\newcommand{\hotimes}{\hat{\otimes}}
\newcommand{\xr}{\xrightarrow}
\newcommand{\mv}{\mathbf{v}}
\DeclareMathOperator{\inv}{inv}
\DeclareMathOperator{\coind}{coind}
\numberwithin{equation}{section}
\newtheorem{thm}[equation]{Theorem}
\newtheorem*{thm*}{Theorem}
\newtheorem*{mthm*}{Main Theorem}
\newtheorem{theorem}[equation]{Theorem}
\newtheorem{lemma}[equation]{Lemma}
\newtheorem{cor}[equation]{Corollary}
\newtheorem{prop}[equation]{Proposition}
\newtheorem{proposition}[equation]{Proposition}
\newtheorem*{conjecture*}{Conjecture}
\newtheorem*{question*}{Question}
\theoremstyle{definition}
\newtheorem{defi}[equation]{Definition}
\newtheorem{construction}[equation]{Construction}
\newtheorem{definition}[equation]{Definition}
 \newtheorem{example}[equation]{Example}
  \newtheorem*{example*}{Example}
 \newtheorem{remark}[equation]{Remark}
\newtheorem{rmk}[equation]{Remark}
\theoremstyle{remark}
\newcommand{\G}{\mathbb{G}}
\newcommand{\N}{\mathbb{N}}
\newcommand{\Q}{\mathbb{Q}}
\newcommand{\T}{\mathbf{T}}
\newcommand{\Z}{\mathbb{Z}}
\newcommand{\CC}{\mathcal{C}}
\newcommand{\DD}{\mathcal{D}}
\newcommand{\FF}{\mathcal{F}}
\newcommand{\UU}{\mathcal{U}}
\newcommand{\II}{\mathcal{I}}
\newcommand{\XX}{\mathcal{X}}
\newcommand{\OO}{\mathcal{O}}
\newcommand{\MM}{\mathcal{M}}
\newcommand{\VV}{\mathcal{V}}
\newcommand{\tauexp}[1]{#1}
\newcommand{\tensor}{\otimes}
\DeclareMathOperator{\type}{type}
\DeclareMathOperator{\height}{ht}
\DeclareMathOperator{\uni}{uni}
\DeclareMathOperator{\rank}{rank}
\DeclareMathOperator{\op}{op}
\DeclareMathOperator{\Sp}{Sp}
\DeclareMathOperator{\Sub}{Sub}
\DeclareMathOperator{\Fin}{Fin}
\DeclareMathOperator{\ev}{ev}
\DeclareMathOperator{\Aff}{Aff}
\DeclareMathOperator{\id}{id}
\DeclareMathOperator{\im}{im}
\DeclareMathOperator{\sm}{\wedge}
\DeclareMathOperator{\colim}{colim}
\DeclareMathOperator{\Hom}{Hom}
\DeclareMathOperator{\Spec}{Spec}
\DeclareMathOperator{\map}{map}
\DeclareMathOperator{\QCoh}{QCoh}
\DeclareMathOperator{\SI}{SI}
\DeclareMathOperator{\Mod}{Mod}
\DeclareMathOperator{\rk}{rk}
\DeclareMathOperator{\res}{res}
\DeclareMathOperator{\tr}{tr}
\DeclareMathOperator{\Set}{Set}
\DeclareMathOperator{\Fun}{Fun}
\DeclareMathOperator{\Map}{Map}
\DeclareMathOperator{\supp}{supp}
\newcommand{\markus}[1]{#1}
\newcommand{\lennart}[1]{#1}
\newcommand{\rev}[1]{#1}
\newcommand{\revm}[1]{#1}
\title{Invariant prime ideals in equivariant Lazard rings}
\author{Markus Hausmann and Lennart Meier}
\date{}
\begin{document}

\maketitle

\abstract{Let $A$ be an abelian compact Lie group. In this paper we compute the spectrum of invariant prime ideals of the $A$-equivariant Lazard ring, or equivalently the spectrum of points of the moduli stack of $A$-equivariant formal groups. We further show that this spectrum is homeomorphic to the Balmer spectrum of compact $A$-spectra, with the comparison map induced by equivariant complex bordism homology.}

\setcounter{tocdepth}{1}
\tableofcontents

\section{Introduction}
Let us pose the question: what algebraic input do we need to develop equivariant versions of chromatic homotopy theory? 

Chromatic homotopy theory studies stable homotopy theory through the lens of formal groups, building on Quillen's identification of the complex bordism ring $\pi_*MU$ with the Lazard ring \cite{Qui69, QuillenElementary}. Around the same time, tom Dieck introduced for every compact Lie group $A$ in \cite{tomDieckBordism} an equivariant analog of $MU$, the homotopical $A$-equivariant complex bordism $MU_A$. Letting $A$ be abelian, Cole, Greenlees and Kriz \cite{CGK} \rev{many years later} found the correct notion of an $A$-equivariant formal group law. Recently, the first author generalized work of Hanke--Wiemeler \cite{HankeWiemeler} and showed that $\pi_*^AMU_A$ is indeed the universal ring for $A$-equivariant formal group laws, thus establishing an equivariant analog of Quillen's theorem for the equivariant Lazard ring $L_A$. 

Many structural features of stable homotopy theory can be explained through the chromatic perspective. The central notion of chromatic homotopy theory is that of \emph{height}. Honda classified formal groups over a \rev{separably closed} field of characteristic $p$ in terms of the height $0\leq n\leq \infty$. Thus, the points of the moduli stack of formal groups $\MM_{FG}$ correspond to pairs $(p,n)$ with $n=0$ if and only if $p=0$. Hopkins and Smith \cite{H-S98} showed that the same classification pertains to thick subcategories of finite spectra: Given a finite spectrum $X$, its $MU$-homology $MU_*X$ defines a coherent sheaf over $\MM_{FG}$. Taking the support of $MU_*X$ in the Zariski spectrum $|\MM_{FG}|$ of points, we obtain a support theory on  
\revm{compact} spectra. The thick subcategory theorem states that this support theory is the universal one. In other words, the induced map $|\MM_{FG}|\to \Spec(Sp^c)$ to the Balmer spectrum of \revm{compact} 
spectra (cf.\ \cite{BalmerSpectrum, BalmerSpectraSpectraSpectra}) is a homeomorphism. 

We show the following equivariant \rev{generalization} (a more precise statement of which we give as \cref{thm:support}):
\begin{thm} \label{thm:main} Let $A$ be an abelian compact Lie group. Then the spectrum of points of the moduli stack $\MM^A_{FG}$ of $A$-equivariant formal groups is homeomorphic to the Balmer spectrum of \revm{compact} $A$-spectra, with the comparison map induced by a support theory based on complex bordism homology $(MU_A)_*$.
\end{thm}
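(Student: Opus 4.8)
\medskip
\noindent\textbf{Proof strategy.}
The plan is to exhibit the homeomorphism as the comparison map attached, via Balmer's machinery, to a support datum built from $(MU_A)_*$, and then to verify that this datum is classifying. For $X\in\Sp_A^c$ the homology $(MU_A)_*X$ is a finitely presented quasi-coherent sheaf on $\MM^A_{FG}$ (via the Hopf algebroid $((MU_A)_*,(MU_A)_*MU_A)$), and we let $\supp(X)$ be its support inside the space $|\MM^A_{FG}|$ of invariant prime ideals of $L_A$, which is spectral by the computation of the previous sections. One checks the support-datum axioms: $\supp$ is exact, $\supp(X\wedge Y)=\supp(X)\cap\supp(Y)$ by the tensor formula for $(MU_A)_*$ over $L_A$, and $\supp(X)$ is Thomason-closed. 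Balmer then produces a continuous map $\rho\colon|\MM^A_{FG}|\to\Spec(\Sp_A^c)$, which is a homeomorphism exactly when the assignment sending a Thomason subset $Y$ to the thick $\otimes$-ideal $\{X:\supp(X)\subseteq Y\}$ is a bijection onto all thick $\otimes$-ideals of $\Sp_A^c$. Thus everything reduces to this thick-subcategory classification.

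For the ``upper bound'' --- that there are no more thick $\otimes$-ideals than the supports allow --- I would prove a nilpotence theorem: $(MU_A)_*$ detects $\otimes$-nilpotence of morphisms between compact $A$-spectra. This should not need essentially new ideas. The family $\{\Phi^H\}_{H\leq A}$ of geometric fixed point functors jointly detects $\otimes$-nilpotence on $\Sp_A^c$, a statement available from isotropy separation together with the theory of nilpotence and descent in equivariant stable homotopy theory; and for each closed subgroup $H\leq A$ the spectrum $\Phi^H(MU_A)$ is, by tom Dieck's computation, built out of $MU$ (a completed wedge of even suspensions of $MU$, with $MU$ occurring as a retract), so that $(MU_A)_*$-nilpotence detection follows from the non-equivariant theorem of Devinatz--Hopkins--Smith. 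Combining these yields, by the standard argument, that a thick $\otimes$-ideal is determined by the union of the supports of its objects: if $\supp(Z)\subseteq\supp(X)$ then $Z$ lies in the thick $\otimes$-ideal generated by $X$.

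For the ``lower bound'' --- that every Thomason subset occurs --- I would construct, for each closed subgroup $H\leq A$ and each $n$ with $0\leq n\leq\infty$, a compact $A$-spectrum $X_{H,n}$ whose $(MU_A)_*$-support is the set of invariant primes lying in the stratum indexed by $H$ of height $\geq n$ (possibly together with strata of certain larger subgroups, which can be removed afterwards by intersecting supports). The building blocks are: a non-equivariant finite type-$n$ complex (Hopkins--Smith), inflated along the quotient $A\to A/H$ after being viewed as a trivial $A/H$-spectrum, which already annihilates geometric fixed points at subgroups not containing $H$; and cofibers of Euler classes $e_V\colon S^0\to S^V$ for appropriate representations $V$, whose effect, under the dictionary between vanishing of Euler classes and subgroups set up in the earlier sections, is to cut the support down towards the $H$-stratum. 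The work here is to compute $(MU_A)_*$ of these objects against the explicit description of $|\MM^A_{FG}|$ and to check that the closed subsets so obtained, under finite unions and passage to thick $\otimes$-ideals, exhaust all Thomason subsets (in particular separate distinct invariant primes).

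The step I expect to be the main obstacle is matching the two topologies, equivalently showing that the specialization order on $|\MM^A_{FG}|$ computed algebraically coincides with the one on $\Spec(\Sp_A^c)$. The latter encodes ``blueshift'': an invariant prime of height $n$ supported at $H$ and one of height $m$ supported at a subgroup $K$ whose $p$-rank differs from that of $H$ by the complementary amount lie in each other's closures. This was established for finite abelian groups in prior work, and it must now be reproduced --- no coarser and no finer --- for all closed subgroups of a compact abelian Lie group, the torus case being genuinely new, purely in terms of inclusions among invariant primes of the equivariant Lazard ring $L_A$. Making the algebra witness exactly this blueshift is precisely what the computation of $|\MM^A_{FG}|$ in the earlier sections is built to provide; granting it, Balmer's theorem upgrades the bijection $\rho$ to the asserted homeomorphism, and the identification of the comparison map with the one induced by $(MU_A)_*$ is immediate from the construction.
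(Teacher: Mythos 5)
Your support datum is not the right one, and the homeomorphism step relies on input that your construction cannot supply. First, you define $\supp(X)$ as the support of the single comodule $(MU_A)_*X$ over $(L_A,S_A)$, i.e.\ of the associated sheaf on $\MM_{FG}^A$. This already fails to match the Balmer support: for $A=C_2$, $p=2$ and $X=S^{\sigma}$ one has $(MU_{C_2})_*(S^{\sigma})_{I^{C_2}_{\{1\},2,0}}=0$ even though the underlying spectrum is $S^1$ and hence rationally non-trivial (\cref{rem:counterexample}); the correct definition must record $(MU_B)_*\res^A_B X$ for \emph{all} closed $B\subseteq A$, as in \cref{thm:support} and \cref{prop:universalsupportMFG}. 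Two of your verifications are also unjustified as stated: $(MU_A)_*X$ is not known to be finitely generated (let alone finitely presented) over $L_A$ for compact $X$, since $L_A$ is not known to be coherent, so closedness of the support is not automatic and has to be proved by hand via local constancy of the type function (\cref{prop:supportclosed}); and there is no algebraic "tensor formula" computing $(MU_A)_*(X\wedge Y)$ from $(MU_A)_*X$ and $(MU_A)_*Y$ -- the multiplicativity of the support is obtained instead from the characterization of the localizations in terms of the chromatic type of geometric fixed points (\cref{prop:supportgeom}).

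Second, and more fundamentally, your "lower bound" does not close the argument. Realizing every Thomason subset means realizing every locally constant admissible function as the type function of a compact $A$-spectrum, and the critical cases are the maximally shifted ones: e.g.\ a finite $C_p^n$-spectrum of underlying type $n$ whose $C_p^n$-geometric fixed points are rationally non-trivial. Your building blocks cannot produce these: inflating a type-$n$ complex gives geometric fixed points of the \emph{same} type $n$ (minimal shift), and smashing with cofibers of Euler classes only kills geometric fixed points at certain strata (raising their type to $\infty$), never lowering a type. The algebraic elements $\overline{\mv}_n$ rule out extra inclusions among the \emph{invariant primes}, but this transfers only one direction across the comparison map: the algebra yields a continuous bijection $\Spec^{\inv}(L_A)\to\Spec(\Sp^c_A)$, while openness (equivalently, universality of the support theory, equivalently the absence of further inclusions among the $P^A_{B,p,n}$) genuinely requires the homotopy-theoretic constructions of finite spectra with maximal type-shifting behaviour from \cite{BarthelHausmannNaumannNikolausNoelStapleton}, \cite{BarthelGreenleesHausmann} and \cite{KuhnLloydChromatic}. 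Your final claim that "the computation of $|\MM^A_{FG}|$ ... is built to provide" this blueshift matching therefore overreaches; without importing (or reproving) that topological input, Balmer's criterion does not upgrade your bijection to a homeomorphism.
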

This establishes $MU_A$ and the theory of equivariant formal groups as fundamental tools for building equivariant versions of chromatic homotopy theory.

For abelian groups as above, the Balmer spectrum of finite $A$-spectra has been computed completely in the papers \cite{BalmerSanders} (the case $A=C_p$), \cite{BarthelHausmannNaumannNikolausNoelStapleton} (the finite abelian case) and \cite{BarthelGreenleesHausmann} (the general abelian case). In a surprising turn of history, it \revm{was} the algebraic counterpart which had not been computed before.
As a set, both $|\MM^A_{FG}|$ and $\Spec(\Sp_A^c)$ decompose as the disjoint union of one copy of $|\MM_{FG}| \cong \Spec(\Sp^c)$ for every closed subgroup of $A$. Thus, the correct notion of height of an $A$-equivariant formal group $F$ over a field of characteristic $p$ consists of a pair: a height $n$ of a non-equivariant formal group and a closed subgroup $B\subset A$ such that $F$ is induced along the zig-zag $A\to A/B \leftarrow \{1\}$. 

The more subtle information lies in the topology of the spectrum, which encodes on the algebraic level how heights can deform and on the homotopical level the chromatic interdependence of the various geometric fixed points $\Phi^B X$ of a finite $A$-spectrum $X$. 

We will detail our results below in the language of invariant prime ideals. Crucially, we exhibit equivariant lifts $\mv_n$ of the classical $v_n$ and show that in \rev{many cases they provide} a sequence of generators of invariant prime ideals. The non-equivariant $v_n$ play an important role in many of the highlights of chromatic homotopy theory, like the greek-letter construction \cite{Rav86}, the construction of the Morava K-theories or the periodicity theorem \cite{H-S98}, and we hope that our equivariant lifts open the prospect to generalize these to the equivariant context. 

\subsection{Invariant prime ideals and statement of results}
As indicated above, the main theorem can also be stated in terms of invariant prime ideals of the equivariant Lazard ring $L_A$, as we now explain. Similarly to the non-equivariant case, $L_A$ is the ground ring of a flat Hopf algebroid $(L_A,S_A)$, classifying $A$-equivariant formal group laws and their strict isomorphisms. By \cite[Theorem 5.52]{Hau} and the discussion thereafter, this is isomorphic to the graded Hopf algebroid $(\pi_*^A MU_A, \pi_*^AMU_A \tensor MU_A)$ of cooperations. The associated stack is the moduli stack of $A$-equivariant formal groups. Hence, the category of graded $(L_A,S_A)$-comodules is equivalent to the category of quasi-coherent sheaves over $\MM^A_{FG}$. 

Recall that an ideal $I$ of $L_A$ is called \emph{invariant} (in the sense of Hopf algebroids) if it is a sub-comodule, i.e., if $\eta_L(I)S_A=\eta_R(I)S_A$ for the left and right unit $\eta_L,\eta_R\colon L_A\to S_A$. Every invariant \emph{prime} ideal $\mathfrak{p}$ gives rise to a point of the moduli stack of prime ideals via the quotient field of $L_A/\mathfrak{p}$. This defines a map from the set of invariant prime ideals $\Spec^{\inv}(L_A)$ to $|\MM^A_{FG}|$, which we show to be a homeomorphism in \cref{thm:ClassificationOfInvariantPrimes}.

For the non-equivariant Lazard ring, Morava and Landweber \cite[\rev{Theorem 2.7}]{LandweberIdeals} showed that the invariant prime ideals are precisely the ideals $I_{p,n}=(v_0,\hdots,v_{n-1})$ for a prime $p$ and $n\in \mathbb{N}\cup \{\infty\}$ (with $I_{p,0}$ being the $0$-ideal for all $p$).

To describe the invariant prime ideals in the equivariant case we recall that $L_A$ contains universal Euler classes $e_V$ for all characters $V\in A^*$, and that equivariant Lazard rings are contravariantly functorial in continuous group homomorphisms. In particular, all equivariant Lazard rings are algebras over the non-equivariant Lazard ring.

Then, given a non-equivariant invariant prime ideal $I_{p,n}$ and a closed subgroup $B$ of $A$ we obtain an invariant prime ideal $I^A_{B,p,n}\subseteq L_A$ as the kernel of the composite\rev{
\[ L_A\xr{\res^A_B} L_B \to \Phi^B L \to \Phi^B L\otimes_L L/I_{p,n}.\]}
Here, $\Phi^BL$ is defined as the localization of \rev{$L_B$} away from all the Euler classes of non-trivial characters for $B$. The ring $\Phi^BL$ is an algebraic version of geometric fixed points and indeed agrees with the coefficient ring of the $B$-geometric fixed points of $MU_B$.
\begin{thm}[\cref{thm:ClassificationOfInvariantPrimes}] For every abelian compact Lie group $A$ the map 
\begin{align*}  \Sub(A)\times \Spec^{\inv}(L) \to & \Spec^{\inv}(L_A) \\
            (B,I_{p,n}) \mapsto & I^A_{B,p,n}
\end{align*}
is a bijection.    
\end{thm}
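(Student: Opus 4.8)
The plan is to reduce the classification, one closed subgroup at a time, to the non-equivariant classification of Morava and Landweber, by performing an algebraic version of isotropy separation on $L_A$.

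\emph{Step 1: stratification by Euler classes.} For an invariant prime $\mathfrak p\subseteq L_A$ set $S(\mathfrak p)=\{V\in A^*: e_V\in\mathfrak p\}$. One first checks that $S(\mathfrak p)$ is a subgroup of $A^*$: it contains $0$ since $e_0=0$; it is closed under addition since $e_{V\otimes W}=e_V+_F e_W\in(e_V,e_W)$; and if $V,V'$ generate the same finite cyclic subgroup then $e_V$ and $e_{V'}$ lie in each other's ideals via the power series $[k]_F$, so membership in $\mathfrak p$ depends only on $\langle V\rangle$. Since $A^*$ is a finitely generated abelian group, Pontryagin duality identifies its subgroups with the collection $\{B^\perp=(A/B)^* : B\in\Sub(A)\}$, so $S(\mathfrak p)=B(\mathfrak p)^\perp$ for a unique $B(\mathfrak p)\in\Sub(A)$. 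This partitions $\Spec^{\inv}(L_A)$ into the subsets $\Spec^{\inv}(L_A)_B=\{\mathfrak p: e_V\in\mathfrak p\iff V|_B=0\}$. Moreover $B(I^A_{B,p,n})=B$: under $L_A\to\Phi^B L$ the element $e_V$ maps to $e_{V|_B}$, which is $0$ when $V|_B=0$ and a unit otherwise, and the latter remains nonzero in $\Phi^B L\otimes_L L/I_{p,n}$ since (by the setup) $I^A_{B,p,n}$ is a genuine invariant prime, so that ring is nonzero. Thus $(B,I_{p,n})\mapsto I^A_{B,p,n}$ respects both decompositions, and it suffices to show that for each fixed $B$ it restricts to a bijection $\Spec^{\inv}(L)\xrightarrow{\sim}\Spec^{\inv}(L_A)_B$.

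\emph{Step 2: identifying a stratum with algebraic geometric fixed points.} Let $J_B\subseteq L_A$ be the ideal generated by $\{e_V:V\in B^\perp\}$ and let $\Sigma_B$ be the multiplicative set generated by $\{e_V:V\notin B^\perp\}$. The ideal $J_B$ is invariant, since Euler classes transform under the universal strict isomorphism, which fixes $0$; hence the passage to $L_A[\Sigma_B^{-1}]/J_B$ — a localization at Euler classes followed by a quotient by an invariant ideal, both of which are controlled at the level of the Hopf algebroid — induces a bijection $\Spec^{\inv}(L_A)_B\cong\Spec^{\inv}\!\big(L_A[\Sigma_B^{-1}]/J_B\big)$. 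The composite $L_A\xrightarrow{\res^A_B}L_B\to\Phi^B L$ annihilates $J_B$ and inverts $\Sigma_B$ (every nontrivial character of $B$ extends to $A$), hence factors through a ring map $L_A[\Sigma_B^{-1}]/J_B\to\Phi^B L$; I would invoke the structural fact, established earlier, that this map is an isomorphism of Hopf algebroids — the algebraic shadow of the decomposition of $\MM^A_{FG}$ along $B$-geometric fixed points. This gives $\Spec^{\inv}(L_A)_B\cong\Spec^{\inv}(\Phi^B L)$.

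\emph{Step 3: invariant primes of $\Phi^B L$, and conclusion.} It remains to identify $\Spec^{\inv}(\Phi^B L)$ with $\Spec^{\inv}(L)$ via the structure map $L\to\Phi^B L$, in such a way that $I_{p,n}$ corresponds to $\ker(\Phi^B L\to\Phi^B L\otimes_L L/I_{p,n})$. For this one uses the structure theory of the geometric-fixed-point ring $\Phi^B L$ — including, where they are available, the equivariant lifts $\mv_n$ of $v_n$ as explicit generators — to show that the height of the underlying formal group remains the only invariant of an invariant prime of $\Phi^B L$, that each $\Phi^B L\otimes_L L/I_{p,n}$ is nonzero, and that $L/I_{p,n}$ embeds into it; combined with the Morava--Landweber classification for $L$ this gives the bijection $\Spec^{\inv}(L)\xrightarrow{\sim}\Spec^{\inv}(\Phi^B L)$. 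Chaining Steps 1--3 and tracking the identifications, the resulting bijection $\Sub(A)\times\Spec^{\inv}(L)\xrightarrow{\sim}\Spec^{\inv}(L_A)$ carries $(B,I_{p,n})$ to $\ker\!\big(L_A\to\Phi^B L\otimes_L L/I_{p,n}\big)=I^A_{B,p,n}$, as required.

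\emph{Main obstacle.} The genuinely hard input is the analysis of the geometric-fixed-point rings $\Phi^B L$ underlying Step 3 (and the isomorphism in Step 2): these are the new objects, and one must show that the residual $A/B$-equivariant structure has been collapsed and that no new invariant primes appear — I expect this to proceed through the modular description of $\Phi^B L$ rather than by formal manipulation, with the $\mv_n$ supplying control precisely in the cases where they generate.
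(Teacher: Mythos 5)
Your overall strategy (stratify $\Spec^{\inv}(L_A)$ by the set of Euler classes contained in a prime, identify the stratum of $B$ with invariant primes of the geometric-fixed-point Hopf algebroid, then quote Morava--Landweber) is a legitimate alternative outline, and Steps 1--2 can be made to work: the subgroup property of $S(\mathfrak p)$ should be argued via the coordinate-free criterion that $e_V$ vanishes iff $\varphi_V=\varphi_\epsilon$ (as in \cref{prop:ClassificationOverFields}) rather than a literal formula $e_{V\otimes W}=e_V+_Fe_W$, which does not exist equivariantly on the nose; the identification $L_A[\Sigma_B^{-1}]/J_B\cong\Phi^BL$ is indeed available from \cref{lem:EulerClassesGenerateKernel} together with surjectivity of $A^*\to B^*$; and the bijection between the stratum and $\Spec^{\inv}$ of the localized quotient requires a short argument (flatness of $S_A$ over $L_A$, the fact that Euler classes transform by units under the coaction, and a check that contraction of an invariant prime along the quotient and the localization is again invariant), which you gesture at but do not supply.

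The genuine gap is Step 3, and it is exactly where the content of the theorem lies. You need that \emph{every} invariant prime of the Hopf algebroid $(\Phi^BL,\Phi^BS)$ is extended from an invariant prime $I_{p,n}\subseteq L$ -- equivalently, that no ``exotic'' invariant primes involving the polynomial generators $b_i^V$ of \cref{prop:GeometricFixedPoints} occur. You state this as ``the height remains the only invariant'' and defer it to unspecified ``structure theory,'' explicitly listing it as the main obstacle; the appeal to the elements $\overline{\mv}_n$ is a red herring, since they are constructed later and are used for containments and generators, not for the classification. The paper closes precisely this gap by a different mechanism: the general injection $\Spec^{\inv}\hookrightarrow|\XX|$ of \cref{prop:InvariantPrimeIdealsAndPoints} combined with the classification of equivariant formal groups over fields (\cref{prop:ClassificationOverFields}, \cref{prop:ClassificationPointsMFGA}), with surjectivity witnessed by the explicitly constructed primes $I^A_{B,p,n}$, whose primality and nontriviality come from \cref{prop:GeometricFixedPoints}. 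You could repair your Step 3 by running that same argument for the stack of $(\Phi^BL,\Phi^BS)$, which is the open substack $\MM_{FG}^{B/B}\simeq\MM_{FG}$ of $\MM_{FG}^B$ -- but at that point your proof reduces to the paper's, applied one stratum at a time. As written, the decisive step is asserted rather than proved, so the proposal does not yet constitute a proof.
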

Here, $\Sub(A)$ is the set of all closed subgroups of $A$. Hence, as for the Balmer spectrum, the invariant prime ideals of $L_A$ decompose as a set as one copy of $\Spec^{\inv}(L)$ for every closed subgroup of $A$. And similarly to the Balmer spectrum, the main work then lies in understanding the Zariski topology, in particular in determining the containments between invariant prime ideals associated to different subgroups.

We obtain the following:
\begin{thm}[\cref{thm:inclusions}] There is an inclusion $I^A_{B,p,n}\subseteq I^A_{B',q,n'}$ if and only if
\begin{enumerate}
    \item $B'$ is a subgroup of $B$.
    \item $p=q$ or $n=0$ (in which case $I^A_{B,p,0}=I^A_{B,q,0}$), the components $\pi_0(B/B')$ are a $p$-group and $n'\geq n + \rank_p(\pi_0(B/B'))$.
\end{enumerate}
\end{thm}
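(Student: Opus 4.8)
The plan is to attack the two directions separately, using the explicit description of $I^A_{B,p,n}$ as the kernel of $L_A \to \Phi^B L \otimes_L L/I_{p,n}$, together with a good understanding of the ring $\Phi^B L$ and its Euler classes. The key structural input is that $\Phi^B L$ is obtained from $L_B$ by inverting all Euler classes $e_V$ of nontrivial characters $V$ of $B$, and that it carries a map from $L$ under which the $v_n$ are the classical ones; moreover, for a quotient $B/B'$ there should be a comparison map $\Phi^B L \to \Phi^{B/B'}L'$ (or rather a map relating the $B'$-fixed-point construction to a further geometric fixed point), realizing the compatibility of these algebraic geometric fixed points with subquotients. I would first record, as a lemma, exactly how $\Phi^B L$ depends on $B$: writing $B \cong T^r \times F$ with $F$ finite abelian, inverting the Euler classes kills the torsion contributions coming from the finite part and leaves, roughly, $\Phi^B L \cong L[\![$coordinates$]\!][e_V^{\pm}]$-type data whose residue fields are understood; the crucial point for part (2) is that passing from $B$ to a subgroup $B'$ with $\pi_0(B/B')$ a $p$-group of $p$-rank $d$ forces one to invert $d$ further Euler classes, and each such inversion, modulo $p$, has the effect of adjoining an element that behaves like $v_1, \dots, v_d$ — this is where the shift $n' \geq n + \rank_p(\pi_0(B/B'))$ must come from.

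\medskip

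For the ``if'' direction I would argue as follows. First, if $B' \subseteq B$ there is a restriction-type map $\Phi^B L \to \Phi^{B'} L$ (an algebraic avatar of the inclusion of geometric fixed points $\Phi^{B'} \to \Phi^B$ after restricting along $B' \hookrightarrow B$, i.e.\ the map $\Phi^{B'}\res^B_{B'} \to \Phi^B$ in one of the two natural directions), compatible with the maps from $L$. Chasing the definitions, a generator of $I^A_{B,p,n}$ maps into $I^A_{B',q,n'}$ provided its image in $\Phi^{B'} L \otimes_L L/I_{q,n'}$ vanishes; using $p = q$ this reduces to the non-equivariant fact that $I_{p,n} \subseteq I_{p,n'}$ for $n \leq n'$, together with the claim that the ``extra'' Euler classes that become invertible in $\Phi^{B'}L$ but not in $\Phi^B L$ all lie in $I_{p,n'}$ once $n' \geq n + d$. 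This last claim is the technical heart: I expect to prove it by an explicit mod-$p$ computation showing that for a character $V$ of $B$ that is nontrivial on $B$ but trivial on $B'$ (equivalently, a nontrivial character of the $p$-group $\pi_0(B/B')$ — here the hypothesis that $\pi_0(B/B')$ is a $p$-group is essential, since only then do such characters have $p$-power order and hence Euler classes with the right divisibility), the Euler class $e_V$ reduces modulo $(p, v_1, \dots, v_{k})$ to a unit multiple of $v_{k+1}$ (up to higher-order corrections), so that inverting $d$ independent such classes is exactly what is needed to pass from $I_{p,n}$ to $I_{p, n+d}$. The $n = 0$ case is separate and easy: $I^A_{B,p,0}$ is the kernel of $L_A \to \Phi^B L$, which is independent of $p$, and one checks directly it is contained in $I^A_{B',q,n'}$ exactly when $B' \subseteq B$.

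\medskip

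For the ``only if'' direction, I would prove the contrapositive by exhibiting, for each failure of the conditions, an explicit element of $I^A_{B,p,n}$ not lying in $I^A_{B',q,n'}$, or equivalently a geometric point of $\MM^A_{FG}$ lying in the closure of the point for $(B,p,n)$ but not satisfying the constraint. If $B'$ is not contained in $B$: pick a character $V$ nontrivial on $B'$ but trivial on $B$ (possible since $B' \not\subseteq B$ means $B'$ is not contained in $B$... more precisely one uses that $\Sub(A)$ is ordered by inclusion and Pontryagin duality to separate); then $e_V \in I^A_{B,p,n}$ since $e_V$ restricts to an Euler class that is killed in $\Phi^B L$ (being trivial... wait — rather: $e_V$ is \emph{not} inverted in $\Phi^B$, so maps to something in the maximal ideal, and in fact to $0$ in $\Phi^B L / I_{p,n}$-type quotient), while $e_V$ becomes a unit in $\Phi^{B'} L$ hence is nonzero in $L_A/I^A_{B',q,n'}$. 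If $B' \subseteq B$ but $p \neq q$ (and $n, n' > 0$): the residue field of $I^A_{B,p,n}$ has characteristic $p$ while that of $I^A_{B',q,n'}$ has characteristic $q$, so $q - p$ (a unit in one, not the other) separates them. If $\pi_0(B/B')$ is not a $p$-group: some Euler class of a character of non-$p$-power order is inverted in $\Phi^{B'}$ but becomes zero mod $p$ in $\Phi^B L/I_{p,n}$, giving a separating element. Finally, if $\pi_0(B/B')$ is a $p$-group of $p$-rank $d$ but $n' < n + d$: here I would use the mod-$p$ computation above in the reverse direction — the image of $I^A_{B,p,n}$ in $\Phi^{B'}L \otimes L/I_{p,n'}$ contains, after inverting the $d$ relevant Euler classes, the elements $v_{n'}, \dots, v_{n+d-1}$ (which are units or at least nonzero there), so the inclusion must fail; more carefully one shows $v_{n'}$ itself, which lies in $I^A_{B,p,n}$ once $n' \geq n$... no: one shows the image of some $v_m \in I_{p,n}$-related class is invertible in $\Phi^{B'}L/I_{p,n'}$. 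I expect the sharpest version of this argument to require the precise statement that $\Phi^{B'}L \otimes_L L/I_{p,n}$, localized appropriately, has $v_{n}, \dots, v_{n+d-1}$ becoming invertible, so that its image actually sits over $I_{p, n+d}$ and no smaller.

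\medskip

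The main obstacle will be the mod-$p$ Euler class computation underpinning the shift by $\rank_p(\pi_0(B/B'))$: one must show that inverting the Euler class of an order-$p^k$ character has exactly the effect of inverting a $v_j$ for the appropriate $j$, with no further collapse and no off-by-one error, and that $d$ independent characters contribute $d$ independent such inversions. This requires choosing coordinates on the equivariant formal group law carefully (the $[p^k]$-series of the coordinate, whose lowest terms in the mod-$p$ reduction involve $v_1, v_2, \dots$) and keeping track of the filtration; I anticipate isolating this as a standalone lemma about $\Phi^B L$ and its base changes, proved by reduction to the finite cyclic case $B/B' = C_{p^k}$ and then to elementary abelian quotients, where the classical computation of the mod-$p$ homology of $B\Sigma$ / the structure of $MU_*BC_{p^k}$ can be invoked.
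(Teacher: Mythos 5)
Your outline gets several framing steps right---separating the primes, ruling out $B'\not\subseteq B$ with an Euler class, and reducing to quotients $B/B'$ that are cyclic or elementary abelian---but the two lemmas you place at the heart of the argument are not correct as stated, and without them both directions have real gaps. For the ``if'' direction: the Euler class of an order-$p^k$ character is \emph{not} a unit multiple of $v_{k+1}$ modulo $(p,v_1,\dots,v_k)$; every torsion character has Euler class restricting to $0$ at the trivial group. The correct statement is a divisibility, $e_{\tau^p}=\psi_p^{(n)}\,e_\tau^{p^n}$ in $L_\T/I_n$ with $\res^\T_1\psi_p^{(n)}=v_n$, which is how the paper sets things up before \cref{prop:generators}. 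Moreover, ``inverting the extra Euler classes'' cannot be the mechanism: for $B'\subseteq B$ every $V\in A^*$ whose Euler class is inverted in $\Phi^{B'}L$ is already inverted in $\Phi^{B}L$, and the characters of $B/B'$ that drive the height shift have Euler classes that become \emph{zero}, not invertible, upon restriction to $B'$. Even granting the corrected divisibility, to conclude that every element of $I^{C_{p^k}}_{C_{p^k},p,n}$ (not just visible generators) restricts into $I_{p,n+1}$ one must know that $I_n$ together with $\psi_{p^k}^{(n)}$ generates the kernel of $L_\T/I_n\to\Phi^{C_{p^k}}L/I_n$; the paper proves this (\cref{prop:generators}) by an iterated division argument whose key input is the regularity of $v_n$ in $L_{C_{p^l}}/I_n$, coming from freeness of $L_A$ over $L$ (\cref{prop:free}, Comeza\~{n}a). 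Your sketch contains no substitute for this step, which is the technical heart of the ``if'' direction.

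For the ``only if'' direction, two of your separating elements do not exist as described. If $\pi_0(B/B')$ is not a $p$-group, no Euler class separates: $e_V\in I^A_{B,p,n}$ forces $V|_B=\epsilon$, hence $V|_{B'}=\epsilon$ and $e_V\in I^A_{B',q,n'}$ as well. The paper instead uses the divided class $\overline{\mv}_0^{(q)}$, the restriction to $C_q$ of $e_{\tau^q}/e_\tau$, which is annihilated by an Euler class yet restricts to the $p$-local unit $q$. For the sharp bound, you need an element of $I^A_{B,p,n}$ whose restriction to $B'$ escapes $I_{p,n'}$ when $n'<n+d$; your candidates are either false ($v_{n'}\notin I^A_{B,p,n}$, as you noticed yourself) or circular---the claim that $\Phi^{B'}L\otimes_L L/I_{p,n}$, ``localized appropriately,'' has $v_n,\dots,v_{n+d-1}$ invertible is essentially the blueshift formula \cref{cor:blueshift}, which the paper deduces \emph{from} the theorem you are proving. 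The actual content is the construction of $\overline{\mv}_m$ over $C_p^{m+1}$ (\cref{def:vn}): one divides $e_{\epsilon\otimes\tau^p}$ by the product of \emph{all} $e_{V\otimes\tau}$, a division possible only after passing to the quotient by the inflated ideal $p^*I_{C_p^m,0}$ (\cref{lem:divisibility}), and then checks that the result is Euler-power torsion at the top group while restricting to $v_m$ at the trivial group (\cref{prop:xn}, \cref{cor:resxn}). Nothing in your mod-$p$ Euler-class computation produces these elements, and computations of $MU_*BC_{p^k}$ will not either: $L_{C_{p^k}}$ is a quotient of $L_\T$, not a Borel completion, so Borel-type input does not control the invariant prime ideals of the genuine equivariant Lazard ring.
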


Comparing with \cite{BarthelGreenleesHausmann} we see that these correspond precisely to the inclusions in the Balmer spectrum, but with roles reversed: There is an inclusion $I^A_{B,p,n}\subseteq I^A_{B',q,n'}$ if and only if there is an inclusion $P^A_{B',q,n'}\subseteq P^A_{B,p,n}$. Here, $\markus{P^A_{B,p,n}} = \{ X\in \Sp_A^c\ |\ K(n)_*(\Phi^B X)=0\}$ are the thick subcategories of $\Sp_A^c$ with $K(n)$ being Morava K-theory at the prime $p$. 

To show that $I^A_{B,p,n}$ indeed includes into $I^A_{B', q, n'}$ when conditions $1$ and $2$ are satisfied, one can reduce to the case $A=\T$ the circle group where it is straightforward to describe explicit generators for the invariant prime ideals. The main step in ruling out further inclusions is the construction of equivariant refinements $\overline{\mv}_{n-1}\in L_{C_p^{n}}$ of the elements $v_{n-1}\in L$ which exhibit maximal height shifts (\cref{def:vn}, \cref{prop:xn}). 
Roughly speaking, $\overline{\mv}_{n-1}$ is of height $-1$ at the top group $C_p^n$ (i.e., it lies in the ideal \rev{$I^{C_p^n}_{C_p^{n},p,0}$}) while it is of height $n-1$ at the trivial group (i.e., it lies in the ideal \rev{$I^{C_p^n}_{\{1\},p,n}$} but not in \rev{$I^{C_p^n}_{\{1\},p,n-1}$}). This is the algebraic analog of the existence of finite $C_p^{n}$-spectra of underlying type $n$ whose $C_p^{n}$-geometric fixed points are rationally non-trivial as in  \rev{\cite[Section 4]{BarthelHausmannNaumannNikolausNoelStapleton} and \cite[Section 7]{KuhnLloydChromatic}}. More precisely, $\overline{\mv}_{n-1}$ is canonically defined only modulo a certain smaller ideal (analogously to $v_{n-1}$ only being defined uniquely up to the ideal $I_{n-1}$ \rev{and up to a unit}). More details are given in \cref{subsec:non-inclusions}.

We further show that - at least over elementary abelian $p$-groups - the elements $\overline{\mv}_i$ give rise to generators of the invariant prime ideals:
\begin{thm}[\cref{thm:generators1}] For all primes $p$ and $n\in \N$ the elements
\[ p_1^* \overline{\mv}_0, p_2^* \overline{\mv}_1,\hdots, p_{n-1}^* \overline{\mv}_{n-2},\overline{\mv}_{n-1} \]
generate the ideal $I^{C_p^n}_{C_p^n,p,0}$. Here, $p_i\colon C_p^n\to C_p^i$ denotes the projection to the first $i$ coordinates.
\end{thm}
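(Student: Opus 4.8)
The plan is to prove the equivalent statement that the ideal $J:=(p_1^*\overline{\mv}_0,\dots,p_{n-1}^*\overline{\mv}_{n-2},\overline{\mv}_{n-1})$ equals $\ker(L_{C_p^n}\to\Phi^{C_p^n}L)$. This is equivalent because $I_{p,0}=0$ and $\res^{C_p^n}_{C_p^n}=\id$, so that $I^{C_p^n}_{C_p^n,p,0}$ is precisely this kernel, and because $C_p^n$ has only finitely many characters, so that $\Phi^{C_p^n}L$ is the localization $L_{C_p^n}[e^{-1}]$ at $e:=\prod_{0\neq V\in(C_p^n)^*}e_V$. Granting the easy inclusion $J\subseteq\ker$, the theorem then reduces to showing that $e$ is a non-zero-divisor on $L_{C_p^n}/J$, equivalently that $L_{C_p^n}/J$ embeds into $(L_{C_p^n}/J)[e^{-1}]=\Phi^{C_p^n}L$. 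I would argue by induction on $n$; the case $n=0$ is vacuous, and the case $n=1$ --- that $\overline{\mv}_0$ generates $\ker(L_{C_p}\to\Phi^{C_p}L)$ --- I would extract from the explicit construction of $\overline{\mv}_0$ in \cref{def:vn} and \cref{prop:xn}.

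For the easy inclusion, each $p_i\colon C_p^n\to C_p^i$ is surjective, and since geometric fixed points undo inflation one has $\Phi^{C_p^n}\circ p_i^*\simeq\Phi^{C_p^i}$; applying $\Phi^{C_p^n}$ to the inflation map $p_i^*MU_{C_p^i}\to MU_{C_p^n}$ produces a commuting square relating the localization maps $L_{C_p^i}\to\Phi^{C_p^i}L$ and $L_{C_p^n}\to\Phi^{C_p^n}L$, whence $\overline{\mv}_{i-1}\in I^{C_p^i}_{C_p^i,p,0}$ (its ``height $-1$ at the top'' property) gives $p_i^*\overline{\mv}_{i-1}\in I^{C_p^n}_{C_p^n,p,0}$. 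For the remaining assertion I would peel off the last coordinate. Write $C_p^n=C_p^{n-1}\times C_p$, let $q\colon C_p^n\to C_p^{n-1}$ be the projection and $e_n\cong C_p$ its kernel; since $p_i$ factors through $q$ for $i<n$, one has $J=q^*(J')L_{C_p^n}+(\overline{\mv}_{n-1})$ with $J':=I^{C_p^{n-1}}_{C_p^{n-1},p,0}$, which by the inductive hypothesis is generated by the lower $\overline{\mv}$'s and satisfies $L_{C_p^{n-1}}/J'\hookrightarrow\Phi^{C_p^{n-1}}L$. Factor $L_{C_p^n}\to\Phi^{C_p^n}L$ through $\Phi^{e_n}L_{C_p^n}$: the first map inverts the Euler classes $e_I$ of characters nontrivial on $e_n$, the second inverts the Euler classes $e_{II}$ of characters inflated from $C_p^{n-1}$, and $\Phi^{C_p^n}L=\Phi^{e_n}L_{C_p^n}\otimes_{L_{C_p^{n-1}}}\Phi^{C_p^{n-1}}L$ with $\Phi^{e_n}L_{C_p^n}$ flat (in fact free) over $L_{C_p^{n-1}}$, by the structure theory of equivariant Lazard rings. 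Base change along $L_{C_p^{n-1}}\to L_{C_p^{n-1}}/J'$ identifies $\Phi^{e_n}L_{C_p^n}/q^*(J')\Phi^{e_n}L_{C_p^n}$ with $S:=\Phi^{e_n}L_{C_p^n}\otimes_{L_{C_p^{n-1}}}(L_{C_p^{n-1}}/J')$, and flatness together with the inductive embedding gives $S\hookrightarrow\Phi^{C_p^n}L$; since $S$ embeds there and $\overline{\mv}_{n-1}$ dies there, $\overline{\mv}_{n-1}$ dies in $S$, and hence $(L_{C_p^n}/J)[e_I^{-1}]=S$. As the localization $L_{C_p^n}/J\to(L_{C_p^n}/J)[e^{-1}]=\Phi^{C_p^n}L$ factors through $S$, the assertion that $e$ is a non-zero-divisor on $L_{C_p^n}/J$ is now equivalent to $L_{C_p^n}/J\hookrightarrow S$, i.e.\ to $e_I$ being a non-zero-divisor on $L_{C_p^n}/J$.

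The hard part will be precisely this --- vanishing of the $e_I$-torsion of $L_{C_p^n}/J$, the genuinely new, ``rank-one'' ingredient, with everything above being formal bookkeeping. It is the $C_p^{n-1}$-relative version of the base case $n=1$: over the locus $\Spec(L_{C_p^{n-1}}/J')$ the ring $L_{C_p^n}$ behaves like $L_{C_p}$ with $e_n$ playing the role of the distinguished $C_p$, and the claim is that killing $\overline{\mv}_{n-1}$ removes exactly the torsion introduced by inverting the Euler classes nontrivial on $e_n$ --- equivalently, that $L_{C_p^n}/(q^*(J')L_{C_p^n}+(\overline{\mv}_{n-1}))$ equals the subring $S$. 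To do this I would use the explicit description of $\overline{\mv}_{n-1}$ from \cref{def:vn} and \cref{prop:xn} --- expecting that, modulo the lower ideal and up to a unit, $\overline{\mv}_{n-1}$ is the ``divided $p$-series'' element witnessing exactly the relation that becomes trivial upon inverting those Euler classes --- together with a description of $L_{C_p^n}$ concrete enough in the $e_n$-direction to carry out the relative rank-one computation. This is where I expect essentially all of the real work to lie.
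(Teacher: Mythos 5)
Your reduction is reasonable bookkeeping, but the proposal stops exactly where the theorem begins. You yourself flag that the ``hard part'' is to show that, modulo $q^*J'$, killing $\overline{\mv}_{n-1}$ kills precisely the power torsion for the Euler classes nontrivial on the last $C_p$ --- and this step, which is the entire mathematical content of the induction, is only described as an expectation (``the divided $p$-series element witnessing exactly the relation\dots''), not proved. In the paper this is done by replacing the last $C_p$ by a circle: one works in $L_{C_p^{n-1}\times\T\times B}/p^*I$, where $e_{\tau^p}$ is a regular element by \cref{prop:regularglobal} and the quotient is an integral domain by induction, and proves \cref{lem:inductionstep}: any relation $x\cdot\prod_V e_{V\otimes\tau}^{n_V}=y\cdot e_{\tau^p}$ forces $x$ into the ideal generated by an inflated restriction of $\mv_{n+m}$, the key inputs being the defining equation of $\mv_{n+m}$ from \cref{def:vn}, the short exact sequences for the $e_{V\otimes\tau}$, and the fact that this element restricts to $v_{n+m}\neq 0$ in the relevant domain; \cref{cor:quotient} then identifies the quotient ideal. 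Even your base case ($\overline{\mv}_0$ generating $\ker(L_{C_p}\to\Phi^{C_p}L)$) is not simply ``extracted from the construction'': it is \cref{cor:generators} at height $0$, whose proof (\cref{prop:generators}) is already a miniature of exactly the divisibility argument you are missing in general.

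Two further structural problems with the framework you set up. First, your induction carries only the height-zero statement for $C_p^{n-1}$, whereas the paper's \cref{thm:generators1} is proved simultaneously with generation of $I_{C_p^n\times B,m}$ for \emph{all} heights $m$ and all torus factors $B$, with the surjectivity of restriction onto higher-height ideals, and with the auxiliary statement that $p^*I_{C_p^n,m}$ generates $I_{C_p^n\times B,m}$; these stronger hypotheses are genuinely used (e.g.\ regularity of an inflated Euler class $p_A^*(e_V)$ modulo the ideal is checked by restricting to $\ker(V)\times\T\times B$, where one needs the domain property at height $m+1$), so a height-zero-only induction is too weak to run the hard step. Second, your reduction relies on $\Phi^{e_n}L_{C_p^n}$ being flat (``in fact free'') over $L_{C_p^{n-1}}$; this is asserted by appeal to ``structure theory'' but is neither proved by you nor available in the paper, which only establishes the absolute statement \cref{prop:GeometricFixedPoints} and freeness over the non-equivariant $L$ (\cref{prop:free}). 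So as it stands the proposal is a plausible plan with an unsupported input and with the decisive computation deferred, and it does not constitute a proof.
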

Suitable restrictions of the $\overline{\mv}_n$ then form generators for the ideals $I^{C_p^n}_{C_p^n,p,m}$ at higher height~$m$, see Section \ref{sec:generators}. We emphasize that in contrast to the non-equivariant situation, the sequence of the $p_i^* \overline{\mv}_{i-1}$ is \emph{not} a regular sequence. In fact, since  $I^{C_p^n}_{C_p^n,p,0}$ consists precisely of the Euler-class-power torsion, it does not contain a non-zero divisor and hence cannot be generated by a regular sequence (unless $n=0$). The torsion in the ring $L_{C_p^n}$ is closely linked to the torsion in the group of characters $(C_p^n)^*$. Hence one might hope that $I^A_{B,n}$ is generated by a regular sequence whenever $A$ is a torus, and indeed that is the case in all the cases we understand (cf., Remark \ref{rem:generatorstori}).

Finally,  to describe the Zariski topology we need one additional ingredient. When $A$ is infinite, the set of closed subgroups $\Sub(A)$ \rev{carries} a non-trivial metric topology, turning it into a totally-disconnected compact Hausdorff space. Together with the inclusions between the invariant prime ideals, this topology determines the Zariski topology on $\Spec^{\inv}(L_A)$.

\begin{thm}[\cref{thm:zariski}] The Zariski topology on $\Spec^{\inv}(L_A)$ has as basis the closed subsets $C$ which are
\begin{enumerate}
    \item[(i)] closed under upward inclusions, i.e., if $I^A_{B',q,n'}\in C$ and $I^A_{B',q,n'}\subseteq I^A_{B,p,n}$, then $I^A_{B,p,n}\in C$, and which
    \item[(ii)] are locally constant on $\Sub(A)$ in the sense that every $B\in \Sub(A)$ has a neighborhood $U$ such that, for every $n$ and $p$, either $I^A_{B',p,n}\in C$ for all $B'\in U$ or $I^A_{B',p,n}\notin C$ for all $B'\in U$.
\end{enumerate}
\end{thm}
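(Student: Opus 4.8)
The Zariski-closed subsets of $\Spec^{\inv}(L_A)$ are exactly the intersections $\bigcap_{f\in\mathfrak a}V(f)$, where $\mathfrak a$ runs over invariant ideals of $L_A$ and $V(f)=\{\mathfrak p\in\Spec^{\inv}(L_A) : f\in\mathfrak p\}$. Hence it is enough to prove that every principal locus $V(f)$ is closed (clear), is closed under upward inclusions, and is locally constant on $\Sub(A)$: then the closed subsets satisfying \textup{(i)} and \textup{(ii)} contain all the $V(f)$, and since every closed set is such an intersection they form a basis for the closed sets. Property \textup{(i)} is immediate, as $f\in\mathfrak p\subseteq\mathfrak q$ forces $f\in\mathfrak q$.

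For \textup{(ii)} we invoke the identification $\Spec^{\inv}(L_A)\cong\Sub(A)\times\Spec^{\inv}(L)$ of \cref{thm:ClassificationOfInvariantPrimes} and must show that the function
\[ B\ \longmapsto\ \{\,(p,n)\ :\ f\in I^A_{B,p,n}\,\} \]
is locally constant on $\Sub(A)$; recall that $f\in I^A_{B,p,n}$ exactly when the image of $f$ in $\Phi^{B}L\otimes_L L/I_{p,n}$ vanishes. A fixed $f$ is expressible, using the structural description of equivariant Lazard rings, in terms of the non-equivariant $L$ and finitely many Euler classes $e_{V_1},\dots,e_{V_r}$. Upon restriction to a subgroup $B$, each $e_{V_i}$ maps in $\Phi^{B}L$ to $0$ if $V_i|_B=1$ and to a unit if $V_i|_B\neq 1$ (Euler classes of non-trivial characters of $B$ being inverted in $\Phi^{B}L$). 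Thus whether $f\in I^A_{B,p,n}$ depends on $B$ only through \textup{(a)} the subset $\{\,i : V_i|_B=1\,\}$ and \textup{(b)} a ``finite-level'' computation once that subset is fixed. For \textup{(a)}, each set $\{B : V_i|_B=1\}=\{B : B\subseteq\ker V_i\}$ is clopen in $\Sub(A)$ --- it is closed, and open because a closed subgroup sufficiently Hausdorff-close to a subgroup of $\ker V_i$ lies in $\ker V_i$ --- so $\Sub(A)$ is partitioned into finitely many clopen pieces on which \textup{(a)} is constant.

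It then remains to check \textup{(b)}: on each such piece, whether the image of $f$ in $\Phi^{B}L\otimes_L L/I_{p,n}$ vanishes is a clopen condition on $B$. This is the step that needs real work, and I expect it to be the main obstacle. The plan is to reduce, using the contravariant functoriality of equivariant Lazard rings in continuous group homomorphisms, to the case of a torus $\T^{k}$, for which $\Sub(\T^{k})$ and the effect of restriction and of $\Phi^{B}(-)$ admit an explicit description, so that the relevant images of $f$ stabilise along convergent sequences of subgroups and are eventually constant; and, for the chromatic-height aspect, to elementary abelian $p$-groups, where \cref{thm:generators1} and Section~\ref{sec:generators} provide explicit generators of the relevant ideals in terms of (restrictions and inflations of) the equivariant refinements $\overline{\mv}_n$ --- precisely the situation those elements were constructed to handle. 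Granting \textup{(b)}, one passes to a common clopen refinement over the finite partition, concludes that the displayed function is locally constant, and thereby establishes \textup{(ii)} and the theorem.
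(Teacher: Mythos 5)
Your reduction to principal loci $V(f)$ establishes only one half of the theorem, and it is the easier half. Showing that every $V(f)$ satisfies (i) and (ii) proves that each closed set is an intersection of sets satisfying (i) and (ii); it does not show that every subset satisfying (i) and (ii) is itself closed, which is what \cref{thm:zariski} asserts (the basic closed sets are exactly the $V_f$ for $f$ locally constant and admissible) and what the subsequent homeomorphism with $\Spec(\Sp_A^c)$ requires. That converse is the realization direction of \cref{prop:heightfunctions}: every locally constant admissible function $f\colon \Sub(A)\to\overline{\N}_-$ equals $\height_X$ for a \emph{finite} $X\subseteq L_A$, so that $V_f=V(X)$ is closed. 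This is where most of the paper's work lies: for each pair of subgroups one constructs elements $x_{B,B'}$ with prescribed height behaviour using Euler classes, the elements $\overline{\mv}^{(q)}_0$ for primes $q\neq p$, and restrictions and inflations of the $\overline{\mv}_n$ (\cref{cor:resxn}), and then patches finitely many of them together using local constancy and compactness of $\Sub(A)$. Nothing in your sketch addresses this direction.

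Moreover, the half you do address is incomplete. Your step (b) --- that for fixed $f$ the vanishing of its image in $\Phi^{B}L\otimes_L L/I_{p,n}$ is a locally constant condition in $B$ --- is precisely the paper's proposition that $\height_{\{f\}}$ is locally constant, and you defer it as ``the main obstacle'' with only a plan (reduction to tori and to elementary abelian groups via \cref{thm:generators1}). The paper proves it directly and by a different route: for the containment direction it uses \cref{lem:characterizationofideals} to produce an Euler class $e_W$ with $W^B=0$ and $e_W\cdot f\in (I^A_B,I_{p,n})$, and notes $W^{B_i}=0$ for subgroups $B_i$ close to $B$; for the non-containment direction it uses the natural classes $\gamma^V_j$ of \cref{rem:naturalitybi} together with the polynomial description of $\Phi^{B}L$ from \cref{prop:GeometricFixedPoints}, observing that finitely many pairwise distinct nontrivial characters of $B$ restrict to pairwise distinct nontrivial characters of all nearby subgroups. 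Your structural claim that a general $f\in L_A$ is ``expressible in terms of $L$ and finitely many Euler classes'' is also not correct as stated --- Euler classes do not generate $L_A$ over $L$; it is only after multiplying by a suitable $e_W$ that the element becomes a polynomial over $L$ in the classes $\gamma^V_j$. So both the local constancy of $\height_{\{f\}}$ and the entire realization direction still need to be supplied.
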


Comparing with \cite{BarthelGreenleesHausmann}, we see that this description precisely matches the computation of the topology on the Balmer spectrum, with $I^A_{B,p,n}$ replaced by $P^A_{B,p,n}$. Hence the assignment
\[ I^A_{B,p,n}\mapsto P^A_{B,p,n}\]
yields a homeomorphism from $\Spec^{\inv}(L_A)$ (and hence $|\MM_{FG}^A|$) to $\Spec(\Sp_A^c)$. In the last section we explain that this comparison map can be obtained less ad hoc via $MU_A$-homology:

\begin{thm}[\cref{sec:universalsupport}]\label{thm:support}
    Let $X$ be a compact $A$-spectrum and $I^A_{B,p,n}$ an invariant prime ideal. Then the localization $(MU_A)_{I^A_{B,p,n}}\wedge X$ is non-trivial if and only if the $B$-geometric fixed points $\Phi^B X$ are of type $\leq n$
    at $p$, i.e, if and only if $P^A_{B,p,n}$ is in the Balmer support of $X$.

This shows that
\[ X\mapsto \supp((\underline{MU}_A)_*X) \subseteq \Spec^{\inv}(L_A)\cong |\MM^A_{FG}| \]
defines a universal support theory on compact $A$-spectra and thus a homeomorphism $\Spec^{\inv}(L_A) \to \Spec(\Sp^c_A)$. Here, $(\underline{MU}_A)_* X$ is the Mackey functor recording $(MU_B)_*\res^A_B X$ for all closed subgroups $B$ of $A$, and $\supp((\underline{MU}_A)_*X)$ is defined as the set of invariant prime ideals at which the localization of this Mackey functor is non-trivial.
\end{thm}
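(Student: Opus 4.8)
\emph{Overall strategy.} The second and third assertions are formal once the first is known, given the homeomorphism already produced by \cref{thm:ClassificationOfInvariantPrimes}, \cref{thm:inclusions} and \cref{thm:zariski}. Indeed, write $\mathfrak p = I^A_{B,p,n}$ and let $(MU_A)_{\mathfrak p}$ be the localization of $MU_A$ inverting the set $S$ of homogeneous elements of $L_A$ outside $\mathfrak p$. Then $(MU_A)_{\mathfrak p}\wedge X = (MU_A\wedge X)_{\mathfrak p}$, and its homotopy Mackey functor is $\big(\upi_*(MU_A\wedge X)\big)_{\mathfrak p} = \big((\underline{MU}_A)_*X\big)_{\mathfrak p}$, since localization at a multiplicative subset of $\pi^A_*$ is a filtered colimit and hence commutes with $\upi_*$; as a genuine $A$-spectrum is contractible exactly when its homotopy Mackey functor vanishes identically, we get $\supp\big((\underline{MU}_A)_*X\big) = \{\mathfrak p : (MU_A)_{\mathfrak p}\wedge X\not\simeq 0\}$. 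Granting the first assertion, this set is $\{\,\mathfrak p : X\notin P^A_{B,p,n}\,\}$, i.e.\ the preimage of the Balmer support of $X$ under $I^A_{B,p,n}\mapsto P^A_{B,p,n}$; transporting Balmer's universal support datum $(\Spec(\Sp^c_A),\supp)$ (cf.\ \cite{BalmerSpectrum}) along that homeomorphism then exhibits $(\Spec^{\inv}(L_A),\supp)$ as universal and the comparison map as a homeomorphism of spectral spaces. So the whole content is the first assertion.

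\emph{Reduction to geometric fixed points.} Since the geometric fixed point functors $\{\Phi^H\}_{H\leq A\text{ closed}}$ are symmetric monoidal, commute with filtered colimits, and jointly detect contractibility, one has $(MU_A)_{\mathfrak p}\wedge X\not\simeq 0$ if and only if $\Phi^H(MU_A)_{\mathfrak p}\wedge \Phi^H X\not\simeq 0$ for some closed $H\leq A$, where $\Phi^H(MU_A)_{\mathfrak p} = (\Phi^H MU_A)[\overline S^{-1}]$ for $\overline S$ the image of $S$. The plan is to pin down $\Phi^H(MU_A)_{\mathfrak p}$ for every $H$ and match it up with the Balmer primes $P^A_{H,p,\cdot}$.

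\emph{The two ingredients.} First, a vanishing statement: $\Phi^H(MU_A)_{\mathfrak p} = 0$ unless $B\leq H$, $\pi_0(H/B)$ is a $p$-group and $r_H := \rank_p\pi_0(H/B)\leq n$. For a character $V\in A^*$ with $V|_B\neq 1$, the Euler class $e_V$ lies in $S$ (its image in $\Phi^B L/I_{p,n}$ is a unit), while $\Phi^H(e_V) = 0$ whenever $V|_H = 1$, being the Euler class of a trivial character; by Pontryagin duality such a $V$ exists once $B\not\leq H$, so then $\Phi^H(MU_A)_{\mathfrak p}$ is a module over the zero ring. If $B\leq H$, the homogeneous prime ideals of $\pi_*\Phi^H(MU_A)_{\mathfrak p} = (\Phi^H L)[\overline S^{-1}]$ are, by \cref{thm:ClassificationOfInvariantPrimes}, the invariant primes $I^A_{H,q,m}$ of $L_A$ with $I^A_{H,q,m}\subseteq \mathfrak p$; by \cref{thm:inclusions} these are precisely the $I^A_{H,p,m}$ with $\pi_0(H/B)$ a $p$-group and $m\leq n-r_H$. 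Hence $\pi_*\Phi^H(MU_A)_{\mathfrak p} = 0$ unless the stated conditions hold, and otherwise it is a $\Z_{(p)}$-algebra whose prime spectrum is the chain of invariant primes of heights $0,\dots,n-r_H$ at $p$. Second — the main work — for such an admissible $H$ one shows that $\Phi^H(MU_A)_{\mathfrak p}$ is Landweber exact over $L = MU_*$ with formal group of height $\leq n-r_H$ at $p$, i.e.\ that it has the same Bousfield class as the Johnson--Wilson spectrum $E(n-r_H)$ localized at $p$; this rests on the explicit description of $\Phi^H MU_A$ and of the ring $\Phi^H L$ from \cite{Hau} and the earlier sections, which exhibits $\Phi^H(MU_A)_{\mathfrak p}$ as a Landweber-exact localization of $\Phi^H MU_A$. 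Granting this, the usual chromatic bookkeeping shows that for finite $Y$ one has $\Phi^H(MU_A)_{\mathfrak p}\wedge Y\not\simeq 0$ iff $Y$ has type $\leq n-r_H$ at $p$; in particular $\Phi^H(MU_A)_{\mathfrak p}\wedge\Phi^H X\not\simeq 0$ iff $X\notin P^A_{H,p,n-r_H}$.

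\emph{Assembling, and the main obstacle.} Combining the two ingredients, $(MU_A)_{\mathfrak p}\wedge X\not\simeq 0$ if and only if $X\notin P^A_{H,p,n-r_H}$ for some admissible $H$, i.e.\ iff $X\notin\bigcap_{H}P^A_{H,p,n-r_H}$, the intersection ranging over admissible $H$. This intersection equals $P^A_{B,p,n}$: the inclusion $\subseteq$ is the term $H=B$ with $r_B=0$, and conversely $P^A_{B,p,n}\subseteq P^A_{H,p,n-r_H}$ for every admissible $H$, because $I^A_{H,p,n-r_H}\subseteq I^A_{B,p,n}$ by \cref{thm:inclusions} and the bijection $I\leftrightarrow P$ reverses inclusions (cf.\ the comparison with \cite{BarthelGreenleesHausmann}). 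Hence $(MU_A)_{\mathfrak p}\wedge X\not\simeq 0$ iff $X\notin P^A_{B,p,n}$, i.e.\ iff $K(n)_*\Phi^B X\neq 0$, i.e.\ iff $\Phi^B X$ is of type $\leq n$ at $p$, which is the first assertion; the rest follows as explained. I expect the one genuinely hard point to be the Landweber-exactness (equivalently, Bousfield-class) identification of $\Phi^H(MU_A)_{\mathfrak p}$ for admissible $H$: controlling the homotopy ring $\Phi^H L$ well enough to see both that the localization is non-degenerate and that it detects exactly type $\leq n-r_H$. The vanishing statement and the bookkeeping with the intersection of Balmer primes are comparatively routine given \cref{thm:ClassificationOfInvariantPrimes} and \cref{thm:inclusions}.
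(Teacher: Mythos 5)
Your proposal is correct in substance and, despite the repackaging, follows essentially the same route as the paper's proof (\cref{prop:supportgeom} and the surrounding discussion). Where you decompose $(MU_A)_{\mathfrak{p}}\wedge X$ using joint conservativity of geometric fixed points and identify each $\Phi^H(MU_A)_{\mathfrak{p}}$, the paper proves the equivalence of the vanishing of the localized Mackey functor, of its value at $B$, and of the type condition on $\Phi^BX$, treating subgroups not containing $B$ by the same Euler-class trick and subgroups containing $B$ by an induction that likewise boils down to the height of the localized geometric fixed points. Your height computation for admissible $H$ is exactly \cref{cor:blueshift} (applied with ambient group $H$), and the step you single out as the hard one -- Landweber exactness of $\Phi^H(MU_A)_{\mathfrak{p}}$ -- is in fact the easy part: $\pi_*\Phi^H MU_H\cong \Phi^HL$ is free over $L$ by \cref{prop:GeometricFixedPoints}, so the localization is a flat complex-oriented $MU$-algebra and the detection statement is the same standard chromatic fact the paper uses in its (ii)$\Leftrightarrow$(iii) step. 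The genuinely non-formal input is the one you dispatch by citation: the inclusion $P^A_{B,p,n}\subseteq P^A_{H,p,n-r_H}$, i.e.\ the chromatic Smith fixed point theorem. Importing it from \cite{BarthelHausmannNaumannNikolausNoelStapleton, BarthelGreenleesHausmann} is legitimate and is also the paper's first option, but be aware it cannot be deduced from the algebraic inclusions plus ``the bijection reverses inclusions'' -- that reversal is precisely the comparison of two independently proved theorems; the paper additionally sketches a self-contained reproof via blue-shift for Borel theories (\cref{prop:blueshift}).

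Two smaller corrections. Your parenthetical claim that the homogeneous primes of $\pi_*\Phi^H(MU_A)_{\mathfrak{p}}$ are the invariant primes contained in $\mathfrak{p}$ is false as stated (this ring has a wealth of non-invariant primes); what you actually need, and what is true, is only the vanishing/height statement of \cref{cor:blueshift}, which follows from \cref{thm:inclusions} as you indicate. Finally, your formal transport of Balmer's universal support along the homeomorphism is fine and in effect replaces \cref{prop:supportclosed}: once the support is identified pointwise with the preimage of the Balmer support, its closedness is automatic, so you sidestep the finite-generation issue the paper has to work around.
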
 

Knowing that $X\mapsto \supp((\underline{MU}_A)_*X)$ is a (not necessarily universal) support theory with a characterization in terms of geometric fixed points as above already yields a continuous bijection $\Spec^{\inv}(L_A) \to \Spec(\Sp^c_A)$.
This reproves one half of the main theorems of \cite{BarthelHausmannNaumannNikolausNoelStapleton} and \cite{BarthelGreenleesHausmann}. This half has been dubbed the chromatic Smith fixed point theorem in \cite{KuhnShort} and \cite{BalderramaKuhnChromatic}, where other proofs are given. To establish that our support theory is universal (and hence that $\Spec^{\inv}(L_A) \to \Spec(\Sp^c_A)$ is also open), we need additional topological input from \cite{BarthelHausmannNaumannNikolausNoelStapleton} and \cite{BarthelGreenleesHausmann}; see also \cite{KuhnLloydChromatic}.

We show in \cref{prop:universalsupportMFG} that the support theory from \cref{thm:support} can alternatively be built by viewing $(\underline{MU}_A)_* X$ as defining quasi-coherent sheaves on $\MM_{FG}^B$ for every closed $B\subseteq A$; the union of the supports of these sheaves agrees with $\supp((\underline{MU}_A)_*X)$ under the homeomorphism $\Spec^{\inv}(L_A)\cong |\MM^A_{FG}|$. To show this, we establish in \cref{prop:FACompatibility} how the adjunction between $\Sp^A$ and $\Sp^{A/B}$ defined by geometric fixed points and pullback corresponds on the algebraic level to pullback and pushforward along the open immersion $\MM_{FG}^{A/B} \subseteq \MM_{FG}^A$ from \cref{prop:OpenClosedSubstacks}.

\subsection{Acknowledgements} 
We thank Robert Burklund, Jeremy Hahn and Allen Yuan for their interest and enlightening discussions about the elements~$\overline{\mv}_n$ and transfers, \revm{and the anonymous referee for their comments}. We thank the Institut Mittag-Leffler for its hospitality during the research program \emph{Higher algebraic structures in algebra, topology and geometry} and the Hausdorff Research Institute for Mathematics for its hospitality during the trimester program \emph{Spectral Methods in Algebra, Geometry, and Topology} -- both have provided a welcoming and stimulating work environment. Finally, the first author was supported by the Knut and Alice Wallenberg Foundation, and the second author was supported by the NWO grant VI.Vidi.193.111a.

\section{Equivariant formal groups}
The aim of this section is to recall some basic definitions and properties about equivariant formal groups and equivariant formal group laws from \cite{CGK, GreenleesFGL, StricklandMulti, Hau}. To make our paper more self-contained, we \rev{also replicate} some of the proofs in our language, and we provide some small extensions of known results. Our treatment of equivariant formal groups is far from exhaustive and especially \cite{StricklandMulti} contains a wealth of results we do not touch upon.

\subsection{Basic definitions}\label{sec:basics}
In this subsection, we will recall the notions of an equivariant formal group and an equivariant formal group law over a commutative ring $k$. The definition of an equivariant formal group law is due to Cole, Greenlees and Kriz \cite{CGK}, and our definition of an equivariant formal group will be a variant of that of Strickland \cite{StricklandMulti}. 

For us, a \emph{formal $k$-algebra} is a complete linearly topologized commutative $k$-algebra with a countable system of open ideals generating the topology. \rev{We define a functor $F$ from formal $k$-algebras to the pro-category of commutative $k$-algebras, sending a formal $k$-algebra $R$ to the pro-system $(R/I)$ with $I\subseteq R$ running over all open ideals. The essential image is contained in the full subcategory $\mathrm{Pro}^{\mathrm{cont}, \mathrm{surj}}(\mathrm{CRing})$ on those pro-objects indexed on a countable directed set with surjective transition maps and actually equals this subcategory by the discussion after Definition 5.4 in \cite{Yasuda}. On this subcategory, taking the limit of the pro-system defines a right adjoint to $F$, which is also a one-sided inverse. Thus, $F$ is fully faithful and defines an equivalence onto $\mathrm{Pro}^{\mathrm{cont}, \mathrm{surj}}(\mathrm{CRing})$.} 

For us, the category of \emph{formal $k$-schemes} is the opposite of that of formal $k$-algebras. \rev{Using the above, i}t can be viewed as the category of ind-objects in affine $k$-schemes, indexed by a countable directed set with closed immersions as transition maps. We will sometimes use the notation $\Spec R$ or $\mathrm{Spf} R$ for the formal $k$-scheme associated to a formal $k$-algebra $R$, and $\OO_X$ for the formal $k$-algebra associated to a formal $k$-scheme $X$. 
The product on affine $k$-schemes induces one on formal $k$-schemes, and this corresponds to the completed tensor product on formal $k$-algebras.

We set $S = \Spec k$. Given a countable set $M$, we view $S \times M$ as a formal scheme, namely as the colimit over all $S\times N$ with $N\subseteq M$ finite. This corresponds to giving $k^M$ the product topology. If we just write $M$, we will apply this construction to $\Spec \Z$ instead of $S$.

For a compact Lie group $A$, we will denote by $A^* = \Hom(A, \T)$ its Pontryagin dual, which is always a discrete group. We use $\epsilon$ for the unit element in $A^*$.

\begin{definition}\label{def:EquivariantFormalGroup}
    Given a compact abelian Lie group $A$, an \emph{$A$-equivariant formal group over $k$} consists of a commutative group object $X$ in formal $k$-schemes together with a group homomorphism $\varphi\colon S \times A^* \to X$ of formal $k$-schemes satisfying the following two conditions: 
    \begin{enumerate}
    \item\label{item:EFG1} For the composite $\varphi_{\epsilon}\colon S \xrightarrow{\id_S \times \epsilon} S \times A^* \xrightarrow{\varphi} X$, the augmentation ideal $I_{\epsilon} = \ker(R \to k)$ of the induced map is fpqc-locally on $k$ a free $R$-module of rank $1$, where $R=\OO_X$. 
    \item \label{item:EFG2}The topology on $R$ is generated by products of the ideals $I_V = \ker(R \xrightarrow{\varphi_V^*} k)$ for $V\in A^*$ and $\varphi_V\colon S \xrightarrow{\id_S \times V} S \times A^* \xrightarrow{\varphi} X$.
    \end{enumerate}
    \end{definition}

\begin{remark}
    In the above definition, one can easily replace $\Spec k$ by an arbitrary quasi-compact scheme $S$, with formal $S$-schemes being a suitable subcategory of the ind-category of $\Aff_S$, the category of schemes affine over $S$. 

    Our definition differs in two aspects from that put forward in \cite[Definition 2.15]{StricklandMulti}. First, Strickland restricts to finite $A^*$. Second, Strickland asks $\ker(\OO_X \to k)$ 
    to be free of rank $1$ instead of locally free (cf.\ \cite[Proposition 2.10]{StricklandMulti}). We changed it so that our definition satisfies descent. Note that we could have asked equivalently that the augmentation ideal is Zariski locally on $k$ a free $R$-module of rank $1$ because line bundles satisfy fpqc-descent. 
\end{remark}

\begin{remark}\label{rem:GroupsCompletion}
    If we leave out the second condition in \cref{def:EquivariantFormalGroup}, we get a notion we call an \emph{$A$-equivariant group}. The category of $A$-equivariant formal groups embeds into that of $A$-equivariant groups and this inclusion has a right adjoint, called \emph{completion}. Concretely, this replaces $R$ in the notation in \cref{def:EquivariantFormalGroup} by the formal $k$-algebra $\lim_{V_1, \dots, V_n\in A^*}R/(I_{V_1}\cdots I_{V_n})$. We will only use $A$-equivariant groups to complete them to $A$-equivariant formal groups. 
\end{remark}

\rev{If we fix a trivialization of the augmentation ideal $I_{\epsilon}$ in \cref{def:EquivariantFormalGroup}, we can spell out the definition in more algebraic terms and obtain the notion of an equivariant formal group law.}

\begin{defi}
   An \emph{$A$-equivariant formal group law} over $k$ is a quadruple
    \[ (R,\Delta,\theta,y(\varepsilon)) \]
    of a formal $k$-algebra $R$, a continuous comultiplication $\Delta\colon R\to R\hotimes R$, a map of $k$-algebras $\theta\colon R\to k^{A^*}$ and an orientation $y(\epsilon)\in R$, such that
    \begin{enumerate}
    \item[(i)] the comultiplication is a map of $k$-algebras which is cocommutative, coassociative and counital for the augmentation $\theta_{\epsilon}\colon R\to k$,
    \item[(ii)] the map $\theta$ is compatible with the coproduct, and the topology on $R$ is generated by finite products of the kernels of the component functions $\theta_V\colon R\to k$ for $V\in A^*$, and
    \item[(iii)] the element $y(\epsilon)$ is regular and generates the kernel of $\theta_{\epsilon}$.
    \end{enumerate}
\end{defi}

\begin{remark}
    If we want to remember the base of an equivariant formal group law, we sometimes also write it as a quintuple $(k,R,\Delta,\theta,y(\varepsilon))$.
\end{remark}

\begin{lemma}\label{lem:EFGEFGL}
    An $A$-equivariant formal group $(\Spec R, \varphi)$ over $k$ together with an $R$-linear isomorphism $I_{\epsilon} \cong R$ of the augmentation ideal is equivalent datum to an $A$-equivariant formal group \emph{law} over $k$. 
    \end{lemma}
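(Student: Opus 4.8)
The plan is to unwind both sides of the claimed equivalence into explicit algebraic data and match them up. Starting with an $A$-equivariant formal group $(\Spec R, \varphi)$ over $k$ equipped with a trivialization $s\colon R \xrightarrow{\sim} I_\epsilon \subseteq R$, I would produce the quadruple $(R, \Delta, \theta, y(\epsilon))$ as follows. The formal $k$-algebra $R$ is already given. The comultiplication $\Delta\colon R \to R \hotimes R$ is dual to the group multiplication $X \times X \to X$, which exists because $X$ is a commutative group object in formal $k$-schemes and the product of formal $k$-schemes corresponds to $\hotimes$ on formal $k$-algebras; cocommutativity, coassociativity and counitality (for the counit $\theta_\epsilon$ dual to the identity section $S \to X$) are exactly the duals of the group axioms. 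The map $\theta\colon R \to k^{A^*}$ is dual to the homomorphism $\varphi\colon S \times A^* \to X$, using that $\OO_{S \times A^*} = k^{A^*}$ with the product topology; its compatibility with the coproduct is dual to $\varphi$ being a group homomorphism, and the component $\theta_V$ is dual to $\varphi_V$. The orientation $y(\epsilon) \in R$ is the image $s(1)$ of $1$ under the chosen trivialization: it generates $I_\epsilon = \ker(\theta_\epsilon)$ since $s$ is an $R$-linear isomorphism onto $I_\epsilon$, and it is regular because multiplication by $s(1)$ is an injective $R$-module map (it factors the isomorphism $s$).

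Next I would check that conditions (i)--(iii) in the definition of an equivariant formal group law hold. Condition (i) is the group-object translation just described. For condition (ii), compatibility of $\theta$ with the coproduct is the dual of $\varphi$ being a homomorphism, and the statement that the topology on $R$ is generated by finite products of the $\ker(\theta_V) = I_V$ is precisely condition \ref{item:EFG2} of \cref{def:EquivariantFormalGroup}. Condition (iii) was verified in the previous paragraph: $y(\epsilon) = s(1)$ is regular and generates $\ker(\theta_\epsilon) = I_\epsilon$. Conversely, given an equivariant formal group law $(R, \Delta, \theta, y(\epsilon))$, I would set $X = \Spec R$ as a formal $k$-scheme, use $\Delta$ (together with the counit $\theta_\epsilon$ and an antipode, whose existence I address below) to make $X$ a commutative group object, let $\varphi\colon S \times A^* \to X$ be dual to $\theta$, and note that multiplication by $y(\epsilon)$ defines an $R$-linear isomorphism $R \to I_\epsilon = \ker(\theta_\epsilon)$ precisely because $y(\epsilon)$ is regular and generates that ideal. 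Condition \ref{item:EFG1} of \cref{def:EquivariantFormalGroup} holds because the augmentation ideal is then literally free of rank $1$, hence certainly fpqc-locally so, and condition \ref{item:EFG2} is condition (ii) of the formal group law. Finally I would observe that these two constructions are mutually inverse: the passage from $\varphi$ to $\theta$ and back is the standard (anti)equivalence between formal $k$-schemes and formal $k$-algebras, and the passage from a trivialization $s$ to the element $s(1)$ and back (multiplication by $s(1)$) is a bijection between $R$-linear isomorphisms $R \to I_\epsilon$ and regular generators of $I_\epsilon$.

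The one point that needs a little care -- and which I expect to be the main technical wrinkle -- is the existence and uniqueness of the antipode (inverse) on $X$, since the definition of an equivariant formal group law only records the comultiplication $\Delta$, the augmentation $\theta$ and the orientation, not an explicit coinverse. Here I would argue that the inverse map $[-1]\colon X \to X$ is automatically determined: a commutative monoid object in formal $k$-schemes admitting a group structure admits a unique one, and one can produce the coinverse on $R$ by the usual formal-group recursion, using that $\Delta$ is counital and that the augmentation ideal $I_\epsilon$ is topologically generated by $y(\epsilon)$ together with the filtration by products of the $I_V$. Concretely, the restriction of $X$ along $S \to S \times A^*$ at $\epsilon$ is an ordinary one-dimensional formal group over $k$ (with coordinate $y(\epsilon)$), which carries its standard inversion, and this extends over all of $X$ by compatibility with $\varphi$ and with $\Delta$; alternatively one invokes the analogous discussion in \cite{StricklandMulti} or \cite{Hau}. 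Everything else is a routine dualization, so once the antipode issue is dispatched the equivalence follows formally, and I would present it largely by pointing at the adjunction between formal $k$-schemes and formal $k$-algebras and the group-object axioms rather than by writing out all the diagrams.
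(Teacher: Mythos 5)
Your proof is correct and follows essentially the same route as the paper, whose argument is just the two-sentence version of your dictionary: $\Delta$ and $\theta$ are dual to the multiplication and to $\varphi$, and $y(\epsilon)=s(1)$ corresponds to the trivialization of $I_\epsilon$, with regularity and generation of $\ker(\theta_\epsilon)$ exactly matching condition (iii). The only point you treat beyond the paper is the antipode, which the paper leaves implicit as standard; your observation that the inverse is unique if it exists and is produced by the usual formal recursion on the coordinate, transported along the $A^*$-translations, is a fine way to dispatch it.
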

    \begin{proof}
    The maps $\Delta$ and $\theta$ are induced by the multiplication on $\Spec R$ and $\varphi$, respectively. The element $y(\varepsilon)$ corresponds to the trivialization of $I_{\epsilon} = \ker(R \to k)$.
\end{proof}

Given any equivariant formal group $G = (\varphi\colon A^* \to X)$ over $S = \Spec k$, we obtain for every $V\in A^*$ a morphism $\varphi_V\colon S \xrightarrow{\id_S \times V} S \times A^* \xrightarrow{\varphi} X$. If  $R=\OO_X$, this corresponds to the morphism $\theta_V\colon R \to k$. Moreover, $\varphi$ composed with left multiplication defines an $A^*$-action on $X$; 
for every $V\in A^*$ this gives a map $l_V \colon R \to R$. In terms of the data of an equivariant formal group law $F = (k,R,\Delta,\theta,y(\varepsilon))$, this can explicitly be written as 
\[ l_V\colon R\xr{\Delta} R\hotimes R \xr{\theta_V\hotimes \id_R}R. \]
Given $V\in A^*$, we set 
\[y(V)=l_V(y(\epsilon))\in R, \] 
which generates the kernel of $\theta_{V^ {-1}}$.
If $A$ is trivial, we have $R \cong k\llbracket y\rrbracket$. We want to describe an analog for general $A$. A \emph{complete flag} for $A$ is a sequence of characters $f=V_1,V_2,\hdots \in (A^*)^\N$ such that every character appears infinitely often. Given such a flag and $n\in \N$, we set \revm{$W_n=V_n\oplus V_{n-1}\oplus \dots \oplus V_1$ and}
\[ y(W_n)=y(V_n)y(V_{n-1})\cdots y(V_1). \]
Then every element $x$ of $R$ can be written uniquely as
\begin{equation}\label{eq:uniquelywritable} x=\sum_{n\in \N} a_n^f y(W_n) \end{equation}
for coefficients $a_n^f\in k$ \cite[Lemma 13.2]{CGK}. Hence, as a $k$-module, $R$ is isomorphic to a countable infinite product of copies of $k$.

\revm{We refer to \cite{CGK} and \cite{Hau} for more information about equivariant formal group laws.}
\subsection{Lazard rings}\label{sec:Lazard}
Our aim in this subsection is to recall the definition of the universal ring for equivariant formal group laws and to clarify its universal property. Let us begin by considering a very strict form of morphisms of equivariant formal group laws.
\begin{definition}\label{def:morphism} A \emph{morphism} between $A$-equivariant formal group laws $(k_1,R_1,\Delta_1,\theta_1,y(\varepsilon)_1)$ and $(k_2,R_2,\Delta_2,\theta_2,y(\varepsilon)_2)$ is a pair of maps $f\colon k_1 \to k_2$ and $g\colon R_1\to R_2$ which are compatible with both the comultiplications $\Delta$ and the augmentations $\theta$ and which send $y(\varepsilon)_1$ to $y(\varepsilon)_2$.
\end{definition}
 This leads to a category $A$-FGL of $A$-equivariant formal group laws. In \cite[Section 14]{CGK} it is shown that this category has an initial object $F^{\uni}$, the ground ring of which is called the \emph{$A$-equivariant Lazard ring} and denoted $L_A$. In fact, the category of $A$-equivariant formal group laws is equivalent to the category of commutative rings under $L_A$. To discuss this, note first that the forgetful functor
 \begin{align*}A\text{-FGL} \to \mathrm{CRing}, \qquad
 (k,R,\Delta,\theta,y(\varepsilon)) &\mapsto k
 \end{align*}
 into the category of commutative rings is cofibered in groupoids. Concretely this boils down to the following two observations:
 \begin{itemize}
     \item Every morphism of $A$-equivariant formal group laws whose first component $f\colon k_1 \to k_2$ is the identity map is an isomorphism. One observes indeed that the diagram
     \[
     \xymatrix{ R_1 \ar[rr]\ar[dr]_-{\cong} && R_2\ar[dl]^-{\cong}\\
     & \prod_{\N} k_1 = \prod_{\N} k_2 }
     \]
     obtained from \cref{eq:uniquelywritable} commutes. We call such an isomorphism living over the identity a \emph{very strict isomorphism} between $A$-equivariant formal group laws, in order to distinguish from other kinds of isomorphisms.
     \item Given a morphism $f\colon k_1 \to k_2$ and an $A$-equivariant formal group law $F$ over $k_1$, one can define a pushforward $f_*F$ over $k_2$ with the usual universal property. Its underlying $k_2$-algebra is given by a completion of $R \tensor_{k_1} k_2$, where $R$ is the underlying $k_1$-algebra of $F$ (cf. \cite[Section 2.E]{GreenleesFGL}, \cite[Section 2.3]{Hau}).
 \end{itemize}

 Note that the only automorphism of an $A$-equivariant formal group law over $k$ which is also a very strict isomorphism is the identity map. Hence, given two $A$-equivariant formal group laws over $k$, there either exists a unique very strict isomorphism between them or none at all. For this reason it is usually harmless to identify two very strictly isomorphic $A$-equivariant formal group laws, and we will often do so.

Now, given a map $f\colon L_A\to k$ there is an induced $A$-equivariant formal group law $f_*F^{\uni}$ over $k$ obtained by pushing forward the universal $A$-equivariant formal group law. Given an $A$-equivariant formal group law $F$ over $k$, we can apply this to the first component $f\colon L_A\to k$ of the unique map of $A$-equivariant formal group laws $F^{\uni}\to F$. The resulting morphism $f_*F^{\uni} \to F$ is necessarily a very strict isomorphism. So, as claimed above, we obtain:

\begin{cor} The functor
\begin{align*}  \mathrm{CAlg}_{L_A}  \to A\text{-FGL}, \qquad
            (f\colon L_A\to k)  \mapsto  f_*F^{\uni} \end{align*}
from commutative $L_A$-algebras \revm{to $A$-equivariant formal group laws} is an equivalence of categories. An inverse is given by sending an $A$-equivariant formal group law $F=(k,R,\Delta,\theta,y(\varepsilon))$ to the first component $f\colon L_A\to k$ of the unique morphism $F^{\uni}\to F$.
\end{cor}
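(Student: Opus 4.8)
The plan is to recognize this corollary as an instance of the general fact that a category $\mathcal{E}$ cofibered in groupoids over a base $\mathcal{B}$ which admits an initial object $E$ is canonically equivalent to the coslice category $\mathcal{B}_{U(E)/}$, where $U\colon\mathcal{E}\to\mathcal{B}$ is the structure functor. Here $\mathcal{E}=A\text{-FGL}$, $\mathcal{B}=\mathrm{CRing}$, $U$ is the forgetful functor (which we recalled above is cofibered in groupoids), and $E=F^{\uni}$, whose image is $L_A$; this yields $\mathrm{CAlg}_{L_A}=\mathrm{CRing}_{L_A/}\simeq A\text{-FGL}$. To keep the argument self-contained I would instead directly exhibit the two functors and the natural isomorphisms between their composites and the identities, reusing only the two bullet-point observations above.

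Define $\Phi\colon\mathrm{CAlg}_{L_A}\to A\text{-FGL}$ on objects by $(f\colon L_A\to k)\mapsto f_*F^{\uni}$ (the pushforward of the second bullet point) and on a morphism $g\colon k\to k'$ of $L_A$-algebras by the canonical comparison $f_*F^{\uni}\to (g\circ f)_*F^{\uni}$ lying over $g$. Define $\Psi\colon A\text{-FGL}\to\mathrm{CAlg}_{L_A}$ by sending $F=(k,R,\Delta,\theta,y(\epsilon))$ to the first component $\Psi(F)\colon L_A\to k$ of the unique morphism $F^{\uni}\to F$ (unique since $F^{\uni}$ is initial), and a morphism $F\to F'$ with first component $h\colon k\to k'$ to $h$ itself: composing $F^{\uni}\to F\to F'$ and using uniqueness of maps out of an initial object gives $h\circ\Psi(F)=\Psi(F')$, so $h\in\mathrm{CAlg}_{L_A}(\Psi F,\Psi F')$, and functoriality of $\Psi$ is then immediate.

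For the composites: $\Psi\Phi\cong\id$ because the structure morphism $F^{\uni}\to f_*F^{\uni}$ is, being a map out of the initial object, \emph{the} unique such map, and it lies over $f$ by construction of the pushforward, so $\Psi(\Phi(f))=f$. For $\Phi\Psi\cong\id$, write $f=\Psi(F)$; the universal property of $f_*F^{\uni}$ factors the unique morphism $F^{\uni}\to F$ as $F^{\uni}\to f_*F^{\uni}\xrightarrow{\;\alpha_F\;}F$ with $\alpha_F$ lying over $\id_k$, so by the first bullet point $\alpha_F$ is a very strict isomorphism $\Phi(\Psi(F))\xrightarrow{\sim}F$; naturality of $\alpha_F$ in $F$ follows once more from the uniqueness of very strict isomorphisms between very strictly isomorphic formal group laws.

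The one genuinely delicate point — the step I would treat most carefully — is that the pushforward of equivariant formal group laws is only pinned down up to canonical very strict isomorphism, so a priori $\Phi$ is merely a pseudofunctor and the comparison $\alpha_F$ merely a pseudonatural transformation. The resolution, and the reason everything strictifies, is exactly the rigidity recalled in the text: the only very strict automorphism of an $A$-equivariant formal group law over $k$ is the identity, so between any two very strictly isomorphic ones there is a unique isomorphism, which forces all the coherence data to be strict. Alternatively, one simply cites the general coslice description for categories cofibered in groupoids with an initial object and avoids the point altogether.
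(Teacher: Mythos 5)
Your proposal is correct and follows essentially the same route as the paper: the paper's proof consists precisely of the cofibered-in-groupoids observations (pushforward plus uniqueness/rigidity of very strict isomorphisms), the factorization $F^{\uni}\to f_*F^{\uni}\to F$ with the second map a very strict isomorphism, and the general coslice characterization you cite is stated verbatim as \cref{rem:cofibered}. Your extra care about strictifying the pseudofunctoriality of the pushforward is handled in the paper by tacitly identifying very strictly isomorphic formal group laws, which is justified by the same rigidity you invoke.
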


\begin{remark}\label{rem:cofibered}
    The above proof is an instance of a general characterization of initial objects $X$ in categories cofibered in  groupoids $\CC \xrightarrow{F}\DD$, namely that pushforward defines an equivalence of $\DD_{F(X)/-}$ with $\CC$. 
\end{remark}

\begin{remark} Non-equivariantly the $k$-algebra $R$ is often fixed to be the power series ring $k\llbracket y(\varepsilon)\rrbracket $ rather than a ring only isomorphic to it. With this convention, the category of formal group laws is \emph{isomorphic} (not merely equivalent) to the category of commutative rings under $L$. In other words, every very strict isomorphism of formal group laws is the identity. Equivariantly one needs to be a little more careful: The statement `$A$-equivariant formal group laws are represented by the $A$-equivariant Lazard ring' is only true up to this notion of very strict isomorphism.
\end{remark} 

\subsection{Global functorality}\label{sec:GlobalFunctorality}
In this subsection, we will discuss both a covariant and a contravariant functoriality of the category of $A$-equivariant formal groups in $A$. 

\begin{defi}\label{def:corestriction}
    Let $\alpha\colon B \to A$ be a group homomorphism and let $G = (B^* \to X)$ be a $B$-equivariant formal group. We define the \emph{corestriction} $\alpha_*G$ to be the $A$-equivariant formal group which is the completion of the $A$-equivariant group $(A^* \xrightarrow{\alpha^*} B^* \to X)$.
\end{defi}

\begin{prop}\label{prop:alphastarfullyfaithful}For every injective group homomorphism $\alpha\colon B\to A$, the functor $\alpha_*$ from $B$-equivariant formal groups to $A$-equivariant formal groups is fully faithful. The essential image consists of those $A$-equivariant formal groups where the homomorphism from $A^*$ factors through $B^*$. 
\end{prop}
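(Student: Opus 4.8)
The plan is to analyze the corestriction functor $\alpha_*$ for an injective $\alpha\colon B\hookrightarrow A$ by exhibiting an explicit partial inverse on the relevant essential image, using the fact that $\alpha^*\colon A^*\to B^*$ is then surjective (Pontryagin duality sends closed injections to surjections of discrete groups). Concretely, I would first unwind the construction: given a $B$-equivariant formal group $G=(\varphi\colon S\times B^*\to X)$ with $R=\mathcal O_X$, the corestriction $\alpha_*G$ has underlying group scheme obtained from the $A$-equivariant group $(S\times A^*\xrightarrow{\id\times\alpha^*} S\times B^*\xrightarrow{\varphi} X)$ by the completion of Remark~\ref{rem:GroupsCompletion}, which replaces $R$ by $\widetilde R:=\lim_{V_1,\dots,V_n\in A^*} R/(I_{\alpha^*V_1}\cdots I_{\alpha^*V_n})$. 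Since $\alpha^*$ is surjective, every ideal $I_W$ for $W\in B^*$ is of the form $I_{\alpha^*V}$ for some $V\in A^*$, so the cofinal systems of open ideals defining the topology on $R$ (as a formal algebra for the $B$-equivariant formal group, using condition (ii)/(EFG2)) and on $\widetilde R$ coincide; hence the completion map $R\to\widetilde R$ is an isomorphism. This identifies the underlying formal scheme of $\alpha_*G$ with $X$ itself, with structure homomorphism $S\times A^*\xrightarrow{\id\times\alpha^*}S\times B^*\xrightarrow{\varphi}X$.

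Next I would characterize the essential image. On the one hand, for any $G$ the homomorphism from $A^*$ into $\alpha_*G$ factors through $\alpha^*\colon A^*\to B^*$ by construction, so every object in the image has the stated property. Conversely, suppose $H=(\psi\colon S\times A^*\to Y)$ is an $A$-equivariant formal group such that $\psi$ factors as $S\times A^*\xrightarrow{\id\times\alpha^*}S\times B^*\xrightarrow{\psi'}Y$ for some group homomorphism $\psi'$. Then $(Y,\psi')$ is an $A$-equivariant \emph{group} structure witnessed by $B^*$; to see it is (after completion) a $B$-equivariant formal group, note that condition (EFG1) is unchanged since $\varphi_\epsilon$ only depends on the unit, and for (EFG2) we again use surjectivity of $\alpha^*$ to match $\{I_{V}\}_{V\in A^*}$ with $\{I_W\}_{W\in B^*}$ as generating families of the topology; so $H\cong\alpha_*G$ where $G$ is the completion of $(S\times B^*\xrightarrow{\psi'}Y)$ as a $B$-equivariant formal group. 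This shows the essential image is exactly as claimed.

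Finally, full faithfulness: a morphism of $A$-equivariant formal groups between $\alpha_*G$ and $\alpha_*G'$ is a homomorphism of group objects in formal schemes $X\to X'$ commuting with the structure maps from $S\times A^*$. Using the first paragraph, the structure map for $\alpha_*G$ is $\varphi\circ(\id\times\alpha^*)$, and commuting with this is equivalent to commuting with $\varphi$ after restriction along the \emph{surjection} $\alpha^*$; since precomposition with an epimorphism of formal schemes $S\times A^*\twoheadrightarrow S\times B^*$ is injective on the sets of maps out of $S\times B^*$, a homomorphism $X\to X'$ is compatible with the $A^*$-structures if and only if it is compatible with the $B^*$-structures. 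Hence $\Hom_{A\text{-FG}}(\alpha_*G,\alpha_*G')=\Hom_{B\text{-FG}}(G,G')$, i.e. $\alpha_*$ is fully faithful.

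I expect the main obstacle to be the first step: carefully checking that the completion in Definition~\ref{def:corestriction} is a no-op, i.e. that the topology on $R$ as the coordinate ring of the $B$-equivariant formal group $G$ (generated by products of $I_W$, $W\in B^*$) already agrees with the topology generated by products of $I_{\alpha^*V}$, $V\in A^*$. This is exactly where injectivity of $\alpha$ — equivalently surjectivity of $\alpha^*$ — is essential, and it is the point that makes both the essential-image description and full faithfulness go through; the rest is formal manipulation of the universal property of completion and of epimorphisms of formal schemes.
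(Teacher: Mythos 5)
Your proposal is correct and follows essentially the same route as the paper: the key observation in both is that injectivity of $\alpha$ makes $\alpha^*\colon A^*\to B^*$ surjective, so the completion in the definition of $\alpha_*$ is a no-op and $\alpha_*G$ is the same group object with structure map restricted along $A^*\to B^*$, from which full faithfulness and the description of the essential image follow. You simply spell out in more detail (topology matching, epimorphism argument, essential surjectivity onto the factored objects) what the paper's one-line proof leaves implicit.
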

\begin{proof}
    For every $B$-equivariant formal group $G$, by construction $\alpha_*G$ is the same group object as $G$ in formal schemes with the structure morphism $A^* \to B^* \to G$ (since $A^* \to B^*$ is surjective, no completion is necessary). This implies fully faithfulness. 
\end{proof}

Upon choosing coordinates, \cref{def:corestriction} corresponds to the construction of \cite[Section 2.4]{Hau}: given a $B$-equivariant formal group law $F=(k,R,\Delta,\theta,y(\varepsilon))$ and a group homomorphism $\alpha\colon B\to A$, there is an induced $A$-equivariant formal group law $\alpha_*F$ over the same ring $k$, given by completing $R$ at products of the ideals $I_V = \ker(R\xrightarrow{\theta_V} k)$ for those $V\in B^*$ which are in the image of $\alpha^*\colon A^*\to B^*$. This defines a functor $\alpha_*$ from $B$-equivariant formal group laws to $A$-equivariant formal group laws, which induces a map $\alpha^*\colon L_A\to L_B$ on Lazard rings. Hence we obtain a functor
\[ \mathbf{L}\colon \text{(abelian compact Lie groups)}^{op}\to \text{commutative rings} \]
which we call the \emph{global Lazard ring}. As shown in \cite{Hau}, $L_A$ is isomorphic to $\pi_*^AMU_A$ and our map $\alpha^*$ corresponds to the restriction map on that level, explaining our terminology. 

\begin{remark} By Pontryagin duality, the opposite category of abelian compact Lie groups is equivalent to the category of finitely generated abelian groups. Therefore, everything in this paper could alternatively be phrased in terms of finitely generated abelian groups rather than abelian compact Lie groups, and the more algebraically minded reader might prefer to do so.
\end{remark}

In addition to this covariant functoriality, there is also a contravariant functoriality. 

\begin{defi}
    Let $\alpha\colon A \to C$ be a surjective group homomorphism and let $G = (C^* \to X)$ be a $C$-equivariant group. We define the \emph{coinduction} $\alpha^*G$ to be the $A$-equivariant group $(A^* \to X\times_{C^*}A^*)$, where the target denotes the quotient of $X \times  A^*$ by the antidiagonal $C^*$-action. 
\end{defi}

\begin{lemma}
    For $\alpha\colon A \to C$ a surjective group homomorphism and $G$ a $C$-equivariant formal group, $\alpha^*G$ is an $A$-equivariant formal group, i.e.\ needs no additional completion. 
\end{lemma}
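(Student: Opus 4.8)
The plan is to compute everything explicitly, reducing condition (\ref{item:EFG2}) of \cref{def:EquivariantFormalGroup} for $\alpha^*G$ to the same condition for $G$. Write $B=\ker(\alpha)$, so that $\alpha^*\colon C^*\hookrightarrow A^*$ is injective with cokernel $A^*/C^*\cong B^*$ (Pontryagin duality is exact), which is a countable group since $B^*$ is finitely generated. Since $C^*$ acts freely on the $A^*$-factor of $X\times A^*$ by translation, a choice of coset representatives $V_0(\bar a)\in A^*$ for $\bar a\in A^*/C^*$ (with $V_0$ of the unit coset equal to $\epsilon$) identifies the formal scheme $Y:=X\times_{C^*}A^*$ with the coproduct $\coprod_{\bar a\in A^*/C^*}X$; correspondingly, writing $R=\OO_X$, we get $\OO_Y\cong\prod_{\bar a}R$ with the product topology. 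In particular $\OO_Y$ is again a formal $k$-algebra (the index set $A^*/C^*$ is countable, as is a system of open ideals of $R$), and its augmentation ideal is $I_\epsilon^X$ in the coordinate of the unit coset and $R$ in all other coordinates, hence fpqc-locally free of rank $1$; so $\alpha^*G$ is at least an $A$-equivariant group, and it remains to check condition (\ref{item:EFG2}) for $\OO_Y$ with this topology.

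I would first read off the structure homomorphism $\varphi_Y\colon S\times A^*\to Y$ and the ideals $I_V^Y=\ker(\theta_{Y,V})$ for $V\in A^*$. Unwinding the quotient $X\times_{C^*}A^*$, the map $\varphi_Y$ carries $S\times\{V\}$ into the component indexed by $\bar V$, where, under the identification of that component with $X$, it equals the unit section of $X$ followed by the left translation $l_{V-V_0(\bar V)}$ (note $V-V_0(\bar V)\in C^*$, so this makes sense on the $C$-equivariant formal group $X$). Therefore $I_V^Y$ equals all of $R$ in every coordinate $\bar a\neq\bar V$ and equals $\mathfrak p_{V-V_0(\bar V)}:=\ker(\theta_{X,\,V-V_0(\bar V)})\subseteq R$ in the coordinate $\bar V$. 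A direct computation — which uses crucially that each $I_V^Y$ is non-trivial in exactly one coordinate, so that the infinite product causes no trouble — then shows that for any finite list $V_1,\dots,V_n\in A^*$,
\[
I_{V_1}^Y\cdots I_{V_n}^Y \;=\; \prod_{\bar a\in A^*/C^*}\ \prod_{i\,:\,\bar V_i=\bar a}\mathfrak p_{V_i-V_0(\bar a)},
\]
the inner product being $R$ when no $\bar V_i$ lies in the coset $\bar a$.

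It remains to see that these ideals are cofinal among the open ideals of $\OO_Y=\prod_{\bar a}R$. Condition (\ref{item:EFG2}) for the $C$-equivariant formal group $X$ says precisely that the ideals $\prod_{W\in S}\mathfrak p_W$, over finite multisets $S\subseteq C^*$, are cofinal among the open ideals of $R$; consequently the ideals $\bigl(\prod_{\bar a\notin N}R\bigr)\times\prod_{\bar a\in N}\bigl(\prod_{W\in S_{\bar a}}\mathfrak p_W\bigr)$, with $N\subseteq A^*/C^*$ finite and the $S_{\bar a}$ finite multisets in $C^*$, are cofinal among the open ideals of $\OO_Y$. By the displayed formula each of these is realized as $I_{V_1}^Y\cdots I_{V_n}^Y$ by taking for $\{V_1,\dots,V_n\}$ the multiset $\bigcup_{\bar a\in N}\{V_0(\bar a)+W : W\in S_{\bar a}\}$, and conversely every $I_{V_1}^Y\cdots I_{V_n}^Y$ has this shape. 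Hence the products of the $I_V^Y$ form a cofinal family of open ideals of the already complete algebra $\OO_Y$, which is exactly the statement that $\alpha^*G$ needs no further completion. I expect the only genuine obstacle here to be bookkeeping rather than conceptual: identifying $\varphi_Y$ precisely enough to extract the $I_V^Y$, and handling the product ideals inside the infinite product $\prod_{\bar a}R$ with enough care — the key simplification being that each $I_V^Y$ is non-trivial in a single coordinate, so products localize coordinatewise. Everything substantive is then imported from condition (\ref{item:EFG2}) for $X$ together with the combinatorics of the coset decomposition $A^*=\bigsqcup_{\bar a}\,(V_0(\bar a)+C^*)$.
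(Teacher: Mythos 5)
Your proof is correct and follows essentially the same route as the paper: the paper also chooses coset representatives to identify $\OO_{X\times_{C^*}A^*}\cong \Map_{C^*}(A^*,R)\cong \prod_{A^*/C^*}R$ with the product topology and concludes from completeness of products of complete rings. The only difference is that you spell out the coordinatewise description of the ideals $I_V$ and the cofinality of their finite products among the open ideals, which the paper compresses into ``the claim follows.''
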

\begin{proof}
If $G = (C^* \to \Spec R)$ is a $C$-equivariant formal group, then \[\Spec R\times_{C^*}A^* \cong \Spec \Map_{C^*}(A^*, R).\] 
\revm{Any choice of coset representatives defines an isomorphism of formal $k$-algebras $\Map_{C^*}(A^*, R)\cong \prod_{A^*/C^*}R$, with respect to the product topology on the latter. As products of complete rings are complete, the claim follows.}
\end{proof}

\begin{prop}\label{prop:alphaadjunction}
    Let $\alpha\colon A \to C$ be a surjective group homomorphism. As functors between $C$-equivariant formal groups and $A$-equivariant formal groups, $\alpha^*$ is the left adjoint of $\alpha_*$.
\end{prop}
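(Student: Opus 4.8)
The plan is to exhibit unit and counit natural transformations and verify the triangle identities, working with the explicit models for $\alpha_*$ and $\alpha^*$ given above. Let me recall the setup: for $\alpha\colon A\to C$ surjective, $\alpha^*$ sends a $C$-equivariant formal group $G=(C^*\to X)$ to $(A^*\to X\times_{C^*}A^*)$, where $A^*\hookrightarrow B^*$ (I mean $C^*\hookrightarrow A^*$ via $\alpha^*$, which is injective since $\alpha$ is surjective) and $X\times_{C^*}A^*$ is the quotient of $X\times A^*$ by the antidiagonal $C^*$-action; and $\alpha_*$ sends a $C$-equivariant formal group $H=(A^*\to Y)$ to the completion of $(C^*\xrightarrow{\alpha^*}A^*\to Y)$, i.e.\ $Y$ with structure map precomposed along $C^*\hookrightarrow A^*$, then completed.

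\medskip

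First I would construct the counit $\varepsilon\colon \alpha^*\alpha_* H \to H$ for $H$ an $A$-equivariant formal group. Unwinding, $\alpha_*\alpha^* $ applied in the other order: $\alpha^*\alpha_*H$ has underlying group object $Y\times_{C^*}A^*$; there is a natural multiplication-and-projection map $Y\times A^*\to Y$ using the $A^*$-action on $Y$ (namely $(y,V)\mapsto l_V y$), which is $C^*$-invariant for the antidiagonal action precisely because the $C^*$-action on $Y$ in $\alpha_*$ is the restriction along $C^*\hookrightarrow A^*$ of the $A^*$-action. This descends to $Y\times_{C^*}A^*\to Y$, a homomorphism of $A$-equivariant groups compatible with the structure maps from $A^*$; since the target is already an $A$-equivariant formal group, this is the counit. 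Dually, for the unit $\eta\colon G\to \alpha_*\alpha^* G$ with $G$ a $C$-equivariant formal group, I would use the $C^*$-equivariant inclusion $X\to X\times A^*$, $x\mapsto [x,\epsilon]$, which lands in the $C^*$-quotient $X\times_{C^*}A^*$ and is compatible with the structure maps; but one must check this gives a map of $C$-equivariant formal groups, i.e.\ that after restricting the structure map of $\alpha^*G$ along $C^*\hookrightarrow A^*$ one recovers (a completion of) $G$ — and here the completion in the definition of $\alpha_*$ is what makes the target land in $C$-equivariant formal groups, so $\eta$ is the canonical map to that completion.

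\medskip

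Then I would verify the two triangle identities $\varepsilon_{\alpha^* G}\circ \alpha^*\eta_G = \id$ and $\alpha_*\varepsilon_H \circ \eta_{\alpha_* H}=\id$ by tracing through the explicit formulas on points: both reduce to the bookkeeping identity that taking $[x,\epsilon]$ and then acting is the identity, together with the compatibility of the $A^*$-action with the $C^*$-action under restriction. An alternative, possibly cleaner, route is to exhibit a natural bijection $\Hom_{A\text{-EFG}}(\alpha^* G, H)\cong \Hom_{C\text{-EFG}}(G,\alpha_* H)$ directly: a map $\alpha^*G\to H$ is a homomorphism $X\times_{C^*}A^*\to Y$ over the structure maps from $A^*$; precomposing with $x\mapsto [x,\epsilon]$ gives a map $X\to Y$, which one checks is a homomorphism compatible with the $C^*$-structure maps (hence, after the completion, a map $G\to \alpha_* H$); conversely, a homomorphism $X\to Y$ compatible with $C^*$-actions extends uniquely to $X\times A^*\to Y$ by $A^*$-equivariance and descends to the $C^*$-quotient. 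One then checks these two assignments are mutually inverse and natural in both variables.

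\medskip

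The main obstacle I anticipate is not the abstract adjunction bookkeeping but making sure all the maps in sight are morphisms of \emph{formal} groups (satisfying the topology condition \eqref{item:EFG2}) and that the completions are inserted in the right places: in particular, that $\alpha^*$ already lands in $A$-equivariant formal groups with no completion (which is exactly the content of the preceding lemma, so I may invoke it), whereas $\alpha_*$ genuinely requires a completion, so the unit $\eta_G$ is in general not an isomorphism but only the canonical comparison map to a completion. A secondary technical point is checking $C^*$-invariance of the various multiplication maps for the antidiagonal action, which comes down to the fact that for $c\in C^*$ with image $\alpha^*(c)\in A^*$, the action $l_{\alpha^*(c)}$ on $Y$ is by definition the $C^*$-action used in forming $\alpha_*$; I would spell this out once and reuse it. Everything else is a routine diagram chase with the explicit coordinate descriptions of $l_V$ and $y(V)$ recalled in Section~\ref{sec:basics}.
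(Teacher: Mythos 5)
Your plan is correct in substance, but it takes a more hands-on route than the paper, and the comparison is instructive. You propose to verify the adjunction directly at the level of equivariant formal groups: unit given by $x\mapsto[x,\epsilon]$, counit induced by the $A^*$-action $(y,V)\mapsto l_V y$, then the triangle identities (or, equivalently, the explicit hom-set bijection), with the completions inserted by hand. The paper instead factors the problem: it introduces $\widetilde{\alpha}_*$, which restricts the structure map to $C^*\to A^*\to Y$ \emph{without} completing, observes that $\alpha^*\dashv\widetilde{\alpha}_*$ already as functors between $C$-equivariant and $A$-equivariant \emph{groups} (your unit and counit are precisely the unit and counit of this groups-level adjunction), and then composes with the completion adjunction of \cref{rem:GroupsCompletion}: since $\alpha_*$ is completion after $\widetilde{\alpha}_*$, completion is right adjoint to the inclusion of formal groups into groups, and $\alpha^*G$ needs no completion by the preceding lemma, the adjunction on formal groups follows formally, with no triangle identities to check. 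That factorization buys exactly the two things you flag as delicate: the placement of completions and the verification that all maps are morphisms of formal (not just equivariant) groups. If you carry out your direct version, one detail in your write-up needs correcting: the underlying object of $\alpha_*H$ is the completion $\widehat{Y}$, not $Y$, so the counit is a map $\widehat{Y}\times_{C^*}A^*\to Y$ obtained by first using the canonical morphism $\widehat{Y}\to Y$ and then acting by $A^*$; likewise, in your hom-set bijection a morphism $G\to\alpha_*H$ lands in $\widehat{Y}$, and the inverse assignment must precompose with $\widehat{Y}\to Y$ before extending $A^*$-equivariantly and descending to the $C^*$-quotient. These are precisely the steps that the paper's appeal to ``completion is a right adjoint'' absorbs automatically, which is why its proof is three lines while yours will be a page of (correct) bookkeeping.
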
 
\begin{proof}
    For an $A$-equivariant group $G = (A^* \to X)$, define $\widetilde{\alpha}_*G$ as $C^* \to A^* \to X$. Then $\alpha^*$ and $\widetilde{\alpha}_*$ are adjoints between $C$-equivariant groups and $A$-equivariant groups in the sense of \cref{rem:GroupsCompletion}. Since completion is a right adjoint, the result follows from the previous lemma. 
\end{proof}

\subsection{Euler classes}
Given an $A$-equivariant formal group law $F=(R,\Delta,\theta,y(\varepsilon))$  over $k$, we can define Euler classes in $k$. Recall that for $V\in A^*$, we set $y(V)=l_V(y(\epsilon))\in R$. The corresponding Euler class is
\[ e_V=\theta_{\epsilon}(y(V))=\theta_V(y(\epsilon))\in k. \] 

In terms of the associated equivariant formal group $G = (\varphi\colon A^* \to X)$, we have 
\[S \times_{\varphi_{\epsilon},X, \varphi_V} S \cong \Spec (k \tensor_{\theta_{\epsilon}, R, \theta_V} k) \cong \Spec (k/e_V)\]
with $S = \Spec k$ and $\varphi_V$ being the composite $S \cong S\times \{V\} \subseteq S\times A^* \xrightarrow{\varphi} X$. This implies:
\begin{lemma}\label{lem:CoordinateFreeEuler} For a given $A$-equivariant formal group law with notation as above: 
    \begin{enumerate}
        \item The Euler class $e_V$ is invertible iff $S \times_{\varphi_{\epsilon},X, \varphi_V}S = \varnothing$.
        \item The Euler class $e_V$ is zero iff $\varphi_V = \varphi_{\epsilon}$. 
    \end{enumerate}
\end{lemma}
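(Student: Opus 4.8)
The statement to prove is \cref{lem:CoordinateFreeEuler}, which characterizes when the Euler class $e_V$ is invertible or zero in terms of the geometry of the associated equivariant formal group.

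\medskip

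The plan is to unwind the definitions and use the displayed isomorphism
\[ S \times_{\varphi_{\epsilon},X, \varphi_V} S \cong \Spec (k/e_V) \]
that precedes the statement. First I would establish this isomorphism carefully: the fiber product $S\times_{\varphi_\epsilon, X, \varphi_V} S$ of formal $k$-schemes corresponds to the completed tensor product $k\hotimes_{R} k$, where the two maps $R\to k$ are $\theta_\epsilon$ and $\theta_V$. Since $y(\epsilon)$ generates $\ker(\theta_\epsilon)$ (by axiom (iii) of an equivariant formal group law) and $y(V)=l_V(y(\epsilon))$ generates $\ker(\theta_{V^{-1}})$, one needs to be slightly careful with the indexing; but using $\theta_\epsilon\otimes \theta_V$ on $R\hotimes R$ and the relation $e_V = \theta_\epsilon(y(V)) = \theta_V(y(\epsilon))$, the base change $k\hotimes_{\theta_\epsilon, R, \theta_V} k$ identifies with $k/(e_V)$: one copy of $k$ kills $y(\epsilon)$, the other kills $y(V)$, and their images under the remaining augmentation differ by the element $e_V$. (Completion is harmless here since everything becomes a quotient of $k$, which is already discrete.)

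\medskip

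Granting the isomorphism $S\times_{\varphi_\epsilon, X, \varphi_V} S\cong \Spec(k/e_V)$, both claims become immediate algebraic translations. For (1): $\Spec(k/e_V)=\varnothing$ if and only if $e_V$ is a unit in $k$ (equivalently $k/e_V = 0$), which is exactly the assertion that the fiber product is empty. For (2): $e_V = 0$ if and only if $k/e_V = k$, i.e.\ the two maps $\varphi_\epsilon, \varphi_V\colon S\to X$ have the property that the fiber product equals all of $S$ via the diagonal; this is precisely the statement that $\varphi_\epsilon = \varphi_V$ as maps of formal schemes. Concretely, $e_V = \theta_V(y(\epsilon)) = 0$ means $y(\epsilon)\in \ker(\theta_V)$, and since $y(\epsilon)$ generates $\ker(\theta_\epsilon)$, we get $\ker(\theta_\epsilon)\subseteq \ker(\theta_V)$; comparing ranks (both augmentation ideals are, fpqc-locally, free of rank one, or more elementarily both kernels are generated by a single regular element) forces $\ker(\theta_\epsilon)=\ker(\theta_V)$, hence $\theta_\epsilon = \theta_V$, i.e.\ $\varphi_\epsilon = \varphi_V$. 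The converse directions are trivial.

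\medskip

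I expect the only real subtlety to be bookkeeping around the completed tensor product and the inverse/shift in characters between $y(V)$ and $\ker(\theta_{V^{-1}})$ — making sure the correct augmentations are paired so that the remaining parameter in $k\hotimes_R k$ really does reduce to the single element $e_V$. Once that identification is pinned down, both parts of the lemma are purely formal. An alternative, if one prefers to avoid the fiber-product language entirely, is to argue directly on coordinates: write $e_V = \theta_V(y(\epsilon))$, observe $y(\epsilon)$ generates $\ker\theta_\epsilon$ and is regular, and deduce (1) and (2) from the behaviour of the principal ideal $(e_V)\subseteq k$ — but the coordinate-free fiber-product argument is cleaner and matches the phrasing of the statement.
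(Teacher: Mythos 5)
Your proposal is correct and follows essentially the same route as the paper, which states the lemma as an immediate consequence of the displayed identification $S \times_{\varphi_{\epsilon},X, \varphi_V} S \cong \Spec (k \otimes_{\theta_{\epsilon}, R, \theta_V} k) \cong \Spec(k/e_V)$ and gives no further argument. You simply fill in the details the paper leaves implicit (the tensor-product computation using that $y(\epsilon)$ generates $\ker\theta_\epsilon$, and for part (2) that $\ker\theta_\epsilon\subseteq\ker\theta_V$ forces $\theta_\epsilon=\theta_V$ since both are $k$-algebra retractions), and these details check out.
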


Thus, the vanishing or invertibility of Euler classes does not depend on chosen coordinates. This allows us to generalize these concepts to $A$-equivariant formal groups in the following way: 

\begin{defi}
    For an $A$-equivariant formal group $G = (\varphi\colon A^* \to X)$, we say that the \emph{Euler class $e_V$ is invertible} if $S \times_{\varphi_{\epsilon},X, \varphi_V}S = \varnothing$ and that \emph{$e_V$ is zero} if $\varphi_V = \varphi_{\epsilon}$. 
\end{defi}

Informally, $e_V$ is invertible if and only if the images of $S\times \{\epsilon\}$ and $S\times \{V\}$ in $X$ are disjoint. 

\begin{example}\label{exa:Gm}
 Let $\G_m = \Spec \Z[x^{\pm1}]$ be the multiplicative group over $S = \Spec \Z$. We choose the group homomorphism $\varphi\colon C_2 = (C_2)^* \to \G_m$ picking the units $\{\pm 1\}$ in $\Z$. This defines the structure of a $C_2$-equivariant group, and its completion is a $C_2$-equivariant formal group we call $\widehat{\G}_m^{C_2}$. 
 Let $V\in (C_2)^*$ be the unique non-trivial character. One computes
 \[\Spec \Z \times_{\varphi_{\epsilon}, \widehat{\G}_m^{C_2}, \varphi_V} \Spec \Z \cong  \Spec \Z \times_{\varphi_{\epsilon}, \G_m, \varphi_V} \Spec \Z\cong \Spec \Z/2.\]
 Thus, $e_V = \pm 2$, depending on the choice of coordinate. See also \cite[Section 7]{StricklandMulti} and \cite[Section 7]{GreenleesFGL} for more information on this and related examples. Note in particular that our example is the pushforward of the true $C_2$-equivariant multiplicative formal group (given by a completion of $\Spec \Z[(C_2\times \T)^*]$) along the map $\Z[C_2^*]\to \Z$ classifying $\pm 1$.
 \end{example}
 \begin{figure}
     \centering
     \begin{tikzpicture}
  \draw (-2,.4) parabola bend (0,0) (2,.4) node[right] {$\mathrm{Spf}(\mathbb{Z}\llbracket t\rrbracket)$};
    \draw (-2,-.4) parabola bend (0,0) (2,-.4) node[right] {$\mathrm{Spf}(\mathbb{Z}\llbracket t\rrbracket)$};
    
    \draw[fill] (0,0) circle (1.5pt);
    
    \draw (-2, -2)--(2, -2) node[right] {$\mathrm{Spec}(\mathbb{Z})$};
    
    \draw[fill] (0,-2) circle (1.5pt)node[below]{$(2)$};
\end{tikzpicture}
     \caption{A schematic picture of $\widehat{\G}_m^{C_2}$ from \cref{exa:Gm}}
     \label{fig:GmC2}
 \end{figure}
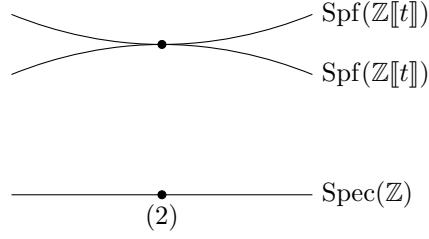

 We will use \rev{the following lemma several times}, taken from \cite[Corollary 2.8]{Hau} and the explanation thereafter: 
\begin{lemma}\label{lem:EulerClassesGenerateKernel}
    Let $B\subseteq A$ be a subgroup. Then the restriction map $L_A \to L_B$ is surjective, with kernel $I_A^B$ generated by the Euler classes $e_V$ where $V$ is running over a generating set of $\ker(A^* \to B^*)$. 
\end{lemma}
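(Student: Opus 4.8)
The plan is to establish \cref{lem:EulerClassesGenerateKernel} by reducing to the case where $B$ is a \emph{proper} subgroup of $A$ such that $A/B$ is either a finite cyclic group of prime order or a circle, since any subgroup inclusion can be refined into a finite tower of such elementary extensions, and both surjectivity and the stated description of the kernel behave well under composition (if $I_A^B$ is generated by Euler classes of characters vanishing on $B$ and likewise for an intermediate group, then the same holds for the composite, as Euler classes restrict to Euler classes and a generating set of $\ker(A^*\to B^*)$ can be assembled from generating sets at each stage). So first I would reduce to this elementary situation.

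Next, in the elementary situation, I would produce the surjectivity and the kernel statement simultaneously by exhibiting the target as an explicit quotient. The key observation is that $\alpha_* \colon B\text{-FGL}\to A\text{-FGL}$ for the inclusion $\alpha\colon B\hookrightarrow A$ is the corestriction of \cref{def:corestriction}; passing to universal objects, $\alpha^*\colon L_A\to L_B$ classifies the $A$-equivariant formal group law $\alpha_* F^{\uni}_B$ over $L_B$. By \cref{prop:alphastarfullyfaithful}, the essential image of $\alpha_*$ consists of exactly those $A$-equivariant formal groups whose structure map from $S\times A^*$ factors through $S\times B^*$, i.e.\ those for which $\varphi_V = \varphi_{\epsilon}$ (equivalently $e_V = 0$ by \cref{lem:CoordinateFreeEuler}) for every $V\in \ker(A^*\to B^*)$. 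Therefore $L_B$ represents the functor on $L_A$-algebras of "$A$-equivariant formal group laws on which the Euler classes $e_V$, $V\in\ker(A^*\to B^*)$, vanish," which is precisely the functor represented by $L_A/I$ with $I$ the ideal generated by those Euler classes. Since both represent the same functor, the natural map $L_A/I \to L_B$ is an isomorphism; this yields surjectivity of $L_A\to L_B$ and identifies the kernel with $I$. Finally, since $\ker(A^*\to B^*)$ is generated as a group by finitely many characters in the elementary case (one character, killed by $p$ in the cyclic-quotient case, or one non-torsion character in the circle-quotient case), and since $e_{V\oplus W}$ can be expressed in terms of $e_V$, $e_W$ and lower-order data via the formal group law (so that the Euler classes of a generating set of characters already generate the ideal spanned by all of them — cf.\ the relation $y(V\oplus W) = F(y(V),y(W))$ up to the $A^*$-action, giving $e_{VW}\in (e_V,e_W)$ after setting one variable to zero), I conclude that $I$ is generated by the Euler classes of a generating set of $\ker(A^*\to B^*)$, as claimed.

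The main obstacle is the last point: verifying that the Euler classes indexed by a \emph{generating set} of the character group $\ker(A^*\to B^*)$ already generate the same ideal as the Euler classes of \emph{all} elements of that subgroup. This requires extracting from the structure of an equivariant formal group law the relation expressing $e_{V\oplus W}$ (and $e_{V^{-1}}$) in the ideal $(e_V, e_W)$; concretely one uses that $y(V\oplus W)$, up to translating by the $B^*$-action, is obtained from $y(V)$ and $y(W)$ via the comultiplication and the universal formal-group-law expansion \eqref{eq:uniquelywritable}, so applying $\theta_\epsilon$ gives $e_{V\oplus W}$ as a power series in $e_V$ and $e_W$ with no constant term. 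A clean alternative, if available, is to cite that \cref{lem:CoordinateFreeEuler}(2) plus \cref{prop:alphastarfullyfaithful} already force the \emph{radical} condition, and then upgrade to an exact ideal statement by the representability argument above applied directly with the generating characters — this sidesteps the explicit power-series manipulation, at the cost of checking that quotienting by the Euler classes of the generators is enough to make \emph{all} the others vanish, which is again the same combinatorial fact about the subgroup generated by those characters. Either way, everything else is formal once the reduction to elementary extensions and the representability identification are in place.
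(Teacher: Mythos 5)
Your argument is sound, but note that the paper itself does not prove this lemma: it is imported from \cite[Corollary 2.8]{Hau}, and the proof there runs along essentially the same lines as your central step, namely that $L_A/(e_V : V\in \ker(A^*\to B^*))$ corepresents $B$-equivariant formal group laws because a law all of whose Euler classes $e_V$, $V\in\ker(A^*\to B^*)$, vanish is the corestriction of a unique $B$-equivariant formal group law (\cite[Lemma 2.7]{Hau}; this is the formal-group-\emph{law} analogue of \cref{prop:alphastarfullyfaithful} and \cref{prop:EulerClassInvertibleOrZero}, which you invoke at the level of formal groups and would still need to transport to laws, as the paper remarks after \cref{prop:EulerClassInvertibleOrZero}). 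Two smaller points. The reduction to elementary extensions buys you nothing: the representability argument works verbatim for arbitrary $B\subseteq A$. And the step you flag as the main obstacle has a clean resolution: over any base ring $e_V=0$ is equivalent to $\theta_V=\theta_\epsilon$ (since $y(\epsilon)$ generates $\ker\theta_\epsilon$), so by the translation argument in the proof of \cref{prop:ClassificationOverFields} (which does not use the field hypothesis for that step) the set $\{V: e_V=0\}$ is a subgroup of $A^*$; applying this to the pushforward of the universal law over $L_A/(e_V : V\in S)$ for a generating set $S$ gives $e_W\in(e_V : V\in S)$ for every $W$ in the subgroup generated by $S$, which is exactly the exact ideal statement you need.
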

Similarly, the following holds on the level of equivariant formal groups.
\begin{proposition}\label{prop:EulerClassInvertibleOrZero}
    Let $G$ be an $A$-equivariant formal group. 
\begin{enumerate}
    \item Let $\alpha\colon A\to C$ be a surjective group homomorphism. Assume that for $V\notin \im(C^* \to A^*)$, the Euler class $e_V$ is invertible. Then $\alpha^* \alpha_*G \to G$ is an isomorphism. 
    \item Let $\alpha\colon B \to A$ be an injective group homomorphism. Assume that for $V\in \ker(A^* \to B^*)$, we have $e_V = 0$. Then $G$ is in the essential image of $\alpha_*$. 
\end{enumerate}
\end{proposition}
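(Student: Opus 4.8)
\textbf{Proof plan for Proposition~\ref{prop:EulerClassInvertibleOrZero}.}

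The plan is to deduce both statements from the adjunction properties established earlier (Propositions~\ref{prop:alphastarfullyfaithful} and~\ref{prop:alphaadjunction}) together with the coordinate description of Euler classes and the structure theory of equivariant formal group laws. Throughout I write $G=(\varphi\colon A^*\to X)$ with $R=\mathcal{O}_X$, and I freely choose a coordinate $y(\epsilon)$, reducing to the language of equivariant formal group laws via \cref{lem:EFGEFGL}; the hypotheses (invertibility/vanishing of Euler classes) are coordinate-independent by \cref{lem:CoordinateFreeEuler}, so this is harmless.

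For part~(2), the hypothesis is that $e_V=0$ for all $V\in K:=\ker(A^*\to B^*)$, which by \cref{lem:CoordinateFreeEuler} means $\varphi_V=\varphi_\epsilon$ for all such $V$. First I would observe that it suffices to show the structure morphism $\varphi\colon S\times A^*\to X$ factors (as a homomorphism of formal group schemes) through $S\times B^*$, i.e.\ through the quotient $A^*\twoheadrightarrow B^*$; once that is known, \cref{prop:alphastarfullyfaithful} immediately identifies $G$ with an object in the essential image of $\alpha_*$. To produce the factorization, note that $\varphi$ restricted to the subgroup $S\times K\subseteq S\times A^*$ is the constant map at $\varphi_\epsilon$: indeed $\varphi$ is a group homomorphism and it kills each generator of $K$ (this is exactly $\varphi_V=\varphi_\epsilon$, i.e.\ $\varphi|_{S\times\{V\}}=\varphi|_{S\times\{\epsilon\}}$, combined with multiplicativity), and since $A^*$ is discrete and $S\times A^*=\coprod_{V}S\times\{V\}$ this is a pointwise statement that propagates to all of $K$ by the homomorphism property. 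Hence $\varphi$ is constant on $K$-cosets in the group-scheme sense, so it descends along $S\times A^*\to S\times B^*$; the descended map is automatically a group homomorphism, giving the desired $B$-equivariant group structure, and no completion is needed since $G$ was already an $A$-equivariant formal group and $\alpha_*$ on the level of group objects changes nothing (as in the proof of \cref{prop:alphastarfullyfaithful}).

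For part~(1), let $\alpha\colon A\twoheadrightarrow C$ and set $J:=\mathrm{im}(C^*\hookrightarrow A^*)$, so by hypothesis $e_V$ is invertible for every $V\in A^*\setminus J$. By \cref{prop:alphaadjunction}, $\alpha^*$ is left adjoint to $\alpha_*$, and the map in question is the counit $\alpha^*\alpha_*G\to G$. I would check this is an isomorphism after choosing coordinates: $\alpha_*G$ has underlying algebra obtained by completing $R$ only at products of the ideals $I_V$ for $V\in J$, and then $\alpha^*$ applied to it takes the form $\Spec R'\times_{C^*}A^*$, whose algebra is (after choosing coset representatives) $\prod_{A^*/C^*}R'$. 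The point is that for $V\notin J$ the Euler class $e_V$ being invertible forces $\varphi_V$ and $\varphi_\epsilon$ to have disjoint images in $X$ (\cref{lem:CoordinateFreeEuler}), so the ideals $I_V$ for $V\notin J$ are the unit ideal in the relevant localized/completed rings and therefore contribute nothing to the topology generated in condition~\ref{item:EFG2} of \cref{def:EquivariantFormalGroup}; consequently completing $R$ at all products of $I_V$ (all $V\in A^*$) versus only at products with $V\in J$ gives the same formal $k$-algebra. Thus the counit $\alpha^*\alpha_*G\to G$, which on underlying group objects is an isomorphism of the type appearing in \cref{prop:alphastarfullyfaithful}'s proof once the superfluous completions are discarded, is an isomorphism of $A$-equivariant formal groups.

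\textbf{Main obstacle.} The routine parts are the adjunction bookkeeping; the crux of part~(1) is verifying that invertibility of $e_V$ for $V\notin J$ genuinely makes the ideal $I_V$ trivial in the completed algebra underlying $\alpha^*\alpha_*G$ (equivalently, that the $S\times\{V\}$ components of the flag in \cref{eq:uniquelywritable} become redundant), so that the two completions agree and condition~\ref{item:EFG2} is unaffected. This requires unwinding the explicit construction of $\alpha_*$ via completion at the ideals $I_V$, $V\in J$, and of $\alpha^*$ via $\Spec R\times_{C^*}A^*\cong\Spec\Map_{C^*}(A^*,R)$, and checking that the Euler-class hypothesis makes the natural comparison map an isomorphism on the nose rather than merely after further completion. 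Everything else follows formally from \cref{prop:alphastarfullyfaithful}, \cref{prop:alphaadjunction}, and \cref{lem:CoordinateFreeEuler}.
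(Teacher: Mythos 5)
Your treatment of part (2) is fine and is essentially the paper's argument: $e_V=0$ for $V\in\ker(A^*\to B^*)$ means $\varphi_V=\varphi_\epsilon$, the homomorphism property then forces $\varphi$ to factor through $S\times B^*$, and \cref{prop:alphastarfullyfaithful} finishes (no completion being needed since $A^*\to B^*$ is surjective).

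Part (1), however, has a genuine gap, and it sits exactly at what you yourself flag as the crux. Your key claim -- that invertibility of $e_V$ for $V\notin J=\im(C^*\to A^*)$ makes the ideals $I_V$ ``contribute nothing'', so that completing $R$ at products of all $I_V$ and at products of the $I_W$ with $W\in J$ give the same formal $k$-algebra -- is false. Invertibility of $e_{VW^{-1}}$ gives \emph{comaximality} $I_V+I_W=R$ for $[V]\neq[W]$ in $A^*/C^*$, not triviality of $I_V$ in $R$: each $I_V$ is always a proper ideal, and the full topology of condition~\eqref{item:EFG2} of \cref{def:EquivariantFormalGroup} is strictly finer than the $J$-topology. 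Concretely, take any $G$ with $e_V$ invertible for all $V\neq\epsilon$ and $C=\{1\}$ (e.g.\ $\widehat{\G}_m^{C_2}$ of \cref{exa:Gm} over a field of characteristic $\neq 2$): then $R\cong\Map(A^*,\widehat R)$ with $\widehat R\cong k\llbracket y\rrbracket$ as in \cref{prop:GeometricFixedPoints}, the completion at powers of $I_\epsilon$ alone is the single factor $\widehat R$, while $R$ itself has $|A^*|$ factors. Worse, if your claimed equality $R'\cong R$ were true it would \emph{contradict} the statement you are proving: the algebra of $\alpha^*\alpha_*G$ is $\Map_{C^*}(A^*,R')\cong\prod_{A^*/C^*}R'$, so $R'\cong R$ would give $|A^*/C^*|$ too many factors and the counit could not be an isomorphism. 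The correct use of the hypothesis is the opposite of ``redundancy'' of the $V\notin J$ components of the flag in \cref{eq:uniquelywritable}: by \cref{lem:CoordinateFreeEuler} the loci $\im\varphi_V$ and $\im\varphi_W$ in each $\Spec R/(y(V_1)\cdots y(V_n))$ are \emph{disjoint} unless $[V]=[W]$, which yields clopen decompositions of these schemes indexed by $A^*/C^*$ and hence an $A^*$-equivariant isomorphism of formal schemes $G\cong\coprod_{A^*/C^*}\alpha_*G$, i.e.\ $R\cong\prod_{A^*/C^*}R'$; here $R'$ is genuinely smaller than $R$, being the factor over the unit coset. One then checks the counit on the unit copy (e.g.\ by applying $\alpha_*$ and using that $\alpha_*\alpha^*\alpha_*G\to\alpha_*G$ is an isomorphism, via \cref{prop:alphaadjunction}) and transports the conclusion to the other copies by $A^*$-equivariance. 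Your write-up is missing precisely this decomposition step, and the substitute you propose for it is not correct.
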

\begin{proof}
Let $G = (\varphi\colon A^* \to X)$. Fixing a coordinate $y(\epsilon)$ Zariski-locally and choosing a complete flag, $X$ is defined by the directed system $(\Spec R/(y(V_1)\cdots y(V_n)))_n$ and each of these terms has underlying space $\bigcup_{i=1}^n\im(\varphi_{V_i}) \subseteq \Spec R$. 

In the first item, \cref{lem:CoordinateFreeEuler} implies that $\im(\varphi_V)$ and $\im(\varphi_W)$ intersect each other in $\Spec \OO_X$ only if $[V] = [W] \in A^*/C^*$. Thus, the underlying space of $(\Spec R/(y(V_1)\cdots y(V_n)))_n$ decomposes into closed subspaces $\coprod_{\nu\in A^*/C^*} \bigcup_{V_i\in \nu} \im(\varphi_{V_i})$, of which only finitely many are non-empty. This induces decompositions of the schemes $\Spec R/(y(V_1)\cdots y(V_n)))$ and we obtain thus an $A^*$-equivariant isomorphism $G \cong \coprod_{A^*/C^*}\alpha_*G$ on the level of formal schemes. 

By construction, $\alpha^*\alpha_*G$ decomposes in the same way. On the unit copy, the map $\alpha^*\alpha_*G \to G$ is an isomorphism since $\alpha_*\alpha^*\alpha_* G \to \alpha_*G$ is one by \cref{prop:alphaadjunction}.
For the other copies, this follows by the $A^*$-equivariance of the map $\alpha^*\alpha_*G \to G$. 
    
    For the second item: by definition, the structure morphism $\varphi\colon A^* \to X$ of $G$ factors as $A^* \to B^* \xrightarrow{\varphi'} X$. The result follows from \cref{prop:alphastarfullyfaithful}. 
\end{proof}

The second part is also true in the setting of equivariant formal group \emph{laws}, as shown in \cite[Lemma 2.7]{Hau}. The analog of the first part becomes more complicated as a coordinate on $\tilde G$ does not determine a canonical coordinate on $\alpha^*\tilde G$ and hence $\alpha^*$ does not define a functor from $C$-equivariant formal group laws to $A$-equivariant formal group laws without additional choices; we will discuss this further in \cref{prop:GeometricFixedPoints}. 

\begin{cor}\label{cor:fullyfaithful}
    Let $\alpha\colon A\to B$ be a surjective group homomorphism. Then $\alpha^*$ is a fully faithful embedding from the category of $B$-equivariant formal groups to that of $A$-equivariant formal groups. The image consists of those $A$-equivariant formal groups such that $e_V$ is invertible for $V$ not in the image of $B^* \to A^*$. 
\end{cor}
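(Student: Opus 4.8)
The statement to prove is \cref{cor:fullyfaithful}: for a surjective homomorphism $\alpha\colon A\to B$, the functor $\alpha^*$ from $B$-equivariant formal groups to $A$-equivariant formal groups is a fully faithful embedding, whose essential image consists of those $A$-equivariant formal groups $G$ such that $e_V$ is invertible for all $V\notin\im(B^*\to A^*)$.

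The plan is to deduce everything from the adjunction $\alpha^*\dashv\alpha_*$ established in \cref{prop:alphaadjunction} together with the characterization of invertibility of Euler classes in \cref{prop:EulerClassInvertibleOrZero}(1). First, to establish that $\alpha^*$ is fully faithful: I would show the unit $\id\to\alpha_*\alpha^*$ of the adjunction is a natural isomorphism. Since $\alpha^*$ has underlying formal scheme $X\times_{C^*}A^*$ (where I write $C=B$ to match the proposition), and $B^*\to A^*$ is injective, the relevant Euler classes on $\alpha^*G$ are indeed invertible for $V\notin\im(B^*\to A^*)$ — this is immediate from the explicit description of $\alpha^*G$, since the structure map of $\alpha^*G$ hits only the $A^*$-translates of the unit that lie in the coset image of $B^*$, and distinct cosets have disjoint images in $X\times_{C^*}A^*$. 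Hence \cref{prop:EulerClassInvertibleOrZero}(1) applies to $\alpha^*G$, giving $\alpha^*\alpha_*\alpha^*G\xrightarrow{\sim}\alpha^*G$; combined with the triangle identities this forces the unit $G\to\alpha_*\alpha^*G$ to be an isomorphism (for $B$-equivariant formal groups, one should note $\alpha_*\alpha^*$ on the $B$-side is the identity up to the completion already accounted for in \cref{prop:alphaadjunction}). Fully faithfulness of $\alpha^*$ then follows formally from the unit being an isomorphism.

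Next, to identify the essential image: one inclusion is the observation just made, that any $\alpha^*G$ has $e_V$ invertible for $V\notin\im(B^*\to A^*)$. For the converse, suppose $G'$ is an $A$-equivariant formal group with $e_V$ invertible for all $V\notin\im(B^*\to A^*)$. Then \cref{prop:EulerClassInvertibleOrZero}(1) (applied with $C=B$) says precisely that the counit $\alpha^*\alpha_*G'\to G'$ is an isomorphism, so $G'\cong\alpha^*(\alpha_*G')$ lies in the essential image. This closes the argument.

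The only genuinely delicate point is bookkeeping around completions: the adjunction in \cref{prop:alphaadjunction} is between $C$- and $A$-equivariant \emph{formal} groups, obtained by composing an adjunction of plain equivariant groups with the completion right adjoint, and one must be careful that $\alpha^*$ applied to a formal group needs no recompletion (which is exactly the content of the lemma preceding \cref{prop:alphaadjunction}), so the formulas for unit and counit are the expected ones. Given that, the proof is essentially a formal consequence of an adjunction whose unit and counit are isomorphisms, and I expect no real obstacle beyond making the identification of Euler classes on $\alpha^*G$ precise enough to invoke \cref{prop:EulerClassInvertibleOrZero}(1).
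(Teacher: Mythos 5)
Your treatment of the essential image is exactly the paper's: $\alpha^*G$ has $e_V$ invertible for $V\notin\im(B^*\to A^*)$ by construction, and conversely \cref{prop:EulerClassInvertibleOrZero}(1) shows that any $G'$ with these Euler classes invertible satisfies $G'\cong \alpha^*\alpha_*G'$. The gap is in your full-faithfulness step. From the counit isomorphism $\alpha^*\alpha_*\alpha^*G\xrightarrow{\sim}\alpha^*G$ and the triangle identity $\epsilon_{\alpha^*G}\circ\alpha^*(\eta_G)=\id_{\alpha^*G}$ you may only conclude that $\alpha^*(\eta_G)$ is an isomorphism, not that the unit $\eta_G\colon G\to\alpha_*\alpha^*G$ is one; to descend from $\alpha^*(\eta_G)$ to $\eta_G$ you need $\alpha^*$ to be conservative, which you never establish. (Formally, "counit invertible on objects of the form $\alpha^*G$" only yields "unit invertible on objects of the form $\alpha_*\alpha^*G$", which is not yet full faithfulness of $\alpha^*$.)

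The repair is what your parenthetical gestures at, and it is how the paper argues: one checks directly that the unit is an isomorphism, i.e.\ $\alpha_*\alpha^*G\cong G$ "by construction". Concretely, $\OO(X\times_{B^*}A^*)\cong\Map_{B^*}(A^*,\OO_X)\cong\prod_{A^*/B^*}\OO_X$, and for $V\in\im(B^*\to A^*)$ the point $\varphi_V$ of $\alpha^*G$ lies in the unit component, so each ideal $I_V$ contains everything supported away from the unit coset; hence the $B$-equivariant completion defining $\alpha_*\alpha^*G$ collapses $X\times_{B^*}A^*$ onto the unit copy $X$, recovering $G$. Note this is \emph{not} already contained in \cref{prop:alphaadjunction}: that proposition only furnishes the adjunction (via $\widetilde{\alpha}_*$ and completion as a right adjoint), and before completion $\widetilde{\alpha}_*\alpha^*G$ has underlying scheme $X\times_{B^*}A^*$, not $X$, so the collapse under completion is a small but genuine extra check. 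Alternatively, you could verify conservativity of $\alpha^*$ directly (a morphism $f$ is sent to a coproduct of copies of $f$ over $A^*/B^*$), which would make your triangle-identity route work. Either fix is short, but as written the deduction that the unit is an isomorphism is incomplete.
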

\begin{proof}

If $G$ is a $B$-equivariant formal group, then $\alpha^*G$ has the property that $e_V$ is invertible for $V$ not in the image of $B^* \to A^*$ by construction. By the preceding proposition, invertibility of these Euler classes characterizes the image of $\alpha^*$. Moreover, $\alpha^*$ is fully faithful since $G \to \alpha_*\alpha^*G$ is an isomorphism by construction.
\end{proof}

The following proposition \rev{essentially provides} a classification of equivariant formal groups over fields. The same statement already appears in \cite[Corollary 8.3]{StricklandMulti}.
\begin{proposition}\label{prop:ClassificationOverFields}
    Let $k$ be a field and $G$ be an $A$-equivariant formal group over $k$. Denote by $A/B$ the Pontryagin dual of the subgroup $\{V \in A^*: e_V=0\}\subseteq A^*$.  Denote further by $A \xrightarrow{q} A/B \xleftarrow{i} \{1\}$ the obvious morphisms. Then $G \cong q^*i_*p_*G$, where $p_*G$ is the \emph{non-equivariant} formal group defined by $p\colon A \to \{1\}$. 
\end{proposition}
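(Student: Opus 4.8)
The plan is to combine the two parts of \cref{prop:EulerClassInvertibleOrZero} with the two fully faithfulness statements (\cref{prop:alphastarfullyfaithful} and \cref{cor:fullyfaithful}), reducing everything to the already-known non-equivariant classification of formal groups over a field. First I would set $\mathfrak{n} = \{V \in A^* : e_V = 0\} \subseteq A^*$, observe that this is indeed a subgroup of $A^*$ — this uses that the structure map $\varphi\colon A^* \to X$ is a group homomorphism, so if $e_V = 0$ and $e_W = 0$, meaning $\varphi_V = \varphi_{\epsilon}$ and $\varphi_W = \varphi_{\epsilon}$, then $\varphi_{VW} = \varphi_{\epsilon}$ as well by multiplicativity, and similarly for inverses — and let $B \subseteq A$ be the Pontryagin dual quotient, i.e.\ $B = (A^*/\mathfrak{n})^\vee$ under the identification $\mathfrak{n} = \ker(A^* \to B^*)$; concretely $\mathfrak{n} = \ker(A^* \to B^*)$ with $A/B = (\mathfrak{n})^\vee$, so $i\colon \{1\} \hookrightarrow A/B$ and $q\colon A \twoheadrightarrow A/B$ are as stated.

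Next I would apply \cref{prop:EulerClassInvertibleOrZero}(2) to the inclusion $\alpha = i'\colon B \hookrightarrow A$: by the very definition of $\mathfrak{n} = \ker(A^* \to B^*)$, every $V$ in this kernel has $e_V = 0$, so $G$ lies in the essential image of $i'_*$, say $G \cong i'_* G'$ for a $B$-equivariant formal group $G'$ (unique up to isomorphism by \cref{prop:alphastarfullyfaithful}). Then I would apply \cref{prop:EulerClassInvertibleOrZero}(1) to the surjection $q'\colon B \twoheadrightarrow \{1\}$, or rather — to match the statement — work over the quotient $A/B$: here the key point is that for the surjection $A/B \to \{1\}$, a character $V \in (A/B)^*$ is non-trivial exactly when, pulled back to $A^*$, it lies in $\mathfrak{n} \setminus \{\epsilon\}$ and hence has $e_V = 0$; since $k$ is a field, $e_V = 0$ means precisely that $e_V$ is \emph{not} invertible, but we need invertibility of the Euler classes of non-trivial characters for $q$, so I should instead run the argument at the level of $G'$ over $B$. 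For $G'$ the Euler classes $e_V$ with $V \in B^*$ non-trivial are all invertible: indeed $V$ lifts to some $\widetilde V \in A^*$ not in $\mathfrak{n}$, so over the field $k$ we have $e_{\widetilde V} \neq 0$, hence $e_{\widetilde V}$ is a unit, and $e_V$ (computed for $G'$) equals $e_{\widetilde V}$ (computed for $G \cong i'_* G'$) up to the restriction identification. Applying \cref{prop:EulerClassInvertibleOrZero}(1) with $\alpha = $ (the projection $B \to \{1\}$, which is surjective with trivial target, but here the subgroup structure is packaged differently) — more cleanly, applying \cref{cor:fullyfaithful} to $q\colon A \to A/B$ shows $G \cong q^* \bar G$ for a unique $A/B$-equivariant formal group $\bar G$, and then \cref{cor:fullyfaithful} (or rather its reflection) applied to $A/B \to \{1\}$ shows $\bar G \cong i_* F$ for the non-equivariant $F$; chasing the identifications, $F = p_* G$ and $\bar G = i_* p_* G$, so $G \cong q^* i_* p_* G$.

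Concretely the cleanest route: (i) by \cref{cor:fullyfaithful} applied to $q\colon A \to A/B$, since $e_V$ is invertible for $V \notin \im((A/B)^* \to A^*) = \mathfrak{n}$ — because over a field non-invertible Euler classes are exactly the zero ones, and the zero ones are exactly $\mathfrak{n}$ — we get $G \cong q^* \bar G$ with $\bar G$ an $A/B$-equivariant formal group; (ii) for $\bar G$, every non-trivial character $V \in (A/B)^*$ has $e_V = 0$ (it pulls back into $\mathfrak{n}\setminus\{\epsilon\}$), so by \cref{prop:EulerClassInvertibleOrZero}(2) applied to $i\colon \{1\} \hookrightarrow A/B$, $\bar G \cong i_* F$ for a non-equivariant $F$; (iii) combining, $G \cong q^* i_* F$, and applying $(q^* i_*)$'s inverses on the image (both fully faithful) identifies $F$ with $p_* G$ since $p = i' \circ q$ on the relevant groups — more precisely $p_* = F \mapsto$ (pushforward to the point), and $p_* G \cong p_* q^* i_* F \cong F$ by the adjunction/fully-faithfulness chains. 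So $G \cong q^* i_* p_* G$, as claimed.

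The main obstacle I anticipate is bookkeeping the two opposite invertible-versus-zero conditions over a field and making sure the Euler classes referred to in \cref{prop:EulerClassInvertibleOrZero} and \cref{cor:fullyfaithful} are the ones of the correct intermediate group (they agree under restriction, which is part of the content of \cref{lem:EulerClassesGenerateKernel} and \cref{cor:fullyfaithful}, but this compatibility needs to be invoked). A secondary, more minor point is checking that $\{V : e_V = 0\}$ is genuinely a subgroup and that its Pontryagin dual quotient is well-defined — this is immediate from $\varphi$ being a homomorphism together with \cref{lem:CoordinateFreeEuler}(2). No hard analysis is involved; the proposition is essentially a formal consequence of the structural results already established, which is why \cite[Corollary 8.3]{StricklandMulti} can be cited for the statement.
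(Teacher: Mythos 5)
Your proposal is correct and follows essentially the same route as the paper: check that $\{V: e_V=0\}$ is a subgroup, use that over a field non-invertible Euler classes are exactly the zero ones, and then apply \cref{prop:EulerClassInvertibleOrZero} (via \cref{cor:fullyfaithful}) to $q\colon A\to A/B$ and $i\colon\{1\}\to A/B$ to get $G\cong q^*i_*\Gamma$, identifying $\Gamma\cong p_*G$ by functoriality of pushforward. The only difference is cosmetic (your detour through $B\hookrightarrow A$ in the middle paragraph is unnecessary, as you yourself note before settling on the clean route), so there is nothing substantive to add.
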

\begin{proof}
    Assume $e_V = 0$, i.e.\ $\varphi_{\epsilon} = \varphi_V$. By the $A^*$-action, this implies $\varphi_W = \varphi_{VW}$ for every $W\in A^*$. Setting $W = V^{-1}$, this implies $e_{V^{-1}} = 0$. Moreover, if $\varphi_{\epsilon} = \varphi_W$, this implies $\varphi_{\epsilon} = \varphi_{VW}$. Thus, $\{V \in A^*: e_V=0\}\subseteq A^*$ is indeed a subgroup. 

    Since $k$ is a field, $e_V = 0$ iff $e_V$ is not invertible. By \cref{prop:EulerClassInvertibleOrZero}, we thus have $G \cong q^*i_*\Gamma$ for some non-equivariant formal group $\Gamma$ over $k$. One computes $\Gamma \cong p_*q^*i_*\Gamma \cong p_*G$.
\end{proof}

For any $A$, let $\Phi^AL = L_A[e_V^{-1}]$ be the localization of $L_A$ away from all Euler classes $e_V$ for $V\neq \epsilon$. Our results above let us compute $\Phi^AL$ quite explicitely (cf.\ \cite[Corollary 6.4]{GreenleesFGL} and \cite[Proposition 2.11]{Hau}). 
\begin{prop}\label{prop:GeometricFixedPoints}
    There is a \rev{ring} isomorphism of the form
    \[ \Phi^AL\cong L[(b_0^V)^{\pm 1}, b_i^V\ |\ i>0,\, V\in A^*-\{\epsilon\}], \]    
    \rev{where the $b_i^V$ are independent polynomial variables.}
\end{prop}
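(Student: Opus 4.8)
The plan is to identify $\Phi^A L$ with the Lazard ring of a suitable moduli problem and then read off the answer from the universal property. More precisely, I would interpret $\Phi^A L$ as corepresenting $A$-equivariant formal group laws $F$ over a ring $k$ for which \emph{every} Euler class $e_V$ with $V\neq\epsilon$ is invertible in $k$; this is immediate from the universal property of $L_A$ and the fact that inverting $e_V$ on the Lazard ring side corresponds to requiring invertibility of $e_V$ on the formal group law side. By \cref{cor:fullyfaithful} applied to the projection $\alpha\colon A\to\{1\}$, such equivariant formal groups are exactly the ones in the essential image of $\alpha^*$, i.e.\ they are of the form $\alpha^*\Gamma$ for $\Gamma$ a non-equivariant formal group over $k$, and this identification is functorial. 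So on the level of \emph{groups} (ignoring coordinates), $\Phi^A L$ classifies pairs $(\Gamma,\text{data})$ where $\Gamma$ is an ordinary formal group.

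The subtlety — and the step I expect to be the main obstacle — is the coordinate bookkeeping: $\Phi^A L$ classifies equivariant formal group \emph{laws}, not groups, and as the remark after \cref{prop:EulerClassInvertibleOrZero} warns, a coordinate $y(\epsilon)$ on $\Gamma$ does not canonically determine a coordinate $y(\epsilon)$ on $\alpha^*\Gamma$. Concretely, by the product decomposition $\mathcal{O}_{\alpha^*\Gamma}\cong\prod_{V\in A^*}\mathcal{O}_\Gamma$ (the $V$-th factor being supported at $\im\varphi_V$, which are pairwise disjoint once all nontrivial Euler classes are inverted), an orientation $y(\epsilon)\in\mathcal{O}_{\alpha^*\Gamma}$ generating $\ker\theta_\epsilon$ amounts to: on the $\epsilon$-factor, a coordinate on $\Gamma$; and on each $V$-factor with $V\neq\epsilon$, an arbitrary \emph{unit}, since the augmentation $\theta_\epsilon$ is the identity on the $\epsilon$-factor and zero on the others, so $\ker\theta_\epsilon$ restricted to the $V$-factor is the whole ring, generated by any unit. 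Expanding such a unit power series $\sum_{i\ge0} b_i^V y(V)^i$ in the coordinate $y(V)$ on the $V$-th copy of $\mathcal{O}_\Gamma$ (valid by \eqref{eq:uniquelywritable} / \cite[Lemma 13.2]{CGK}, in the ``spread out'' form where only the character $V$ appears), we see the universal such datum is given by a coordinate on $\Gamma$ together with, for each $V\neq\epsilon$, a unit $b_0^V$ and free variables $b_i^V$ for $i>0$.

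Carrying this out in order: first, establish the universal property $\Phi^A L\cong L_A[e_V^{-1}\,:\,V\neq\epsilon]$ corepresents equivariant FGLs with all nontrivial Euler classes invertible (formal from the definition). Second, invoke \cref{cor:fullyfaithful} to reduce the underlying \emph{group} to a non-equivariant formal group $\Gamma$ over the base, functorially; since non-equivariant formal group laws are corepresented by $L$, this contributes the factor $L$ and the coordinate variable on $\Gamma$. Third, analyze the space of orientations on $\alpha^*\Gamma$ compatible with the fixed structure: using the idempotent decomposition $\mathcal{O}_{\alpha^*\Gamma}\cong\prod_{V\in A^*}\mathcal{O}_\Gamma$ coming from the disjointness of the $\im\varphi_V$, an orientation is the same as a coordinate on the $\epsilon$-factor plus a unit on each of the remaining factors, and expanding the unit in the local coordinate $y(V)$ produces the generators $(b_0^V)^{\pm1}$ and $b_i^V$ for $i>0$. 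Assembling these three pieces gives a map $L[(b_0^V)^{\pm1},b_i^V\mid i>0,\ V\in A^*\setminus\{\epsilon\}]\to\Phi^A L$ which one checks is an isomorphism by exhibiting the inverse on the universal object — i.e.\ that the quadruple just described over this polynomial ring is initial among equivariant FGLs with invertible nontrivial Euler classes. The only real work is the third step: one must be careful that the ``completion'' in the definition of $\alpha^*$ and of pushforward does not introduce relations among the $b_i^V$, which follows because after inverting the Euler classes the relevant ideals $I_V$ for $V\neq\epsilon$ become the unit ideal and the completion only affects the $\epsilon$-factor, where it produces the usual power series ring $L[\![y(\epsilon)]\!]$ as non-equivariantly.
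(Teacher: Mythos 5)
Your proposal is correct and is essentially the paper's own argument: the paper likewise reduces, via \cref{prop:EulerClassInvertibleOrZero}, to the splitting $R\cong \map(A^*,k\llbracket y\rrbracket)$ and then observes that $F$ amounts to the non-equivariant law $p_*F$ together with a lift of the coordinate $y(\epsilon)$, i.e.\ a unit power series $b_0^V+b_1^Vy+\cdots$ with $b_0^V\in k^\times$ on each factor $V\neq\epsilon$. (Only your closing remark that after inverting Euler classes the ideals $I_V$ ``become the unit ideal'' is loosely phrased -- they remain proper; the relevant point is that the loci $\im \varphi_V$ become pairwise disjoint, so the completion does not mix the factors, which is exactly what \cref{prop:EulerClassInvertibleOrZero} provides.)
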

\begin{proof}
    The ring $\Phi^AL$ classifies $A$-equivariant formal group laws $F = (R,\Delta,\theta,y(\varepsilon))$ such that $e_V$ is invertible for all $V\neq \epsilon$. By \cref{prop:EulerClassInvertibleOrZero}, $R \cong \map(A^*, \widehat{R})$, where $\widehat{R}$ is the completion of $R$ at the augmentation ideal. The structure of $F$ determines the structure of a non-equivariant formal group law $p_*F$ on $\widehat{R}$, where $p\colon A\to \{1\}$ is the projection; in particular, we obtain an isomorphism $\widehat{R} \cong k\llbracket y\rrbracket$, where $y$ is the image of $y(\epsilon)$. Vice versa, $\Delta$ and $\theta$ are determined by $p_*F$. In particular, $\theta_\epsilon$ is the composite $\map(A^*, \widehat{R}) \xrightarrow{\ev_{\epsilon}} \widehat{R} \to k$. Thus we see that $F$ is determined by $p_*F$, plus a choice of $y(\epsilon)$ mapping to $y$ under $\ev_{\epsilon}$. Such $y(\epsilon)$ are exactly those elements $(y^V)\in \map(A^*, k\llbracket y\rrbracket)$ such that $y^{\epsilon} = y$ and $y^V = b_0^V + b_1^Vy +\cdots$  with $b_0^V \in k^{\times}$. This gives the result.
\end{proof}
\begin{remark} \label{rem:naturalitybi} The elements $b_i^V \in \Phi^A L$ in the previous proposition already come from elements $\gamma_i^V\in L_A$, which are uniquely \rev{determined} 
by the property that
\[ e_{\epsilon \otimes \tau} = \gamma_0^V + \gamma_1^V e_{V^{-1}\otimes \tau} + \gamma_2^V (e_{V^{-1}\otimes \tau})^2 + \hdots + \gamma_n^V (e_{V^{-1}\otimes \tau})^n \in L_{A\times \T}/(e_{V^{-1}\otimes \tau})^{n+1}, \]
for all $n\in \mathbb{N}$. Here, $\tau\in \T^*$ denotes the tautological character for the circle group $\T$\revm{, and we view $L_{A\times \T}$ as an algebra over $L_A$ via restriction along the projection $A\times \T\to A$}. In particular, $\gamma_0^V$ equals the Euler class $e_V$. The elements $\gamma_i^V$ are natural in the sense that $\alpha^*\gamma_i^V=\gamma_i^{\alpha^*V}$ for every group homomorphism $\alpha\colon B\to A$. We refer to \cite[Section 2.7]{Hau} for more details on this construction.
\end{remark}

\subsection{The relationship between Lazard rings at different groups and their completions}
\label{sec:completion}

While not needed for our \emph{classification} of invariant prime ideals, it will be necessary for our study of \emph{containments} between invariant prime ideals to have a deeper look upon how Lazard rings at different groups relate. These properties are all based on the identification of the global Lazard ring with equivariant complex bordism in \cite{Hau}. 

\begin{prop}[\cite{Hau}, Proposition 5.50, Corollary 5.33, Lemma 5.28] \label{prop:regularglobal} \begin{enumerate}
    \item For every $A$ and every non-torsion character $V\in A^*$, the sequence
\[ 0\to L_A\xrightarrow{e_V\cdot} L_A \xrightarrow{\res^A_{\ker(V)}} L_{\ker(V)} \to 0 \]
is exact. In particular, all Euler classes $e_V\in L_A$ for non-torsion characters $V$ are non-zero divisors.
    \item For every $A$, the complete $L_A$-Hopf algebra $R$ of the universal $A$-equivariant formal group law is canonically isomorphic to the completion
    \[ \lim_{n\in \N,V_1,\hdots,V_n\in A^*} (L_{A\times \T})/I_{V_1}\cdots I_{V_n}, \]
    where $I_{V_j}$ is the kernel of the restriction map $(id,V_j)^*\colon L_{A\times \T}\to L_A$. More generally, $R^{\hotimes n}$ is a completion of $L_{A\times \T^n}$.
    Under this identification
    \begin{enumerate}
        \item the comultiplication $R\to R\hotimes R$ and the augmentations $\theta_V\colon R\to L_A$ are induced by the maps $(\id_A,m)^*\colon L_{A\times \T} \to L_{A\times \T\times \T}$ and $(\id_A,V)^*\colon L_{A\times \T}\to L_A$ on completion, and
        \item the elements $y(V)$ are the image of $e_{V\otimes \tau}\in L_{A\times \T}$ under the completion map, where $\tau\in \T^*$ is the tautological character.
    \end{enumerate}
\end{enumerate} 
\end{prop}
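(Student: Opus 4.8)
The statement to prove is \cref{prop:regularglobal}, which has three distinct pieces. I would organize the proof around the structural identification of the global Lazard ring with equivariant complex bordism coefficients $\pi_*^AMU_A$ from \cite{Hau}, so that all three parts become consequences of statements about $MU_A$.

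\textbf{Part (1).} The plan is to realize the sequence $0\to L_A\xrightarrow{e_V\cdot} L_A \xrightarrow{\res^A_{\ker(V)}} L_{\ker(V)}\to 0$ as the $\pi_*$ of a cofiber sequence of $MU_A$-modules. For a non-torsion character $V$, the Euler class $e_V$ is the image of the Euler class in $\widetilde{MU}_A^{|V|}(S^V)$ under the usual map, and there is a cofiber sequence $S(V)_+\to S^0\to S^V$ of based $A$-spaces where $S(V)$ is the unit sphere in $V$; since $V$ is non-torsion, $S(V)$ is $A$-equivariantly equivalent to $A/\ker(V)$ with a free action of $A/\ker(V)\cong \T$, but more precisely $S(\infty V)\simeq EA_{\ker(V)}$ detects exactly the family generated by $\ker(V)$. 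Smashing with $MU_A$ and taking homotopy groups, the cofiber sequence $MU_A\wedge S(V)_+ \to MU_A \to MU_A\wedge S^V \xrightarrow{\cdot e_V}\Sigma^{|V|}MU_A$ yields, after checking that $\pi_*^A(MU_A\wedge S(V)_+)\cong \pi_*^{\ker V}MU_{\ker V} = L_{\ker V}$ via the Wirthmüller/Adams isomorphism and that multiplication by $e_V$ is injective, the claimed four-term exact sequence. Injectivity of $e_V\cdot$ — equivalently that $\pi_*^A(MU_A\wedge S^{\infty V})$ vanishes in the relevant range, or the statement that $L_A\to L_A[e_V^{-1}]$ is injective — is really the crux of part (1); I expect to quote it directly from \cite{Hau} (Proposition 5.50) rather than reprove it, since the reference is cited in the statement.

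\textbf{Part (2).} Here I would argue that the underlying $L_A$-algebra $R$ of the universal $A$-equivariant formal group law is, by definition, the completed Hopf algebra of cooperations, and by \cite[Theorem 5.52]{Hau} (referenced in the introduction) $(L_A,S_A)\cong(\pi_*^AMU_A,\pi_*^A(MU_A\wedge MU_A))$. The key geometric input is that $MU_A\wedge MU_A$, as an $MU_A$-module, is the $MU_A$-homology of $BU(1)_A$-type spaces; concretely, $MU_{A\times\T}\simeq MU_A\wedge (BU(1))_{A}$-flavored object, and the completion of $L_{A\times\T}=\pi_*^{A\times\T}MU_{A\times\T}$ at the augmentation ideals $I_{V_j}=\ker((\id,V_j)^*)$ recovers $R$. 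So I would: (a) identify $\Spec R$ with the formal completion of $\Spec L_{A\times \T}$ along the divisor cut out by the Euler classes $e_{V\otimes\tau}$; (b) check the two sub-items — the comultiplication comes from the multiplication $m\colon \T\times\T\to\T$ inducing $(\id_A,m)^*$, and $\theta_V$ from $(\id_A,V)^*$, which is just the functoriality of $MU$ under the corresponding maps of groups; (c) observe $y(V)=l_V(y(\epsilon))$ is by construction the image of $e_{V\otimes\tau}$, using \cref{rem:naturalitybi}-style naturality and the fact that $y(\epsilon)$ corresponds to $e_{\epsilon\otimes\tau}=e_\tau$. The statement about $R^{\hotimes n}$ being a completion of $L_{A\times\T^n}$ follows the same way from $MU_A\wedge MU_A\wedge\cdots\wedge MU_A$ ($n$ smash factors) corresponding to $MU_{A\times\T^n}$ after completion. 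Most of this is bookkeeping once the identification with $MU_A$ is in hand; the one place requiring care is matching the \emph{topology} (the system of open ideals) on the two sides, i.e.\ checking that the $I_{V_1}\cdots I_{V_n}$-adic topology is the correct one — this is where I would lean on condition (ii) in the definition of an equivariant formal group law and \cite[Lemma 13.2]{CGK}.

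\textbf{Overall obstacle.} The genuinely hard content is entirely imported from \cite{Hau}: the identification $L_A\cong\pi_*^AMU_A$ compatibly with restriction maps and Euler classes, and the computation of the Hopf algebroid of cooperations. Given those, part (1) is a cofiber-sequence argument and part (2) is an unwinding of definitions. So in the write-up I would state clearly which results of \cite{Hau} are being invoked (Propositions 5.50, 5.52, Corollary 5.33, Lemma 5.28, as in the attribution), then give the short topological/diagrammatic arguments that deduce the precise form stated here — in particular spelling out the Euler-class cofiber sequence for part (1) and the completion-of-$L_{A\times\T}$ picture for part (2). I would expect the bulk of the remaining work, and the main risk of sign or indexing errors, to be in verifying that $y(V)$ matches $e_{V\otimes\tau}$ under the completion map with the correct twist, since this is what ties the formal-group-law coordinate picture to the honest Euler classes and is used heavily in the later sections on containments.
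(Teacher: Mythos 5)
The paper itself offers no proof of this proposition: it is imported wholesale from \cite{Hau} (Proposition 5.50, Corollary 5.33, Lemma 5.28), so your plan of quoting those results for all the substantive content — the identification $L_A\cong\pi_*^AMU_A$, regularity of Euler classes of non-torsion characters, and the completion description of $R$ — is exactly what the authors do, and your topological sketches are supplementary rather than load-bearing. In that sense the proposal is consistent with the paper.

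Two corrections are needed in the added material, however. First, in part (2) you describe $R$ as ``by definition the completed Hopf algebra of cooperations'' and invoke $(L_A,S_A)\cong(\pi_*^AMU_A,\pi_*^A(MU_A\wedge MU_A))$. That is a different object: $S_A$ classifies strict isomorphisms and is a localized polynomial ring over $L_A$, whereas $R$ is the function ring of the universal $A$-equivariant formal group itself; topologically it is $MU_A^*(B_AU(1))$, equivalently the stated completion of $L_{A\times\T}$ along the ideals $I_{V}$, and it is not a completion of the cooperations $MU_A\wedge MU_A$. The correct inputs are precisely \cite[Lemma 5.28, Corollary 5.33]{Hau}, which identify $MU_A^*(B_AU(1))$ with $\lim L_{A\times\T}/I_{V_1}\cdots I_{V_n}$ and trace the coproduct, the augmentations $\theta_V$, and $y(V)$ through the maps induced by the multiplication $\T\times\T\to\T$ and the characters $V$; your items (b) and (c) are then routine, as you say. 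Second, in part (1), since $V$ is non-torsion the orbit $S(V)\cong A/\ker(V)\cong\T$ is one-dimensional, so the Wirthm\"uller isomorphism gives $\pi_*^A(MU_A\wedge S(V)_+)\cong\pi_{*-1}^{\ker(V)}MU_{\ker(V)}$ with a shift by the (trivial, one-dimensional) tangent representation; this shift is exactly what makes the degrees in the resulting short exact sequence match the Thom-shifted target of multiplication by $e_V$, so it cannot be suppressed. With those repairs, and with the injectivity of $e_V\cdot$ quoted from \cite{Hau} as you intend (it rests on the freeness results going back to Comeza\~na and is not a formal consequence of the cofiber sequence), the argument goes through.
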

The special case of (2) for $A$ the trivial group is particularly important: Completing $L_{\T^n}$ at the kernel $I$ of the augmentation $L_{\T^n}\to L$ yields a power series ring on $n$ generators $L\llbracket y_1,y_2,\hdots,y_n\rrbracket $, where $y_i$ is the Euler class of the $i$-th projection $\T^n\to \T$. Moreover, the isomorphism
\[ (L_{\T^n})^{\wedge}_I \cong  L\llbracket y_1,y_2,\hdots,y_n\rrbracket \]
is natural in $\T^n$, where
\begin{itemize}
    \item the functoriality of $(L_{\T^n})^{\wedge}_I$ is induced by the global functoriality of $L_{\T^n}$, and
    \item the functoriality of $L\llbracket y_1,y_2,\hdots,y_n\rrbracket $ is through the universal formal group law over $L$.
\end{itemize}
For example, for any $n\in \N$ the square
\[ \xymatrix{L_\T \ar[r]^-c \ar[d]_{[n]^*} & L\llbracket y\rrbracket  \ar[d]^{[n]_F} \\
            L_\T \ar[r]_-c & L\llbracket y\rrbracket  } \]
commutes, where $[n]\colon \T\to \T$ 
is the $n$th power map, and $[n]_F\colon L\llbracket y\rrbracket \to L\llbracket y\rrbracket $ sends $y$ to the $n$-series $[n]_F(y)$ with respect to the universal formal group law. This implies that the Euler class $e_{\tau^n}\in L_\T$ for the $n$th power map on $\T$ is sent to the $n$-series $[n]_F(y)$ under the completion map. Similar statements hold for the collection of $L_{A\times \T^n}$ for fixed $A$ and varying $\T^n$.

We further record, where again $\tau\colon \T\to \T$ denotes the identity character:
\begin{cor} \label{cor:division} Let $x\in L_\T$ be an element whose image in $L\llbracket y\rrbracket$ is of the form $\lambda y^k + \text{higher order terms}$. Then $x$ is uniquely divisible by $e_\tau^k$ and the quotient $x/e_\tau^k$ restricts to $\lambda$ at the trivial group.
\end{cor}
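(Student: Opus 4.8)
The plan is to work entirely inside the ring $L_{\T}$ using the natural isomorphism $c\colon (L_{\T})^{\wedge}_I \cong L\llbracket y\rrbracket$ of \cref{prop:regularglobal}(2), where $I=\ker(L_\T\to L)$ and $y=e_\tau$ maps to $y$. The key structural input is \cref{prop:regularglobal}(1) with $V=\tau$: since $\tau$ is a non-torsion character of $\T$, multiplication by $e_\tau$ on $L_\T$ is injective, with cokernel $L_{\ker(\tau)}=L_{\{1\}}=L$ via the restriction map. In other words, there is a short exact sequence $0\to L_\T\xrightarrow{e_\tau\cdot}L_\T\xrightarrow{\res}L\to 0$. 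This immediately gives both the uniqueness of divisors by powers of $e_\tau$ (as $e_\tau$ is a non-zero-divisor) and the ``restriction computes the leading coefficient'' statement: an element of $L_\T$ is divisible by $e_\tau$ iff it restricts to $0$ at the trivial group.

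First I would prove the case $k=1$: suppose $x\in L_\T$ restricts to $0$ at the trivial group, i.e.\ lies in $I$. Then by the exact sequence $x=e_\tau\cdot x'$ for a unique $x'\in L_\T$. Now observe that the completion map $c$ is compatible with multiplication by $e_\tau$, which on $L\llbracket y\rrbracket$ is just multiplication by $y$; hence if $c(x)=\lambda y+(\text{higher order})$ then $c(e_\tau)c(x')=c(x)$ forces $c(x')=\lambda+(\text{higher order})$, i.e.\ $x'$ has image $\lambda$ under the augmentation $L\llbracket y\rrbracket\to L$, $y\mapsto 0$. But that augmentation is precisely $\res^\T_{\{1\}}$ under the identification, so $x'$ restricts to $\lambda$ at the trivial group. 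The only mild point to check here is that an element of $L_\T$ has image in $I$ iff its image in $L\llbracket y \rrbracket$ lies in $(y)$, which follows because $I$ is (by construction/definition) the kernel of $\res^\T_{\{1\}}$ and $\res^\T_{\{1\}}$ factors through $c$ as the augmentation.

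Then I would iterate: given $x$ with $c(x)=\lambda y^k+(\text{higher order})$, applying the $k=1$ step produces $x_1=x/e_\tau$ with $c(x_1)=\lambda y^{k-1}+(\text{higher order})$, and after $k$ steps we obtain $x/e_\tau^k$ with image $\lambda+(\text{higher order})$, hence restricting to $\lambda$ at the trivial group; uniqueness at each stage (again from $e_\tau$ being a non-zero-divisor) gives uniqueness of the final quotient. I do not expect any serious obstacle here; the one place requiring a little care is making sure the ``leading coefficient'' bookkeeping across the completion is legitimate — i.e.\ that $c$ being an isomorphism onto $L\llbracket y \rrbracket$ (not merely onto a dense subring) lets us read off divisibility of $x$ from divisibility of $c(x)$ by powers of $y$, and that this divisibility in the complete ring descends to divisibility in $L_\T$ itself. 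This descent is exactly what \cref{prop:regularglobal}(1) provides, so the argument closes without needing anything beyond the results already stated.
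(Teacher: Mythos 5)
Your proposal is correct and follows essentially the same route as the paper: regularity of $e_\tau$ and the exact sequence $0\to L_\T\xrightarrow{e_\tau\cdot}L_\T\xrightarrow{\res}L\to 0$ give uniqueness and the criterion ``divisible by $e_\tau$ iff restriction vanishes,'' while the identifications $\res^\T_1(z)=$ constant term of $c(z)$ and $c(z/e_\tau)=c(z)/y$ drive the induction on $k$, exactly as in the paper's proof.
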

\begin{proof} First we note that $e_\tau\in L_\T$ is a regular element by Proposition \ref{prop:regularglobal}. Hence division by~$e_\tau$ is always unique if possible. By induction on $k$ the corollary then follows from the following facts, for an element $z\in L_\T$ and its image $c(z)\in L_\T$:
\begin{enumerate}
    \item $z$ is divisible by $e_\tau$ if and only if $\res^\T_1(z)=0$.
    \item The leading coefficient of  $c(z)$ is equal to $\res^\T_1(z)$.
    \item If $\res^T_1(z)=0$ and hence $z$ is divisible by $e_\tau$, then $c(z/e_\tau)=c(z)/y$. \qedhere
\end{enumerate}
\end{proof}
Note that the global functoriality makes every equivariant Lazard ring $L_A$ an algebra over the non-equivariant Lazard ring $L$, and that all restriction maps $\alpha^*\colon L_A\to L_B$ are $L$-algebra maps. \rev{The following property is shown in \cite[Corollary 5.50]{Hau} as a consequence of work of Comeza\~{n}a \cite{Com96}.}
\begin{prop}\label{prop:free}\rev{For every abelian compact Lie group $A$, the equivariant Lazard ring} $L_A$ is free as a module over $L$. 
\end{prop}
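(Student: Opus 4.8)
The plan is to reduce the statement to a theorem of Comeza\~na about equivariant complex bordism. By the identification of the global Lazard ring with equivariant complex bordism that is used throughout (see \cite{Hau}, and already invoked in \cref{prop:regularglobal}), there is a canonical isomorphism of graded rings $L_A \cong \pi_*^A MU_A$ under which the $L$-algebra structure on $L_A$ coming from global functoriality — that is, from restriction along $A\to\{1\}$ — corresponds to the $\pi_*MU$-module structure on $\pi_*^A MU_A$ induced by the unit map $MU\to MU_A$. So it is equivalent to prove that $\pi_*^A MU_A$ is free over $\pi_*MU = L$ for every abelian compact Lie group $A$, which is exactly the content of \cite{Com96}.

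For completeness I would recall the shape of Comeza\~na's argument. One induces on the complexity of $A$, say on the pair $(\dim A, |\pi_0 A|)$ ordered lexicographically, observing that every proper closed subgroup of $A$ has strictly smaller complexity, the base case $A=\{1\}$ being $\pi_*MU=L$ itself. For the inductive step one applies $\pi_*^A$ to the isotropy separation cofiber sequence for the family $\mathcal P$ of proper closed subgroups of $A$,
\[ E\mathcal P_+ \wedge MU_A \longrightarrow MU_A \longrightarrow \widetilde{E\mathcal P}\wedge MU_A, \]
obtaining a long exact sequence of $L$-modules. The rightmost term is the homotopy of the geometric fixed points $\Phi^A MU_A$; since inverting Euler classes commutes with $A$-fixed points here, this ring is $\Phi^A L = L_A[e_V^{-1}:V\neq\epsilon]$, which by \cref{prop:GeometricFixedPoints} is a localized polynomial $L$-algebra on generators of even degree, hence concentrated in even degrees and free over $L$. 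The left term $\pi_*^A(E\mathcal P_+\wedge MU_A)$ is analysed by filtering $E\mathcal P$ by orbit type: the subquotients are built from pieces $\pi_*^A\big(A/H_+\wedge MU_A\wedge T\big)$ for proper $H\subsetneq A$ and complex Thom spectra $T$, which by the Wirthm\"uller isomorphism, the complex-orientability of $MU$ (so that the representation shifts that occur remain even), and the inductive hypothesis for $H$ are again even and free over $L$. Since all three terms are then concentrated in even degrees, the long exact sequence degenerates into short exact sequences
\[ 0 \to \pi_{2k}^A(E\mathcal P_+\wedge MU_A) \to \pi_{2k}^A MU_A \to \pi_{2k}\Phi^A MU_A \to 0 \]
of $L$-modules whose outer terms are free, hence projective; these split, so $\pi_*^A MU_A$ is a direct sum of two free $L$-modules and thus free.

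The hard part is this middle step: controlling $\pi_*^A(E\mathcal P_+\wedge MU_A)$ and, above all, checking that evenness is preserved along the orbit-type filtration, which is what forces the relevant spectral sequences to collapse and the extension problems to split. This is a genuine input about equivariant complex bordism — one uses crucially that $MU$ is complex-oriented, so that all Euler and Thom classes live in even degrees — and I would cite \cite{Com96} rather than reprove it. By contrast, a purely algebraic induction via the exact sequences $0\to L_A\xrightarrow{\,e_V\,}L_A\to L_{\ker V}\to 0$ of \cref{prop:regularglobal} is not enough on its own: they show $L_A$ is $e_V$-torsion-free with free quotient when $\dim A$ can be lowered by a non-torsion character $V$, but $L_A$ is not $e_V$-adically complete (its completion is only of power-series type, cf.\ \cref{prop:regularglobal}(2)), so one cannot lift an $L$-basis through the filtration, and this approach never reaches the case of finite $A$ at all.
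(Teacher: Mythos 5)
Your proposal is correct and matches the paper's approach: the paper likewise offers no independent argument, but deduces the statement from the identification $L_A \cong \pi_*^A MU_A$ of \cite{Hau} (specifically citing \cite[Corollary 5.50]{Hau}) together with Comeza\~na's theorem \cite{Com96} that $\pi_*^A MU_A$ is free over $\pi_*MU$. Your additional sketch of Comeza\~na's isotropy-separation induction and your remark that the purely algebraic Euler-class sequences of \cref{prop:regularglobal} do not suffice are accurate but go beyond what the paper records.
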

\begin{cor} The exact sequences of $L$-modules in Part 1 of Proposition \ref{prop:regularglobal} are split exact. In particular, they remain exact after applying any additive functor.
\end{cor}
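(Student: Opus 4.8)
The plan is to deduce the splitting directly from \cref{prop:free}. Recall the sequence in question is
\[ 0\to L_A\xrightarrow{e_V\cdot} L_A \xrightarrow{\res^A_{\ker(V)}} L_{\ker(V)} \to 0 \]
for a non-torsion character $V\in A^*$. By \cref{prop:free}, the cokernel $L_{\ker(V)}$ is free, hence projective, as an $L$-module. A short exact sequence of modules with projective cokernel splits: since $\res^A_{\ker(V)}$ is surjective and $L_{\ker(V)}$ is projective over $L$, there is an $L$-linear section $s\colon L_{\ker(V)}\to L_A$ with $\res^A_{\ker(V)}\circ s = \id$. This section exhibits the sequence as split, i.e.\ it yields an isomorphism of $L$-modules $L_A\cong (e_V\cdot L_A)\oplus s(L_{\ker(V)})\cong L_A\oplus L_{\ker(V)}$ (using that $e_V$ is a non-zero divisor, by Part~1 of \cref{prop:regularglobal}, so that $e_V\cdot\colon L_A\to L_A$ is injective) under which the two maps of the sequence become the canonical inclusion and projection.

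Given the splitting, the final assertion is formal. Any additive functor $F$ preserves finite biproducts, so it sends the direct-sum decomposition $L_A\cong L_A\oplus L_{\ker(V)}$, together with its inclusion and projection maps, to a direct-sum decomposition $F(L_A)\cong F(L_A)\oplus F(L_{\ker(V)})$ with the correspondingly induced structure maps. Hence applying $F$ to the original sequence produces again a (split) short exact sequence. The splitting as $L$-modules is the most robust form of the statement, and it covers the cases we need, since the relevant functors (restriction of scalars, tensoring with an $L$-algebra, passing to $L_A$-modules, etc.) are additive over $L$.

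There is essentially no obstacle here: the only genuine input is the freeness result \cref{prop:free}, while the remainder is the standard fact that short exact sequences with projective cokernel split and that additive functors preserve split exact sequences. The one point worth spelling out, which I have done above, is why the first map $e_V\cdot$ is injective, so that the splitting statement is literally a statement about a short exact sequence and not merely a surjection with a section.
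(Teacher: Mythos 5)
Your argument is correct and is precisely the intended one: the paper states this corollary without proof immediately after \cref{prop:free}, the implicit reasoning being exactly that $L_{\ker(V)}$ is free (hence projective) over $L$, so the surjection $\res^A_{\ker(V)}$ admits an $L$-linear section, the sequence splits, and split exactness is preserved by any additive functor. Your extra remark that injectivity of $e_V\cdot$ is already part of the exactness statement in \cref{prop:regularglobal} is fine but not needed beyond what the paper provides.
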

The following special case is of particular importance to us:
\begin{cor} \label{cor:regularmodIn} Let $n\in \overline{\N} = \N\cup\{\infty\}$ and $I_n\subseteq L$ be the ideal generated by $(v_0,\hdots,v_{n-1})$. Then for every $A$ and every non-torsion character $V\in A^*$, the sequence
\[ 0\to L_A/(I_n\cdot L_A)\xrightarrow{e_V\cdot} L_A/(I_n\cdot L_A) \xrightarrow{\res^A_{\ker(V)}} L_{\ker(V)}/(I_n\cdot L_{\ker(V)}) \to 0 \]
is exact. In particular, the Euler classes $e_V\in L_A$ for non-torsion characters $V\in A^*$ remain non-zero divisors in $L_A/I_n$. In the terminology of \cite{Hau}, the assignment
\[ A\mapsto L_A/(I_n\cdot L_A) \]
(together with the image of $e_\tau$ under $L_\T\to L_\T/(I_n\cdot L_\T)$) is a regular global group law.
\end{cor}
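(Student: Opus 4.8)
The plan is to deduce the whole corollary by base change along $L\to L/I_n$ from Part 1 of \cref{prop:regularglobal}, exploiting that every map in sight is $L$-linear.

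First I would invoke \cref{prop:free}: $L_{\ker(V)}$ is free, hence projective, over $L$, so the short exact sequence
\[ 0\to L_A\xrightarrow{e_V\cdot} L_A \xrightarrow{\res^A_{\ker(V)}} L_{\ker(V)} \to 0 \]
of \cref{prop:regularglobal}(1) is split as a sequence of $L$-modules — this is exactly the content of the corollary stated immediately before the present one. A split exact sequence remains exact after applying any additive functor, in particular after $-\otimes_L L/I_n$. Applying this functor, using the standard identification $M\otimes_L L/I_n\cong M/(I_n\cdot M)$ for an $L$-module $M$, and noting that $e_V\cdot$ and $\res^A_{\ker(V)}$ are $L$-linear (so that the two maps become, respectively, multiplication by the image of $e_V$ — since $I_n\cdot L_A$ is an ideal — and the induced restriction map), yields the asserted exact sequence
\[ 0\to L_A/(I_n\cdot L_A)\xrightarrow{e_V\cdot} L_A/(I_n\cdot L_A) \xrightarrow{\res^A_{\ker(V)}} L_{\ker(V)}/(I_n\cdot L_{\ker(V)}) \to 0. \]
The injectivity of $e_V\cdot$ in this sequence is precisely the assertion that the image of $e_V$ is a non-zero divisor in $L_A/(I_n\cdot L_A)$.

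For the last sentence I would observe that, since every restriction map $\alpha^*\colon L_A\to L_B$ is a map of $L$-algebras, it carries $I_n\cdot L_A$ into $I_n\cdot L_B$; hence $A\mapsto L_A/(I_n\cdot L_A)$ is a global functor receiving a map from the universal global group law $\mathbf{L}$, and the structure recorded in \cref{prop:regularglobal}(2) — the comultiplication, the augmentations $\theta_V$, and in particular the distinguished Euler class $e_\tau\in L_\T$ — descends to these quotients because $I_n$ is generated over the trivial group and all structure maps are $L$-linear. By the definition of a regular global group law in \cite{Hau}, what then remains to be verified is exactly the exactness of the sequences displayed above for all non-torsion $V\in A^*$, which has just been established.

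The argument is entirely formal; the only points I would spell out in detail are the identification of the base-changed maps with the claimed ones and, if the definition of ``regular global group law'' is taken literally, the routine check that the quotient inherits that structure. I do not anticipate a genuine obstacle: all the substantive input is already packaged in \cref{prop:regularglobal} and \cref{prop:free}, and the role of $I_n$ being an ideal of the \emph{non-equivariant} Lazard ring $L$ is precisely what makes the base change compatible with all the global structure.
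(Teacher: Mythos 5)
Your proposal is correct and follows the same route the paper takes: the splitting of the sequence from \cref{prop:regularglobal}(1) as $L$-modules via \cref{prop:free}, followed by base change along $L\to L/I_n$, is exactly the content of the corollary preceding \cref{cor:regularmodIn}, of which the statement is the special case $-\otimes_L L/I_n$.
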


\section{The Lazard Hopf algebroid and its associated stack}

\subsection{Strict isomorphisms and the Lazard Hopf algebroid}
In this subsection, we will introduce one of our main objects of study, the Hopf algebroid $(L_A, S_A)$ for equivariant formal group laws. There is a hierarchy of notions of isomorphisms between (equivariant) formal group laws, namely
\begin{itemize}
    \item isomorphisms, which do not need to respect the coordinate and are thus really isomorphisms between the underlying (equivariant) formal groups;
    \item strict isomorphisms, which respect the coordinate up to quadratic terms; 
    \item very strict isomorphisms as in \cref{sec:Lazard}, which respect the coordinate strictly. 
\end{itemize}
Already classically, strict isomorphisms are especially relevant since the Hopf algebroid modeled on them gives $(MU_*, MU_*MU)$. 
\begin{definition} A \emph{strict isomorphism} between two $A$-equivariant formal group laws \[(k,R_1,\Delta_1,\theta_1,y(\varepsilon)_1) \quad\text{ and }\quad(k,R_2,\Delta_2,\theta_2,y(\varepsilon)_2)\] over the same ground ring $k$ is a $k$-linear isomorphism
\[ \varphi\colon R_1\xrightarrow{\cong} R_2 \]
of Hopf algebras over $k^{A^*}$ such that $y(\varepsilon)_1$ is sent to $y(\varepsilon)_2$ modulo $I_{\varepsilon}^2$, where $I_{\varepsilon}$ is the augmentation ideal in $R_2$. Explicitly, this means that $(\varphi\otimes \varphi)\circ \Delta_1=\Delta_2\circ \varphi$, $\theta_2\circ \varphi=\theta_1$ and $\varphi(y(\varepsilon)_1)=x\cdot y(\varepsilon)_2$ for some unit $x\in R_2$ which augments to $1\in k$.
\end{definition}
By definition, strict isomorphisms need not preserve the coordinate, hence they are generally not morphisms of formal group laws in the sense of \revm{\cref{def:morphism}}. On the other hand, every very strict isomorphism is both a strict isomorphism and an isomorphism in the category of $A$-equivariant formal group laws. 

Let $\SI$ be the category of strict isomorphisms of $A$-equivariant formal group laws. More precisely, its objects are quadruples $(k,F^1,F^2,\varphi)$ consisting of a commutative ring $k$, two $A$-equivariant formal group laws $F^1$ and $F^2$ over $k$ and a strict isomorphism $\varphi$ between them. Morphisms between two such quadruples $(k_1,F_1^1,F_1^2,\varphi_{1})$ and $(k_2,F_2^1,F_2^2,\varphi_2)$ are given by a pair of morphisms $f_1\colon F_1^1\to F_2^1$ and $f_2\colon F_1^2\to F_2^2$ \revm{ in the sense of \cref{def:morphism},} with the same underlying map $k_1\to k_2$, such that 
\[ \xymatrix{ R_1^1 \ar[r]^{\varphi_{1}} \ar[d]_{f_1} & R_1^2 \ar[d]^{f_2} \\
            R_2^1 \ar[r]_{\varphi_2} & R_2^2} \]
commutes.

\begin{proposition}\label{prop:SISA}
    The category $SI$ has an initial object% $F^{\uni}$
    , whose underlying ring $S_A$ is a localization of an infinite polynomial ring $L_A[a_1^f, a_2^f, \dots]$ over the Lazard ring $L_A$. 
\end{proposition}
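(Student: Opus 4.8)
The plan is to identify $\SI$ as a category cofibered in groupoids over $\CAlg_{L_A}$ (or equivalently over $\mathrm{CRing}$), so that the existence of an initial object reduces to exhibiting the representing ring together with its universal strict isomorphism, following the pattern of the analysis of $A$-FGL in \cref{sec:Lazard} and \cref{rem:cofibered}. First I would check that the forgetful functor $\SI \to A\text{-FGL}$ sending $(k,F^1,F^2,\varphi)$ to $F^1$ is cofibered in groupoids: given a morphism $f\colon F^1 \to G^1$ in $A$-FGL with underlying ring map $k_1\to k_2$, one pushes forward $F^2$ and $\varphi$ along $f$ (using that pushforward of equivariant formal group laws exists, as recalled in \cref{sec:Lazard}, and that a $k_1$-linear Hopf algebra isomorphism over $k_1^{A^*}$ base-changes to a $k_2$-linear one over $k_2^{A^*}$, respecting coordinates mod $I_\varepsilon^2$), and one verifies that any morphism in $\SI$ whose underlying $A$-FGL-morphism is an isomorphism is itself an isomorphism. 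Composing with the equivalence $\CAlg_{L_A}\simeq A\text{-FGL}$ from the corollary in \cref{sec:Lazard}, we get that $\SI$ is cofibered in groupoids over $\CAlg_{L_A}$; by \cref{rem:cofibered} it then suffices to construct an initial object, and once constructed, pushforward along it gives the claimed equivalence with $\CAlg_{S_A}$.

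\textbf{Constructing the universal object.} To build the initial $(S_A, F^{\uni}_1, F^{\uni}_2, \varphi^{\uni})$ I would start from the universal $A$-equivariant formal group law $F^{\uni}$ over $L_A$, take $F^{\uni}_1 = F^{\uni}$, and then adjoin variables encoding a generic strict isomorphism out of it. Concretely, using the presentation of the underlying algebra $R$ of $F^{\uni}$ from \cref{eq:uniquelywritable} — every element written uniquely as $\sum_n a_n^f y(W_n)$ with respect to a chosen complete flag $f$ — a strict isomorphism $\varphi\colon R \to R'$ is determined by where it sends the coordinate $y(\varepsilon)$, namely $\varphi(y(\varepsilon)) = x\cdot y(\varepsilon)'$ for a unit $x$ augmenting to $1$; expanding $x$ in the flag basis gives coefficients $a_1^f, a_2^f, \dots$ (with the constant term forced to be $1$). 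Over the polynomial ring $L_A[a_1^f, a_2^f, \dots]$ one then defines $R'$ as the completed Hopf algebra obtained by transporting the structure of $R\hotimes_{L_A} L_A[a_i^f]$ along the formula forcing $\varphi$ to be the strict isomorphism; the target $F^{\uni}_2$ is the resulting $A$-equivariant formal group law, and one inverts exactly those elements needed to make $x$ an honest unit (this is the source of the localization in the statement). One must check that the conditions (i)--(iii) defining an equivariant formal group law are preserved — regularity of the new coordinate and the topology condition — which follows because multiplication by a unit does not change the augmentation ideal or its powers.

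\textbf{Main obstacle.} The routine part is the bookkeeping with the flag basis; the genuinely delicate step is verifying that $F^{\uni}_2$ so constructed is again an $A$-equivariant formal group \emph{law} in the strict sense of the definition — in particular that the transported comultiplication $\Delta_2$ lands in the completed tensor product and remains counital for the transported augmentation $\theta_2$, and that after the localization the coordinate $y(\varepsilon)_2$ is still regular. Equivalently, one must confirm that the data $(a_i^f)$ genuinely parametrize \emph{all} strict isomorphisms and nothing more, i.e.\ that the universal property holds: given any strict isomorphism $\psi\colon G^1 \to G^2$ over $k$ classified by $g\colon L_A \to k$ via $G^1$, the coefficients of $\psi$ relative to the flag basis define the unique extension $S_A \to k$ with $(G^1,G^2,\psi) \cong g_*(F^{\uni}_1,F^{\uni}_2,\varphi^{\uni})$. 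I expect this to go through by the same mechanism as \cref{eq:uniquelywritable} made the $A$-FGL case work — the flag basis rigidifies everything — but checking that the localization is by a well-defined (flag-independent, up to the comodule relations) set of elements, so that $S_A$ is canonically defined, is where care is needed, exactly as in the classical case where $MU_*MU = MU_*[b_1,b_2,\dots]$ requires no localization but the equivariant Euler classes force one here.
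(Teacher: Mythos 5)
Your overall strategy coincides with the paper's: reduce via the cofibered-in-groupoids formalism, normalize so that the data to parametrize is a unit $x=1+\sum_{n\geq 1}a_n^f\,y(W_n)$ expanded in a flag basis, and present $S_A$ as a localization of $L_A[a_1^f,a_2^f,\dots]$. However, there is a genuine gap exactly at the step you defer at the end: you never identify which elements are inverted, nor why ``forcing $x$ to be a unit'' is representable by a localization of the base ring at all. The element $x$ lives in the Hopf algebra $R$, not in the ground ring, so the universal way to make it invertible under base change is not a priori given by localizing $L_A[a_1^f,a_2^f,\dots]$ at anything; one needs a criterion translating invertibility of $x$ in $R$ into invertibility of a specified family of elements of the base. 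This is precisely what the paper's proof supplies: by \cref{lem:invertible}, $x\in R$ is a unit if and only if $\theta_V(x)\in k$ is a unit for every $V\in A^*$, and by \cref{cor:polynomials} each $\theta_V(x)$ is the explicit linear expression $P_V(1,a_1^f,a_2^f,\dots)$ in the flag coefficients, with products of Euler classes as coefficients. Hence the functor ``equivariant formal group law together with a unit augmenting to $1$'' is represented by $S_A=L_A[a_1^f,a_2^f,\dots][P_V(1,a_1^f,a_2^f,\dots)^{-1}\mid V\in A^*-\{\varepsilon\}]$, which is the substance of the proposition. Without these two inputs, ``invert exactly those elements needed'' is not a construction, and the well-definedness concern you raise in your last paragraph remains unresolved.

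A secondary remark: the difficulty you single out as the main obstacle (transporting $\Delta$ and $\theta$ to a new target $R'$ and re-verifying the axioms of an equivariant formal group law) is dissolved by the normalization the paper uses. Given $(k,F^1,F^2,\varphi)$, replace $F^2$ by the law with the \emph{same} underlying Hopf algebra, comultiplication and augmentation as $F^1$ but with coordinate $\varphi^{-1}(y(\varepsilon)_2)$; this is very strictly isomorphic to the original object and turns the strict isomorphism into the identity of $R_1$. The universal object is then $(S_A,F^{\uni},\widetilde F^{\uni},\id)$, where $\widetilde F^{\uni}$ differs from $F^{\uni}$ only in that its coordinate is $(1+\sum_{n\geq 1}a_n^f\,y(W_n))\cdot y(\varepsilon)$. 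No structure transport or completion argument is needed, and regularity of the new coordinate is immediate because the rescaling factor is a unit — so that part of your worry is not where the real work lies.
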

By \cref{rem:cofibered}, we can equivalently say that \revm{there is an object $(S_A, F^{\uni}, \widetilde{F}^{\uni}, \varphi)\in \SI$ such that} the functor
\begin{align*} \mathrm{CAlg}_{S_A} &\to \SI \\
    (f\colon S_A\to k) & \mapsto \revm{(k,f_*F^{\uni}, f_*\widetilde{F}^{\uni},f_*\varphi)} \end{align*}
    is an equivalence of categories. Here, we use that $SI$ is again cofibered in groupoids over commutative rings. 

Before we prove the proposition, it will be good to review two general results about the maps $\theta_V\colon R \to k^{A^*} \to k$ for an $A$-equivariant formal group law $(k,R,\Delta,\theta,y(\varepsilon))$. Recall that after choosing a complete flag $f$ we can write every element $x\in R$ uniquely as 
\[ x=\sum_{n\in \N} a_n^f y(W_n).\]
Given $V\in A^*$, we have
\[ \theta_V(x)=\sum_{n\in \N} a_n^f \theta_V(y(W_n))=\sum_{n\in \N} a_n^f e_{VW_n}, \]
where $e_{VW_n}$ is defined as the product $e_{V\cdot V_n}e_{V\cdot V_{n-1}}\cdots e_{V\cdot V_1}$. This is a finite sum, since $e_{VW_n}=0$ if there exists some $i\leq n$ where $V_i=V^{-1}$. We obtain:
\begin{lemma} \label{cor:polynomials} The augmentation $\theta_V$ is a linear combination of $a_n^f$ whose coefficients are products of Euler classes.
\end{lemma}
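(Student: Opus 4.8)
The statement is essentially unwound already in the displayed computation that precedes it, so the plan is simply to justify that computation line by line. First I would record that $\theta_V$ is not merely additive but a continuous homomorphism of $k$-algebras: it is the composite of the $k$-algebra map $\theta\colon R\to k^{A^*}$ with the projection onto the $V$-th factor, and it is continuous because $\ker\theta_V$ is open, being one of the ideals whose finite products generate the topology on $R$ (condition (ii) in the definition of an equivariant formal group law). Continuity then lets me apply $\theta_V$ term by term to the convergent expansion $x=\sum_{n\in\N} a_n^f y(W_n)$ of \cref{eq:uniquelywritable}, while multiplicativity lets me factor $\theta_V(y(W_n))=\prod_{i=1}^n \theta_V(y(V_i))$ using $y(W_n)=y(V_n)y(V_{n-1})\cdots y(V_1)$.

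Next I would identify each factor $\theta_V(y(V_i))$ with an Euler class. Using $y(W)=l_W(y(\epsilon))$ and the explicit formula $l_W=(\theta_W\hotimes\id_R)\circ\Delta$, one obtains $\theta_V(y(W))=(\theta_W\hotimes\theta_V)(\Delta(y(\epsilon)))$; compatibility of $\theta$ with the coproduct (again condition (ii)) together with commutativity of $A^*$ identifies $(\theta_W\hotimes\theta_V)\circ\Delta$ with $\theta_{VW}$, so $\theta_V(y(W))=\theta_{VW}(y(\epsilon))=e_{VW}$. Specialising to $W=V_i$ gives $\theta_V(y(W_n))=\prod_{i=1}^n e_{VV_i}=e_{VW_n}$, a product of Euler classes.

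Finally I would check that the resulting sum is actually finite: $e_{VV_i}=e_{\epsilon}=\theta_{\epsilon}(y(\epsilon))=0$ whenever $V_i=V^{-1}$, since $y(\epsilon)$ generates $\ker\theta_{\epsilon}$, and $V^{-1}$ occurs in the complete flag $f$ (every character appears infinitely often), so $e_{VW_n}=0$ for all $n$ past its first occurrence. Hence $\theta_V=\sum_{n\in\N} e_{VW_n}\cdot a_n^f$ is a genuine finite $k$-linear combination of the coordinate functionals $a_n^f$, with coefficients that are products of Euler classes, which is exactly the claim. I do not expect a real obstacle; the only point that takes a moment of care is the identity $\theta_V\circ l_W=\theta_{VW}$, which one can alternatively read off geometrically from the fact that $l_W$ is translation by the point $\varphi_W$ while $\varphi\colon S\times A^*\to X$ is a homomorphism of commutative group objects.
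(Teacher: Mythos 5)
Your proposal is correct and follows the same route as the paper: the paper's argument is exactly the displayed computation $\theta_V(x)=\sum_n a_n^f\,\theta_V(y(W_n))=\sum_n a_n^f e_{VW_n}$ with $e_{VW_n}=e_{VV_n}\cdots e_{VV_1}$, plus the observation that the sum is finite because $e_{VW_n}=0$ once some $V_i=V^{-1}$ occurs in the flag. You merely spell out the justifications (continuity of $\theta_V$, multiplicativity, and the identity $\theta_V\circ l_W=\theta_{VW}$ via compatibility of $\theta$ with the coproduct) that the paper leaves implicit.
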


Moreover, we have the following:
\begin{lemma} \label{lem:invertible} Let $F=(k,R,\Delta,\theta,y(\epsilon))$ be an $A$-equivariant formal group law. Then an element $x\in R$ is a unit if and only if $\theta_V(x)$ is a unit in $k$ for all $V\in A^*$.
\end{lemma}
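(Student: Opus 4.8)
The plan is as follows. The forward implication is immediate: each $\theta_V\colon R\to k$ is a homomorphism of rings, hence carries units to units. For the converse, the idea is to use the completeness of $R$ together with the multiplicativity built into the topology on $R$ (condition (ii) in the definition of an $A$-equivariant formal group law). It is worth noting at the outset why the naive approach fails: the total map $\theta=(\theta_V)_{V}\colon R\to k^{A^*}$ is very far from injective, so one cannot simply invert $\theta(x)$ componentwise in $k^{A^*}$ and pull the inverse back.

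The first step is to pass to the quotients defining the topology. The finite products $I_{V_1}\cdots I_{V_n}$ (with $V_i\in A^*$) generate the topology on $R$, and since the product of such ideals is contained in their intersection, these products are cofinal among all open ideals; hence the canonical map $R\to \lim_{(V_1,\dots,V_n)} R/(I_{V_1}\cdots I_{V_n})$ is an isomorphism. It therefore suffices to show that the image of $x$ is a unit in each $R/(I_{V_1}\cdots I_{V_n})$. Once this is known, the various inverses are automatically compatible — each transition map $R/(I_{V_1}\cdots I_{V_n})\to R/(I_{W_1}\cdots I_{W_m})$ is surjective and inverses in a commutative ring are unique — so they assemble to an element $y\in R$ with $xy-1$ lying in every open ideal, i.e.\ $xy=1$ by separatedness.

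To invert $x$ modulo a product of $n$ of these ideals I would induct on $n$, using the elementary fact that in any commutative ring, if $x$ is a unit modulo ideals $I$ and $J$ then it is a unit modulo $IJ$: choosing $y,z$ with $xy=1+a$, $a\in J$, and $xz=1+b$, $b\in I$, the element $w=y+z-xyz$ satisfies $xw=1-ab$, and $ab\in IJ$, so $xw\equiv 1 \pmod{IJ}$. The base case $n=1$ is precisely the hypothesis, since $\theta_V$ induces an isomorphism $R/I_V\cong k$ and $\theta_V(x)$ is assumed to be a unit; the inductive step applies the two-ideal fact with $I=I_{V_n}$ and $J=I_{V_1}\cdots I_{V_{n-1}}$.

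I do not expect a genuine obstacle: the argument is formal once one observes that the ideals $I_V$ are not merely a sub-basis of neighborhoods of $0$ but are closed under the multiplication defining the topology, so that the two-ideal fact can be iterated. The only points requiring a little care are the cofinality of the products $I_{V_1}\cdots I_{V_n}$ among open ideals (needed to identify $R$ with the corresponding inverse limit) and the observation that compatibility of the partial inverses is automatic, so that they glue in the limit.
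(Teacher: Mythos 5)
Your proof is correct. Note that the paper itself gives no internal argument here: it simply cites \cite[Lemma 2.3]{Hau}, so there is nothing in the text to compare against step by step; your write-up supplies a self-contained proof. The two ingredients you use are both sound: (a) since the finite products $I_{V_1}\cdots I_{V_n}$ are open and closed under pairwise products, they are a cofinal directed family among the open ideals, so completeness gives $R\cong \lim R/(I_{V_1}\cdots I_{V_n})$ (this is the same filtration underlying the flag expansion \eqref{eq:uniquelywritable}); and (b) the elementary fact that a unit modulo $I$ and modulo $J$ is a unit modulo $IJ$, whose verification $xw=1-ab$ with $w=y+z-xyz$ is correct. The base case is right because $\theta_V$ is a surjective $k$-algebra map, so $R/I_V\cong k$ and the hypothesis says exactly that $x$ is invertible there; and the gluing step needs only uniqueness of inverses in each quotient (surjectivity of the transition maps is not even required), together with separatedness, which is part of completeness. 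So the argument is complete; it is arguably more elementary than invoking the reference, at the cost of not exhibiting the inverse's flag coefficients explicitly, which an argument via \eqref{eq:uniquelywritable} would do.
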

\begin{proof} See \cite[Lemma 2.3]{Hau}.
\end{proof}

\begin{proof}[Proof of \cref{prop:SISA}]
    For every object $(k,F^1,F^2,\varphi)\in \SI$ we can define a new $A$-equivariant formal group law $\widetilde{F}^2$ for which the components $k,R,\Delta$ and $\theta$ agree with those of $F^1$, but $\widetilde{y}(\varepsilon)_2$ is defined as $\varphi^{-1}(y(\varepsilon)_2)$, the preimage of the coordinate of $F^2$ under $\varphi$. Then we obtain a new object $(k,F^1,\widetilde{F}^2,\id_{R_1})$ which is isomorphic in $\SI$ to $(k,F^1,F^2,\varphi)$ via the commutative square
\[ \xymatrix{ F^1 \ar[r]^{\id} \ar[d]_{\id} & \widetilde{F}^2 \ar[d]^{\varphi} \\
              F^1 \ar[r]^{\varphi} &  F^2} \]
Note that the two vertical maps are in fact very strict isomorphisms of $A$-equivariant formal group laws. In summary, every object of $\SI$ is isomorphic to one of the form $(k,F_1,F_2,\id)$, where $F_1$ and $F_2$ are given by the same $k^{A^*}$-augmented $k$-Hopf algebra and the strict isomorphism is the identity. This is the same data as a single $A$-equivariant formal group law $F$ together with a second choice of coordinate $y(\varepsilon)_2=x\cdot y(\varepsilon)_1$ for some unit $x\in R$ which augments to $1$. Up to very strict isomorphism we can further assume that $F$ is the push-forward along a map $L_A\to k$. We claim that the functor sending an $A$-equivariant formal group law to the set of all units 
$x\in R$
augmenting to $1$ is representable by an $L_A$-algebra $S_A$. Indeed, a presentation for $S_A$ is given by
\begin{equation}S_A=L_A[a_1^f,a_2^f,\hdots][P_V(1,a_1^f,a_2^f,\hdots)^{-1}\ |\ V\in A^*-\{\varepsilon\}] \end{equation}
where $f$ is a complete flag starting with $\varepsilon$ and $P_V$ is the linear combination expressing $\theta_V$ in terms of the coefficients with respect to $f$; see \cref{cor:polynomials} and \cref{lem:invertible}. Here we use that the \revm{elements} augmenting to $1$ are precisely the elements of the form $1 +\sum_{n\in \N^+} a_n^f y(W_n)$, with $y(W_n)$ as in the end of \cref{sec:basics}.

\revm{To conclude, let $F^{\uni}$ be the pushforward of the universal equivariant formal group law along $L_A\to S_A$, and let $\tilde{F}^{\uni}$ be the same formal group with the coordinate replaced with $(1 +\sum_{n\in \N^+} a_n^f y(W_n))\cdot y(\epsilon)$. The functor
\begin{align*} \mathrm{CAlg}_{S_A} &\to \SI \\
(f\colon S_A\to k) & \mapsto (k,f_*F^{\uni},f_*\widetilde{F}^{\uni},\id) \end{align*}
is fully faithful by \cref{sec:Lazard}, and we have demonstrated it to be essentially surjective. Thus, it is an equivalence of categories, as was to be shown.}
\end{proof}
\begin{remark}\label{rem:nonstrictisos}
    The same proof shows that the category of all (not necessarily strict) isomorphisms also has an initial object, whose underlying ring is $S_A[(a_0^f)^{\pm 1}]$. In fact, the only difference in the proof is that the equation for the unit $x$ is now of the form $\sum_{n\in\N}a_n^fy(W_n)$ so that the presentation for the analog of $S_A$ becomes 
    \[L_A[a_0^f, a_1^f,a_2^f,\hdots][(a_0^f)^{-1}, P_V(a_0^f,a_1^f,a_2^f,\hdots)^{-1}\ |\ V\in A^*-\{\varepsilon\}].\]
    Since $P_V(a_0^f,a_1^f,a_2^f,\hdots) = a_0^fP_V(1,\tfrac{a_1^f}{a_0^f},\tfrac{a_2^f}{a_0^f},\hdots)$, this ring is indeed isomorphic to 
    \[L_A[(a_0^f)^{\pm 1}, \tfrac{a_1^f}{a_0^f},\tfrac{a_2^f}{a_0^f},\hdots][P_V(1,\tfrac{a_1^f}{a_0^f},\tfrac{a_2^f}{a_0^f},\hdots)^{-1}\ |\ V\in A^*-\{\varepsilon\}] \cong S_A[(a_0^f)^{\pm 1}].\]
\end{remark}
There are functors
\[ s,t\colon \SI\to \text{A-FGL} \]
sending a strict isomorphism to its source and target, respectively, as well as an `identity' functor
\[\text{A-FGL} \to \SI, \]
an `inverse' functor
\[ i\colon \SI\to \SI \]
and a `composition' 
\[ c\colon \SI\times \SI\to \SI, \]
which restrict to the full subcategory of those objects of $\SI$ where the isomorphism $\varphi$ is given by the identity. By representability we obtain analogous source and target maps $s,t\colon L_A\to S_A$, an identity map $S_A\to L_A$, an inverse map $i\colon S_A\to S_A$ and a composition map $c\colon S_A\to S_A\otimes_{L_A} S_A$.
\begin{cor} The pair $(L_A,S_A)$ together with the above structure defines a Hopf algebroid i.e.\ a cogroupoid object in commutative rings. The associated functor
\[ \mathrm{CRing} \to \mathrm{Groupoids} \]
is equivalent to the one sending a commutative ring $k$ to the groupoid of $A$-equivariant formal group laws over $k$ and strict isomorphisms between them.
\end{cor}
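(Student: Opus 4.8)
The plan is to obtain everything formally by transporting structure along the two representability equivalences already at our disposal: the equivalence $\mathrm{CAlg}_{L_A}\simeq A\text{-FGL}$ established above, and the equivalence $\mathrm{CAlg}_{S_A}\simeq\SI$ from \cref{prop:SISA}, both instances of the general principle recorded in \cref{rem:cofibered}. The one elementary fact I would isolate first is that any functor over $\mathrm{CRing}$ between comma categories of commutative rings is restriction of scalars along a unique ring map: a functor $G\colon\mathrm{CAlg}_B\to\mathrm{CAlg}_A$ commuting with the projections to $\mathrm{CRing}$ is pinned down by $G(B,\id_B)=(B,\phi\colon A\to B)$, since then $G(k,g)=(k,g\circ\phi)$ for every $g\colon B\to k$; moreover this assignment converts composites of such functors into composites of ring maps. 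I would also use that a fibre product of categories cofibered in groupoids over $\mathrm{CRing}$ is again cofibered in groupoids (here $s$ and $t$ are cocartesian fibrations, so this is well behaved), and that, translating through the equivalences, the category $\SI\times_{A\text{-FGL}}\SI$ of composable strict isomorphisms — formed with the target functor $t$ on the first factor and the source functor $s$ on the second — corresponds to $\mathrm{CAlg}_{S_A\otimes_{L_A}S_A}$, where the two $L_A$-algebra structures on the tensor factors are the ones induced by $t$ and $s$. This last point is just the universal property of $\otimes$ as a pushout, $\Hom(S_A\otimes_{L_A}S_A,-)=\Hom(S_A,-)\times_{\Hom(L_A,-)}\Hom(S_A,-)$; the analogous statement holds for triple composites with $S_A\otimes_{L_A}S_A\otimes_{L_A}S_A$.

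With these in hand I would let $\eta_L,\eta_R\colon L_A\to S_A$, $\epsilon\colon S_A\to L_A$, $\iota\colon S_A\to S_A$ and $\psi\colon S_A\to S_A\otimes_{L_A}S_A$ be the ring maps corresponding to the structure functors $s$, $t$, the identity inclusion $A\text{-FGL}\to\SI$, the inverse functor $i$, and the composition functor $c$, respectively. Each defining identity of a Hopf algebroid (counitality and coassociativity of $\psi$, the compatibilities of $\psi$ with $\epsilon$, $\eta_L$ and $\eta_R$, and the relations $\iota\eta_L=\eta_R$, $\iota\eta_R=\eta_L$, $\iota^2=\id$ together with the ``composition with inverse'' identities) is the image under the above dictionary of a strict identity of functors among $s,t,i,c$ and the inclusion. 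These functor identities hold on the nose, because composition of strict isomorphisms — regarded as honest ring isomorphisms $R_1\xrightarrow{\cong}R_2$ — is strictly associative and unital, inversion of such an isomorphism is a strict two-sided inverse, and $s,t$ literally read off source and target. Hence $(L_A,S_A;\eta_L,\eta_R,\epsilon,\iota,\psi)$ is a Hopf algebroid.

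It then remains to identify the groupoid-valued functor $k\mapsto(\Hom(L_A,k)\rightrightarrows\Hom(S_A,k))$ represented by this Hopf algebroid. By the equivalence $\mathrm{CAlg}_{L_A}\simeq A\text{-FGL}$, the set $\Hom(L_A,k)$ is the set of $A$-equivariant formal group laws over $k$ of the standard shape $f_*F^{\uni}$, which forms a skeleton of the groupoid of $A$-equivariant formal group laws over $k$ and \emph{very} strict isomorphisms; by \cref{prop:SISA}, $\Hom(S_A,k)$ is correspondingly the set of strict isomorphisms over $k$ whose source has this standard shape. Unwinding the definitions of $\eta_L,\eta_R,\epsilon,\iota,\psi$ shows that under these identifications they become exactly source, target, identity, inverse and composition of strict isomorphisms, so the represented groupoid is this skeleton. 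Since every $A$-equivariant formal group law over $k$ is uniquely very strictly isomorphic to a standard one (as discussed in \cref{sec:Lazard}), the inclusion of the skeleton into the groupoid of \emph{all} $A$-equivariant formal group laws over $k$ with strict isomorphisms is an equivalence, naturally in $k$, which gives the second claim.

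I do not expect a genuine obstacle here: all the mathematical content — the construction of $L_A$ and $S_A$ and their universal properties — is already in place, and the argument is a mechanical transport of structure. The two points demanding care are purely bookkeeping: getting the left/right conventions right in $S_A\otimes_{L_A}S_A$ and in the units $\eta_L,\eta_R$, and keeping track of the passage between the skeletal and the full versions of the groupoids of equivariant formal group laws.
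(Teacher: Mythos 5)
Your proposal is correct and follows essentially the same route as the paper: the paper likewise obtains $\eta_L,\eta_R,\epsilon,i,c$ purely formally from the representability statements ($\mathrm{CAlg}_{L_A}\simeq A\text{-FGL}$ and \cref{prop:SISA}, via \cref{rem:cofibered}), with the Hopf algebroid identities inherited from the strict groupoid identities for composition and inversion of strict isomorphisms, and the identification of the represented groupoid handled up to the unique very strict isomorphisms. Your write-up just makes explicit the bookkeeping (tensor products as fibre products of co-slice categories, skeleta of standard-shape formal group laws) that the paper leaves implicit.
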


\begin{remark}\label{rem:grading}
The Hopf algebroid $(L_A, S_A)$ has a natural grading. This can be constructed in three equivalent ways:
\begin{itemize}
    \item \cite[Corollary 5.6]{Hau} shows that the global Lazard ring admits a unique grading such that it defines a graded global group law. By the same arguments, the `universal global group law with a strict $n$-tuple of coordinates' $\mathbf{L}^{(n)}$ (cf., \cite[Section 5.8]{Hau}) also carries a unique grading for every $n$ such that the coordinates have degree $-2$. The source-target maps $s,t\colon \mathbf{L}\to \mathbf{L}^{(2)}$, the identity map $\mathbf{L}^{(2)}\to \mathbf{L}$, the inverse map $i\colon \mathbf{L}^{(2)}\to \mathbf{L}^{(2)}$ and the composition map $c\colon \mathbf{L}^{(2)}\to \mathbf{L}^{(3)}$ all preserve this grading, since they are defined through their effect on the respective coordinates. Evaluating these maps at a group $A$ yields the Hopf algebroids $(L_A,S_A)$, which hence inherit a grading compatible with all restriction and inflation maps.
    \item The groupoid-valued functor represented by $(L_A, S_A)$ admits a $\mathbb{G}_m$-action from multiplying the coordinate of the equivariant formal group law by a unit $u$ and acting on strict isomorphisms by multiplying $a_n^f$ by $u^n$. This corresponds to an even grading on $(L_A, S_A)$, putting e.g.\ $a_n^f$ in degree $2n$.
    \item By its interpretation as representing the groupoid of $A$-equivariant formal group laws and isomorphisms between them (see \cref{rem:nonstrictisos}), $(L_A, S_A[(a_0^f)^{-1}])$ obtains the structure of a Hopf algebroid. An element $s$ in $S_A$ is of degree $2n$ if applying the composition map $S_A[(a_0^f)^{-1}] 
    \to S_A[(a_0^f)^{-1}]\tensor_{L_A} S_A[(a_0^f)^{-1}]$ to $s$ gives $(a_0^f)^n c(s)$, where $c\colon S_A \to S_A \tensor_{L_A}S_A$ is the composition map of $(L_A, S_A)$.  
\end{itemize}
One can show that this is the same grading coming from the isomorphism 
\[(L_A, S_A) \cong (\pi_*^AMU_A, \pi_*^A MU_A \sm MU_A)\] 
from \cite[Theorem E]{Hau}.
\end{remark}

Given a strict isomorphism $\varphi\colon F_1\cong F_2$ of $B$-equivariant formal group laws and a group homomorphism $\alpha\colon B\to A$, we obtain an induced strict isomorphism $\alpha_*\varphi\colon \alpha_*F_1\cong \alpha_*F_2$ by completion. This assignment is compatible with composition of strict isomorphisms. Therefore, the functor $\mathbf{L}$ from \cref{sec:GlobalFunctorality} extends to a functor
\[ \mathbf{L}\colon \text{(abelian compact Lie groups)}^{\op}\to \text{Hopf algebroids}, \]
which we call the \emph{global Lazard Hopf algebroid}.

\subsection{The moduli stack of equivariant formal groups}\label{sec:moduli}
\rev{Throughout this subsection we let $A$ be an abelian compact Lie group. Our goal is to discuss the stack $\MM_{FG}^A$ of equivariant formal groups and its relationship to the Lazard Hopf algebroid. Our stacks will always be stacks for the fpqc-topology on commutative rings. More precisely, we will consider stacks as the full \revm{sub-$2$-category} of the $2$-category of (pseudo-)functors from commutative rings to groupoids on those (pseudo-)functors satisfying fpqc-descent on objects and morphisms (the usual stack conditions).}\footnote{\rev{We write (\emph{pseudo}-)functors since stacks are typically not strict functors into the $2$-category of groupoids. All $2$-morphisms in our setting are invertible; thus we can also decide to work in the setting of $(\infty,1)$-categories and look at all functors from commutative rings to the $(\infty,1)$-category of groupoids. In this setting, the `pseudoness' is \revm{implicit}; which is why we set `pseudo' in brackets.}} 

\rev{Let us define $\MM_{FG}^A$, leaving the verification that it is indeed an fpqc-stack to \cref{prop:MFGAHopfAlgebroid}.
\begin{defi}\label{def:MFGA}
    Let $\MM_{FG}^A$ be the (pseudo-)functor sending a commutative ring to the groupoid of $A$-equivariant formal groups over it (with morphisms arbitrary isomorphisms between them).
\end{defi}
}

We have discussed above that the Hopf algebroid $(L_A, S_A)$ represents the functor sending a commutative ring to the groupoid of $A$-equivariant formal group laws and strict isomorphisms between them. As discussed in \cref{rem:grading}, $(L_A, S_A)$ is naturally a graded Hopf algebroid. On the other hand, we have discussed in \cref{rem:nonstrictisos} the ungraded Hopf algebroid $(L_A, S_A[a_0^{\pm 1}])$ classifying $A$-equivariant formal group laws and all isomorphisms between them. It is easy to see that this is precisely the ungraded Hopf algebroid associated to the graded Hopf algebroid $(L_A, S_A)$ in the sense of \cite[Section 4.1]{MeierOzornova}.\footnote{\cite{MeierOzornova} uses an algebraic grading convention, while we use a topological one; thus one has to double all degrees.} We will follow \cite[Definition 4.1]{MeierOzornova} by defining the stack associated to a graded Hopf algebroid as the fpqc-stackification of the groupoid-valued functor corepresented by its associated ungraded Hopf algebroid. In particular, the stack associated to the graded Hopf algebroid $(L_A, S_A)$ is the same as the stack associated to the ungraded Hopf algebroid $(L_A, S_A[a_0^{\pm 1}])$. Equivalently, it is the quotient of the stack associated to $(L_A, S_A)$ by the $\mathbb{G}_m$-action induced by the grading.

\rev{
\begin{prop}\label{prop:MFGAHopfAlgebroid}
\begin{enumerate}
    \item\label{item:stack} The groupoid-valued presheaf $\MM_{FG}^A$ from \cref{def:MFGA} is an fpqc-stack.
    \item\label{item:equivalence} Sending an $A$-equivariant formal group law to its underlying $A$-equivariant formal group defines an equivalence from the stack associated to the graded Hopf algebroid $(L_A, S_A)$ to $\MM_{FG}^A$.
\end{enumerate}
\end{prop}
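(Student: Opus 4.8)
The plan is to deduce both statements in parallel, since an fpqc-stack is precisely an fpqc-sheaf of groupoids and the natural candidate equivalence will make descent for $\MM_{FG}^A$ follow from descent for the corepresented functor. First I would set up the comparison map. Recall from \cref{lem:EFGEFGL} that an $A$-equivariant formal group together with a chosen trivialization of its augmentation ideal $I_\epsilon$ is the same datum as an $A$-equivariant formal group law, and from \cref{prop:SISA} and \cref{rem:nonstrictisos} that $(L_A, S_A[a_0^{\pm 1}])$ corepresents the groupoid-valued functor $G$ of $A$-equivariant formal group laws with \emph{all} isomorphisms between them. Forgetting the trivialization defines a natural transformation $G \to \MM_{FG}^A$ of groupoid-valued presheaves. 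I would check that this is fully faithful on each groupoid (two formal group laws with a fixed isomorphism of underlying formal groups differ by the unit $x\in R_2$ identifying coordinates, which is exactly the extra datum recorded in an isomorphism of formal group laws, cf.\ the explicit description of $S_A[a_0^{\pm1}]$ in \cref{rem:nonstrictisos}) and that it is \emph{locally} essentially surjective: over a ring $k$, the augmentation ideal $I_\epsilon$ is by \cref{def:EquivariantFormalGroup}\eqref{item:EFG1} fpqc-locally free of rank one, so after an fpqc cover a trivialization exists and the formal group is in the image. Hence $G \to \MM_{FG}^A$ exhibits $\MM_{FG}^A$ as the fpqc-stackification of $G$.

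Next I would invoke the general fact (as used in \cite[Section 4.1]{MeierOzornova}, whose conventions we are following) that the stack associated to the graded Hopf algebroid $(L_A, S_A)$ is by definition the fpqc-stackification of the functor corepresented by the associated ungraded Hopf algebroid $(L_A, S_A[a_0^{\pm 1}])$, i.e.\ of $G$. Combining this with the previous paragraph gives an equivalence between the stack associated to $(L_A,S_A)$ and $\MM_{FG}^A$, which by construction is induced by sending an equivariant formal group law to its underlying equivariant formal group; this is exactly statement \eqref{item:equivalence}. Statement \eqref{item:stack} is then immediate: $\MM_{FG}^A$ is equivalent to a stackification, hence is an fpqc-stack. (Alternatively, one checks descent for $\MM_{FG}^A$ directly: formal $k$-schemes, being opposite to a reflective subcategory of pro-objects in affine schemes as in the replicated discussion after \cref{def:EquivariantFormalGroup}, glue in the fpqc-topology, as do commutative group objects in them and the homomorphism $\varphi$ from $S\times A^*$; conditions \eqref{item:EFG1} and \eqref{item:EFG2} are fpqc-local by construction, the former precisely because line bundles satisfy fpqc-descent as noted in the remark after \cref{def:EquivariantFormalGroup}. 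But routing through stackification avoids re-proving descent from scratch.)

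The main obstacle I anticipate is the local essential surjectivity / stackification step done \emph{carefully}: one must verify that the fpqc-local trivializations of $I_\epsilon$ on an arbitrary base, together with the transition data, assemble to show that every object of $\MM_{FG}^A(k)$ is fpqc-locally in the image of $G$, and moreover that $\MM_{FG}^A$ satisfies descent so that it genuinely \emph{is} the stackification rather than merely receiving a map from $G$ with dense image. Concretely, this requires knowing that the target is a stack before one can identify it with a stackification — which is the mild circularity the proposition's own phrasing ("leaving the verification... to \cref{prop:MFGAHopfAlgebroid}") flags. I would resolve this by proving descent for $\MM_{FG}^A$ directly as sketched parenthetically above (it is essentially formal given that line bundles and group schemes satisfy fpqc-descent), and only then use the universal property of stackification to obtain the equivalence in \eqref{item:equivalence}. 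Everything else — the fully faithfulness computation and the bookkeeping with gradings versus the $a_0$-inversion — is routine given \cref{prop:SISA}, \cref{rem:nonstrictisos}, and \cref{rem:grading}.
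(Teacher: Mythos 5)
Your reduction of \eqref{item:equivalence} to \eqref{item:stack} is exactly the paper's: local triviality of the augmentation ideal (condition \eqref{item:EFG1} of \cref{def:EquivariantFormalGroup} plus \cref{lem:EFGEFGL}) gives local essential surjectivity of the map from the prestack corepresented by $(L_A,S_A[a_0^{\pm1}])$, and full faithfulness holds because isomorphisms of equivariant formal group laws are precisely isomorphisms of the underlying equivariant formal groups; you also correctly diagnose that one must prove \eqref{item:stack} directly before speaking of stackification. The gap is that your proof of \eqref{item:stack} is the parenthetical sketch, and that sketch asserts the one genuinely non-formal step: that formal $k$-schemes, and hence equivariant formal groups, ``glue in the fpqc-topology.'' Effectivity of descent for formal schemes does not follow from their description as (a subcategory of) ind-/pro-objects in affine schemes: a descent datum only provides isomorphisms of formal schemes, which do not match up any chosen ind-presentations, and completions/infinite limits do not commute with faithfully flat base change, so one cannot simply descend ``termwise'' without further input. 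Descent for \emph{morphisms} is indeed formal (write the formal schemes as ind-objects and use that equalizers of sets commute with filtered colimits), but descent for \emph{objects} is where the paper does its actual work.

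What makes the argument go through in the paper is the equivariant structure itself: condition \eqref{item:EFG2} of \cref{def:EquivariantFormalGroup} says the topology on $\OO_X$ is generated by the finite products of the ideals $I_V=\ker(\varphi_V^*)$, so every $A$-equivariant formal group has a \emph{canonical} cofinal system of closed affine subschemes $\Spec \OO_X/\II$ indexed by the finite multi subsets of $A^*$, with closed immersions as transition maps. This system is functorial in the equivariant formal group, hence any descent datum for equivariant formal groups induces, for each index, a descent datum for affine schemes (and for the closed immersions between them); fpqc descent for affine schemes then produces the diagram downstairs, which reassembles into a formal $k$-scheme, after which morphism-descent supplies the group structure and $\varphi$, and conditions \eqref{item:EFG1} and \eqref{item:EFG2} hold by construction. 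Your proposal never produces such a canonical presentation compatible with the descent isomorphisms, so the claim that the underlying formal group descends is unsubstantiated; to repair the proof you should replace ``formal $k$-schemes glue'' by this argument (or an equivalent one exploiting condition \eqref{item:EFG2})). The remaining points you make — fpqc-locality of \eqref{item:EFG1} via descent for line bundles, and the bookkeeping between the graded Hopf algebroid $(L_A,S_A)$ and the ungraded $(L_A,S_A[a_0^{\pm1}])$ — are fine and agree with the paper.
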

}
\begin{proof}
\rev{Let us first show that \eqref{item:stack} implies \eqref{item:equivalence}:} denote the stack associated to the graded Hopf algebroid $(L_A, S_A)$ by $\XX_A$; equivalently, this is the stack associated to the ungraded Hopf algebroid $(L_A, S_A[a_0^{\pm1}])$. Since the augmentation ideal of every $A$-equivariant formal group $(X, \varphi)$ is by definition fpqc-locally trivial, $(X,\varphi)$ comes after fpqc-base change from an $A$-equivariant formal group law (see \cref{lem:EFGEFGL}). Thus, $\XX_A\to \MM_{FG}^A$ is essentially surjective as a functor of stacks. Moreover, it is fully faithful since isomorphisms between $A$-equivariant formal group laws are precisely isomorphisms of the underlying $A$-equivariant formal groups. \rev{Thus, it remains to show \eqref{item:stack}, i.e.\ }that $\MM_{FG}^A$ satisfies fqpc-descent on morphisms and objects. 

Given two formal $k$-schemes $X$ and $Y$, the functor 
\[k\text{-algebras} \to \Set, \qquad K \mapsto \Hom_K(X\times_{\Spec k} \Spec K, Y\times_{\Spec k} \Spec K)\]
is an fpqc-sheaf. Indeed, if we view $X$ and $Y$ as ind-objects $(X_i)$ and $(Y_j)$, we can rewrite this $\Hom$ as $\lim_i\colim_j\Hom_K(X_i\times_{\Spec k} \Spec K, Y_j\times_{\Spec k} \Spec K)$ and equalizers commute with filtered colimits in sets. This easily implies that $\MM_{FG}^A$ satisfies fpqc-descent on morphisms.

Let $\Fin(A^*)$ denote the directed set of finite multi subsets\footnote{\rev{A \emph{multiset} is a version of a set where elements can occur more than once. Formally speaking, a \emph{multi subset} of a set $S$ is a function $S\to \mathbb{Z}_{\geq0}$, indicating which element occurs how often. It is \emph{finite} if the function has finite support. A multi subset $F$ of $S$ is \emph{contained} in another multi subset $G$ of $S$ iff $F(s)\leq G(s)$ for all $s\in S$.}} of $A^*$, ordered by inclusion. Sending an $A$-equivariant formal group $(X, \varphi)$ over $k$ to the system $(\Spec \OO_X/\II)$, where $\II$ runs over all finite products of the $\ker(\OO_X \xrightarrow{\phi(V)^*} k)$ for $V\in A^*$, defines a functor from $A$-equivariant formal groups over $k$ \rev{to the subcategory $\CC(k)$ of} $\Fun(\Fin(A^*), \Aff_k)$ \rev{where all transition maps are closed immersions.} \rev{The associated object in $\mathrm{Ind}(\Aff_k)$ defines the underlying formal scheme of $X$. Moreover, the functors $\MM_{FG}^A(k)\to \CC(k)$ assemble into a natural transformation $\MM_{FG}^A\to \CC$ of (pseudo)-functors from commutative rings to groupoids.} 

\rev{Given a descent datum $D$ for $A$-equivariant formal groups for an fpqc-cover $k\to K$, we thus obtain a descent diagram for $\CC$ along $k\to K$. This in turn defines a $\Fin(A^*)$-diagram of descent data for $\Aff_k$ along $k\to K$. As $\Aff$ and closed immersions satisfy fpqc-descent, we obtain a $\Fin(A^*)$-diagram in $\Aff_k$ such that all transition maps are closed immersions.}
This 
defines a formal $k$-scheme $X$. \rev{Denoting the functor
\[\text{(formal schemes over }k) \;\to \; \text{(descent data for formal schemes along } k\to K)\]
by $\DD$, one checks that $\DD(X)$ is the underlying descent datum of formal schemes for $D$. Moreover, $\DD$ preserves products.} By descent for morphisms between formal $k$-schemes, $X$ obtains a group structure and also a group homomorphism $\varphi\colon S\times A^* \to X$. Conditions \rev{\eqref{item:EFG1} and \eqref{item:EFG2}} for an $A$-equivariant formal group are fulfilled by construction. 
\end{proof}

\begin{remark}
    As every equivariant formal group comes Zariski-locally from an equivariant formal group law, in our case only a Zariski-stackification was necessary to pass from $(L_A, S_A[u^{\pm 1}])$ to $\MM_{FG}^A$. 
\end{remark}

\begin{prop}\label{prop:OpenClosedSubstacks}
    Let $B\subseteq A$ be a subgroup of an abelian group $A$. Denote by $\alpha\colon B \to A$ the inclusion and by $q\colon A\to A/B$ the projection. 
    \begin{enumerate}[(i)]
        \item  The functor $q^*$ induces an open immersion $\MM_{FG}^{A/B} \to \MM_{FG}^A$ whose image is the common non-vanishing locus of the Euler classes $e_V$ for all $V\notin \im((A/B)^*\to A^*)$. 
        \item  The functor $\alpha_*$ induces a closed immersion $\MM_{FG}^B \to \MM_{FG}^A$, inducing an equivalence of $\MM_{FG}^B$ to the common vanishing locus of the $e_V$ for all $V\in \ker(A^*\to B^*)$.
    \end{enumerate}
\end{prop}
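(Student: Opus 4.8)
The plan is to read off both statements from the full-faithfulness results already established, identify the relevant essential images in terms of Euler classes, and then verify openness/closedness by pulling back along the faithfully flat atlas $\Spec L_A\to\MM_{FG}^A$ from \cref{prop:MFGAHopfAlgebroid}. Throughout I will use that the $e_V$ are coordinate-free and natural (\cref{lem:CoordinateFreeEuler}), so that for each $V$ the conditions ``$e_V$ invertible'' and ``$e_V=0$'' define full substacks of $\MM_{FG}^A$, cut out on the atlas by $\Spec(L_A)_{e_V}$ and by $\Spec L_A/(e_V)$ respectively.

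For (ii), the first step is to note that $\alpha_*$ is fully faithful on $R$-points for every ring $R$ by \cref{prop:alphastarfullyfaithful}, hence $\alpha_*\colon\MM_{FG}^B\to\MM_{FG}^A$ is a monomorphism of stacks. Next, the description of the essential image in \cref{prop:alphastarfullyfaithful} says that an $A$-equivariant formal group $(\varphi\colon A^*\to X)$ lies in the image of $\alpha_*$ exactly when $\varphi$ factors through $B^*=A^*/\ker(A^*\to B^*)$, i.e.\ when $\varphi_V=\varphi_\epsilon$ for every $V\in\ker(A^*\to B^*)$; by \cref{lem:CoordinateFreeEuler} this is precisely the condition $e_V=0$ for those $V$ (the nontrivial implication, that vanishing of these $e_V$ forces $G$ into the image, being \cref{prop:EulerClassInvertibleOrZero}(2)). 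So $\alpha_*$ is an equivalence onto the full substack $Z\subseteq\MM_{FG}^A$ defined by the vanishing of the $e_V$, $V\in\ker(A^*\to B^*)$. Finally I would argue $Z$ is closed by pulling back along the atlas: $Z\times_{\MM_{FG}^A}\Spec L_A$ is the common vanishing locus of these $e_V$ in $\Spec L_A$, and since $A^*$ is finitely generated it suffices to impose $e_V=0$ for $V$ in a finite generating set of $\ker(A^*\to B^*)$, giving a genuine closed subscheme, which by \cref{lem:EulerClassesGenerateKernel} is exactly $\Spec L_B\hookrightarrow\Spec L_A$ along the restriction map. Descent of closed immersions along the faithfully flat atlas then promotes $Z\hookrightarrow\MM_{FG}^A$ to a closed immersion.

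For (i), I would argue symmetrically: by \cref{cor:fullyfaithful} applied to the surjection $q\colon A\to A/B$, the functor $q^*$ is fully faithful on $R$-points, so $q^*\colon\MM_{FG}^{A/B}\to\MM_{FG}^A$ is a monomorphism of stacks, and its essential image is the full substack $U\subseteq\MM_{FG}^A$ of those $G$ for which $e_V$ is invertible for all $V\notin\im((A/B)^*\to A^*)$ (one containment from the construction of $q^*$, the reverse one from \cref{prop:EulerClassInvertibleOrZero}(1)). Thus $q^*$ is an equivalence onto $U$, and what remains is to see that $U\hookrightarrow\MM_{FG}^A$ is an \emph{open} immersion. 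Pulling back along the atlas, $U\times_{\MM_{FG}^A}\Spec L_A$ is the subfunctor of $\Spec L_A$ on which all $e_V$ with $V\notin\im((A/B)^*\to A^*)$ become invertible, namely the localization $\Spec L_A[e_V^{-1}: V\notin\im((A/B)^*\to A^*)]$; one then checks this is an open subscheme of $\Spec L_A$ and concludes by descent of open immersions. When $A$ is finite this is immediate, since $A^*$ is finite, only finitely many $e_V$ are inverted, and the localization is $\Spec(L_A)_f$ for the single element $f=\prod_{V\notin\im((A/B)^*\to A^*)}e_V$.

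I expect the last step of (i) to be the genuine obstacle: for $A$ with a positive-dimensional torus the set of characters being inverted is infinite, the localization is not of finite presentation in an obvious way, and one must argue by hand that it nevertheless defines an open subscheme of $\Spec L_A$ --- equivalently, that $\bigcup_{V\notin\im((A/B)^*\to A^*)}V(e_V)$ is Zariski-closed. I would attack this using the explicit structure of the global Lazard ring: the relations expressing $e_{V^n}$ and $e_{VW}$ through $e_V$, $e_W$ and the universal formal group law (together with the naturality of the classes $\gamma_i^V$ of \cref{rem:naturalitybi}) and the finite generation of $A^*$ should let one cut the infinite system of invertibility conditions down to a controllable one, or alternatively let one verify the stack-level criterion for an open immersion directly. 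Once openness is in hand, the essential-image computation above identifies the image of $q^*$ with this open substack, and parts (i) and (ii) are complete; the conceptual input is entirely carried by \cref{cor:fullyfaithful}, \cref{prop:alphastarfullyfaithful} and \cref{prop:EulerClassInvertibleOrZero}, with the rest being bookkeeping on the atlas.
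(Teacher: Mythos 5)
Your part (ii), and the overall architecture of part (i), coincide with the paper's own (very terse) proof: identify the essential images via \cref{prop:alphastarfullyfaithful}, \cref{prop:EulerClassInvertibleOrZero} and \cref{cor:fullyfaithful}, and then check closedness/openness after pullback along the atlas $\Spec L_A\to \MM_{FG}^A$. For (ii) your extra details are correct and unproblematic: the pullback is cut out by the finitely many Euler classes of a generating set of $\ker(A^*\to B^*)$, hence is exactly $\Spec L_B\hookrightarrow \Spec L_A$ by \cref{lem:EulerClassesGenerateKernel}.

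The step you flag in (i) is, however, not mere bookkeeping, and you have not missed a hidden argument: the paper's proof consists of the one sentence that the map ``is open after pullback to $\Spec L_A$''. Moreover, the repair you sketch --- cutting the infinite family of invertibility conditions down to a finite one --- provably cannot work. Already for $A=B=\T$ the conditions are irredundant: the invariant prime $I^{\T}_{C_m,p,0}=\ker(L_\T\to\Phi^{C_m}L)$ contains $e_{\tau^m}$ but contains $e_{\tau^{m'}}$ only when $m\mid m'$, so no finite subfamily of the $e_{\tau^{m}}$, even up to radical, controls all of them. Worse, the image of $\Spec L_\T[e_{\tau^m}^{-1}\colon m\neq 0]\to \Spec L_\T$ is not open at all: if $D(f)$ were a nonempty basic open contained in it (one may localize at $p$, since openness is preserved by this base change), then $f$ would lie in $I^{\T}_{C_m,p,0}$ for every $m$, and since $C_m\to\T$ in $\Sub(\T)$ the local constancy of height functions underlying \cref{prop:heightfunctions} and \cref{thm:zariski} forces $f\in I^{\T}_{\T,p,0}=0$; yet the image contains $I^{\T}_{\T,p,n}$ for all $n$, so it is nonempty with empty interior. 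The same argument applies whenever $A$ is positive-dimensional and $B\neq 1$, i.e.\ whenever infinitely many Euler classes are inverted; it matches the Balmer-spectrum picture, where $\{P^{\T}_{\T,p,n}\}$ is not open because type functions of compact $\T$-spectra are locally constant on $\Sub(\T)$. So in those cases the pullback is a flat monomorphism (a filtered intersection of the opens $D(e_V)$) but not an open immersion, and the openness claim of (i) is immediate only when $A$ is finite or $B$ is trivial. Your instinct that this is the genuine obstacle is therefore correct, but the route you propose for overcoming it is closed; a complete treatment has to either weaken ``open immersion'' to a notion such as flat monomorphism/pro-open immersion (which is all that the adjunction $j^*\dashv j_*$ used in \cref{prop:FACompatibility} requires) or restrict the pairs $(A,B)$ for which openness is asserted.
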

\begin{proof}
    The first part is a reformulation of \cref{cor:fullyfaithful}. The immersion is open since it is open after pullback to $\Spec L_A$. 

    For the second, note that by \cref{prop:EulerClassInvertibleOrZero} every $A$-equivariant formal group such that $e_V = 0$ for all $V \in \ker(A^* \to B^*)$ is of the form $\alpha_*G$ for $G$ a $B$-equivariant formal group. Moreover, by construction, this vanishing of Euler classes is true for all $A$-equivariant formal groups of the form $\alpha_*G$ and thus characterizes the image of $\alpha_*$. The substack given by this image is closed since it is closed after pullback to $\Spec L_A$. Moreover, $\alpha_*$ is fully faithful by \cref{prop:alphastarfullyfaithful}.
\end{proof}

\begin{remark}
With notation as in the preceding proposition, we have $H \cong q_*q^*H$ for every $A/B$-equivariant formal group $H$. Thus, the inclusion of substacks $\MM_{FG}^{A/B} \to \MM_{FG}^A$ in the preceding proposition allows for a retraction (up to isomorphism) in the $2$-category of stacks.
\end{remark}

\revm{
\begin{remark}\label{rem:stratification}Let $A$ be a compact abelian Lie group. 
\cref{prop:OpenClosedSubstacks} allows to define for every sugroup $B\subseteq A$ an immersion $\MM_{FG}\to \MM_{FG}^A$ as the composition of the open immersion $\MM_{FG}^{B/B}\to \MM_{FG}^B$ and the closed immersion  $\MM_{FG}^B\to \MM_{FG}^A$. \cref{prop:ClassificationPointsMFGA} allows to view the collection of all such immersions as a stratification of $\MM_{FG}^A$. 
\end{remark}
}

\begin{example} \cref{prop:OpenClosedSubstacks} gives closed immersions of $\MM_{FG}$ and $\MM_{FG}^{C_2}$ into $\MM_{FG}^{C_4}$. The first is the common vanishing locus of all Euler classes (which equals the vanishing locus of the Euler class of one of the two generators of $(C_4)^*$, cf.\ \cref{prop:ClassificationOverFields}). Its complement is the open substack given by the non-vanishing locus of the Euler class of the generators and hence equivalent to $\MM_{FG}^{C_4/C_2}$.
    The second, i.e. the closed immersion of $\MM_{FG}^{C_2}$, equals the vanishing locus of $e_{[2]}$ for $[2]\colon C_4 \xrightarrow{[2]} C_4 \hookrightarrow \T$. Its complement is an open substack equivalent to $\MM_{FG}^{C_4/C_4}$, the non-vanishing locus of all Euler classes of non-trivial characters.
    
\definecolor{closed}{RGB}{170,170, 220}
\definecolor{open}{RGB}{255,238,240}
\definecolor{clopen}{RGB}{249,246,254}
\begin{figure}
    \centering
    \label{fig:C4}
    \begin{tikzpicture}
        \def\un{0.15}
        \filldraw [open] (20*\un,0) ellipse ({6*\un} and {4*\un});
         \filldraw [open] (10*\un,0) circle ({6*\un} and {4*\un});
        \filldraw [closed] (0,0) ellipse ({6*\un} and {4*\un});
        \node at (0,0) {$1$};
        \node at (10*\un,0) {$C_2$};
        \node at (20*\un,0) {$C_4$};
        \node at (0,-8*\un) {$\MM_{FG}^{\{1\}}$};
        \node at (15*\un,-8*\un) {$\MM_{FG}^{C_4/C_2}$};
        \begin{scope}[shift={(50*\un,0)}]
        \filldraw [open] (20*\un,0) ellipse ({6*\un} and {4*\un});
        \filldraw [closed] (0,0) ellipse ({6*\un} and {4*\un});
        \filldraw [closed] (10*\un,0) circle ({6*\un} and {4*\un});
        \node at (0,0) {$1$};
        \node at (10*\un,0) {$C_2$};
        \node at (20*\un,0) {$C_4$};
        \node at (5*\un,-8*\un) {$\MM_{FG}^{C_2}$};
        \node at (20*\un,-8*\un) {$\MM_{FG}^{C_4/C_4}$};
        \end{scope}
    \end{tikzpicture}
    \caption{Decompositions of $\MM_{FG}^{C_4}$ into open and closed substacks, using misty rose for open and lavender for closed. \revm{Every ellipse stands for the image of an immersion $\MM_{FG}\to \MM_{FG}^{C_4}$, as in \cref{rem:stratification}.} }
    \end{figure}

For a \revm{general} group $A$ the situation is more complicated, and we cannot expect that the complement of the closed substack $\MM_{FG}^B$ in $\MM_{FG}^A$ can be expressed as a single open substack $\MM_{FG}^{A/C}$ in general and vice versa. In general, the complement of $\MM_{FG}^B$ in $\MM_{FG}^A$ can be written as the union of the open substacks $\MM_{FG}^{A/C}$ where $C$ runs over the minimal subgroups of $A$ not contained in $B$. We have indicated the situation for $A = C_2\times C_2$ in \cref{fig:C2xC2}.
   
\begin{figure}
    \centering
    \begin{tikzpicture}
        \def\un{0.15}
        \def\radi{5.2}
        \begin{scope}[shift ={(-23*\un, 0)}]
         \draw [open, line width = 50*\un] (0,12*\un) -- (-15*\un,0);
         \draw [open, line width = 50*\un] (0,12*\un) -- (15*\un,0);
         \draw [closed, line width = 50*\un] (0,-12*\un) -- (0,0);
          \draw [clopen, line width = 50*\un] (0,12*\un) -- (0,0);
          \draw [clopen, line width = 50*\un] (0,-12*\un) -- (-15*\un,0);
          \draw [clopen, line width = 50*\un] (0,-12*\un) -- (15*\un,0);
        \filldraw [open] (0,12*\un) circle (\radi*\un);
        \node at (0,12*\un) {$C_2\times C_2$};
        \filldraw [open] (-15*\un, 0) circle (\radi*\un);
        \node at (-15*\un,0) {$\{1\}\times C_2$};
        \filldraw [open] (15*\un, 0) circle (\radi*\un);
        \node at (15*\un,0) {$C_2\times\{1\}$};
        \filldraw [closed] (0, 0) circle (\radi*\un);
        \node at (0,0) {$\Delta$};
        \filldraw [closed] (0,-12*\un) circle (\radi*\un);
       \node at (0,-12*\un) {$\{1\}\times \{1\}$};
       \node at (0, -19.1*\un) {$\MM_{FG}^{\Delta}$};
       \node at (0, 18.8*\un) {$\,\MM_{FG}^{C_2\times (C_2/C_2)}\quad\!\!\cup\quad\!\MM_{FG}^{(C_2/C_2)\times C_2}$};
        \end{scope}
        \begin{scope}[shift ={(23*\un, 0)}]
         \draw [clopen, line width = 50*\un] (0,12*\un) -- (-15*\un,0);
         \draw [clopen, line width = 50*\un] (0,12*\un) -- (15*\un,0);
         \draw [clopen, line width = 50*\un] (0,-12*\un) -- (0,0);
          \draw [open, line width = 50*\un] (0,12*\un) -- (0,0);
          \draw [closed, line width = 50*\un] (0,-12*\un) -- (-15*\un,0);
          \draw [closed, line width = 50*\un] (0,-12*\un) -- (15*\un,0);
        \filldraw [open] (0,12*\un) circle (\radi*\un);
        \node at (0,12*\un) {$C_2\times C_2$};
        \filldraw [closed] (-15*\un, 0) circle (\radi*\un);
        \node at (-15*\un,0) {$\{1\}\times C_2$};
        \filldraw [closed] (15*\un, 0) circle (\radi*\un);
        \node at (15*\un,0) {$C_2\times\{1\}$};
        \filldraw [open] (0, 0) circle (\radi*\un);
        \node at (0,0) {$\Delta$};
        \filldraw [closed] (0,-12*\un) circle (\radi*\un);
       \node at (0,-12*\un) {$\{1\}\times \{1\}$};
       \node at (0, 19.1*\un) {$\MM_{FG}^{C_2\times C_2/\Delta}$};
       \node at (0, -18.8*\un) {$\,\MM_{FG}^{\{1\}\times C_2}\quad\!\!\cup\quad\!\MM_{FG}^{C_2\times\{1\}}$};
        \end{scope}
   \end{tikzpicture}
    \caption{Decompositions of $\MM_{FG}^{C_2\times C_2}$ into open and closed substacks, $\Delta$ being the diagonal subgroup. \revm{Every circle stands for the image of an immersion $\MM_{FG}\to \MM_{FG}^{C_2\times C_2}$, as in \cref{rem:stratification}.}}
    \label{fig:C2xC2}
\end{figure}
\end{example}

\section[Points of $\MM_{FG}^A$ and invariant prime ideals]{Points of the moduli stack of equivariant formal groups and invariant prime ideals}
The goal of this section is to classify the points of $\MM_{FG}^A$ and the invariant prime ideals of $(L_A, S_A)$. Although the latter could be done without the former, we feel that both questions are of the same importance and the stack point of view makes some issues more transparent. 

\subsection{The space associated to a stack}
As mentioned above, given a graded flat Hopf algebroid $(A,\Gamma)$, we can associate an ungraded Hopf algebroid $(A, \Gamma[u^{\pm1}])$ to it (see e.g.\ \cite[Section 4.1]{MeierOzornova}).\footnote{Standard conventions force us to use $A$ as part of the notation of a general Hopf algebroid, while $A$ stands for a compact abelian Lie group in most of this article. We trust that this does not cause confusion.} The category of comodules over the latter is equivalent to that of graded comodules over $(A,\Gamma)$. The stack $\XX$ associated to $(A,\Gamma)$ is by definition the stack associated to $(A,\Gamma[u^{\pm1}])$, i.e.\ the fpqc-stackification of the presheaf of groupoids represented by $(A,\Gamma[u^{\pm1}])$ on the category of all schemes. We denote the resulting morphism $\Spec A \to \XX$ by $\pi$.

\begin{defi}
    Let $(A, \Gamma)$ be a Hopf algebroid with units $\eta_L$ and $\eta_R$. An ideal $I\subseteq A$ is called \emph{invariant} if $\eta_L(I)\Gamma = \eta_R(I)\Gamma$.  If $(A, \Gamma)$ is graded, we will assume that $I$ is also graded, i.e.\ generated by homogeneous elements.
\end{defi}

It is easy to check that invariant ideals in a graded Hopf algebroid $(A,\Gamma)$ correspond exactly to graded subcomodules of $A$ and thus to ideal sheaves on $\XX$. Here, we use that $\pi^*\colon \QCoh(\XX) \to \QCoh(\Spec A) \simeq \Mod_A$ refines to an equivalence from quasi-coherent $\OO_{\XX}$-modules to graded $(A,\Gamma)$-comodules \rev{\cite[Proposition 4.3]{MeierOzornova}}.  

Following \cite[Section 5]{LMB00} and \cite[\href{https://stacks.math.columbia.edu/tag/04XL}{Tag 04XL}]{STACKS} in the case of Artin stacks, we can associate a topological space to $\XX$. To that purpose, \rev{we define a morphism $\UU \to \XX$ of stacks to be an} \emph{open immersion} if the pullback $\UU \times_{\XX}\Spec A \to \Spec A$ is an open immersion.

\begin{defi}
    For $\XX$ as above, define the underlying set of $|\XX|$ to consist of equivalence classes of morphisms $x\colon \Spec K \to \XX$ for $K$ a field; the equivalence relation is generated by isomorphisms and $x \sim (\Spec L \to \Spec K \xrightarrow{x} \XX)$, where $L$ is a field extension of $K$.
    We call a subset of $|\XX|$ \emph{open} if it is the image of $|\UU| \to |\XX|$ for an open immersion $\UU \to \XX$.
\end{defi}

Equivalently, we can characterize the opens as the images of those opens in $\Spec A$ that are invariant, i.e.\ have the same preimage along both the left and right unit $\Spec \Gamma[u^{\pm1}] \to \Spec A$. Indeed: \rev{since morphisms of stacks satisfy descent along the fpqc-cover $\Spec A \to \XX$ and $\Spec A\times_{\XX}\Spec A \simeq \Spec \Gamma[u^{\pm1}]$,} an open immersion $\UU \to \XX$ corresponds to an open immersion $\VV \to \Spec A$ with an isomorphism $\VV \times_{\Spec A}\Spec \Gamma[u^{\pm1}] \cong \Spec \Gamma[u^{\pm1}]\times_{\Spec A}\VV$ over $\Spec \Gamma[u^{\pm1}]$ satisfying a cocycle condition. But the category of open immersions into some $X$ with isomorphisms over $X$ between them is equivalent to the discrete category of open subsets of $X$, yielding the equivalence \rev{we claimed}. Since invariant opens in $\Spec A$ form a topology, we deduce that the opens in $|\XX|$ form a topology. One further checks that the map induced by any morphism of stacks is continuous. 

Our definition \rev{of the topological space associated to a stack} coincides with that of \cite{LMB00} and \cite{STACKS} \rev{(who do it for Artin stacks)} in the intersection of their domains, e.g.\ when $\XX$ is an affine scheme, where we get the usual topology. 

\begin{prop}\label{prop:InvariantPrimeIdealsAndPoints}
Let $(A,\Gamma)$ be a graded Hopf algebroid with associated stack $\XX$. 
Then:
\begin{enumerate}
    \item For every invariant prime ideal $I\subseteq A$, the image of $V(I) \subseteq |\Spec A|$ is closed in $|\XX|$ and $\eta_I = |\pi|(\eta)$ for $\eta$ the generic point of $V(I)$ (i.e.\ the point in $|\Spec A|$ corresponding to $I$) is generic (i.e.\ $\overline{\{\eta_I\}} = |\pi|(V(I))$).
    \item Mapping $I$ to $\eta_I$ defines an injection from the set of invariant prime ideals $\Spec^{\inv}(A)$ in $A$ to $|\XX|$. Equipping $\Spec^{\inv}(A)$ with the subspace topology from $\Spec A$, this map is continuous; if it is a bijection, it is a homeomorphism. 
\end{enumerate}
\end{prop}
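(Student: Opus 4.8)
The plan is to work entirely with the fpqc cover $\pi\colon\Spec A\to\XX$ and the description, recalled above, of the opens of $|\XX|$ as the images $|\pi|(W)$ of the \emph{invariant} opens $W\subseteq\Spec A$ — i.e.\ those with $\eta_L^{-1}(W)=\eta_R^{-1}(W)$ inside $\Spec\Gamma[u^{\pm1}]=\Spec A\times_{\XX}\Spec A$. Two facts do most of the work. First, $|\pi|$ is surjective: for a field-valued point $x\colon\Spec K\to\XX$ the base change $\Spec A\times_{\XX}\Spec K\to\Spec K$ is faithfully flat, hence nonempty, so $x$ lifts after a field extension. Second, every invariant subset $W\subseteq\Spec A$ is \emph{saturated} for $|\pi|$, i.e.\ $|\pi|^{-1}(|\pi|(W))=W$: unwinding the definition of $|\XX|$, two points $\mathfrak p,\mathfrak q$ of $\Spec A$ have the same image in $|\XX|$ exactly when there is a common field $L$, maps $\Spec L\to\Spec A$ hitting $\mathfrak p$ resp.\ $\mathfrak q$, and a point $z\colon\Spec L\to\Spec\Gamma[u^{\pm1}]$ whose two projections are these maps; if $\mathfrak p\in W$ then $\eta_L\circ z$ factors through $W$, hence $z$ factors through $\eta_L^{-1}(W)=\eta_R^{-1}(W)$, hence $\mathfrak q\in W$. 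Consequently, for any invariant ideal $I$ the sets $|\pi|(V(I))$ and $|\pi|(D(I))$ are disjoint and cover $|\XX|$ (by saturatedness and surjectivity), so $|\pi|(V(I))$ is closed, being the complement of the open set $|\pi|(D(I))$.

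Granting these, part (1) is short: $|\pi|(V(I))$ is closed and contains $\eta_I=|\pi|(\eta)$, so $\overline{\{\eta_I\}}\subseteq|\pi|(V(I))$; conversely, any closed $C\ni\eta_I$ has $|\pi|^{-1}(C)$ closed and containing $\eta$, hence containing $\overline{\{\eta\}}=V(I)$, so $C\supseteq|\pi|(V(I))$ by surjectivity of $|\pi|$; intersecting over all such $C$ gives the reverse inclusion. For part (2), injectivity follows since $\eta_I=\eta_J$ forces $|\pi|(V(I))=\overline{\{\eta_I\}}=\overline{\{\eta_J\}}=|\pi|(V(J))$, and applying $|\pi|^{-1}$ and using saturatedness gives $V(I)=V(J)$, whence $I=\sqrt I=\sqrt J=J$. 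Continuity is immediate: $I\mapsto\eta_I$ is literally the restriction of the continuous map $|\pi|$ to the subspace $\Spec^{\inv}(A)\subseteq\Spec A$.

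It remains to prove the homeomorphism claim. Assuming $f\colon\Spec^{\inv}(A)\to|\XX|$ is a bijection, I would show it is a closed map, which for a continuous bijection suffices. A closed subset of $\Spec^{\inv}(A)$ is $C_{\mathfrak a}=V(\mathfrak a)\cap\Spec^{\inv}(A)$ for some ideal $\mathfrak a\subseteq A$. Let $\mathfrak a^{\sim}$ be the smallest invariant ideal containing $\mathfrak a$. Since every invariant prime containing $\mathfrak a$ automatically contains $\mathfrak a^{\sim}$, and conversely, we get $C_{\mathfrak a}=C_{\mathfrak a^{\sim}}$; and using the bijectivity of $f$ together with saturatedness of $V(\mathfrak a^{\sim})$ one checks $f(C_{\mathfrak a^{\sim}})=|\pi|(V(\mathfrak a^{\sim}))$, which is closed by the partition argument of the first paragraph. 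Hence $f$ is closed and therefore a homeomorphism.

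The one genuinely non-formal input — and the step I expect to need the most care — is the existence of $\mathfrak a^{\sim}$, i.e.\ that an arbitrary intersection of invariant ideals is again invariant. This uses the flatness of $(A,\Gamma)$: the category of $(A,\Gamma)$-comodules is then Grothendieck abelian, so it has all small limits, and the forgetful functor to $A$-modules is exact; thus the module-theoretic intersection of subcomodules of $A$ is computed as a subobject in comodules and is again a subcomodule, i.e.\ an invariant ideal. (In the graded setting one runs the same argument with graded comodules, so that $\mathfrak a^{\sim}$ is homogeneous when $\mathfrak a$ is.) Everything else is point-set bookkeeping around the surjective quotient map $|\pi|$ and the saturatedness of invariant subsets.
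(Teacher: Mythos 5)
Your route is the same as the paper's: you use the description of the opens of $|\XX|$ as images of invariant opens, surjectivity of $|\pi|$, and saturatedness of invariant subsets to show that images of invariant closed sets are closed; part (1), injectivity, and continuity are then handled exactly as in the paper's proof, and for the homeomorphism claim you make the same reduction, replacing an arbitrary ideal $\mathfrak{a}$ by the smallest invariant ideal containing it and then using bijectivity plus saturation to identify $f\bigl(V(\mathfrak{a}^{\sim})\cap \Spec^{\inv}(A)\bigr)$ with $|\pi|(V(\mathfrak{a}^{\sim}))$. You have in fact isolated the load-bearing step more explicitly than the paper does: the paper sets $I'=\bigcap_{I\subseteq J}J$ over invariant $J$ and uses the invariance of $I'$ without comment.

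The gap is in your justification of that step. Flatness of $\Gamma$ does make $(A,\Gamma)$-comodules a Grothendieck abelian category with an exact forgetful functor, but exactness only gives preservation of \emph{finite} limits. The forgetful functor is a left adjoint (its right adjoint is the extended/cofree comodule functor $-\otimes_A\Gamma$), so it preserves colimits, not infinite limits; in particular, the categorical intersection of an infinite family of subcomodules $J_\alpha\subseteq A$ is not the module-theoretic intersection but the largest subcomodule contained in it, for instance $\psi^{-1}\bigl((\bigcap_\alpha J_\alpha)\otimes_A\Gamma\bigr)$, which may be strictly smaller. What you actually need is $\bigl(\bigcap_\alpha J_\alpha\bigr)\otimes_A\Gamma=\bigcap_\alpha\bigl(J_\alpha\otimes_A\Gamma\bigr)$ inside $\Gamma$, and tensoring with a flat module does not commute with infinite intersections in general: already $\bigcap_n p^n\Z=0$ while $\bigcap_n\bigl(p^n\Z\otimes_{\Z}\Q\bigr)=\Q$ inside $\Z\otimes_{\Z}\Q$. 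So ``the module-theoretic intersection of subcomodules is computed as a subobject in comodules'' does not follow from the formal properties you cite; finite intersections are fine, but the infinite case needs a genuine argument (or the proof needs to be rerouted around it). To be fair, the paper's own write-up silently assumes the same statement at the same spot, so apart from this one added justification — which does not work as stated — your proof matches the published argument.
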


\begin{proof}
For the first point, observe first that for every invariant ideal $I$, the set $V(I)$ is invariant and hence the complement of $V(I)$ defines an invariant open. Thus the image in $|\XX|$ is open. Moreover, $|\pi|^{-1}(|\pi|(V(I))) = V(I)$. Thus $|\pi|(V(I))$ is the complement of $|\pi|(|\Spec A| \setminus V(I))$ and thus closed. 

Assume now that $I$ is an invariant prime ideal. Let $\eta_I$ be the image of the generic point $\eta$ of $V(I)$. Then $\overline{\{\eta_I\}} \subseteq |\pi|(V(I)) = |\pi|\overline{\{\eta\}} \subseteq \overline{\{\eta_I\}}$ and hence $\overline{\{\eta_I\}} =  |\pi|(V(I))$.

For the injectivity of $\Spec^{\inv}(A) \to |\XX|$, let $I, J\subseteq A$ be two invariant prime ideals with $\eta_I = \eta_J$. By the first point, this implies that $|\pi|(V(I)) = |\pi|(V(J))$ and hence $V(I) = V(J)$. Thus, $I = J$. 

The continuity of $\Spec^{\inv}(A) \to |\XX|$ follows from that of $\Spec A \to |\XX|$. An arbitrary closed set of $\Spec^{\inv}(A)$ is of the form $V(I) \cap \Spec^{\inv}(A)$ for some ideal $I\subseteq A$. Set $I' = \bigcap_{I\subseteq J}J$, where the $J\subseteq A$ run over all invariant ideals containing $I$. Then  $V(I) \cap \Spec^{\inv}(A) = V(J)\cap \Spec^{\inv}(A)$. If $|\pi|\colon \Spec^{\inv}(A) \to |\XX|$ is a bijection, $|\pi|^{-1}(|\pi|(V(J))) = V(J)$ implies $|\pi|(V(J)\cap \Spec^{\inv}A) ) = |\pi|(V(J))$ and this is closed by the first part.
\end{proof}

We warn the reader that in general, the preimage of an irreducible closed subset of $|\XX|$ won't be irreducible in $|\Spec A|$ and thus does not correspond to an invariant prime ideal.

We recall that $|\MM_{FG}|$ has been computed by Honda: its points are classified by a pair $(p, n)$, where $p\geq 0$ is the characteristic of the field and $n \in \overline{\N}= \N \cup\{\infty\}$ is the height of the formal group, with $n=0$ iff $p=0$. \revm{More precisely, Honda defined for each prime field of characteristic $p$ the Honda formal group law of height $n$, and showed that any formal group law over a separably closed field is isomorphic to the pushforward of a Honda formal group law. As any field embeds into a separably closed field, the classification of points of $\MM_{FG}$ follows. We will often identify points of $\MM_{FG}$ with pairs $(p,n)$.} 

\begin{proposition}\label{prop:ClassificationPointsMFGA}
    Let $A$ be a compact abelian Lie group. \rev{We obtain a bijection of sets
    \[|\MM_{FG}^A| \to |\MM_{FG}| \times \Sub(A)\]
    (with $\Sub(A)$ being the set of closed subgroups of $A$) as follows:} For a given $A$-equivariant formal group $G$ over a field, the \rev{associated} point in $|\MM_{FG}|$ is \rev{defined by} the pushforward of $G$ along $p\colon A \to \{e\}$, and the \rev{associated subgroup of $A$} is Pontryagin dual to $A^*/\{V\in A^*: e_V = 0\}$.
\end{proposition}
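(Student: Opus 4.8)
The plan is to construct the map and its inverse explicitly, using the structural results about equivariant formal groups over fields from \cref{prop:ClassificationOverFields} together with Honda's non-equivariant classification.

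First I would check that the prescription in the statement genuinely defines a function $|\MM_{FG}^A| \to |\MM_{FG}|\times \Sub(A)$. Given a representative $G$ over a field $k$, the pushforward $p_*G$ along $p\colon A\to\{e\}$ is a non-equivariant formal group over $k$, hence defines a point of $|\MM_{FG}|$; and $\{V\in A^*: e_V=0\}$ is a subgroup of $A^*$ by the first paragraph of the proof of \cref{prop:ClassificationOverFields}, so its Pontryagin dual is a closed subgroup of $A$. I must verify this is compatible with the equivalence relation generating $|\MM_{FG}^A|$: isomorphisms of $A$-equivariant formal groups induce isomorphisms on $p_*G$ and preserve the vanishing locus of Euler classes (the latter because vanishing of $e_V$ is coordinate-free, by \cref{lem:CoordinateFreeEuler}), and base change along a field extension $k\hookrightarrow L$ commutes with $p_*$ and does not change which Euler classes vanish (it changes $0$ to $0$ and a unit to a unit). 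So the map is well defined; the corresponding statement on the $|\MM_{FG}|$-coordinate is exactly Honda's classification as recalled before the proposition.

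Next I would build the inverse. Given a closed subgroup $B\subseteq A$, write $A\xrightarrow{q}A/B\xleftarrow{i}\{1\}$ for the canonical maps; given a point of $|\MM_{FG}|$, represent it by a non-equivariant formal group $\Gamma$ over a field $k$ (Honda gives a canonical such representative, but any will do). Send this pair to the class of $q^*i_*\Gamma$ in $|\MM_{FG}^A|$. Again one checks well-definedness: this uses that $q^*$ and $i_*$ are functorial and commute with base change along field extensions. That the two assignments are mutually inverse is now \cref{prop:ClassificationOverFields}: that proposition says precisely $G\cong q^*i_*p_*G$ where $A/B$ is Pontryagin dual to $A^*/\{V: e_V=0\}$, which shows one composite is the identity; for the other composite, starting from $(B,[\Gamma])$ one computes $p_*(q^*i_*\Gamma)\cong\Gamma$ (done inside the proof of \cref{prop:ClassificationOverFields}) and one checks that for $G=q^*i_*\Gamma$ the vanishing locus $\{V\in A^*: e_V=0\}$ is exactly $\ker(A^*\to (A/B)^*)$, whose Pontryagin dual is $B$ --- this last identification follows from \cref{lem:CoordinateFreeEuler}, since $\varphi_V=\varphi_\epsilon$ for $G=q^*i_*\Gamma$ holds iff $V$ lies in the image of $(A/B)^*$ composed with the structure map, which by construction of $q^*$ is iff $V\in\ker(A^*\to(A/B)^*)$ --- equivalently one invokes the characterization of the essential image of $q^*$ in \cref{cor:fullyfaithful}.

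The main obstacle, I expect, is not any single deep step but rather the bookkeeping needed to see that both assignments descend to the equivalence classes defining $|\MM_{FG}^A|$ and $|\MM_{FG}|$, i.e.\ that everything in sight (pushforward $p_*$, the functors $q^*$, $i_*$, and the vanishing locus of Euler classes) is invariant under isomorphism and stable under extension of the ground field. Once that is in place, the bijectivity is essentially a restatement of \cref{prop:ClassificationOverFields}. I would also remark that this recovers \cite[Corollary 8.3]{StricklandMulti}, and note that the topology on this set is deferred to the subsequent classification of invariant prime ideals.
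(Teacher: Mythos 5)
Your proposal is correct, and it rests on the same two pillars as the paper's proof (\cref{prop:ClassificationOverFields} and the construction $q^*i_*\Gamma$), but the mechanism for getting a bijection is genuinely different. The paper argues surjectivity and injectivity separately; for injectivity it must compare two representatives $G_1, G_2$ living over \emph{different} fields, and it handles this by passing to separably closed fields and invoking Honda's classification to produce a common model $q^*i_*\Gamma$ over the prime field, of which both $G_i$ are base changes. You instead build an explicit two-sided inverse $(B,[\Gamma])\mapsto [q^*i_*\Gamma]$, and absorb the different-fields issue into the well-definedness of this inverse: since the equivalence relation defining the point sets is \emph{generated} by isomorphisms and field extensions, and $q^*$, $i_*$ are functorial and compatible with base change along field maps, the assignment descends to classes. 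This buys you a proof of the bijection with $|\MM_{FG}|\times\Sub(A)$ that does not use Honda's classification at all (Honda is only needed afterwards to identify $|\MM_{FG}|$ with pairs $(p,n)$), at the cost of having to spell out the base-change compatibility of $q^*i_*$ --- a routine check which the paper's argument also uses implicitly, so you are not assuming more than the paper does. One small slip: for $G=q^*i_*\Gamma$ the vanishing locus $\{V\in A^*: e_V=0\}$ is the image of $(A/B)^*\to A^*$, i.e.\ $\ker(A^*\to B^*)$ (there is no natural map $A^*\to (A/B)^*$), and the associated subgroup is the Pontryagin dual of the quotient $A^*/\ker(A^*\to B^*)\cong B^*$, namely $B$ --- not the dual of the vanishing locus itself; your parallel appeal to \cref{cor:fullyfaithful} and \cref{lem:CoordinateFreeEuler} gives the intended and correct statement.
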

\begin{proof}  Given a closed subgroup $B$ of $A$ and a non-equivariant formal group $\Gamma$ over a field $k$, we obtain an $A$-equivariant formal group via $q^*i_*\Gamma$, where $A \xrightarrow{q} A/B \xleftarrow{i} \{1\}$. By \cref{prop:ClassificationOverFields}, every $A$-equivariant formal group $G$ over a field $k$ is isomorphic to one of this form.
Here, the projection $A \to A/B$ is Pontryagin dual to the subgroup \rev{inclusion $\{V\in A^*: e_V = 0\}\subseteq A^*$,} and thus $B$ is uniquely defined by $G$. Moreover, necessarily $\Gamma \cong p_*G$. This implies that \rev{$|\MM_{FG}^A| \to |\MM_{FG}| \times \Sub(A)$ is surjective and} that two $A$-equivariant formal groups $G_1$ and $G_2$ defined over the same field $k$ are isomorphic if and only if they have the same associated closed subgroup and their completions $p_*G_1$ and $p_*G_2$ are isomorphic as non-equivariant formal groups.

\rev{To show that $|\MM_{FG}^A| \to |\MM_{FG}| \times \Sub(A)$ is injective, assume that two points
\[G_i\colon \Spec K_i\to \MM_{FG}^A\]
for $i=1,2$ define the same element of $|\MM_{FG}|\times \Sub(A)$ for separable closed fields $K_i$. 
By the above we know that $G_1\cong q^*i_*p_* G_1$ and $G_2\cong q^*i_*p_*G_2$, where $i$ and $q$ are defined with respect to the same subgroup $B$ of $A$. Moreover, the heights of $p_*G_1$ and $p_*G_2$ and the  characteristics of $K_1$ and $K_2$ agree by assumption. Let $\Gamma$ be a formal group law of the same height as the $p_*G_i$, but defined over the common prime field $K$ of the $K_i$. Both $p_*G_i$ are isomorphic to the pushforward of $\Gamma$ to the $K_i$. Thus, the $A$-equivariant formal group $q^*i_*\Gamma$ defines the same element in $|\MM_{FG}^A|$ as the $G_i \cong q^*i_*p_*G_i$.  Hence $G_1$ and $G_2$ define the same element of $|\MM_{FG}^A|$.}
\end{proof}

\begin{remark}\label{rem:IdentifyPointsAlongOpenImmersion}
    By \cref{prop:OpenClosedSubstacks}, there is an open immersion $\MM_{FG}^{A/B} \to \MM_{FG}^A$ for any closed subgroup $B\subseteq A$ and hence $|\MM_{FG}^{A/B}|$ is homeomorphic to an open subset of $|\MM_{FG}^A|$.  Given $C\subseteq A/B$, the point corresponding to $(C, p, n)$ is $(q^{-1}(C), p, n)$ for $q\colon A \to A/B$ the projection. Indeed, for a $A/B$-equivariant formal group $G$ over a field corresponding to $(C, p, n)$, we have 
    \[\{V\in A^*: e_V = 0\} = q^*(\{V\in (A/B)^*: e_V = 0\}) = q^*(((A/B)/C)^*).\] 
    The Pontryagin dual of $A^*/q^*(((A/B)/C)^*)$ is precisely $q^{-1}(C)$ since $A/q^{-1}(C) \cong (A/B)/C$. 
\end{remark}

\subsection{Invariant prime ideals of $(L_A, S_A)$}
\revm{Our goal in this section is to classify the invariant prime ideals of the Hopf algebroid $(L_A, S_A)$. For that purpose let us introduce and recall some notation.} 
As before, let $\Phi^BL =L_B[e_V^{-1}]$ be the localization of $L_B$ away from 
all Euler classes $e_V$ for $V\neq \epsilon$ and let $\Phi^B S = S_B \tensor_{L_B} \Phi^BL$. 
\revm{We denote by $v_n$ the coefficient of $x^{p^n}$ in the $p$-series $[p]_{F^{\uni}}(x)$ of the universal formal group law $F^{\uni}$ over $L$. We further} denote by $I_{p,n}$ the ideal $(\revm{v_0=}p, v_1, v_2, \dots, v_{n-1}) \subseteq L$, i.e.\ the unique invariant prime ideal at height $n$ containing $p$. We also include the case of $I_{p,0}=0$. In this section, we will denote this ideal by $I_{0,0}$ to uniformize notation with respect to the residue characteristic.\footnote{\revm{Our choice of $v_n$ is different from the more commonly chosen Araki generators $v_n'$, which satisfy $[p](x) = \sum_n^{F^{\uni}}v_n'x^{p^n}$ (see e.g.\ \cite[A.2.2.4]{Rav86}). Thus, modulo $I_{p,n}' = (v_0'=p, v_1', \dots, v_{n-1}')$, the $p$-series is of the form $v_n'x^{p^n}+\cdots$. Therefore, $v_n \equiv v_n' \mod I_{p,n}'$.  Inductively, we see that $I_{p,n}=I_{p,n}'$.}}

\begin{construction}For every triple $(B,p, n)$ of a closed subgroup $B$ of $A$, a prime $p$ and $n\in \overline{\mathbb{N}}$ we define an invariant ideal $I^A_{B,p, n}\subseteq L_A$ as the preimage of $\Phi^BL\cdot I_{p,n}\subseteq \Phi^B L$ along the composite map of Hopf algebroids
\[ (L_A,S_A) \to (L_B,S_B) \to (\Phi^B L,\Phi^B S). \]
\end{construction}
Note that the $\Phi^BL\cdot I_{p,n}$ are indeed prime ideals since by \cref{prop:GeometricFixedPoints} the quotient ring $\Phi^BL/\Phi^BL\cdot I_{p,n}$ is of the form $(L/I_{p,n})[(b_0^V)^{\pm 1}, b_i^V\ |\ i>0, V\in A^*-\{\epsilon\}]$ and hence an integral domain. Thus, the $I^A_{B,p,n}$ are prime as well. 

To simplify notation, we will from now on often write $\Phi^BL/I_{p,n}$ instead of $\Phi^BL/\Phi^BL\cdot I_{p,n}$ and likewise in similar situations.

\begin{theorem}\label{thm:ClassificationOfInvariantPrimes} The assignment
    \begin{eqnarray*} \Sub(A)\times |\MM_{FG}| & \to & \Spec^{\inv}(L_A) \\
        (B,p,n)& \mapsto & I^A_{B,p,n} \end{eqnarray*}
    is a bijection. In other words, the ideals $I^A_{B,p,n}$ are pairwise different and constitute all the invariant prime ideals in $L_A$. 
    
    The map $\Spec^{\inv}(L_A) \to |\MM_{FG}^A|$ from \cref{prop:InvariantPrimeIdealsAndPoints} is a homeomorphism.
    \end{theorem}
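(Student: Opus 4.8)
The plan is to establish the bijection first and then deduce the homeomorphism statement from \cref{prop:InvariantPrimeIdealsAndPoints}. For the bijection, I would proceed in three steps.

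\textbf{Step 1: The map is well-defined and injective.} I have already argued above that each $I^A_{B,p,n}$ is prime, so the assignment lands in $\Spec^{\inv}(L_A)$. For injectivity I would compare with the classification of points of $\MM_{FG}^A$ from \cref{prop:ClassificationPointsMFGA}. The composite
\[ \Sub(A)\times|\MM_{FG}| \to \Spec^{\inv}(L_A) \to |\MM_{FG}^A| \]
sends $(B,p,n)$ to the point represented by $q^*i_*\Gamma_{p,n}$ over the quotient field of $L/I_{p,n}$ (where $A\xrightarrow{q}A/B\xleftarrow{i}\{1\}$), because $\Phi^BL/I_{p,n}$ is obtained from $L_B$ by inverting Euler classes and reducing mod $I_{p,n}$, and by \cref{prop:GeometricFixedPoints} and \cref{prop:EulerClassInvertibleOrZero} an equivariant formal group law classified by a point of this ring is, up to isomorphism, of the form $q^*i_*p_*$ of it. By \cref{prop:ClassificationPointsMFGA} this composite is injective, and hence so is $(B,p,n)\mapsto I^A_{B,p,n}$.

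\textbf{Step 2: Every invariant prime ideal arises this way.} This is the heart of the argument. Given an invariant prime ideal $\mathfrak{p}\subseteq L_A$, it corresponds to an integral closed substack of $\MM_{FG}^A$, or more concretely to a point of $\MM_{FG}^A$ together with the condition that the preimage of the corresponding closed set in $\Spec L_A$ is irreducible. I would argue as follows. Let $B\subseteq A$ be the closed subgroup dual to $\{V\in A^*: e_V\in\mathfrak{p}\}$ --- one checks this is a subgroup exactly as in the proof of \cref{prop:ClassificationOverFields}, using invariance of $\mathfrak{p}$ to see that $e_V\in\mathfrak{p}$ implies $e_{V^{-1}}\in\mathfrak{p}$ and that the set is closed under the group operation via the structure of $L_{A\times\T}$ and the formula $e_{VW}$ expressed through the formal group law. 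Now consider the two cases: either $e_V\in\mathfrak{p}$ for all $V\in\ker(A^*\to B^*)$, in which case by \cref{lem:EulerClassesGenerateKernel} the ideal $I_A^B$ is contained in $\mathfrak{p}$ and $\mathfrak{p}$ is the preimage of an invariant prime ideal of $L_B$ all of whose ``relevant'' Euler classes vanish; or, after localizing at the Euler classes $e_V$ for $V\notin\im(B^*\to A^*)$ (which is harmless since these are \emph{not} in $\mathfrak{p}$ by maximality of $B$), we reduce to computing invariant primes of $\Phi^BL$, which by \cref{prop:GeometricFixedPoints} is a localization of a polynomial ring over $L$; its invariant primes pull back from invariant primes of $L$, which by Landweber--Morava are exactly the $I_{p,n}$. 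Combining, $\mathfrak{p}=I^A_{B,p,n}$ for suitable $p,n$.

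\textbf{Step 3: Conclude the homeomorphism.} Once the bijection is established, \cref{prop:InvariantPrimeIdealsAndPoints}(2) tells us the continuous injection $\Spec^{\inv}(L_A)\to|\MM_{FG}^A|$ is automatically a homeomorphism, provided it is surjective; and surjectivity follows because the composite of this map with the bijection of Step 1 is the bijection $\Sub(A)\times|\MM_{FG}|\to|\MM_{FG}^A|$ of \cref{prop:ClassificationPointsMFGA}.

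\textbf{Main obstacle.} I expect the delicate point to be Step 2, specifically the reduction of the ``geometric fixed point'' case to the non-equivariant Landweber--Morava classification: one must check that an invariant prime of the localized ring $\Phi^BL$ (with respect to the Hopf algebroid $(\Phi^BL,\Phi^BS)$) is genuinely induced from an invariant prime of $(L,L\tensor MU)$ and not something new coming from the extra polynomial generators $b_i^V$ --- equivalently, that the map of stacks $\MM_{FG}^{B/B}\times(\text{extra variables})\to\MM_{FG}$ is, on invariant primes, just projection. This requires understanding how strict isomorphisms act on the $b_i^V$, i.e.\ a careful analysis of the Hopf algebroid $(\Phi^BL,\Phi^BS)$. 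The bookkeeping for why $\{V: e_V\in\mathfrak p\}$ is a subgroup is the second place where care is needed, but that is a direct adaptation of \cref{prop:ClassificationOverFields}.
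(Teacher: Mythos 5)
Your overall frame --- feeding the ideals $I^A_{B,p,n}$ through the injection $\Spec^{\inv}(L_A)\hookrightarrow|\MM_{FG}^A|$ of \cref{prop:InvariantPrimeIdealsAndPoints} and the point classification of \cref{prop:ClassificationPointsMFGA} --- is exactly the paper's, but your Step 2, which you call the heart of the argument, is a genuine gap as written. There you attempt a direct classification of invariant primes: cut down to $L_B$, localize to $\Phi^BL$, and then assert that every invariant prime of the Hopf algebroid $(\Phi^BL,\Phi^BS)$ is (the extension of) some $I_{p,n}$, with no new primes involving the generators $b_i^V$ of \cref{prop:GeometricFixedPoints}. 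You flag this yourself as the main obstacle, and rightly so: it is a Landweber--Morava-type statement that requires real control of the $\Phi^BS$-coaction on the $b_i^V$, and even routine-looking sub-steps (e.g.\ that the contraction to $L$ of an invariant prime of $(\Phi^BL,\Phi^BS)$ is again invariant) are not formal for maps of Hopf algebroids. Nothing in your proposal supplies this, so the surjectivity half of the bijection is not established by your argument.

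The irony is that Step 2 is unnecessary: your Steps 1 and 3 already contain the paper's entire proof. The paper never classifies invariant primes directly; it only verifies, as in your Step 1, that the point of $|\MM_{FG}^A|$ attached to $I^A_{B,p,n}$ (via the fraction field $k$ of $L_A/I^A_{B,p,n}$) is precisely $(B,p,n)$. The two checks are that $e_V$ vanishes in $k$ exactly for $V\in\ker(A^*\to B^*)$ --- using that $L_A/I^A_{B,p,n}$ embeds into $\Phi^BL/I_{p,n}$, so the other Euler classes are nonzero, hence invertible in $k$ --- and that the underlying height is \emph{exactly} $n$, i.e.\ $v_n\notin I^A_{B,p,n}$, because $\Phi^BL/I_{p,n}$ is a nonzero polynomial ring over $L/I_{p,n}$ by \cref{prop:GeometricFixedPoints}; your sketch glosses over this second point. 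Once this is in place, surjectivity is automatic by a pigeonhole through \cref{prop:InvariantPrimeIdealsAndPoints}: an arbitrary invariant prime $\mathfrak{p}$ maps to some point $(B,p,n)\in|\MM_{FG}^A|$, the ideal $I^A_{B,p,n}$ maps to the same point, and injectivity of $\Spec^{\inv}(L_A)\to|\MM_{FG}^A|$ forces $\mathfrak{p}=I^A_{B,p,n}$; the homeomorphism then follows as in your Step 3. (If you insist on your Step 2 route, note that the same pigeonhole applied to $(\Phi^BL,\Phi^BS)$, which presents the open substack $\MM_{FG}\simeq\MM_{FG}^{B/B}\subseteq\MM_{FG}^B$ of \cref{prop:OpenClosedSubstacks}, would prove your missing claim --- but that is just the paper's argument run one group at a time.)
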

\begin{proof}
    By \cref{prop:InvariantPrimeIdealsAndPoints}, we know that $\Spec^{\inv}(L_A)$ injects into $|\MM_{FG}^A|$ and the latter we computed to be $\Sub(A) \times |\MM_{FG}|$ as a set. Thus, it suffices to show that the element of $\Sub(A) \times |\MM_{FG}|$ associated to $I^A_{B, p,n} \in \Spec^{\inv}(L_A)$ is precisely $(B, (p,n))$, where $p$ is a prime number if $n>0$ and $0$ if $n=0$.  

    To spell this out concretely, let $k$ be the field of fractions of $L_A/I^A_{B, p,n}$. We denote by $F$ the pushed forward $A$-equivariant formal group law over $k$ and by $G$ the corresponding $A$-equivariant formal group. Since $\Spec k \to \Spec L_A$ hits the point corresponding to the prime ideal $I^A_{B, p,,n}$, the corresponding point in $|\MM_{FG}^A|$ is represented by $\Spec k \xrightarrow{G} \MM_{FG}^A$. By the classification in \cref{prop:ClassificationPointsMFGA}, we need to show three things:
    \begin{enumerate}
        \item the set of $V\in A^*$ such that $e_V = 0$ in $k$ is precisely $\ker(A^* \to B^*)$, 
        \item $k$ has characteristic $p$ (which is clear), and
        \item the pushforward $p_*G$ along $A \xrightarrow{p} \{1\}$ has height $n$. 
    \end{enumerate}
For the first, recall from \cref{lem:EulerClassesGenerateKernel} that $I^A_B = \ker(\res\colon L_A \to L_B)$ is generated by the Euler classes $e_V$ for all $V\in \ker(A^*\to B^*)$. These Euler classes must vanish in $k$ since $L_A \to k$ factors through $L_A/I_A^B$. If $V$ is not in $\ker(A^* \to B^*)$, then $e_V \neq 0$ in $L_B$ and hence also in $\Phi^BL/I_{p,n}$ (as else $\Phi^BL/I_{p,n} = 0$). Since $L_A/I^A_{B, p,,n}$ injects into $\Phi^BL/I_{p,n}$, the Euler class $e_V$ is actually nonzero in $L_A/I^A_{B, p,,n}$ and hence invertible in $k$. This shows the first point.

The pushforward $p_*G$ is classified by the composite
\[g\colon L \to L_A \to L_A/I^A_{B,p,n} \to k.\]
The ideal $I_{p,n}\cdot L_A$ maps to $0$ in $\Phi^BL/I_{p,n}$ and is hence contained in $I^A_{B,p,n}$. Therefore, $g$ factors through $L/I_{p,n}$ and the height of $p_*G$ is at least $n$. \revm{If $n=\infty$, we are done. If $n<\infty$, i}t remains to show that $v_n$ is non-zero in $L_A/I^A_{B,p,n}$ and hence in $k$. By definition of $I^A_{B,p,n}$, the map $L_A \to \Phi^BL/I_{p,n}$ factors over $L_A/I^A_{B,p,n}$. We know by \cref{prop:GeometricFixedPoints} that $\Phi^BL/I_{p,n}$ is an integral domain of the form $(L/I_{p,n})[(b_0^V)^{\pm 1}, b_i^V]$. In particular, $v_n$ is non-trivial in $\Phi^BL/I_{p,n}$ and hence in $L_A/I^A_{B,p,n}$. This shows that $p_*G$ is of height $n$ as desired, which finishes the proof.
\end{proof}

Unraveling the definition, we obtain the following description of the ideal $I^A_{B,p,n}$, where $I^A_B$ still denotes the kernel of the restriction map $L_B\to L_A$.
\begin{lemma} \label{lem:characterizationofideals} An element $x\in L_A$ lies in $I^A_{B,p,n}$ if and only if there exists an $A$-representation $W$ with $W^B=0$, such that
\[ x\cdot e_W \in (I^A_B,I_{p,n}).\]
\end{lemma}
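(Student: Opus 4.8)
The plan is to unravel the definition of $I^A_{B,p,n}$ as the preimage of $\Phi^BL\cdot I_{p,n}$ under $L_A\xr{\res} L_B\to \Phi^BL$, and to identify which elements of $L_B$ land in $\Phi^BL\cdot I_{p,n}$. First I would reduce to the case where $B=A$ by working with $\bar x=\res^A_B(x)\in L_B$: by definition $x\in I^A_{B,p,n}$ iff $\bar x$ maps into $\Phi^BL\cdot I_{p,n}$, and $\res^A_B$ is surjective with kernel $I^A_B$ (\cref{lem:EulerClassesGenerateKernel}), so the condition ``$x\cdot e_W\in(I^A_B,I_{p,n})$ for some $A$-representation $W$ with $W^B=0$'' translates, after restricting to $B$, into ``$\bar x\cdot e_{\res W}\in I_{p,n}\cdot L_B$ for some $B$-representation $\res W$ whose characters are all nontrivial''. (One must check the translation goes both ways: a $B$-representation with no trivial summand is the restriction of some such $A$-representation, e.g.\ via coinduction, or more simply because $B^*\to A^*$ need not be surjective but one can take $W$ to be a sum of $A$-characters restricting to the given $B$-characters after enlarging — here I would use that $\Sub$ allows choosing preimages of characters, and that adding trivial $A$-summands is harmless since they restrict to trivial $B$-summands. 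The cleanest route is to note $e_{\res^A_B V}=\res^A_B(e_V)$ and that the ideal generated by Euler classes of nontrivial $B$-characters equals the ideal generated by $\res^A_B(e_V)$ over all $A$-characters $V$ with $V|_B$ nontrivial.)

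So it suffices to prove: for $\bar x\in L_B$, one has $\bar x\in \ker(L_B\to \Phi^BL\cdot I_{p,n})$ — equivalently $\bar x$ maps to zero in $\Phi^BL/I_{p,n} = (\Phi^BL)/(I_{p,n}\cdot\Phi^BL)$ — if and only if $\bar x\cdot e_W\in I_{p,n}\cdot L_B$ for some $B$-representation $W$ with $W^B=0$. Now $\Phi^BL/I_{p,n}$ is obtained from $L_B/I_{p,n}\cdot L_B$ by inverting all Euler classes $e_V$ for $\epsilon\neq V\in B^*$. An element of a ring maps to zero in such a localization iff it is annihilated by a product of the inverted elements; since every finite product of Euler classes $e_{V_1}\cdots e_{V_k}$ with all $V_i\neq\epsilon$ equals $e_W$ for $W=V_1\oplus\cdots\oplus V_k$ (a $B$-representation with $W^B=0$), the condition ``$\bar x=0$ in $\Phi^BL/I_{p,n}$'' is exactly ``$\bar x\cdot e_W=0$ in $L_B/I_{p,n}\cdot L_B$ for some such $W$'', i.e.\ ``$\bar x\cdot e_W\in I_{p,n}\cdot L_B$''. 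This is just the standard fact that localizing a module (here $L_B/I_{p,n}$, or $L_B$ itself) at a multiplicative set $S$ kills precisely the $S$-torsion, combined with the observation that the multiplicative set generated by the $e_V$, $V\neq\epsilon$, consists (up to multiplication) of the $e_W$ with $W^B=0$, together with $1$ (corresponding to $W=0$, the degenerate case recovering $x\in(I^A_B,I_{p,n})$).

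The only genuine subtlety — and the step I expect to need the most care — is the bookkeeping in passing between $A$-representations and $B$-representations in the reduction of the first paragraph: one has to make sure that the $e_W$ appearing in the statement (Euler class of an \emph{$A$-representation} $W$ with $W^B=0$) generate, after restriction to $B$, the same multiplicative set modulo $I^A_B$ as the $e_V$ for nontrivial $V\in B^*$. For this I would argue that $W^B=0$ forces every character of $W$ to restrict nontrivially to $B$ (so $\res^A_B e_W$ is a product of Euler classes of nontrivial $B$-characters), and conversely, given nontrivial characters $V_1,\dots,V_k\in B^*$, one can choose $A$-characters $\widetilde V_i$ restricting to them; then $\widetilde W=\bigoplus\widetilde V_i$ has $\widetilde W^B=0$ and $\res^A_B e_{\widetilde W}=e_{V_1}\cdots e_{V_k}$, so the two ideals match. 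Feeding this into the preimage description — $x\in I^A_{B,p,n}$ iff $\res^A_B(x)$ is $e_W$-torsion modulo $I_{p,n}$ for some such $W$ iff $\res^A_B(x\cdot e_W)\in I_{p,n}\cdot L_B$ iff $x\cdot e_W\in(I^A_B, I_{p,n}\cdot L_A)$, using $\ker(\res^A_B)=I^A_B$ — completes the proof.
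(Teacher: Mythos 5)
Your proposal is correct and follows essentially the same route as the paper: unravel the definition of $I^A_{B,p,n}$ as the kernel of $L_A\to L_B/I_{p,n}\to \Phi^BL/I_{p,n}$, use that the second map is a localization at Euler classes of $B$-representations with trivial $B$-fixed points (so the kernel is exactly the Euler-class torsion), and pass between $B$- and $A$-representations by extending characters along the surjection $A^*\to B^*$. The bookkeeping step you flag as the main subtlety is exactly the paper's "extend $\overline{W}$ to an $A$-representation $W$" step, so no genuinely different ideas are involved.
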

\begin{proof} By definition, $I^A_{B,p,n}$ is the kernel of the composition
\[ L_A\xrightarrow{\res^A_B}L_B\to L_B/L_B\cdot I_{p,n}\to \Phi^BL/\Phi^BL\cdot I_{p,n}. \]
The composition $L_A\to L_B\to L_B/I_{p,n}$ is surjective with kernel $(I^A_B,I_{p,n})$, and the map $L_B/I_{p,n}\to \Phi^BL/I_{p,n}$ inverts all Euler classes $e_{\overline{W}}$ for $B$-representations $\overline{W}$ with $\overline{W}^B=0$. The product of Euler classes is an Euler class again. Hence, if $x\in L_A$ is contained in $I^A_{B,p,n}$, its image $\overline{x}$ in $L_B/I_{p,n}$ must be annihilated by such an Euler class $e_{\overline{W}}$.  We can extend $\overline{W}$ to an $A$-representation $W$ and find that $x\cdot e_W$ is contained in $(I^A_B,I_{p,n})$, as desired.

For the opposite direction, if we assume that $x\cdot e_W \in (I^A_B,I_{p,n})$ for some $A$-representation $W$ with $W^B=0$, then $\overline{x}\cdot e_{\res^A_BW}=0$ in $L_B/I_{p,n}$. Since $\res^A_BW$ has trivial $B$-fixed points, $e_{\res^A_BW}$ becomes invertible in $\Phi^BL/I_{p,n}$ and hence $\overline{x}$ is taken to $0$ there. Therefore, $x$ is contained in $I^A_{B,p,n}$.
\end{proof}
When $B$ is a torus, the ideal $I^A_{B,p,n}$ is easy to describe explicitly:
\begin{cor} \label{cor:idealsfortori} If $B$ is a torus, then $I^A_{B,p,n}=(I^A_B,I_{p,n})$.
\end{cor}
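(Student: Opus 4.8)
The plan is to deduce the corollary from the explicit characterization of $I^A_{B,p,n}$ provided by \cref{lem:characterizationofideals}, combined with the fact that Euler classes of non-torsion characters remain non-zero divisors after passing to $L/I_{p,n}$, as recorded in \cref{cor:regularmodIn}. One inclusion is essentially free: taking the representation $W = 0$ in \cref{lem:characterizationofideals} (or reading off the defining construction directly) gives $(I^A_B, I_{p,n}) \subseteq I^A_{B,p,n}$, and this requires no assumption on $B$.

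For the reverse inclusion I would take $x \in I^A_{B,p,n}$ and apply \cref{lem:characterizationofideals} to produce an $A$-representation $W$ with $W^B = 0$ such that $x \cdot e_W \in (I^A_B, I_{p,n})$; the task is then to cancel the factor $e_W$. I would pass to the quotient $L_A/(I^A_B, I_{p,n}\cdot L_A)$, which by the surjectivity and computation of the kernel of $\res^A_B$ in \cref{lem:EulerClassesGenerateKernel} is identified with $L_B/(I_{p,n}\cdot L_B)$. In this ring the relation reads $\overline{x}\cdot e_{\res^A_B W} = 0$. Here is where the torus hypothesis enters: since $B^*$ is torsion-free, the condition $W^B = 0$ forces $\res^A_B W$ to decompose as a direct sum of non-trivial, hence \emph{non-torsion}, characters of $B$, so that $e_{\res^A_B W}$ is a finite product of Euler classes $e_V$ with $V\in B^*$ non-torsion. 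By \cref{cor:regularmodIn} each such $e_V$ is a non-zero divisor in $L_B/(I_{p,n}\cdot L_B)$, hence so is their product $e_{\res^A_B W}$; therefore $\overline{x} = 0$, i.e.\ $x \in (I^A_B, I_{p,n})$.

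I do not expect a genuine obstacle here — the corollary is a direct payoff of \cref{lem:characterizationofideals} and \cref{cor:regularmodIn}, the essential point being that over a torus there are no torsion characters to obstruct the cancellation (which is precisely why $I^A_{B,p,n}$ is strictly larger than $(I^A_B, I_{p,n})$ for non-toral $B$, where it absorbs Euler-class-power torsion). The only care needed is in the degenerate cases: when $n = 0$ one has $I_{p,0} = 0$ and should instead invoke the unquotiented statement in \cref{prop:regularglobal} that non-torsion Euler classes are non-zero divisors in $L_B$ itself, and the case $n = \infty$ is already included in \cref{cor:regularmodIn} since it is stated for all $n\in\overline{\N}$.
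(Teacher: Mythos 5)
Your proof is correct and is essentially the paper's argument: the paper simply packages your elementwise cancellation as the statement that, since all non-trivial characters of a torus are non-torsion, \cref{cor:regularmodIn} makes $L_B/I_{p,n}\to \Phi^BL/I_{p,n}$ injective, so $I^A_{B,p,n}$ is the kernel of $L_A\to L_B/I_{p,n}$, which is $(I^A_B,I_{p,n})$ by \cref{lem:EulerClassesGenerateKernel}. Routing through \cref{lem:characterizationofideals} and cancelling $e_{\res^A_BW}$ is just an unwound version of the same reasoning, and your remarks about the degenerate cases $n=0,\infty$ are fine (both are already covered by \cref{cor:regularmodIn}).
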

\begin{proof} When $B$ is a torus, every non-trivial character is non-torsion and thus the map
\[ L_B/I_{p,n}\to \Phi^BL/I_{p,n} \]
is injective (Corollary \ref{cor:regularmodIn}). Hence, $I_{B,p,n}^A$ is equal to the kernel of
\[ L_A\to L_A/I_{p,n}\to L_B/I_{p,n}, \]
which is generated by $I^A_B$ and $I_{p,n}$.
\end{proof}

We note the following useful corollary:
\begin{cor} 
\revm{Let $A$ be a torus, $I_{p,n}\subseteq L$ an invariant prime ideal with $n<\infty$,} and consider the augmentation ideal \[I=\ker(L_A/I_{p,n}\to L/I_{p,n}),\]
i.e., the ideal generated by all the Euler classes.
Then the intersection $J=\cap_{k\in \N}I^k\subseteq L_A/I_{p,n}$ equals the $0$-ideal.
\end{cor}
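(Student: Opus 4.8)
The plan is to identify $\bigcap_k I^k$ with the kernel of the $I$-adic completion map of $L_A/I_{p,n}$, to recognise this kernel as an invariant prime ideal, and then to run through the short list of invariant primes contained in the augmentation ideal, eliminating all but the zero ideal by a direct computation with Euler classes.

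Write $A\cong\T^d$ and $M=L_A/I_{p,n}$. First I would record two preliminary facts: by \cref{cor:idealsfortori} the ideal $I_{p,n}L_A$ equals $I^A_{A,p,n}$, which is an invariant \emph{prime} ideal, so $M$ is an integral domain; and by \cref{lem:EulerClassesGenerateKernel} applied to the trivial subgroup, $I$ is generated by the images of $e_{\tau_1},\dots,e_{\tau_d}$, where $\tau_i\colon\T^d\to\T$ is the $i$-th projection.

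\textbf{The completion.} By \cref{cor:regularmodIn}, iterated over the characters $\tau_d,\dots,\tau_1$, the sequence $e_{\tau_d},\dots,e_{\tau_1}$ is regular in $M$ with $M/(e_{\tau_1},\dots,e_{\tau_d})=L/I_{p,n}$. By part (2) of \cref{prop:regularglobal} the $(e_{\tau_1},\dots,e_{\tau_d})$-adic completion of $L_A$ is the power series ring $L\llbracket y_1,\dots,y_d\rrbracket$ with $y_i\leftrightarrow e_{\tau_i}$; using this together with \cref{prop:free} and the isomorphisms $L_A/(e_{\tau_1},\dots,e_{\tau_d})^k\cong L[y_1,\dots,y_d]/(y_1,\dots,y_d)^k$ supplied by the regular sequence, I would reduce modulo $I_{p,n}$ and pass to the inverse limit to obtain a ring isomorphism
\[ \widehat M:=\varprojlim_k M/I^k\;\cong\;(L/I_{p,n})\llbracket y_1,\dots,y_d\rrbracket.\]
Since $\widehat M$ is a domain and $\bigcap_k I^k=\ker(M\to\widehat M)$, the ideal $\bigcap_k I^k$ is prime. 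I would also record that the composite $M\to\widehat M\to L/I_{p,n}$ given by $y_i\mapsto 0$ is restriction to the trivial subgroup, and — by naturality of the completion isomorphism in $\T^d$, as in the square with the power maps $[m]\colon\T\to\T$ recorded after \cref{prop:regularglobal} — that a character $V=\tau_1^{a_1}\cdots\tau_d^{a_d}$ has $e_V\mapsto[a_1]_F(y_1)+_F\cdots+_F[a_d]_F(y_d)$ in $\widehat M$, reduced mod $I_{p,n}$.

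\textbf{Invariance and the classification.} The augmentation ideal $I$ is invariant (it is the kernel of the Hopf-algebroid map $L_A\to L$ restricting to the trivial subgroup, reduced mod $I_{p,n}$), hence so is each $I^k$, and hence — $S_A$ being flat over $L_A$ — so is $\bigcap_k I^k$. Thus $\bigcap_k I^k$ is an \emph{invariant} prime ideal of $M$, so by \cref{thm:ClassificationOfInvariantPrimes} it is of the form $I^A_{B,q,m}/I_{p,n}L_A$ for a closed subgroup $B\subseteq A$ with $I^A_{B,q,m}\supseteq I^A_{A,p,n}$. If $B\subsetneq A$, I would choose a nontrivial $V\in\ker(A^*\to B^*)$; then $e_V$ restricts to $e_\epsilon=0$ in $L_B$, so $e_V\in I^A_{B,q,m}$ and hence $e_V\in\ker(M\to\widehat M)$. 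But specialising all variables except one $y_i$ with $a_i\neq 0$ to $0$, the image of $e_V$ in $\widehat M$ becomes $[a_i]_F(y_i)\bmod I_{p,n}$, which is a nonzero power series over the domain $L/I_{p,n}$ for every $a_i\neq0$: writing $a_i=p^bu$ with $p\nmid u$, it equals $[u]_F([p^b]_F(y_i))$ with $[u]_F$ of unit leading coefficient, while $[p]_F(y)\equiv v_n'\,y^{p^n}+\cdots\pmod{I_{p,n}}$ with $v_n'\neq0$ — and this is exactly where $n<\infty$ enters. This contradiction forces $B=A$, and then by \cref{cor:idealsfortori} one gets $\bigcap_k I^k=I_{q,m}L_A/I_{p,n}L_A$ with $I_{q,m}\supseteq I_{p,n}$. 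Finally $\bigcap_k I^k\subseteq I$ forces $I_{q,m}$ to map to $0$ in $L/I_{p,n}$, i.e.\ $I_{q,m}\subseteq I_{p,n}$; together with $I_{q,m}\supseteq I_{p,n}$ this gives $I_{q,m}=I_{p,n}$, so $\bigcap_k I^k=0$ in $M$, as claimed.

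I expect the main obstacle to be the identification of the $I$-adic completion of $M=L_A/I_{p,n}$ with the stated power series ring: this requires commuting the inverse limit with $-\otimes_L(L/I_{p,n})$, for which the regular sequence of Euler classes and the freeness of $L_A$ over $L$ are exactly what is needed, together with the (routine but necessary) check that $\bigcap_k I^k$ is genuinely an invariant ideal. It is worth emphasising that the hypothesis $n<\infty$ is essential: for $n=\infty$ one has $[p]_F\equiv 0\pmod{I_\infty}$, so $e_{\tau^p}$ then lies in every power of the augmentation ideal and the conclusion fails.
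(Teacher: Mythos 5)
Your proposal is correct and follows essentially the same route as the paper's proof: identify $J$ with the kernel of the completion map to $(L/I_{p,n})\llbracket y_1,\dots,y_d\rrbracket$, deduce that $J$ is an invariant prime ideal, invoke the classification, and rule out $B\subsetneq A$ by observing that Euler classes of non-trivial characters map to non-trivial power series (the $[k]$-series being non-zero precisely because $n<\infty$), before pinning down the height. The only difference is that you spell out the completion isomorphism (regularity of the Euler classes mod $I_{p,n}$ plus freeness over $L$) in more detail than the paper, which simply cites its section on completions.
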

For example, this shows that no element in the $\T$-equivariant Lazard ring $L_\T$ is infinitely often divisible by the Euler class $e$.
\begin{proof} Since $I$ is an invariant ideal of $L_A/I_{p,n}$, so are all its powers and the intersection thereof. Moreover, $J$ equals the kernel of the completion map (cf.\ Section \ref{sec:completion})
\[ L_A/I_{p,n}\to L/I_{p,n}\llbracket y_1,\hdots,y_r\rrbracket ,\]
where $r$ is the rank of $A$ and the $y_i$ are the images of the Euler classes ranging through a basis of $A^*$. Since $L/I_{p,n}\llbracket y_1,\dots,y_r\rrbracket $ is an integral domain, $J$ must be prime and hence an invariant prime ideal.

Therefore $J$ must be of the form $I^{A}_{B,p,m}$ (or rather its image under the projection $L_{A}\to L_{A}/I_{p,n}$) for some subgroup $B$ and height $m\geq n$. Note that $v_n$ is not contained in $I$, hence in particular not in $J$. This means that we must have $m=n$. Moreover, given a character $V=V_{1}^{\otimes k_1}\otimes \dots \otimes V_{r}^{\otimes k_r}\in A^*$ expressed in the chosen basis $V_1,\hdots,V_r$ above, the image of $e_V$ under the completion map is given by
\[ F([k_1]_F(y_1),\dots,[k_r]_F(y_r)) \in L/I_{p,n}\llbracket y_1,\hdots,y_r\rrbracket,\]
where $F$ is the universal formal group law pushed forward to $L/I_{p,n}$. Since we assumed $n$ to be a finite height, the $[k]$-series of $F$ is non-trivial whenever $k$ is non-zero. It follows that for any non-trivial $V$ the image of $e_V$ is non-trivial in $L/I_{p,n}\llbracket y_1,\hdots,y_r\rrbracket $. Hence $J$ contains no Euler class $e_V$ and we must have $B=A$, i.e., 
\[ J=I^A_{A,p,n}=0\subseteq L_A/I_{p,n},\]
as desired.  
\end{proof}

\section{Inclusions between invariant prime ideals} \label{sec:inclusions}
Non-equivariantly the ideals $I_{p,n}\subseteq L$ form ascending towers
\[ (0)=I_{p,0}\subseteq I_{p,1}\subseteq I_{p,2} \subseteq \hdots, \]
essentially by definition. Except for the overlap at $(0)$, there are no inclusions between the towers for different primes $p$. For invariant prime ideals in the equivariant Lazard ring $L_A$ we saw that we have one tower
\[ I^A_{B,p,0}\subseteq I^A_{B,p,1} \subseteq I^A_{B,p,2} \subseteq \hdots \]
for every pair of a closed subgroup $B$ and prime $p$. Again, there will be no interplay between the towers associated to different primes (except for the overlap at height $0$). However, there are additional inclusions connecting the towers for different subgroups $B,B'$ at the same prime~$p$. This relationship between the heights at different subgroups is one of the essential properties of equivariant formal groups. It is closely related to the blue-shift phenomenon in stable homotopy theory. We say more about this in Section \ref{sec:comparison} below.

To see that there is no inclusion between towers associated with different primes we note that $p=v_0\in I^A_{B,p,n}$ whenever $n\geq 1$. It is easy to see that $p$ maps non-trivially under $L_A\to \Phi^{B'}L/I_{q,n'}$ for $q\neq p$ (since the target is free over $L/I_{q,n}$ by \cref{prop:GeometricFixedPoints}). 
Hence there cannot be an inclusion $I^A_{B,p,n}\subseteq I^A_{B',q,n'}$ for $n\geq 1$ and $p\neq q$. Moreover, if $n=0$ we have $I^A_{B,p,0}=I^A_{B,q,0}$. Hence we can reduce to studying containments between invariant prime ideals associated to the same prime $p$.

\medskip

\textbf{New convention}: For this reason and to simplify notation we from now on and for the rest of the paper implicitly localize at a fixed prime $p$. That is, we consider the $p$-localized Lazard ring $L_A$ and denote its invariant prime ideals simply by $I^A_{B,n}$, omitting the chosen prime $p$. We further sometimes abbreviate $I^A_{A,n}$ to $I_{A,n}$.
\medskip

Hence our goal is to understand for which pairs of subgroups $B,B'$ and natural numbers $n,n'$ there is an inclusion
\[  I^A_{B,n} \subseteq I^A_{B',n'}.\]
We will show the following:
\begin{theorem} \label{thm:inclusions} There is an inclusion $I^A_{B,n} \subseteq I^A_{B',n'}$ if and only if the following conditions are satisfied:
\begin{enumerate}
    \item $B'$ is a subgroup of $B$ and $\pi_0(B/B')$ is a $p$-group.
    \item We have $n'\geq n+\rank_p(\pi_0(B/B'))$.
\end{enumerate}
\end{theorem}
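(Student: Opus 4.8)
I would prove \cref{thm:inclusions} by a chain of reductions collapsing the problem to the circle and to finite cyclic groups, where everything can be computed, together with one substantial construction (the height‑shifting elements) for the sharpness of the rank bound.

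\emph{Step 1: first reductions.} Necessity of $B'\subseteq B$ is immediate: if $B'\not\subseteq B$, choose $V\in A^*$ trivial on $B$ but not on $B'$; then $e_V\in I^A_B\subseteq I^A_{B,n}$, while $e_V$ maps to a unit in $\Phi^{B'}L$ and hence $e_V\notin I^A_{B',n'}$ (a proper ideal), a contradiction. The separation of distinct primes is already recorded before the theorem, so fix $p$ and assume $B'\subseteq B$. Since $I^A_{C,m}=(\res^A_B)^{-1}(I^B_{C,m})$ for $C\subseteq B$ and $\res^A_B$ is surjective (\cref{lem:EulerClassesGenerateKernel}), the inclusion is equivalent to the case $A=B$. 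Now the open immersion $\MM^{A/B'}_{FG}\hookrightarrow\MM^A_{FG}$ (\cref{prop:OpenClosedSubstacks}) has image the non‑vanishing locus of the $e_V$ with $V\notin(A/B')^*$; this contains the points of both $I^A_{A,n}$ and $I^A_{B',n'}$, which by \cref{rem:IdentifyPointsAlongOpenImmersion} pull back to the points of $I^Q_{Q,n}$ and $I^Q_{\{1\},n'}$ for $Q:=A/B'$. As containment of invariant primes is equivalent to specialization of the associated points (\cref{prop:InvariantPrimeIdealsAndPoints}, \cref{thm:ClassificationOfInvariantPrimes}) and specialization passes to open subspaces, we obtain $I^A_{B,n}\subseteq I^A_{B',n'}\iff I^Q_{Q,n}\subseteq I^Q_{\{1\},n'}$ in $L_Q$; the cases $n$ or $n'=\infty$ follow from the finite ones since taking preimages commutes with the increasing union $I_\infty=\bigcup_mI_m$. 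So it remains to show, for $Q$ a compact abelian Lie group, that $I^Q_{Q,n}\subseteq I^Q_{\{1\},n'}$ holds iff $\pi_0(Q)$ is a $p$‑group and $n'\geq n+\rank_p\pi_0(Q)$.

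\emph{Step 2: reduction to cyclic $Q$.} Write $Q\cong\mathbb{T}^r\times P$ and pick a chain $\{1\}=Q_0\subsetneq\dots\subsetneq Q_k=Q$ whose successive quotients are cyclic — each either $\mathbb{T}^r$, a cyclic $p$‑group $C_{p^a}$, or a cyclic $\ell$‑group with $\ell\neq p$ — arranged so that $\sum_i\rank_p\pi_0(Q_i/Q_{i-1})=\rank_p\pi_0(Q)$ (peel off one invariant factor of $P$ at a time). Re‑applying Step 1 to each link $Q_{i-1}\subseteq Q_i\subseteq Q$ reduces the containments $I^Q_{Q_i,m}\subseteq I^Q_{Q_{i-1},m'}$ in $L_Q$ to the statements $I^{Q_i/Q_{i-1}}_{Q_i/Q_{i-1},m}\subseteq I^{Q_i/Q_{i-1}}_{\{1\},m'}$ for cyclic groups; composing links then yields the ``if'' direction once the cyclic cases are known, while for ``only if'' it suffices to exhibit, in a suitable cyclic quotient (or in $C_p^s$, see Step 4), an element violating the sharper inclusion.

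\emph{Step 3: cyclic base cases.} For $\mathbb{T}^r$: $I^{\mathbb{T}^r}_{\mathbb{T}^r,m}=I_mL_{\mathbb{T}^r}$ (\cref{cor:idealsfortori}) is contained in $I^{\mathbb{T}^r}_{\{1\},m'}=(e_1,\dots,e_r,I_{m'})$ iff $m\leq m'$ (the converse by applying $\res^{\mathbb{T}^r}_{\{1\}}$) — no shift. For $C_\ell$, $\ell\neq p$: $L_{C_\ell}=L_{\mathbb{T}}/(e_{\tau^\ell})$ (\cref{lem:EulerClassesGenerateKernel}), and since $[\ell]_F(y)=\ell y\cdot(\text{unit})$ with $\ell$ invertible $p$‑locally, the class $\overline u$ of $e_{\tau^\ell}/e_\tau$ in $L_{C_\ell}$ is killed by $e_{\bar\tau}$ (so $\overline u\in I^{C_\ell}_{C_\ell,0}$) yet restricts to the unit $\ell$ at the trivial group, whence $I^{C_\ell}_{C_\ell,0}\not\subseteq I^{C_\ell}_{\{1\},m'}$ for every $m'$; pulling $\overline u$ back along a surjection $B/B'\twoheadrightarrow C_\ell$ (naturality of Euler classes and restriction) forces $\pi_0(B/B')$ to be a $p$‑group. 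For $C_{p^a}$: again $L_{C_{p^a}}=L_{\mathbb{T}}/(e_{\tau^{p^a}})$, so (using that $e_{\tau^{p^a}}$ lies in both ideals and $\res^{\mathbb{T}}_{C_{p^a}}$ is surjective) $I^{C_{p^a}}_{C_{p^a},m}\subseteq I^{C_{p^a}}_{\{1\},m'}\iff I^{\mathbb{T}}_{C_{p^a},m}\subseteq I^{\mathbb{T}}_{\{1\},m'}$. Modulo $I_m$ the group law $A\mapsto L_A/I_m$ is regular (\cref{cor:regularmodIn}), so \cref{cor:division} applies verbatim: the leading term of $[p^a]_F(y)$ mod $I_m$ is $v_m^{\alpha}y^{p^{am}}$ (with $\alpha=(p^{am}-1)/(p^m-1)$, and $\alpha=a$ when $m=0$), hence $e_{\tau^{p^a}}$ is uniquely divisible by $e_\tau^{p^{am}}$ mod $I_m$, and $w:=e_{\tau^{p^a}}/e_\tau^{p^{am}}\bmod I_m$ restricts to $v_m^{\alpha}$ at the trivial group. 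One checks $I^{\mathbb{T}}_{C_{p^a},m}=(I_m,w)$: the inclusion $\supseteq$ is \cref{lem:characterizationofideals} with the representation $p^{am}\cdot\tau$, and $\subseteq$ because $L_{\mathbb{T}}/(I_m,w)\to\Phi^{C_{p^a}}L/I_m$ is the localization inverting $e_\tau$ (after which $e_{\tau^{p^a}}$ and $w$ are associates and all Euler classes of non‑trivial $C_{p^a}$‑characters become units), while $e_\tau$ is a non‑zero‑divisor modulo $(I_m,w)$ (a short computation in the completion using that $e_\tau$ is a non‑zero‑divisor mod $I_m$ with $\ker(\res^{\mathbb{T}}_{\{1\}})=(e_\tau)$ there). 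Since $I_m\subseteq(e_\tau,I_{m'})\iff m\leq m'$ and $w\in(e_\tau,I_{m'})\iff v_m^{\alpha}\in I_{m'}\iff m'\geq m+1$, we conclude $I^{C_{p^a}}_{C_{p^a},m}\subseteq I^{C_{p^a}}_{\{1\},m'}\iff m'\geq m+1$, which, chained together in Step 2, gives the ``if'' direction in general and the necessity of $m'\ge m+1$ at every cyclic link.

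\emph{Step 4: sharpness of the rank bound — the main obstacle.} It remains, for $\pi_0(Q)$ a $p$‑group of $p$‑rank $s$, to show $I^Q_{Q,n}\not\subseteq I^Q_{\{1\},n+s-1}$. Fixing a surjection $\alpha\colon Q\twoheadrightarrow C_p^s$ and using that $\alpha$ induces compatible ring maps $L_{C_p^s}\to L_Q$ and $\Phi^{C_p^s}L\to\Phi^QL$, together with $\res^Q_{\{1\}}\circ\alpha^*=\res^{C_p^s}_{\{1\}}$ (naturality, cf.\ \cref{rem:naturalitybi}), this reduces to producing an element of $L_{C_p^s}$ that lies in $I^{C_p^s}_{C_p^s,n}$ but not in $I^{C_p^s}_{\{1\},n+s-1}$: its image under $\alpha^*$ then does the job in $L_Q$. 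For $n=0$ this is exactly the element $\overline{\mv}_{s-1}$ of the introduction — of ``height $-1$'' at $C_p^s$ and of ``height exactly $s-1$'' at the trivial subgroup — and for general $n$ one uses a suitable restriction of the $\overline{\mv}_i$ giving the analogue at height $n$. \textbf{The construction of $\overline{\mv}_{s-1}$ and the verification of these two properties is the crux of the whole argument}; it is the algebraic shadow of the existence of finite $C_p^s$‑spectra of underlying type $s$ with rationally non‑trivial geometric fixed points, and is carried out in \cref{subsec:non-inclusions} (\cref{def:vn}, \cref{prop:xn}). Granting it, Steps 1--4 assemble into both directions of \cref{thm:inclusions}.
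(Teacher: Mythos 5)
Your overall skeleton (reduction to $A=B$, $B'=\{1\}$ as in \cref{lem:reduction}, the torus case via \cref{cor:idealsfortori}, the prime-to-$p$ obstruction via the class of $e_{\tau^\ell}/e_\tau$, cyclic $p$-group links, and height-shifting elements for sharpness) is the paper's, but Step 3 contains a genuine error for $C_{p^a}$ with $a\geq 2$. The claimed identity $I^{\T}_{C_{p^a},m}=(I_m,w)$ with $w=e_{\tau^{p^a}}/e_\tau^{p^{am}}$ is false: from $e_{\tau^{p^j}}=\psi_{p^j}^{(m)}\,e_{\tau^{p^{j-1}}}^{p^m}$ one gets $w=\psi_{p^a}^{(m)}\cdot\prod_{j=1}^{a-1}\bigl(\psi_{p^j}^{(m)}\bigr)^{p^{(a-j)m}}$, and $\psi_{p^a}^{(m)}$ itself lies in the kernel of $L_\T/I_m\to \Phi^{C_{p^a}}L/I_m$ (multiply by the image of $e_{\tau^{p^{a-1}}}$, which is invertible there), while the cofactor restricts to a positive power of $v_m$ at the trivial group and is therefore not a unit in the domain $L_\T/I_m$; hence $(I_m,w)\subsetneq (I_m,\psi_{p^a}^{(m)})\subseteq I^\T_{C_{p^a},m}$. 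Your justification breaks precisely at the claim that $L_\T/(I_m,w)\to\Phi^{C_{p^a}}L/I_m$ is the localization inverting only $e_\tau$: inverting $e_{\overline{\tau}}$ in $L_{C_{p^a}}/I_m$ does \emph{not} invert $e_{\overline{\tau}^{p^j}}$ for $0<j<a$, since the composite $L_{C_{p^a}}/I_m\to L_{C_p}/I_m\to \Phi^{C_p}L/I_m$ factors through that localization and kills $e_{\overline{\tau}^{p^j}}$ while the target is non-zero. Consequently the crucial single-link inclusion $I^{C_{p^a}}_{C_{p^a},m}\subseteq I^{C_{p^a}}_{1,m+1}$ is not established for $a\geq 2$, and it cannot be sidestepped by refining your chain in Step 2 to links with quotient $C_p$, since that only yields the weaker bound $n'\geq n+\log_p|\pi_0(B/B')|$ instead of the rank bound. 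This is exactly where the paper's \cref{prop:generators} does real work: the kernel is shown to be generated by $\psi_{p^a}^{(m)}$ by an iterated divisibility argument using that $\psi_{p^a}^{(m)}$ restricts to $v_m$ at every proper subgroup $C_{p^l}$ and that $v_m$ is regular in $L_{C_{p^l}}/I_m$ (via \cref{prop:free}); \cref{cor:generators} and \cref{cor:cyclicinclusion} then give the inclusion your Step 3 was meant to provide.

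The second gap is Step 4: you explicitly grant the construction of $\overline{\mv}_{s-1}$ and its two key properties, deferring to \cref{def:vn} and \cref{prop:xn}, and you yourself call this the crux. As it stands, the "only if" direction is therefore not proved: one still has to show that $e_{\epsilon\otimes\tau^p}$ becomes divisible by $\prod_{V\in (C_p^{s-1})^*}e_{V\otimes\tau}$ after passing to $L_{C_p^{s-1}\times\T}/p^*I_{C_p^{s-1},0}$ (which requires the split exact sequences modulo the inflated ideal and integrality of the relevant quotients), that the resulting class restricts to $v_{s-1}$, and the intermediate-subgroup statements of \cref{cor:resxn} needed for general $n$, which themselves rely on the already-proved inclusions. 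Until both the $C_{p^a}$ generation statement and the construction of $\overline{\mv}_n$ are supplied, the proposal proves neither direction of \cref{thm:inclusions} in full.
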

Hence, for example there are inclusions $I^\T_{\mathbb{T},n}\subseteq I^\T_{1,n}$ and $I^{C_{p^k}}_{C_{p^k},n}\subseteq I^{C_{p^k}}_{1,n+1}$, but $I^{C_{p}^k}_{C_{p}^k,n}$ is not contained in $I^{C_{p}^k}_{1,n+k-1}$. In fact the theorem can be formally reduced to checking those three special cases, as we will see below. Note also that the theorem in particular says that given a chain of inclusions $B'\subseteq B\subseteq A$, the question whether $I^A_{B,n}$ is contained in $I^A_{B',n'}$ does not depend on the ambient group $A$, but only on $B,B',n$ and $n'$.

\cref{thm:inclusions} can be interpreted as a statement about the heights of geometric fixed points of localizations of $L_A$, in the following way. Recall from \cite[Definition 4.1]{HoveyStricklandComodules} that the \emph{height} $ht(R)$ of an $L$-algebra $R$ is the maximal $n$ such that $R/(I_n\cdot R)\neq 0$; equivalently, it is the minimal number $n$ such that $I_{n+1}\cdot R=R$. If there is no such $n$, the height is understood to be infinite. If $R=0$, the height is $-1$. \revm{Denoting by $\Phi^A((L_A)_{I^A_{B,n}})$ the localization of $L_A$ inverting all Euler classes and all elements outside of $I^A_{B,n}$,} we have the following:
\begin{cor} \label{cor:blueshift} Let $B\subseteq A$ be a closed subgroup, $n\in \mathbb{N}$. If $\pi_0(A/B)$ is not a $p$-group, or if $\pi_0(A/B)$ is a $p$-group but $\rank_p(\pi_0(A/B))>n$, then the geometric fixed points $\Phi^A((L_A)_{I^A_{B,n}})$ are trivial. Otherwise, their height is given by
\[ ht(\Phi^A((L_A)_{I^A_{B,n}})) = n - \rank_p(\pi_0(A/B)). \]
\end{cor}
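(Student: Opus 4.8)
The plan is to derive \cref{cor:blueshift} directly from \cref{thm:inclusions} by translating the two combinatorial conditions there into statements about heights, via the definition of $\Phi^A((L_A)_{I^A_{B,n}})$ and the characterization of $I^A_{A,m}$. The key observation is that $\Phi^A((L_A)_{I^A_{B,n}})$ is an $L$-algebra, so its height is governed by which ideals $I_m\cdot L$ survive after base change, and that there is a clean correspondence: $I_m\cdot \Phi^A((L_A)_{I^A_{B,n}})$ is the whole ring if and only if the localization of $L_A$ at $I^A_{B,n}$ kills $I_m$ after inverting all Euler classes, which is the same as asking whether the prime $I^A_{A,m}$ (the geometric-fixed-point-at-$A$ height-$m$ prime) is contained in the localizing prime $I^A_{B,n}$.

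First I would make precise the ring-theoretic bookkeeping. By definition $\Phi^A((L_A)_{I^A_{B,n}}) = L_A[e_V^{-1} : V\neq \epsilon]_{S}$ where $S = L_A \setminus I^A_{B,n}$; call this ring $R$. For a prime ideal $\mathfrak q \subseteq L_A$, the localized-and-Euler-inverted ring $R$ has $R/(I_m\cdot R) \neq 0$ if and only if there is a prime of $L_A$ containing $I_m\cdot L_A$, avoiding $S$, and not containing any Euler class $e_V$ for $V\neq\epsilon$; equivalently, if and only if there is an invariant prime $\mathfrak p$ with $I_m \cdot L_A \subseteq \mathfrak p \subseteq I^A_{B,n}$ and $e_V\notin\mathfrak p$ for all nontrivial $V$. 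The last condition on Euler classes, by \cref{lem:characterizationofideals} (or directly by the construction of $I^A_{C,p,m}$), forces $\mathfrak p = I^A_{A,m}$: it is exactly the invariant prime ideal of "subgroup $A$, height $m$", the preimage of $\Phi^A L\cdot I_m$ under $L_A \to \Phi^A L$. Hence $\height(R) \geq m$ if and only if $I^A_{A,m}\subseteq I^A_{B,n}$, and $\height(R)$ is the supremum of such $m$ (with $R=0$, i.e.\ height $-1$, precisely when no such $m$, not even $m=0$, works — which happens when even $\Phi^A L_{I^A_{B,n}}$ vanishes because $I^A_{B,n}$ does not contain the nilpotent... more precisely when $I^A_{A,0}\not\subseteq I^A_{B,n}$, i.e.\ some nontrivial Euler class already lies in $I^A_{B,n}$).

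Now I would plug in \cref{thm:inclusions} with the roles $(B,n)\rightsquigarrow(A,m)$ as the smaller ideal and $(B',n')\rightsquigarrow(B,n)$ as the larger: $I^A_{A,m}\subseteq I^A_{B,n}$ holds if and only if (1) $B$ is a subgroup of $A$ (automatic) and $\pi_0(A/B)$ is a $p$-group, and (2) $n \geq m + \rank_p(\pi_0(A/B))$. Therefore: if $\pi_0(A/B)$ is not a $p$-group, no $m\geq 0$ satisfies this, so $R/(I_0\cdot R)=R=0$, giving height $-1$ — but wait, we must double check the $m=0$ edge case, since $I^A_{A,0}=I^A_{B,0}$ by the height-$0$ collapse and the inclusion condition then only needs $\pi_0(A/B)$ a $p$-group with $n\geq 0$; so when $\pi_0(A/B)$ is a $p$-group, $I^A_{A,0}\subseteq I^A_{B,n}$ always, hence $R\neq 0$; and when it is not a $p$-group, $I^A_{A,0}\not\subseteq I^A_{B,n}$, hence $R=0$ and the height is $-1$, matching "trivial" in the statement. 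If $\pi_0(A/B)$ is a $p$-group, let $r=\rank_p(\pi_0(A/B))$; then the largest $m$ with $n\geq m+r$ is $m=n-r$ when $n\geq r$, so $\height(R)=n-r$; and when $r>n$ there is no such $m\geq 0$ except... actually $m=n-r<0$ is not allowed, and $m=0$ would need $n\geq r$, which fails, so again $R/(I_1\cdot R)$... hmm, here one has to be slightly careful: $I^A_{A,0}\subseteq I^A_{B,n}$ still holds (height-$0$ collapse needs only $p$-group), so $R\neq 0$, but $I^A_{A,1}\subseteq I^A_{B,n}$ needs $n\geq 1+r$, false; so $R/(I_1\cdot R)=0$ while $R\neq 0$, meaning $\height(R)=0\neq n-r$. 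This shows the statement as literally written needs the hypothesis $r\leq n$ for the formula, and indeed the corollary says "if $\rank_p(\pi_0(A/B))>n$, then $\Phi^A((L_A)_{I^A_{B,n}})$ are trivial" — so I must recheck whether $R$ really vanishes in that case, which it does precisely when some nontrivial Euler class lies in $I^A_{B,n}$.

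So the main obstacle, and the step I would spend real care on, is the edge-case analysis pinning down exactly when $\Phi^A((L_A)_{I^A_{B,n}})=0$: this is equivalent to "$I^A_{B,n}$ contains a nontrivial Euler class $e_V$", and by \cref{lem:characterizationofideals} and the containment $I^A_{A,0}\subseteq I^A_{B,n}\Leftrightarrow \pi_0(A/B)$ is a $p$-group, one sees $R=0$ iff $\pi_0(A/B)$ is not a $p$-group. To also get $R=0$ when $\pi_0(A/B)$ is a $p$-group but $r>n$, I would instead argue directly: restrict along $A\to A/B \xleftarrow{} \{1\}$ to reduce, as in the proof of \cref{thm:inclusions}, to the case $A=C_p^r$, $B=A$, $n<r$, where $\Phi^A((L_A)_{I^A_{A,n}})$ is computed to vanish because the relevant generator $\overline{\mv}_{r-1}$ (from \cref{def:vn}, \cref{prop:xn}) lies in $I^A_{A,n}$ while being a unit times an Euler-class power in $\Phi^A L$ — i.e.\ its geometric-fixed-point image is invertible, forcing $\Phi^A L_{I^A_{A,n}}=0$. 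Once the vanishing cases are settled, the remaining (nonvanishing) cases follow purely formally from the height characterization above and \cref{thm:inclusions}, so I would present the corollary as: (i) reduce height computations to inclusion questions about $I^A_{A,m}$, (ii) invoke \cref{thm:inclusions}, (iii) handle the two vanishing regimes by the Euler-class/$\overline{\mv}$ argument.
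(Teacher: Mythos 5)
Your overall strategy is the paper's: translate the height of $\Phi^A((L_A)_{I^A_{B,n}})$ into the question of whether $I^A_{A,m}\subseteq I^A_{B,n}$ (the paper does this in one line, noting that $I_m$ generates the unit ideal in the localized geometric fixed points iff some element of $I^A_{A,m}$ lies outside $I^A_{B,n}$), and then feed this into \cref{thm:inclusions}. Your first paragraph carries this out essentially correctly (the parenthetical ``forces $\mathfrak p=I^A_{A,m}$'' is too strong, but all you need is $I^A_{A,m}\subseteq\mathfrak p$, which holds since $\mathfrak p$ avoids Euler classes and contains $I_m$). The problem is the ``$m=0$ edge case'' where you derail. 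The identity $I^A_{A,0}=I^A_{B,0}$ is false: the height-$0$ collapse in \cref{thm:inclusions} is independence of the \emph{prime}, not of the \emph{subgroup}. For instance $\overline{\mv}_0\in I^{C_p}_{C_p,0}$ but $\overline{\mv}_0\notin I^{C_p}_{1,0}$ (it restricts to $p$), while $e_V\in I^{C_p}_{1,0}\setminus I^{C_p}_{C_p,0}$. Consequently $I^A_{A,0}\subseteq I^A_{B,n}$ is \emph{not} automatic for $\pi_0(A/B)$ a $p$-group; \cref{thm:inclusions} at $m=0$ requires $n\geq\rank_p(\pi_0(A/B))$. With the theorem applied correctly, all cases — including $\rank_p(\pi_0(A/B))>n$, where $I^A_{A,0}\not\subseteq I^A_{B,n}$ and hence (since $I_0=(0)$) the ring vanishes — follow uniformly, exactly as in the paper; there is no tension with the corollary and no extra hypothesis is needed.

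Your attempted repair of the $r>n$ regime is also incorrect on two counts. First, the vanishing criterion ``$\Phi^A((L_A)_{I^A_{B,n}})=0$ iff some nontrivial Euler class lies in $I^A_{B,n}$'' is false: for $A=C_p$, $B=\{1\}$, $n=1$ one has $e_V\in I^{C_p}_{1,1}$, yet the ring is nonzero of height $0$ (the invariant prime $I^{C_p}_{C_p,0}\subseteq I^{C_p}_{1,1}$ avoids all Euler classes and survives the localization). The correct criterion is $I^A_{A,0}\not\subseteq I^A_{B,n}$, i.e.\ the existence of Euler-power torsion outside $I^A_{B,n}$. Second, the direct argument is garbled: the reduction should be to $B=\{1\}$ (not $B=A$), and $\overline{\mv}_{r-1}$ maps to \emph{zero}, not to a unit, in $\Phi^{C_p^r}L$ — its defining relation $\overline{\mv}_{r-1}\cdot\prod_V e_{V\otimes\overline\tau}=0$ exhibits it as Euler-power torsion. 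What actually does the job (and is the content behind the ``only if'' half of \cref{thm:inclusions}, via \cref{cor:resxn}) is that a lift $x_{r-1}$ of $\overline{\mv}_{r-1}$ lies in $I^A_{A,0}$ but \emph{not} in $I^A_{1,n}$ for $n<r$; such an element becomes a unit in $(L_A)_{I^A_{B,n}}$ while dying in $\Phi^AL$, forcing the ring to vanish. But once you use \cref{thm:inclusions} correctly at $m=0$, no separate argument is needed at all.
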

\begin{proof} We have $I_m\cdot \Phi^A((L_A)_{I^A_{B,n}})=\Phi^A((L_A)_{I^A_{B,n}})$ if and only if there exists an element of $L_A$ not contained in $I^A_{B,n}$ which is mapped to $I_m\cdot \Phi^A L_A$ under the geometric fixed point map. Since $I_{A,m}$ is defined precisely as the preimage of $I_m\cdot \Phi^A L_A$, this in turn is equivalent to $I^A_{A,m}$ not being contained in $I^A_{B,n}$. 

By Theorem \ref{thm:inclusions} we know that if $\pi_0(A/B)$ is not a $p$-group or if $\pi_0(A/B)$ is a $p$-group but $\rank_p(\pi_0(A/B))>n$, then $I^A_{A,0}$ is not contained in $I^A_{B,n}$. Since $I_0=(0)$, this implies that the geometric fixed points $\Phi^A((L_A)_{I^A_{B,n}}) = I_0\cdot \Phi^A((L_A)_{I^A_{B,n}})$ are trivial. 

If $\pi_0(A/B)$ is a $p$-group and $r=\rank_p(\pi_0(A/B))\leq n$, then the theorem tells us that $I^A_{A,n-r}\subseteq I^A_{B,n}$ and $I^A_{A,n-r+1}\not\subseteq I^A_{B,n}$. Hence we have $I_{n-r}\cdot \Phi^A((L_A)_{I^A_{B,n}})\neq \Phi^A((L_A)_{I^A_{B,n}})$ and $I^A_{n-r+1}\cdot \Phi^A((L_A)_{I^A_{B,n}})= \Phi^A((L_A)_{I^A_{B,n}})$, as claimed.
\end{proof}

\begin{remark} The techniques of this paper can be used to compute the height of geometric fixed points for many complex oriented theories. We give one example of this in Proposition \ref{prop:blueshift} below.
\end{remark}

\begin{remark}\label{rem:InclusionsAndStackClosure}
    There is an inclusion $I^A_{B,n} \subseteq I^A_{B',n'}$ if and only if in $|\MM_{FG, (p)}^A|$, the point corresponding to $I^A_{B',n'}$ lies in the closure of the point corresponding to $I^A_{B,n}$. Thus, \cref{thm:inclusions} can be interpreted as a result about the topology of $|\MM_{FG, (p)}^A|$. 
\end{remark}

The proof of Theorem \ref{thm:inclusions} takes up the remainder of this section.

\subsection{Formal reduction to the $p$-toral case} \label{sec:reduction} Our first step is the following:
\begin{lemma} \label{lem:mustbesubgroup} If there is an inclusion $I^A_{B,n} \subseteq I^A_{B',n'}$, then $B'$ is a subgroup of $B$ and $n'\geq n$.
\end{lemma}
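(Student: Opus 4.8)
The plan is to prove the two assertions — that $B'\subseteq B$ and that $n'\geq n$ — separately, in each case transporting the containment $I^A_{B,n}\subseteq I^A_{B',n'}$ into the explicitly understood rings $\Phi^BL/I_{p,n}$ and $\Phi^{B'}L/I_{p,n'}$. By \cref{prop:GeometricFixedPoints} these are polynomial rings $(L/I_{p,n})[(b_0^V)^{\pm1},b_i^V\mid i>0,\,V\neq\epsilon]$ over $L/I_{p,n}$; the point of passing to them is that they are nonzero for every $n\in\overline{\N}$ (since $L/I_{p,n}\neq 0$), a unit in them is never zero, and the coefficient subring $L/I_{p,n}$ embeds into them. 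Since $I^A_{B,n}$ is by definition the kernel of $L_A\to L_B\to\Phi^BL\to\Phi^BL/I_{p,n}$, membership of the two kinds of elements we care about (Euler classes and the $v_i$) becomes easy to test.

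First I would pin down which Euler classes $e_V\in L_A$ lie in $I^A_{B,n}$. By naturality of Euler classes (\cref{rem:naturalitybi}) the image of $e_V$ under $\res^A_B$ is $e_{V|_B}$. If $V|_B=\epsilon$ then $e_{V|_B}=e_\epsilon=0$, so $e_V\in I^A_{B,n}$; if $V|_B\neq\epsilon$ then $e_{V|_B}$ is one of the Euler classes inverted in $\Phi^BL$, hence a nonzero unit in $\Phi^BL/I_{p,n}$, so $e_V\notin I^A_{B,n}$. Thus $e_V\in I^A_{B,n}$ exactly for $V\in\ker(A^*\to B^*)$, the annihilator of $B$ in $A^*$. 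Applying this to both ideals, the containment forces $\ker(A^*\to B^*)\subseteq\ker(A^*\to (B')^*)$; since Pontryagin duality identifies closed subgroups of $A$ with subgroups of $A^*$ via annihilators, inclusion-reversingly, this yields $B'\subseteq B$.

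For $n'\geq n$, I would use that $I_{p,n}\cdot L_A\subseteq I^A_{B,n}$ (as $I_{p,n}$ maps to zero in $\Phi^BL/I_{p,n}$), so every $v_i$ with $i<n$ lies in $I^A_{B',n'}$. Then I would observe that the composite $L\to L_A\to\Phi^{B'}L/I_{p,n'}$ factors as $L\twoheadrightarrow L/I_{p,n'}\hookrightarrow (L/I_{p,n'})[(b_0^V)^{\pm1},b_i^V]$, the second map being injective; hence an element of $L$ lies in $I^A_{B',n'}$ if and only if it lies in $I_{p,n'}$. So $v_i\in I_{p,n'}$ for every $i<n$, and by Landweber's description of the chain $I_{p,0}\subsetneq I_{p,1}\subsetneq\cdots$ (in which $v_{m-1}$ is the generator newly appearing at stage $m$, so $v_j\in I_{p,m}\iff j<m$) this forces $n'\geq n$. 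The cases $n=0$ (vacuous) and $n=\infty$ (forcing $n'=\infty$, since then all $v_i$ lie in $I^A_{B',n'}$) are handled directly.

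I do not anticipate a real obstacle: once \cref{prop:GeometricFixedPoints} is in hand the argument is essentially bookkeeping, and the only points requiring a moment's care are checking that $\Phi^BL/I_{p,n}$ is nonzero for all $n\in\overline{\N}$ and that vanishing in it is detected on $L/I_{p,n}$ — both immediate from the explicit polynomial description. An alternative, slicker route would be to invoke \cref{thm:ClassificationOfInvariantPrimes} together with \cref{rem:InclusionsAndStackClosure} and argue purely in terms of specializations of points of $|\MM_{FG}^A|$, but the hands-on argument above is self-contained and sets up notation reused in the rest of \cref{sec:inclusions}.
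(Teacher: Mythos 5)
Your proof is correct and follows essentially the same route as the paper: both parts come down to testing Euler classes (to force $B'\subseteq B$, via a character trivial on $B$ but not on $B'$ — your characterization of which $e_V$ lie in $I^A_{B,n}$ plus Pontryagin duality is just a repackaging of this) and testing the elements $v_i$ (to force $n'\geq n$), with \cref{prop:GeometricFixedPoints} supplying the key fact that $L/I_{p,n'}$ injects into the nonzero ring $\Phi^{B'}L/I_{p,n'}$. No substantive difference from the paper's argument.
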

\begin{proof} When $B'$ is not a subgroup of $B$ we can choose a character $V\in A^*$ which is trivial when restricted to $B$ but non-trivial when restricted to $B'$. Its Euler class $e_V$ then restricts to $0$ in $L_B$, in particular it is contained in $I^A_{B,n}$ for all $n$. On the other hand, its restriction to $B'$ becomes an invertible element in the non-trivial ring $\Phi^{B'}L/I_{n'}$ and is hence non-trivial. Therefore $e_V$ is not an element of $I_{B',n'}^A$. It follows that $I^A_{B,n}$ cannot be contained in $I^A_{B',n'}$.

If $n'<n$, then $v_{n'}$ is contained in $I_n$ but not in $I_{n'}$. This implies that $v_{n'}$ (now thought of as an element of $L_A$) is contained in $I^A_{B,n}$ but not in $I^A_{B',n'}$, since $\Phi^{B'}L/I_{n'}$ is a non-trivial free module over $L/I_{n'}$ by \cref{prop:GeometricFixedPoints}. Hence, again, $I^A_{B,n}$ cannot be contained in $I^A_{B',n'}$.
\end{proof}

\begin{lemma} \label{lem:reduction} Let $B'\subseteq B$ be an inclusion of subgroups of $A$ and $n,n'\in \mathbb{N}$. Then there is an inclusion 
\[ I_{B,n}^A \subseteq I_{B',n'}^A\]
if and only if there is an inclusion
\[ I_{B,n}^B \subseteq I_{B',n'}^B\]
if and only if there is an inclusion 
\[ I_{B/B',n}^{B/B'}\subseteq I_{B'/B',n'}^{B/B'}.\]
\end{lemma}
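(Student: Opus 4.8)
The statement has two independent ``if and only if''s, which I would handle separately. For the first equivalence, between inclusions in $L_A$ and inclusions in $L_B$, the key observation is that the ideal $I^A_{B',n'}$ is, by construction, the preimage of $I^B_{B',n'}$ along the restriction map $\res^A_B\colon L_A\to L_B$ (this follows by unravelling the \emph{Construction} preceding \cref{thm:ClassificationOfInvariantPrimes}: both $I^A_{B',n'}$ and $I^B_{B',n'}$ are the preimage of $\Phi^{B'}L\cdot I_{p,n'}$, and the composite map of Hopf algebroids factors through $(L_B,S_B)$). Similarly, $I^A_{B,n}=(\res^A_B)^{-1}(I^B_{B,n})$. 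Since $\res^A_B$ is surjective by \cref{lem:EulerClassesGenerateKernel}, preimages of ideals along it correspond bijectively (and inclusion-preservingly) to ideals of $L_B$ containing the kernel $I^A_B$. Both $I^B_{B,n}$ and $I^B_{B',n'}$ contain $\res^A_B(I^A_B)=0$ trivially, but more to the point $I^A_{B,n}\supseteq I^A_B$ and $I^A_{B',n'}\supseteq I^A_B$ because $I^A_B$ restricts to $0$ in $L_B$; hence the inclusion $I^A_{B,n}\subseteq I^A_{B',n'}$ holds in $L_A$ if and only if it holds after passing to $L_A/I^A_B\cong L_B$, i.e.\ if and only if $I^B_{B,n}\subseteq I^B_{B',n'}$.

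For the second equivalence, between inclusions in $L_B$ and in $L_{B/B'}$, I would use the contravariant (inflation) functoriality. Let $q\colon B\to B/B'$ be the quotient map, inducing $q^*\colon L_{B/B'}\to L_B$. By \cref{cor:fullyfaithful} (or \cref{prop:OpenClosedSubstacks}(i)), $q^*$ exhibits $\MM_{FG}^{B/B'}$ as an open substack of $\MM_{FG}^B$, namely the non-vanishing locus of the Euler classes $e_V$ for $V\notin\im((B/B')^*\to B^*)$; concretely $L_B[e_V^{-1}\mid V\notin\im(q^*)]$ is the localization of $L_B$ along which this open immersion is pulled back. The points $I^B_{B,n}$ and $I^B_{B',n'}$ both lie in this open locus: indeed $B'$ and $B$ both contain $B'$, so the characters that must be invertible are exactly those trivial on $B'$ but not on $B$ (for the point at $B$) resp.\ trivial on $B'$ and then automatically handled — more carefully, one checks via \cref{lem:characterizationofideals} or the classification \cref{prop:ClassificationOverFields} that a character $V$ with $V\notin\im(q^*)$ (i.e.\ $V$ nontrivial on $B'$) has $e_V$ invertible in both $\Phi^B L/I_n$ and $\Phi^{B'}L/I_{n'}$, hence $e_V\notin I^B_{B,n}$ and $e_V\notin I^B_{B',n'}$. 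Therefore both primes are in the image of $\Spec$ of the localization, and inclusions among them can be tested there; but that localization, together with the $\G_m$-action, has associated stack $\MM_{FG}^{B/B'}$, and under the open immersion the primes $I^B_{B,n}$ and $I^B_{B',n'}$ correspond to $I^{B/B'}_{B/B',n}$ and $I^{B/B'}_{B'/B',n'}$ respectively (use \cref{rem:IdentifyPointsAlongOpenImmersion} with $B$ there equal to our $B'$, and the subgroups $B/B'\supseteq B'/B'=\{1\}$, noting $q^{-1}(B/B')=B$, $q^{-1}(\{1\})=B'$). Since an open immersion is injective on points and inclusions of primes correspond to specialization relations, which are preserved and reflected by open immersions, we conclude $I^B_{B,n}\subseteq I^B_{B',n'}$ iff $I^{B/B'}_{B/B',n}\subseteq I^{B/B'}_{B'/B',n'}$.

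\textbf{Main obstacle.} The delicate point is the second equivalence: I need that the two invariant primes in question genuinely lie in the open substack $\MM_{FG}^{B/B'}\subseteq\MM_{FG}^B$, and that passing to this open locus translates $I^B_{B,n}, I^B_{B',n'}$ into exactly $I^{B/B'}_{B/B',n}, I^{B/B'}_{B'/B',n'}$. The first part is a concrete check that the relevant Euler classes are non-nilpotent modulo these primes — this follows from \cref{prop:GeometricFixedPoints} (which shows $\Phi^{B'}L/I_{n'}$ is an honest polynomial ring over $L/I_{n'}$, so Euler classes of nontrivial characters of $B'$ are nonzero there) together with the characterization in \cref{lem:characterizationofideals}. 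The second part is bookkeeping with \cref{rem:IdentifyPointsAlongOpenImmersion} and the naturality of the construction $I^{(-)}_{(-),n}$ under inflation, using that $\Phi^{q^{-1}(C)}L$ computed inside $L_B$ agrees with $\Phi^C L$ computed inside $L_{B/B'}$ for $C\subseteq B/B'$, which is immediate from the definition of $\Phi$ as a localization at Euler classes and the fact that $q^*$ sends Euler classes to Euler classes (cf.\ \cref{rem:naturalitybi}). Once these identifications are in place the equivalence is formal, since open immersions of (locally Noetherian) stacks induce homeomorphisms onto their image and hence preserve and reflect specialization.
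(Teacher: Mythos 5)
Your argument is correct and takes essentially the same route as the paper: the first equivalence because both $I^A_{B,n}$ and $I^A_{B',n'}$ are full preimages of the corresponding ideals along the surjection $\res^A_B\colon L_A\to L_B$, and the second via the open embedding $|\MM_{FG}^{B/B'}|\subseteq |\MM_{FG}^{B}|$ of \cref{rem:IdentifyPointsAlongOpenImmersion}, the homeomorphism of \cref{thm:ClassificationOfInvariantPrimes}, and the fact that closure (specialization) relations between two points lying in a subspace are detected in that subspace. The only flaw is the side claim in your final paragraph that $\Phi^{q^{-1}(C)}L$ agrees with $\Phi^{C}L$, which is false (by \cref{prop:GeometricFixedPoints} these rings have polynomial generators indexed by $q^{-1}(C)^*\setminus\{\epsilon\}$ and $C^*\setminus\{\epsilon\}$ respectively, e.g.\ $\Phi^{B'}L\neq L$ when $C=\{1\}$); fortunately this is not load-bearing, since \cref{rem:IdentifyPointsAlongOpenImmersion} already provides the identification of points, and hence of invariant primes, that your argument uses.
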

\begin{proof} The first two statements are equivalent since the restriction map $\res^A_B\colon L_A\to L_B$ identifies $L_B$ with a quotient of $L_A$, and the ideals $I^A_{B,n}$ and $I^A_{B',n'}$ are the preimages of the ideals $I^B_{B,n}$ and $I^B_{B',n'}$ under the quotient projection.

Phrased differently, the projection $L_A\to L_B$ induces a closed embedding of the stack of $B$-equivariant formal groups into the stack of $A$-equivariant formal groups. On spectra, the image consists precisely of these $I^A_{B'',n}$ with $B''\subseteq B$. This implies the desired equivalence by \cref{rem:InclusionsAndStackClosure}.

For the second equivalence, recall the open embedding $|\MM_{FG}^{B/B'}| \subseteq |\MM_{FG}^B|$ from \cref{rem:IdentifyPointsAlongOpenImmersion}, sending the point $(B''/B',n)$ to $(B'',n)$ for every $B'\subseteq B''\subseteq B$. 
Since the closure relation among points in a subspace can be detected in the subspace, \cref{rem:InclusionsAndStackClosure} gives the result.
\end{proof}
Taken together, the previous two lemmas allow us to reduce to the case $B=A$ and $B'=1$ and understand under what conditions there is an inclusion
\[ I^A_{A,n} \subseteq I^A_{1,n'}, \]
or in other words whether the restriction map $L_A\to L$ maps $I^A_{A,n}$ into the ideal $I_{n'}$. 

Our next goal is to show that we can further reduce to the case where $\pi_0 A$ is a $p$-group. 
For this we choose a prime $q$ and consider the Euler class $e_{\tau^q}\in L_\T$, i.e., the pullback of $e_\tau\in L_\T$ along the $q$th power map $[q]\colon \T\to \T$. The Euler class $e_{\tau^q}$ restricts to $0$ at the trivial group and is hence uniquely divisible by $e_\tau$. We set $\mv_0^{(q)}\in L_\T$ to be the unique element satisfying $e_{\tau^q}= \mv_0^{(q)}\cdot e_\tau$ (the reason for this choice of notation will become clear in Section \ref{sec:noinclusions}). 

Under the completion map $L_\T\to L\llbracket e_\tau\rrbracket $,
the Euler class $e_{\tau^q}$ is sent to the $q$-series $[q]_F(e_\tau)$ of the universal formal group law. Hence, $\mv_0^{(q)}$ is sent to the quotient $[q]_F(e_\tau)/e_\tau$, whose leading coefficient equals $q$. Since the restriction of any element in $L_\T$ to the trivial group equals the leading coefficient of its image in $L\llbracket e_\tau\rrbracket $, we see that $\res^\T_1 \mv_0^{(q)} = q\in L.$
We further set $\overline{\mv}_0^{(q)}\in L_{C_q}$ to be the restriction of $ \mv_0^{(q)}$, and find that it satisfies:
\begin{align*}
     \overline{\mv}_0^{(q)} \cdot e_{\overline{\tau}} & =  0 \\
    \res^{C_q}_1  \overline{\mv}_0^{(q)} &= q 
\end{align*}
Here, $\overline{\tau}$ denotes the restriction of the tautological character $\tau\in \T^*$ to $C_q$. We are now ready to show:
\begin{lemma} If $\pi_0A$ is not a $p$-group, then there is no inclusion of the form $I^A_{A,n}\subseteq I^A_{1,n'}$.
\end{lemma}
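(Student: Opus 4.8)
The plan is to produce, for any choice of $n$ and $n'$, a single element of $L_A$ lying in $I^A_{A,n}$ but not in $I^A_{1,n'}$, built from the element $\overline{\mv}_0^{(q)}$ introduced above for a suitable prime $q\neq p$. Since $\pi_0 A$ is not a $p$-group, its order is divisible by some prime $q\neq p$, so by the structure theory of finite abelian groups there is a surjection $\pi_0 A\twoheadrightarrow C_q$; composing with the canonical quotient $A\to\pi_0 A$ gives a surjective continuous homomorphism $\alpha\colon A\to C_q$. First I would set $x:=\alpha^*\overline{\mv}_0^{(q)}\in L_A$ and $V:=\alpha^*\overline{\tau}\in A^*$, where $\overline{\tau}$ is the tautological character of $C_q$; note $V$ has order $q$, in particular $V\neq\epsilon$. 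Applying $\alpha^*$ to the two defining identities of $\overline{\mv}_0^{(q)}$ and using naturality of Euler classes ($\alpha^*e_{\overline{\tau}}=e_V$) together with $\res^A_1\circ\alpha^*=\res^{C_q}_1$ (because $1\hookrightarrow A\xrightarrow{\alpha}C_q$ is the inclusion of the trivial subgroup), one obtains $x\cdot e_V=0$ in $L_A$ and $\res^A_1 x=q$ in $L$.

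Next I would observe that $x\in I^A_{A,m}$ for every $m\in\overline{\N}$. Since $V\neq\epsilon$, the Euler class $e_V$ is invertible in $\Phi^A L=L_A[e_W^{-1}\mid W\neq\epsilon]$, so the relation $x\cdot e_V=0$ forces the image of $x$ in $\Phi^A L$ to vanish; as $I^A_{A,m}$ is by definition the preimage of $I_m\cdot\Phi^A L$ under $L_A\to\Phi^A L$, this gives $x\in I^A_{A,m}$. (Alternatively, one applies \cref{lem:characterizationofideals} with the one-dimensional representation $W=V$, noting that $I^A_A=0$ and $x\cdot e_V=0\in I_m\cdot L_A$.) In particular $x\in I^A_{A,n}$.

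Finally I would show $x\notin I^A_{1,n'}$. Here $\Phi^1 L=L$, so $I^A_{1,n'}$ is exactly the kernel of the composite $L_A\xrightarrow{\res^A_1}L\to L/I_{n'}$, and $\res^A_1 x=q$. Since we have $p$-localized and $q\neq p$, the element $q$ is already a unit in $\Z_{(p)}\subseteq L$, hence maps to a unit in the nonzero ring $L/I_{n'}$; thus $q\notin I_{n'}$ and $x\notin I^A_{1,n'}$. Combining this with the previous paragraph, $x$ witnesses $I^A_{A,n}\not\subseteq I^A_{1,n'}$ for all $n,n'\in\overline{\N}$. I do not anticipate any real difficulty here: the only points needing a little care are the naturality bookkeeping for $\alpha^*$ and the remark that after $p$-localization every prime $q\neq p$ is invertible, so that $q\notin I_{n'}$ even in the extreme case $n'=\infty$.
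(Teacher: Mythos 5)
Your proposal is correct and follows essentially the same route as the paper: choose a surjection $\alpha\colon A\to C_q$ with $q\neq p$, pull back $\overline{\mv}_0^{(q)}$, note that the relation $\alpha^*\overline{\mv}_0^{(q)}\cdot e_{\alpha^*\overline{\tau}}=0$ with $\alpha^*\overline{\tau}\neq\epsilon$ places it in $I^A_{A,n}$, while its restriction to the trivial group is $q$, a unit after $p$-localization, so it avoids $I^A_{1,n'}$. The extra bookkeeping you add (existence of the surjection via $\pi_0A$, naturality of $\alpha^*$) is fine and matches the paper's implicit reasoning.
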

\begin{proof} If $\pi_0 A$ is not a $p$-group we can choose a surjection $f\colon A\to C_q$ with $q\neq p$ a prime. Then the element $f^*\overline{\mv}_0^{(q)}\in L_A$ satisfies the equation $f^*\overline{\mv}_0^{(q)}\cdot e_{f^*\overline{\tau}}=0$. Since $f$ is surjective, the character $f^*\overline{\tau}\in A^*$ is non-trivial. Hence, $f^*\overline{\mv}_0^{(q)}$ is an element of $I^A_{A,0}$ and hence also of $I^A_{A,n}$. Its restriction to the trivial group equals that of $ \overline{\mv}_0^{(q)}$, which is $q$ and hence a unit in the ($p$-localized) ring $L/I_{n'}$. In other words, $f^* \overline{\mv}_0^{(q)}$ is not an element of $I^A_{1,n'}$. Hence $I^A_{A,0}$ does not include into~$I^A_{1,n'}$. \end{proof}

Combined with Lemmas \ref{lem:mustbesubgroup} and \ref{lem:reduction} we obtain:
\begin{cor}
\label{cor:necessary} If there is an inclusion $I^A_{B,n} \subseteq I^A_{B',n'}$, then $B'$ is a subgroup of $B$, $n'\geq n$ and the quotient $B/B'$ is $p$-toral, i.e.\ a product of a $p$-group and a torus.
\end{cor}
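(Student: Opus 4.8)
The strategy is purely to assemble the three results already established in this subsection; there is no new content to produce. First I would invoke \cref{lem:mustbesubgroup}: from the mere existence of an inclusion $I^A_{B,n}\subseteq I^A_{B',n'}$ it gives at once that $B'$ is a (closed) subgroup of $B$ and that $n'\geq n$. This already disposes of two of the three asserted conclusions, and it is what allows the reduction step to be applied.

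Next, knowing $B'\subseteq B$, I would feed the inclusion into \cref{lem:reduction}, which identifies $I^A_{B,n}\subseteq I^A_{B',n'}$ with the inclusion $I^{B/B'}_{B/B',n}\subseteq I^{B/B'}_{1,n'}$ inside the Lazard ring of the quotient group $B/B'$. Here the essential point is that the restriction map collapses everything into a statement about a single group with ``top'' and ``trivial'' subgroups, which is exactly the form handled by the last lemma of this subsection. Applying that lemma with ambient group $B/B'$ (in its contrapositive form: an inclusion $I^{B/B'}_{B/B',n}\subseteq I^{B/B'}_{1,n'}$ forces $\pi_0(B/B')$ to be a $p$-group) yields that $\pi_0(B/B')$ is a $p$-group.

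Finally I would record the elementary structural remark that an abelian compact Lie group $G$ splits as $G\cong \T^r\times F$ with $F$ a finite abelian group, and that under this splitting $\pi_0(G)\cong F$. Applying this to $G=B/B'$, the condition that $\pi_0(B/B')$ be a $p$-group translates precisely into $B/B'\cong\T^r\times F$ with $F$ a finite abelian $p$-group, i.e.\ $B/B'$ is $p$-toral, which is the last claim. I do not expect any genuine obstacle: every substantive input (the necessity of $B'\subseteq B$, the reduction along restriction and along the open immersion, and the construction of $\overline{\mv}_0^{(q)}$ detecting the prime $q\neq p$) has already been carried out, and only the bookkeeping of combining them remains.
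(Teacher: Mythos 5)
Your proposal is correct and follows essentially the paper's own route: the paper obtains \cref{cor:necessary} precisely by combining \cref{lem:mustbesubgroup}, \cref{lem:reduction}, and the lemma that no inclusion $I^A_{A,n}\subseteq I^A_{1,n'}$ exists when $\pi_0A$ is not a $p$-group (proved via $f^*\overline{\mv}_0^{(q)}$). Your only addition is making explicit the standard splitting $B/B'\cong \T^r\times F$ with $\pi_0(B/B')\cong F$, which the paper leaves implicit and which is indeed valid for compact abelian Lie groups.
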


\subsection{Proof of inclusions} \label{sec:noinclusions}
The next goal is to prove the `if' part of Theorem \ref{thm:inclusions}, i.e., to show that if the conditions on $B,B',n$ and $n'$ stated there are satisfied we do have an inclusion $I^A_{B,n}\subseteq I^A_{B',n'}$. We start with the easiest case:
\begin{lemma} \label{lem:toralinclusions} Let $B'\subseteq B$ be an inclusion of subgroups of $A$ such that $B/B'$ is a torus. Then there is an inclusion $I^A_{B,n}\subseteq I^A_{B',n}$ for all $n$.
\end{lemma}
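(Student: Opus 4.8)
The plan is to reduce, using \cref{lem:reduction}, to the case $A=B$ (a torus) and $B'=1$, so that we must show the restriction map $\res^A_1\colon L_A \to L$ carries $I^A_{A,n}$ into $I_n$. Since $A$ is a torus, \cref{cor:idealsfortori} tells us that $I^A_{A,n} = (I^A_A, I_n)$, where $I^A_A = \ker(\res^A_1)$ is the augmentation ideal generated by all Euler classes. The image of $I^A_A$ under $\res^A_1$ is $0 \subseteq I_n$, and the image of $I_n$ is $I_n$ itself (as $\res^A_1$ is an $L$-algebra map). Hence $\res^A_1(I^A_{A,n}) \subseteq I_n$, which is exactly the inclusion $I^A_{A,n} \subseteq I^A_{1,n}$.

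More carefully, I would first invoke \cref{lem:reduction} to replace $(A,B,B')$ by $(B/B', B/B', 1)$, so the quotient $B/B'$ being a torus means we may assume $A = B$ is itself a torus and $B' = 1$; the claim becomes $I^A_{A,n}\subseteq I^A_{1,n}$. By \cref{lem:characterizationofideals}, or more directly by the description $I^A_{1,n} = \ker(L_A \to L_A/I_n \to L/I_n)$ (valid since every non-trivial character of a torus is non-torsion, so $L/I_n \to \Phi^{\{1\}}L/I_n$ is the identity), it suffices to show that $I^A_{A,n}$ maps into $I_n$ under $\res^A_1$. Applying \cref{cor:idealsfortori} with $B = A$ gives $I^A_{A,n} = (I^A_A, I_n)$, and since $\res^A_1$ annihilates the augmentation ideal $I^A_A$ and fixes $I_n \subseteq L$, the image lies in $I_n$ as desired.

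There is essentially no obstacle here: the content is entirely bookkeeping, namely correctly translating the containment of invariant prime ideals into the reductions already established and then quoting \cref{cor:idealsfortori}. The only point requiring a moment's care is confirming that the reduction of \cref{lem:reduction} applies (it does, since $B'\subseteq B$ and we only need $B/B'$ a torus) and that for a torus the ideal $I^A_{1,n}$ really is the kernel of $L_A \to L/I_n$ — but this is precisely the computation carried out in the proof of \cref{cor:idealsfortori} applied to the subgroup $\{1\}$.
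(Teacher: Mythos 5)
Your proof is correct and follows essentially the same route as the paper: reduce via \cref{lem:reduction} to the case $B=A$ a torus and $B'=1$, then conclude with \cref{cor:idealsfortori}. One notational slip worth fixing: in the paper's notation $I^A_A=\ker(\res^A_A)=0$, not the augmentation ideal $I^A_1=\ker(\res^A_1)$, so \cref{cor:idealsfortori} with $B=A$ gives $I^A_{A,n}=(I_n)$ rather than $(I^A_1,I_n)$ --- your argument is unaffected, since under either reading every element of $I^A_{A,n}$ restricts into $I_n$ and hence lies in $I^A_{1,n}=(I^A_1,I_n)$, which is exactly the paper's conclusion.
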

\begin{proof} By Lemma \ref{lem:reduction} we can reduce to $B=A$ and $B'=1$ the trivial group. Hence $A$ is a torus.  Lemma \ref{cor:idealsfortori} then implies that $I^A_{1,n}$ is equal to the ideal generated by $I_n$ and the augmentation ideal $I^A_1$, whereas $I^A_{A,n}$ is generated by $I_n$ only. Clearly the latter is contained in the former.
\end{proof}
We now turn to showing that there are inclusions $I^{C_{p^k}}_{C_{p^k},n}\subseteq I^{C_{p^k}}_{1,n+1}$. 
For this we show that $I^{C_{p^k}}_{C_{p^k},n}$ is generated by $I_n$ plus one additional element which reduces to $v_n$ under the restriction map $L_{C_{p^k}}/I_n\to L/I_n$. We start with the case $k=1$ and recall again that the Euler class $e_{\tau^p}$ is sent to the $p$-series $[p]_F(e_\tau)$ under the completion map $L_\T\to L\llbracket e_\tau\rrbracket $. Modulo $I_n$, this $p$-series is of the form $v_n e_{\tau}^{p^n}$ + higher order terms. Hence \cref{cor:division} implies that there exists a unique element $\psi_p^{(n)}\in L_\T/I_n$ such that $e_{\tau^p}=\psi_p^{(n)} e_\tau^{p^n}$. This element satisfies $\res^\T_1(\psi_p^{(n)})=v_n$\revm{, i.e.\ it is the coefficient of $x^{p^n}$ in the $p$-series of the universal formal group law.} 

We then set
\[ \psi_{p^k}^{(n)}=[p^{k-1}]^*(\psi_p^{(n)})\in L_\T/I_n \]
for all $k\geq 2$, where $[p^{k-1}]$ is the multiplication-by-$p^{k-1}$ map on the circle. By functoriality, $\psi_{p^k}^{(n)}$ also restricts to $v_n$ at the trivial group. In fact, it already restricts to $v_n$ at $L_{C_{p^{k-1}}}/I_n$. This is because $C_{p^{k-1}}$ is the kernel of $[p^{k-1}]$ and hence the restriction map factors through the trivial group. Applying $[p^{k-1}]^*$ to the defining equation for $\psi_p^{(n)}$, we also obtain $e_{\tauexp{p^k}} = \psi_{p^k}^{(n)}e_{\tauexp{p^{k-1}}}^{p^n}$; here and in the following, we will often abbreviate $e_{\tau^m}$ to $e_m$. 

\begin{prop} \label{prop:generators} For every $k\geq 1$ and $n\in \mathbb{N}$ the element $\psi_{p^k}^{(n)}$ generates the kernel of
    \[ \phi^\T_{C_{p^k}}\colon L_\T/I_n\to L_{C_{p^k}}/I_n\to \Phi^{C_{p^k}}L/I_n. \]
Hence, $I^\T_{C_{p^k},n}$ is generated by the regular sequence $v_0,v_1,\hdots,v_{n-1},\psi_{p^k}^{(n)}$.
\end{prop}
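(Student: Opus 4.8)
The plan is to work with the explicit presentation of $\Phi^{C_{p^k}}L$ coming from \cref{prop:GeometricFixedPoints} and the completion description of $L_\T$ from \cref{prop:regularglobal}(2). First I would unravel the composite $\phi^\T_{C_{p^k}}$: restriction $L_\T \to L_{C_{p^k}}$ followed by inverting the Euler classes $e_{\overline{V}}$ of all non-trivial characters $\overline{V}\in (C_{p^k})^*$. By \cref{prop:GeometricFixedPoints}, $\Phi^{C_{p^k}}L/I_n$ is a polynomial ring $(L/I_n)[(b_0^{\overline{V}})^{\pm1}, b_i^{\overline{V}}]$ over $L/I_n$, which is in particular a free $L/I_n$-module, hence torsion-free over $L/I_n$. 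The key point is that the non-trivial characters of $C_{p^k}$ are exactly the restrictions of $\tau, \tau^2, \dots, \tau^{p^k-1}$, i.e.\ powers $\tau^m$ with $p^k \nmid m$, and the smallest such power class is $\overline{\tau}$ itself; all the $e_{\overline{\tau^m}}$ become invertible in $\Phi^{C_{p^k}}L$.

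The main step is to show $\ker(\phi^\T_{C_{p^k}}) = (\psi_{p^k}^{(n)})$. For the inclusion $(\psi_{p^k}^{(n)}) \subseteq \ker$: we have the defining relation $e_{p^k} = \psi_{p^k}^{(n)} e_{p^{k-1}}^{p^n}$ in $L_\T/I_n$. Now $\tau^{p^k}$ restricts to the \emph{trivial} character of $C_{p^k}$, so $e_{\tau^{p^k}}$ restricts to $0$ in $L_{C_{p^k}}$, hence maps to $0$ in $\Phi^{C_{p^k}}L/I_n$; on the other hand $e_{p^{k-1}}=e_{\tau^{p^{k-1}}}$ restricts to a non-trivial character of $C_{p^k}$ (since $p^k \nmid p^{k-1}$) and thus becomes invertible in $\Phi^{C_{p^k}}L/I_n$. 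Since $\Phi^{C_{p^k}}L/I_n$ is an integral domain, the relation $0 = \phi(\psi_{p^k}^{(n)}) \cdot (\text{unit})$ forces $\phi(\psi_{p^k}^{(n)}) = 0$. For the reverse inclusion $\ker \subseteq (\psi_{p^k}^{(n)})$, I would argue as follows. By \cref{prop:regularglobal}(2) and the discussion after it, the completion map $c\colon L_\T/I_n \to (L/I_n)\llbracket y\rrbracket$ (with $y = e_\tau$) is injective on the relevant piece — more precisely, it is injective because $L_\T$ is $I$-adically separated modulo $I_n$ (this is exactly the corollary proved just before \cref{sec:inclusions}, which shows $\cap_k I^k = 0$ in $L_\T/I_n$). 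Given $x\in\ker(\phi^\T_{C_{p^k}})$, restrict to $L_{C_{p^k}}/I_n$; the image $\overline{x}$ is annihilated by inverting all $e_{\overline{\tau^m}}$, hence $\overline{x}\cdot e_{\overline{\tau}}^{N} = 0$ for some $N$ (one Euler class suffices since a product of Euler classes is again annihilating and $e_{\overline\tau}$ generates enough; here one uses that the kernel of $L_{C_{p^k}}/I_n \to \Phi^{C_{p^k}}L/I_n$ is the $e_{\overline\tau}$-power-torsion, as in \cref{lem:characterizationofideals}). Lifting back, $x\cdot e_\tau^N$ lies in $(I_A^{C_{p^k}}, I_n)$, i.e.\ in the ideal generated by $e_{\tau^{p^k}}$ together with $I_n$, so modulo $I_n$ we get $x \cdot e_\tau^N = e_{p^k}\cdot z = \psi_{p^k}^{(n)} e_{p^{k-1}}^{p^n} z$ for some $z$. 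Now I would pass to the completion $c$ and use that $(L/I_n)\llbracket y\rrbracket$ is a domain in which $c(e_\tau) = y$ and $c(e_{p^{k-1}}) = [p^{k-1}]_F(y)$ are non-zero: cancelling, $c(x) \cdot y^N = c(\psi_{p^k}^{(n)}) \cdot [p^{k-1}]_F(y)^{p^n} \cdot c(z)$, and since modulo $I_n$ we have $[p^{k-1}]_F(y) = (\text{unit})\cdot y^{p^{n(k-1)}}\cdot(1+\cdots)$ — wait, more carefully, $[p]_F(y) \equiv v_n y^{p^n} \bmod I_n$ up to higher terms, so $[p^{k-1}]_F(y)$ has leading term a unit multiple of $y^{p^{n(k-1)}}$ — I can divide through by the appropriate power of $y$ and conclude that $\psi_{p^k}^{(n)}$ divides $x$ in $L_\T/I_n$, using that $e_\tau = y$ is a non-zero-divisor (\cref{prop:regularglobal}(1), \cref{cor:regularmodIn}) and \cref{cor:division} to control divisibility. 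The cleanest packaging is: $x\in\ker(\phi^\T_{C_{p^k}})$ iff $\overline x$ is $e_{\overline\tau}$-power torsion in $L_{C_{p^k}}/I_n$ iff (via the split exact sequence of \cref{cor:regularmodIn} applied iteratively, or directly via the completion) $c(x)$ is divisible by $c(\psi_{p^k}^{(n)})$, iff $x$ is divisible by $\psi_{p^k}^{(n)}$.

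The second sentence of the proposition then follows formally: $I^\T_{C_{p^k},n}$ is by definition the preimage of $I_n\cdot\Phi^{C_{p^k}}L$ along $L_\T\to\Phi^{C_{p^k}}L$, equivalently the preimage of $\ker(\phi^\T_{C_{p^k}})$ along $L_\T\to L_\T/I_n$, which by the first part is $(I_n, \psi_{p^k}^{(n)})$; and this sequence $v_0,\dots,v_{n-1},\psi_{p^k}^{(n)}$ is regular because $v_0,\dots,v_{n-1}$ is regular in $L$ hence in $L_\T$ (as $L_\T$ is free over $L$ by \cref{prop:free}), and $\psi_{p^k}^{(n)}$ is a non-zero-divisor in $L_\T/I_n$ since its completion $c(\psi_{p^k}^{(n)})$ has leading coefficient $\res^\T_1(\psi_{p^k}^{(n)}) = v_n \neq 0$ in $L/I_n$, so $c(\psi_{p^k}^{(n)})$ is a non-zero-divisor in the domain $(L/I_n)\llbracket y\rrbracket$ and $c$ is injective modulo $I_n$.

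The main obstacle I anticipate is the reverse inclusion $\ker(\phi^\T_{C_{p^k}}) \subseteq (\psi_{p^k}^{(n)})$ — specifically, controlling the $y$-adic valuations when cancelling $y^N$ and the power of $[p^{k-1}]_F(y)$, and making sure the quotient one extracts actually lifts from $(L/I_n)\llbracket y\rrbracket$ back to an element of $L_\T/I_n$ rather than just the completion. The tool for the lifting is \cref{cor:division} (combined with injectivity of $c$ modulo $I_n$, i.e.\ the $\cap_k I^k = 0$ corollary), which says precisely that divisibility by powers of $e_\tau$ in $L_\T$ can be detected and performed after completion; iterating it, or rather applying the structural fact that $\ker(L_{C_{p^k}}/I_n \to \Phi^{C_{p^k}}L/I_n)$ is the $e_{\overline\tau}$-power torsion and matching this with divisibility by $\psi_{p^k}^{(n)}$ via the relation $e_{p^k} = \psi_{p^k}^{(n)}e_{p^{k-1}}^{p^n}$, should close the gap.
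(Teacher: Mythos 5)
The easy inclusion $(\psi_{p^k}^{(n)})\subseteq\ker(\phi^\T_{C_{p^k}})$ and the formal deductions at the end are fine, but the core of your argument --- the reverse inclusion --- breaks at its very first reduction, and this is fatal for $k\geq 2$. You claim that the kernel of $L_{C_{p^k}}/I_n\to\Phi^{C_{p^k}}L/I_n$ is the $e_{\overline{\tau}}$-power torsion, so that any $x\in\ker(\phi^\T_{C_{p^k}})$ satisfies $x\cdot e_\tau^N\in(e_{\tau^{p^k}},I_n)$. What \cref{lem:characterizationofideals} actually provides is $x\cdot\prod_i e_{\tau^{m_i}}\in(e_{\tau^{p^k}},I_n)$ with the $m_i$ arbitrary integers not divisible by $p^k$; by \cref{lem:EulerClassesGenerateKernel} one may normalize the $m_i$ to powers of $p$, but one cannot collapse them all to $m_i=1$: modulo $e_{\tau^{p^k}}$ the classes $e_{\tau^{p^i}}$ for different $i$ generate different ideals, and for $i\geq 1$ the class $e_{\tau^{p^i}}$ divides no power of $e_\tau$, since its image in $L/I_n\llbracket y\rrbracket$ is $[p^i]_F(y)$, whose lowest-order coefficient is a power of $v_n$ and hence not a unit. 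Concretely, your claimed membership fails already for $x=\psi_{p^k}^{(n)}$ itself when $k\geq 2$: if $\psi_{p^k}^{(n)}e_\tau^N\in(e_{\tau^{p^k}},I_n)$, then cancelling $\psi_{p^k}^{(n)}$ in the integral domain $L_\T/I_n$ (using $e_{\tau^{p^k}}=\psi_{p^k}^{(n)}e_{\tau^{p^{k-1}}}^{p^n}$) would force $e_{\tau^{p^{k-1}}}^{p^n}\mid e_\tau^N$, which is impossible by the leading-coefficient argument just given. So the input to your completion computation is simply not available. The genuine content of this direction is precisely the handling of the mixed product $e_\tau^{a_0}e_{\tau^p}^{a_1}\cdots e_{\tau^{p^{k-1}}}^{a_{k-1}}$: the paper strips these factors off the other side of the equation one at a time, using that $\psi_{p^k}^{(n)}$ restricts to $v_n$ at every proper subgroup $C_{p^l}$, $l<k$ (because it is inflated along $[p^{k-1}]$), and that $v_n$ is a regular element of $L_{C_{p^l}}/I_n$ by \cref{prop:free}. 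Your proposal contains no substitute for this step.

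A secondary gap: even where your reduction is valid (the case $k=1$), the final descent is not covered by the tools you cite. From $c(x)\,y^N=c(\psi_{p^k}^{(n)})\cdot(\cdots)$ one can indeed conclude $c(\psi_{p^k}^{(n)})\mid c(x)$ in the domain $L/I_n\llbracket y\rrbracket$, but injectivity of $c$ does not transport divisibility back to $L_\T/I_n$ --- you would have to show that the quotient power series lies in the image of $c$ --- and \cref{cor:division} only authorizes division by powers of $e_\tau$, not by $\psi_{p^k}^{(n)}$. The repair is again the restriction mechanism: from $e_\tau\mid \psi_{p^k}^{(n)}z$ one gets $v_n\cdot\res^\T_1(z)=0$, hence $\res^\T_1(z)=0$ and $e_\tau\mid z$, so divisions by Euler classes are detected by vanishing of restrictions; this is exactly how the paper closes the argument, and your write-up needs the same (or an equivalent) device to be complete.
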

\begin{cor} \label{cor:generators} The ideal $I^{C_{p^k}}_{C_{p^k},n}$ is generated by $I_n$ and the restriction $\overline{\psi}^{(n)}_{p^k}$ of $\psi_{p^k}^{(n)}$ to $L_{C_{p^k}}/I_n$.
\end{cor}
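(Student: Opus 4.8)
The plan is to deduce \cref{cor:generators} formally from \cref{prop:generators} by transporting the generating set along the restriction map $\res^\T_{C_{p^k}}\colon L_\T\to L_{C_{p^k}}$. The key structural inputs are that this map is surjective by \cref{lem:EulerClassesGenerateKernel}, and that the geometric fixed point map $\phi^\T_{C_{p^k}}$ used to define $I^\T_{C_{p^k},n}$ factors as $L_\T\xrightarrow{\res^\T_{C_{p^k}}}L_{C_{p^k}}\to \Phi^{C_{p^k}}L$, where the second map is exactly the one defining $I^{C_{p^k}}_{C_{p^k},n}$.

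First I would record the identity $I^\T_{C_{p^k},n}=(\res^\T_{C_{p^k}})^{-1}(I^{C_{p^k}}_{C_{p^k},n})$, which is immediate from the factorization above together with the definition of both ideals (in the Construction preceding \cref{thm:ClassificationOfInvariantPrimes}) as preimages of $\Phi^{C_{p^k}}L\cdot I_n$ along the respective maps, using $(\phi^\T_{C_{p^k}})^{-1}=\res^{-1}\circ(\text{second map})^{-1}$. Since $\res^\T_{C_{p^k}}$ is surjective, applying it to both sides yields $\res^\T_{C_{p^k}}(I^\T_{C_{p^k},n})=I^{C_{p^k}}_{C_{p^k},n}$; more precisely, for any generating set $G$ of $I^\T_{C_{p^k},n}$, the image $\res^\T_{C_{p^k}}(G)$ generates $I^{C_{p^k}}_{C_{p^k},n}$, since the image of an ideal under a surjective ring map is generated by the images of any of its generators.

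It then remains to feed in the explicit generating set of $I^\T_{C_{p^k},n}\subseteq L_\T$ furnished by \cref{prop:generators}, namely $v_0,\dots,v_{n-1}$ together with any lift $\widetilde{\psi}_{p^k}^{(n)}\in L_\T$ of $\psi_{p^k}^{(n)}\in L_\T/I_n$ (the choice of lift is immaterial because $I_n\subseteq I^\T_{C_{p^k},n}$). Since all restriction maps are maps of $L$-algebras, each $v_i$ restricts to the corresponding element $v_i\in L_{C_{p^k}}$, and these generate $I_n\cdot L_{C_{p^k}}$; and $\widetilde{\psi}_{p^k}^{(n)}$ restricts to a lift of $\overline{\psi}^{(n)}_{p^k}$ by the definition of the latter as $\res^\T_{C_{p^k}}(\psi_{p^k}^{(n)})$. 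Hence $I^{C_{p^k}}_{C_{p^k},n}$ is generated by $I_n$ and $\overline{\psi}^{(n)}_{p^k}$, which is the claim. I do not expect a genuine obstacle here; the only point requiring care is the bookkeeping between $L_\T$, the quotient $L_\T/I_n$ in which $\psi_{p^k}^{(n)}$ lives, and their images in $L_{C_{p^k}}$ — but the passage from ``$\psi_{p^k}^{(n)}$ generates $\ker(\phi^\T_{C_{p^k}})$ modulo $I_n$'' to ``$I^\T_{C_{p^k},n}=(v_0,\dots,v_{n-1},\widetilde{\psi}_{p^k}^{(n)})$'' is precisely what the final sentence of \cref{prop:generators} already packages.
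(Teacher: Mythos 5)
Your proposal is correct and is exactly the deduction the paper intends: the corollary is stated without a separate proof precisely because $I^{\T}_{C_{p^k},n}=(\res^\T_{C_{p^k}})^{-1}(I^{C_{p^k}}_{C_{p^k},n})$ by construction, $\res^\T_{C_{p^k}}$ is surjective by \cref{lem:EulerClassesGenerateKernel}, and so the generators from \cref{prop:generators} push forward to generators of $I^{C_{p^k}}_{C_{p^k},n}$. Your handling of the lift of $\psi_{p^k}^{(n)}$ from $L_\T/I_n$ (immaterial since $I_n\subseteq I^\T_{C_{p^k},n}$) is the only point of care, and you address it correctly.
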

\begin{proof}[Proof of Proposition \ref{prop:generators}] Let $x\in L_{\T}/I_n$ be an element mapping to $0$ in $\Phi^{C_{p^k}}L/I_n$, i.e.\ \revm{an element} in the image of $I_{C_{p^k}, n}^{\T}$\revm{ under the reduction map $L_\T\to L_\T/I_n$}. By Lemma \ref{lem:characterizationofideals} we know that we have an equation of the form
\begin{equation} \label{eq:kernel} x\cdot e_1^{b_1}\cdots e_{\tauexp{p^{k-1}}}^{b_{p^k-1}}=y\cdot e_{\tauexp{p^k}}=y'\cdot \psi_{p^k}^{(n)} \end{equation}
for some $b_i\in \N$, $y\in L_\T/I_n$ and $y'=y\cdot (e_{\tauexp{p^k}}/\psi_{p^k}^{(n)})$ since $I_{C_{p^k}}^{\T} = (e_{\tauexp{p^k}})$ by \cref{lem:EulerClassesGenerateKernel}. 

If $l$ is coprime to $p$, then $e_{\tauexp{p^il}}$ and $e_{\tauexp{p^i}}$ become multiples of one another modulo $e_{\tauexp{p^k}}$: indeed, the corresponding characters in $(C_{p^k})^*$ generate the same subgroup and thus \cref{lem:EulerClassesGenerateKernel} implies that $e_{\tauexp{p^il}}$ and $e_{\tauexp{p^i}}$ generate the same ideal in $L_{C_{p^k}}$. It follows that Equation \eqref{eq:kernel}  gives rise to an equation
\[ x\cdot e_{1}^{a_0}\cdot e_{\tauexp{p}}^{a_1}\cdots e_{\tauexp{p^{k-1}}}^{a_{k-1}}=y''\cdot \psi_{p^k}^{(n)}, \]
with only Euler classes for powers of $p$ appearing on the left hand side. We claim that $y''$ must be divisible by the entire product $e_{1}^{a_0}\cdot e_{p}^{a_1}\cdots e_{p^{k-1}}^{a_{k-1}}$.
 \revm{ Since $L_\T/I_n$ is an integral domain, this implies that $x$ is divisible by $\psi_{p^k}^{(n)}$, as desired.} To see the claim, recall that $\psi_{p^k}^{(n)}$ restricts to $v_n$ in $L_{C_{p^l}}/I_n$ for all $l<k$. Hence if $a_l>0$ we have that 
 \[ 0=\res^\T_{C_{p^l}} (y''\cdot \psi_m^{(n)})=\res^\T_{C_{p^l}} (y'')\cdot v_n\in L_{C_{p^l}}/I_n. \]
 Since $v_n$ is a regular element in $L_{C_{p^l}}/I_n$ by \cref{prop:free}, this implies that $\res^\T_{C_{p^l}} (y'')=0$ and hence $y''$ is (uniquely) divisible by $e_{\tauexp{p^l}}$. This argument can be iterated by replacing $y''$ by $y''/e_{\tauexp{p^l}}$, and the statement follows.
\end{proof}
\begin{remark} One can show that more generally there exist elements $\psi_m^{(n)}\in L_\T/I_n$ for all $m\in \mathbb{N}$ uniquely determined by the equations
\begin{equation} \label{eq:psitn} e_{m}= \prod_{t|m} (\psi_t^{(n)})^{p^{\nu_p(\frac{m}{t})\cdot n}},\end{equation}
where $\nu_p(-)$ denotes the $p$-adic valuation of a natural number. The element $\psi_m^{(n)}$ generates the kernel of
    \[ \phi^\T_{C_m}\colon L_\T/I_n\to L_{C_m}/I_n\to \Phi^{C_m}L/I_n\]
and its restriction to the trivial group is given by
\[\res^\T_1(\psi^{(n)}_m)=\begin{cases} 0 & \text{if $m=1$} \\
        v_{n} & \text{if $m=p^l$ and $l>0$} \\
        q & \text{if $m=q^l$ with $q\neq p$ prime and $l>0$} \\
                1 & \text{otherwise.} \end{cases} \]
We note also that every $\psi_m^{(n)}$ is prime, since the geometric fixed points $\Phi^{C_m}L/I_n$ are integral domains. The elements $\psi_m^{(0)}$ were previously considered in \cite[Proposition 5.46]{Hau}, denoted $\psi_m$ there.
\end{remark}

\begin{remark}\label{rem:generators} The proof of Proposition \ref{prop:generators} applies in a more general context. Let $X$ be a global group law in the sense of \cite[Definition 5.1]{Hau}. As the global Lazard ring $\mathbf{L}$ is the initial global group law, there is a unique map $\mathbf{L}\to X$. Assume that the map $L=\mathbf{L}(1)\to X(1)$ sends $I_n$ to $0$, and that for every $l=0,\hdots,k$ the Euler classes $e_{p^l}$ in $X(\T)$ are regular elements and $v_n$ is a regular element in $X(C_{p^l})=X(\T)/e_{p^l}$ (for example this is the case if $v_n$ is regular in $X(\T)$ and $e_{p^l}$ remains a regular element modulo $v_n$). Then the image of $\psi_{p^k}^{(n)}$ in $X(\T)$ generates the kernel of the composition
\[ \Phi^\T_{C_{p^k}}\colon X(\T)\to X(C_{p^k})\to \Phi^{C_{p^k}} X. \]
For example, this applies to the coefficients of many Borel-equivariant complex oriented spectra, which can be used to compute their blue-shift numbers. We make use of this in Proposition \ref{prop:blueshift}.
\end{remark}

\begin{cor} \label{cor:cyclicinclusion} We have an inclusion $I^{C_{p^k}}_{C_{p^k},n}\subseteq I^{C_{p^k}}_{1,n+1}$.
\end{cor}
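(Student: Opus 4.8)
The plan is to deduce \cref{cor:cyclicinclusion} directly from \cref{cor:generators}, which describes $I^{C_{p^k}}_{C_{p^k},n}$ explicitly. By that corollary, $I^{C_{p^k}}_{C_{p^k},n}$ is generated by $I_n$ together with the restriction $\overline{\psi}^{(n)}_{p^k}$ of $\psi^{(n)}_{p^k}$ to $L_{C_{p^k}}/I_n$. To show $I^{C_{p^k}}_{C_{p^k},n}\subseteq I^{C_{p^k}}_{1,n+1}$ it therefore suffices to check that each of these generators lies in $I^{C_{p^k}}_{1,n+1}$. For the generators coming from $I_n=(v_0,\dots,v_{n-1})$ this is immediate, since $I_n\subseteq I_{n+1}$ and $I^{C_{p^k}}_{1,n+1}$ contains the image of $I_{n+1}$ under $L\to L_{C_{p^k}}$ (indeed $I^A_{1,n'}$ is the kernel of $L_A\to \Phi^{\{1\}}L/I_{n'}=L/I_{n'}$, so it contains $I_{n'}\cdot L_A$).

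The remaining point is that $\psi^{(n)}_{p^k}\in L_{C_{p^k}}$ maps into $I^{C_{p^k}}_{1,n+1}$; equivalently, its restriction along $\res^{C_{p^k}}_1\colon L_{C_{p^k}}\to L$ lands in $I_{n+1}$. But we already recorded in the discussion preceding \cref{prop:generators} that $\psi^{(n)}_{p^k}$ restricts to $v_n$ at the trivial group (in fact already at $C_{p^{k-1}}$, hence a fortiori at $\{1\}$). Since $v_n\in I_{n+1}$, we get $\res^{C_{p^k}}_1(\overline{\psi}^{(n)}_{p^k})=v_n\equiv 0$ in $L/I_{n+1}$, so $\overline{\psi}^{(n)}_{p^k}\in I^{C_{p^k}}_{1,n+1}$. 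Combining the two cases, every generator of $I^{C_{p^k}}_{C_{p^k},n}$ lies in $I^{C_{p^k}}_{1,n+1}$, and the inclusion follows.

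There is essentially no obstacle here: the corollary is a formal consequence of the explicit generating set from \cref{prop:generators}/\cref{cor:generators} together with the elementary computation $\res^{C_{p^k}}_1\psi^{(n)}_{p^k}=v_n$. The only mild subtlety is bookkeeping with the quotient by $I_n$ — one should either phrase everything in $L_{C_{p^k}}$ (so that $I^{C_{p^k}}_{C_{p^k},n}$ is generated by $I_n$ together with a lift of $\overline{\psi}^{(n)}_{p^k}$) or work uniformly in $L_{C_{p^k}}/I_n$, noting that $I^{C_{p^k}}_{1,n+1}/I_n$ makes sense since $I_n\subseteq I^{C_{p^k}}_{1,n+1}$. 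Either way the argument is a one-line verification on generators.
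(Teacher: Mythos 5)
Your argument is correct and is essentially identical to the paper's own proof: both reduce via \cref{cor:generators} to checking the generators, observe that $I_n\subseteq I^{C_{p^k}}_{1,n+1}$ trivially, and use that $\overline{\psi}^{(n)}_{p^k}$ restricts to $v_n\in I_{n+1}$ at the trivial group. Your bookkeeping remark about working modulo $I_n$ matches how the paper phrases the same reduction.
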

\begin{proof} By Corollary \ref{cor:generators}, $I^{C_{p^k}}_{C_{p^k},n}$ is generated by $I_n$ and $\overline{\psi}^{(n)}_{p^k}$. Since $I_n$ is clearly contained in $I^{C_{p^k}}_{1,n+1}$ (even in $I^{C_{p^k}}_{1,n}$), we can reduce modulo $I_n$ and need to show that $\overline{\psi}^{(n)}_{p^k}$ is taken to $0$ under the composition
\[ L_{C_{p^k}}/I_n\xrightarrow{\res^\T_1} L/I_n \to L/I_{n+1}. \]
But this is clear, since $\overline{\psi}^{(n)}_{p^k}$ restricts to $v_n$ at the trivial group and $v_n$ lies inside $I_{n+1}$.
\end{proof}
We now have all the ingredients to prove the `if'-direction in Theorem \ref{thm:inclusions}:
\begin{cor} \label{cor:inclusions} Let $B'\subseteq B$ be an inclusion of subgroups of $A$ such that $B/B'$ is $p$-toral, and $n',n\in \N$ such that $n'\geq n+\rank_p(\pi_0(B/B'))$. Then there is an inclusion $I^A_{B,n}\subseteq I^A_{B',n'}$. 
\end{cor}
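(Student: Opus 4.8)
The plan is to reduce at once to the case $B=A$ and $B'=1$: by \cref{lem:reduction} the inclusion $I^A_{B,n}\subseteq I^A_{B',n'}$ is equivalent to $I^{B/B'}_{B/B',n}\subseteq I^{B/B'}_{1,n'}$, and the quotient $B/B'$ is again $p$-toral with $\rank_p(\pi_0(B/B'))$ unchanged. So, replacing $A$ by $B/B'$, I would reduce to showing: if $A$ is a $p$-toral compact abelian Lie group and $n'\geq n+s$ with $s:=\rank_p(\pi_0 A)$, then $I^A_{A,n}\subseteq I^A_{1,n'}$. Since the ideals $I^A_{1,m}$ form an ascending tower in $m$, it suffices to treat $n'=n+s$.

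Next I would invoke the structure theory of compact abelian Lie groups to split $A$ as $T\times C_{p^{k_1}}\times\cdots\times C_{p^{k_s}}$, with $T=A^\circ$ a torus and exactly $s$ cyclic factors: the extension $1\to A^\circ\to A\to\pi_0 A\to 1$ splits because $A^\circ\cong(\R/\Z)^r$ is an injective $\Z$-module, and the finite abelian $p$-group $\pi_0 A$ decomposes into $s=\rank_p(\pi_0 A)$ cyclic factors. I would then interpolate between $A$ and $1$ by the descending chain of closed subgroups $B_i:=T\times C_{p^{k_{i+1}}}\times\cdots\times C_{p^{k_s}}$ for $0\leq i\leq s$ (so $B_0=A$ and $B_s=T$), together with $B_{s+1}:=1$, which satisfies $B_{i-1}/B_i\cong C_{p^{k_i}}$ for $1\leq i\leq s$ and $B_s/B_{s+1}=T$.

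The heart of the argument is then one inclusion per link, each raising the height by the right amount. For $i\in\{1,\dots,s\}$, \cref{lem:reduction} (with ambient group $A$) identifies the inclusion $I^A_{B_{i-1},\,n+i-1}\subseteq I^A_{B_i,\,n+i}$ with $I^{C_{p^{k_i}}}_{C_{p^{k_i}},\,n+i-1}\subseteq I^{C_{p^{k_i}}}_{1,\,n+i}$, which is precisely \cref{cor:cyclicinclusion}. For the final, torus link, \cref{lem:toralinclusions} applied to $B_s/B_{s+1}=T$ gives $I^A_{B_s,\,n+s}\subseteq I^A_{B_{s+1},\,n+s}=I^A_{1,\,n+s}$. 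Concatenating the $s+1$ inclusions produces the chain $I^A_{A,n}=I^A_{B_0,n}\subseteq I^A_{B_1,n+1}\subseteq\cdots\subseteq I^A_{B_s,n+s}\subseteq I^A_{1,n+s}$, which is the claim.

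I expect no genuine conceptual obstacle: all the analytic input has already been supplied by \cref{prop:generators}/\cref{cor:cyclicinclusion} and by the toral case \cref{lem:toralinclusions}. The two points that merit care are the splitting of $1\to A^\circ\to A\to\pi_0 A\to 1$ (which licenses the product decomposition and the chain), and the bookkeeping of height shifts, so that each cyclic quotient $C_{p^{k_i}}$ contributes a shift of exactly $+1$ regardless of $k_i$ and the total shift along the chain is exactly $s=\rank_p(\pi_0(B/B'))$, matching the hypothesis $n'\geq n+\rank_p(\pi_0(B/B'))$.
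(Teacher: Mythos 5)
Your proposal is correct and takes essentially the paper's route: reduce with \cref{lem:reduction} to the case $B=A$, $B'=1$ with $A$ $p$-toral, then concatenate one height-$+1$ inclusion per cyclic $p$-factor via \cref{cor:cyclicinclusion} together with the height-preserving toral inclusion from \cref{lem:toralinclusions}. The only cosmetic difference is that the paper does not need the splitting $A\cong A^0\times \pi_0(A)$: it first applies \cref{lem:toralinclusions} to $1\subseteq A^0$ and then uses \cref{lem:reduction} to pass to the finite abelian $p$-group $A/A^0$, filtered by subgroups with cyclic subquotients, whereas you build the analogous chain inside $A$ after splitting off the torus.
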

\begin{proof} By Lemma \ref{lem:reduction} we can assume that $A=B$ and that $B'=1$ is the trivial subgroup. Hence, $A$ is a $p$-toral group. Let $A^0$ denote the path component of the identity. We have $I^A_{A^0,n'}\subseteq I^A_{1,n'}$ by Lemma \ref{lem:toralinclusions}. Hence it suffices to show that $I^A_{A,n}$ is contained in $I^A_{A^0,n'}$. For this, making use of Lemma \ref{lem:reduction} once more, we can assume that $A^0=1$ and hence $A$ is a finite abelian $p$-group. We write $m=\rank_p(A)$ and choose a filtration of subgroups
\[ 1\subseteq A_1\subseteq A_2\subseteq \dots \subseteq A_m=A \]
such that every subquotient $A_i/A_{i-1}$ is a cyclic $p$-group. By Corollary \ref{cor:cyclicinclusion} and Lemma \ref{lem:reduction} we see that
\[ I^A_{A,n} \subseteq I^A_{A_{m-1},n+1} \subseteq I^A_{A_{m-2},n+2} \subseteq \dots \subseteq I^A_{1,n+m} \subseteq I^A_{1,n'}. \]
The final inclusion follows from the assumption $n'\geq n+\rank_p(\pi_0(B/B'))=n+m$. This finishes the proof.
\end{proof}

\subsection{Proof of non-inclusions}\label{subsec:non-inclusions}
For the `only if' direction we need to rule out further inclusions between prime ideals by constructing elements whose restrictions exhibit a large `height shift'. \revm{We will abbreviate $I^A_{A,n}$ to $I_{A,n}$ in the following, for various groups $A$.} The goal is for every $n\in \N$ to construct an element $x\in L_{C_p^{n+1}}$ which lies in the ideal $I_{C_p^{n+1},0}$ and whose restriction to the trivial group lies outside of $I_n\subseteq L$. 

It turns out to be more natural to define such an element modulo a subideal of $I_{C_p^{n+1},0}$, namely the \revm{ideal generated by the} inflation of the ideal $I_{C_p^n,0}$ along the projection $p_{C_p^n}\colon C_p^{n+1}\cong C_p^n\times C_p\to C_p^n$. That is, we construct an element
\[ \overline{\mv}_n \in I_{C_p^n\times C_p,0}/p^*_{C_p^n}I_{C_p^n,0}. \]
By Corollary \ref{cor:inclusions} the restriction map $L_{C_{p^n}}\to L$ takes $I_{C_p^n,0}$ into $I_n$. Therefore, the restriction map $L_{C_p^{n+1}}\to L$ takes $p^*I_{C_p^n,0}$ into $I_n$ and we obtain an induced restriction map
\[ L_{C_p^n\times C_p,0}/p^*_{C_p^n}I_{C_p^n,0}\to L/I_n. \]
We will see that, under this restriction, $\overline{\mv}_n$ is sent to $v_n$. Later in Section \ref{sec:generators} we will show that $\overline{\mv}_n$ in fact forms a generator of the quotient $I_{C_p^n\times C_p,0}/p^*_{C_p^n}I_{C_p^n,0}$ and that suitable inflations and restrictions of these elements generate all the invariant prime ideals at elementary abelian $p$-groups.

We now turn to the construction of the element $\overline{\mv}_n$. We set $A=C_p^n$ and first define an element $\mv_n$ in the ring $L_{A\times \T}/p_A^* I_{A,0}$, whose restriction to $L_{A\times C_p}/p^*_AI_{A,0}$ then yields $\overline{\mv}_n$. 

For every character $V\in A^*$, 
\cref{prop:regularglobal} yields a short exact sequence
\[0 \to L_{A\times \T} \xrightarrow{e_{V\otimes \tau}\cdot} L_{A\times \T} \xrightarrow{(\id, V^{-1})^*} L_A \to 0,\] 
where we use that $(\id, V^{-1})\colon A \to A\times \T$ identifies $A$ with the kernel of $(V\otimes \id)\colon A\times \T \to \T$. The inflation map $L_A \to L_{A\times \T}$ provides an $L_A$-linear splitting of this exact sequence if we view $L_{A\times \T}$ as an $L_A$-module via the same inflation map. Thus, we obtain a short exact sequence 
\begin{equation} \label{eq:exact} 0\to L_{A\times \T}/p_A^*I_{A,0} \xr{e_{V\otimes \tau}} L_{A\times \T}/p_A^*I_{A,0}\xr{(\id,V^{-1})^*} L_A/I_{A,0} \to 0.  \end{equation}
of $L_A/I_{A,0}$-modules.
\begin{remark} We will see below in Section \ref{sec:generators} that the ideal in $L_{A\times \T}$ generated by $p_A^*I_{A,0}$ equals the invariant prime ideal $I_{A\times \T,0}$. In particular, $L_{A\times \T}/p_A^* I_{A,0}$ is again an integral domain. At this point it is only clear that $p_A^*I_{A,0}$ is contained in $I_{A\times \T,0}$.
\end{remark}
We now consider the Euler class $e_{\epsilon \otimes \tau^p}\in L_{A\times \T}/I_{A,0}$. We have $(\id,V^{-1})^*(e_{\epsilon \otimes \tau^p})=e_{V^{-p}}=0$, since $A$ is an elementary abelian $p$-group and hence every character is $p$-torsion. By exactness, it follows that $e_{\epsilon \otimes \tau^p}$ is divisible by $e_{V\otimes \tau}$, for all $V\in A^*$. We want to define $\overline{\mv}_n$ as the quotient of $e_{\epsilon \otimes \tau^p}$ by the product over all the $e_{V\otimes \tau}$. For this we first need to check that the different $e_{V\otimes \tau}$ are coprime. To understand this, we consider the following:

\begin{lemma} \label{lem:divisibility}Let $x,y\in L_{A\times \T}/p_A^*I_{A,0}$, and assume that $y$ restricts to a non-zero element under $L_{A\times \T}/p_A^*I_{A,0} \xr{(\id,V^{-1})^*} L_A/I_{A,0}$. Then $x$ is divisible by $e_{V\otimes \tau}$ if and only if $x\cdot y$ is divisible by $e_{V\otimes \tau}$.
\end{lemma}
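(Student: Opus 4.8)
The plan is to read off both implications directly from the short exact sequence \eqref{eq:exact}. Recall that this sequence records two facts about the ring $L_{A\times \T}/p_A^* I_{A,0}$: first, multiplication by $e_{V\otimes \tau}$ is injective, so if an element is divisible by $e_{V\otimes \tau}$ the quotient is unique; and second, an element $z$ is divisible by $e_{V\otimes \tau}$ if and only if $(\id, V^{-1})^*(z)=0$ in $L_A/I_{A,0}$, since the image of multiplication by $e_{V\otimes \tau}$ is precisely the kernel of $(\id, V^{-1})^*$.

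The forward implication is then immediate: if $x = e_{V\otimes \tau}\cdot x'$, then $x\cdot y = e_{V\otimes \tau}\cdot(x' y)$. For the converse, I would suppose that $x\cdot y$ is divisible by $e_{V\otimes \tau}$, so that $(\id, V^{-1})^*(x\cdot y)=0$. Since $(\id, V^{-1})^*$ is a ring homomorphism, this says $(\id, V^{-1})^*(x)\cdot (\id, V^{-1})^*(y)=0$ in $L_A/I_{A,0}$. Now $L_A/I_{A,0} = L_A/I^A_{A,0}$ is an integral domain, as $I^A_{A,0}$ is a prime ideal by the discussion following the Construction of the ideals $I^A_{B,n}$ (it injects into $\Phi^A L/I_{0,0}$, which by \cref{prop:GeometricFixedPoints} is a polynomial ring over a field). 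By hypothesis $(\id, V^{-1})^*(y)\neq 0$, so we conclude $(\id, V^{-1})^*(x)=0$, and hence $x$ is divisible by $e_{V\otimes \tau}$ by exactness of \eqref{eq:exact}.

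There is essentially no obstacle here: the statement is a one-line consequence of \eqref{eq:exact} once one observes that $(\id, V^{-1})^*$ is multiplicative and lands in the integral domain $L_A/I_{A,0}$. The only external input is the integrality of $L_A/I_{A,0}$, which was already recorded.
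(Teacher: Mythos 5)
Your argument is correct and is essentially identical to the paper's own proof: both directions are read off from the exact sequence \eqref{eq:exact}, which identifies divisibility by $e_{V\otimes\tau}$ with vanishing under $(\id,V^{-1})^*$, and the converse then follows from the integrality of $L_A/I_{A,0}$ together with $(\id,V^{-1})^*(y)\neq 0$. One tiny inaccuracy in your justification of that integrality: by \cref{prop:GeometricFixedPoints}, $\Phi^A L/I_{p,0}\cong L[(b_0^V)^{\pm1}, b_i^V]$ is a polynomial ring (with the $b_0^V$ inverted) over the Lazard ring $L$, not over a field, but it is still an integral domain, so the conclusion you need is unaffected.
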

\begin{proof} By exactness of \eqref{eq:exact}, $x$ is divisible by $e_{V\otimes \tau}$ if and only if $(\id,V^{-1})^*x=0$ in $L_A/I_{A,0}$. Since $(\id,V^{-1})^*(y)$ is non-zero by assumption and $L_A/I_{A,0}$ is an integral domain, this is the case if and only if 
\[ (\id,V^{-1})^*(x\cdot y)=(\id,V^{-1})^*x\cdot (\id,V^{-1})^*y=0, \]
which in turn is equivalent to $x\cdot y$ being divisible by $e_{V\otimes \tau}$.
\end{proof}
Given a character $V_2\neq V$, the restriction of $e_{V_2\otimes \tau}$ along $(\id,V^{-1})^*$ equals the non-trivial Euler class $e_{V_2V^{-1}}\in L_A/I_{A,0}$. Hence the lemma applies to $y$ being any product of Euler classes of the form $e_{V'\otimes \tau}$ with $V'\neq V$. We find that in the quotient $L_{A\times \T}/p_A^*I_{A,0}$, the Euler class $e_{\epsilon\otimes \tau^p}$ is uniquely divisible by the product $\prod_{V\in A^*}e_{V\otimes \tau}$. To summarize:
\begin{definition} \label{def:vn} Let $n\in \N$  and $A=C_p^n$. We define $\mv_n \in L_{A\times \T}/p^*_A I_{A,0}$ to be the unique element satisfying
\[ e_{\epsilon\otimes \tau^p}=\mv_n \cdot \prod_{V\in A^*} e_{V\otimes \tau}\in L_{A \times \T}/p^*_A I_{A,0}, \]
and we set
\[ \overline{\mv}_n =\res^{A\times \T}_{A\times C_p}(\mv_n) \in L_{A\times C_p}/p_A^*I_{A,0}. \]
\end{definition}
\begin{remark}\label{rem:mv0x0}
    \lennart{The elements $\mv_0\in L_\T$ and $\overline{\mv}_0\in L_{C_p}$ agree with the elements $\mv_0^{(p)}$ and $\overline{\mv}_0^{(p)}$ respectively from \cref{sec:reduction}.
    
    Furthermore, the element $\psi^{(n)}_p\in L_\T/I_n$ introduced before \cref{prop:generators} is equal to the restriction of $\mv_n\in L_{A\times \T}/p^*I_{A,0}$ to $L_\T/I_n$. (Note that the latter makes sense since we know from \cref{cor:inclusions} that the restriction of $I_{A,0}\subseteq L_{C_p^n}$ to the non-equivariant Lazard ring $L$ lands in $I_n$.) Indeed, the defining relation for $\mv_n$ above restricts to the equation
    \[ e_{\tau^p}=\res^{A\times \T}_{\T} (\mv_n) \cdot \prod_{V\in A^*} \res^{A\times \T}_{\T}(e_{V\otimes \tau})=\res^{A\times \T}_{\T} (\mv_n)\cdot e_\tau^{p^n} \in L_T/I_n\]
    since each $e_{V\otimes \tau}$ restricts to $e_\tau$ at the circle group. Since $\psi^{(n)}_p$ was defined as the unique element of $L_T/I_n$ satisfying $e_{\tau^p}=\psi^{(n)}_p\cdot e_\tau^{p^n}$, the claim follows.
    Consequently, $\overline{\psi}^{(n)}_p\in L_{C_p}/I_n$ agrees with the restriction of $\overline{\mv}_n\in L_{A\times C_p}/p^*I_{A,0}$ to $L_{C_p}/I_n$.}
\end{remark}
\begin{remark} Lemma \ref{lem:divisibility} also applies in the ring $L_{A\times \T}$ itself (i.e., before quotienting by $p_A^*I_{A,0}$) if we demand that $(\id,V)^*y$ is a regular element, rather than just being non-zero. These two conditions are equivalent in $L_A/I_{A,0}$ since it is an integral domain. However, the Euler classes are not regular elements in $L_A$, hence the lemma does not apply for $y$ a product of the $e_{V'\otimes \tau}$. In fact $e_{\epsilon\otimes \tau^p}$ is not divisible by $\prod_{V\in A^*} e_{V\otimes \tau}$ before dividing out $p_A^* I_{A,0}$, even though it is divisible by each individual $e_{V\otimes \tau}$. One can see this by restricting from $A\times \T$ to $\T$:
If $e_{\epsilon \otimes \tau^p}$ was divisible by the product of all the $e_{V\otimes \tau}$, this would imply that its restriction $e_{\tau^p}\in L_\T$ was divisible by $e_\tau^{p^n}$, since every $e_{V\otimes \tau}$ restricts to $e_\tau$. But  $e_{\tau^p}\in L_\T$ is divisible by $e_\tau$ precisely once, since $e_{\tau^p}/e_\tau$ restricts to $p$ at the trivial group, cf. Section \ref{sec:inclusions}. It is for this reason that it is most natural to define $\mv_n$ and $\overline{\mv}_n$ in this quotient. As we will see now, this matches nicely with the fact that $v_n$ is most naturally defined in the quotient $L/I_n$.
\end{remark}

\begin{prop} \label{prop:xn}
\begin{enumerate}
    \item The element $\overline{\mv}_n$ defines a class in the ideal $I_{A\times C_p,0}/p_A^*I_{A,0}$, i.e., it is sent to $0$ under the map $L_{A\times C_p,0}/p_A^*I_{A,0}\to \Phi^{A\times C_p}L.$
    \item The restriction map
    \[ L_{A\times C_p,0}/p_A^*I_{A,0}\to L/I_n\]
    takes $\overline{\mv}_n$ to $v_n$.
\end{enumerate}
\end{prop}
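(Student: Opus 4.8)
The plan is to verify the two statements separately, both by restricting the defining equation for $\mv_n$ to suitable subgroups and invoking the classification in \cref{lem:characterizationofideals} together with the divisibility facts already established.

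\textit{Part (1).} We must show $\overline{\mv}_n$ maps to $0$ in $\Phi^{A\times C_p}L$, equivalently (by \cref{lem:characterizationofideals}, working modulo $I_{A,0}$ rather than modulo some $I_{p,n}$) that $\overline{\mv}_n\cdot e_W$ lies in the ideal $I^{A\times C_p}_{A\times C_p}+p_A^*I_{A,0}$ for some representation $W$ of $A\times C_p$ with $W^{A\times C_p}=0$. First I would restrict the defining relation
\[ e_{\epsilon\otimes\tau^p}=\mv_n\cdot\prod_{V\in A^*}e_{V\otimes\tau}\in L_{A\times\T}/p_A^*I_{A,0} \]
along $\res^{A\times\T}_{A\times C_p}$. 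The character $\epsilon\otimes\tau^p\in(A\times\T)^*$ restricts to $\epsilon\otimes\overline{\tau}^p=\epsilon\in(A\times C_p)^*$ (since $\overline{\tau}$ has order $p$), so $e_{\epsilon\otimes\tau^p}$ restricts to $e_\epsilon=0$. Hence
\[ 0=\overline{\mv}_n\cdot\prod_{V\in A^*}e_{V\otimes\overline{\tau}}\in L_{A\times C_p}/p_A^*I_{A,0}. \]
Now the product $\prod_{V\in A^*}e_{V\otimes\overline{\tau}}$ is an Euler class of a representation $W$ of $A\times C_p$, and $W^{A\times C_p}=0$ because every summand $V\otimes\overline{\tau}$ is non-trivial on $A\times C_p$ (its restriction to $\{1\}\times C_p$ is $\overline{\tau}\neq\epsilon$). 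So $\overline{\mv}_n\cdot e_W=0$ in $L_{A\times C_p}/p_A^*I_{A,0}$, which is exactly the condition in \cref{lem:characterizationofideals} (with $n=0$, $I_{A,0}$ playing the role of the auxiliary ideal after extending scalars, but here it is even simpler: the product of the $e_{V\otimes\overline{\tau}}$ already becomes invertible in $\Phi^{A\times C_p}L$). Therefore $\overline{\mv}_n$ maps to $0$ in $\Phi^{A\times C_p}L$, proving (1).

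\textit{Part (2).} For the restriction to the trivial group I would use \cref{rem:mv0x0}, which records that the image of $\mv_n$ under $L_{A\times\T}/p_A^*I_{A,0}\to L_\T/I_n$ is the element $\psi_p^{(n)}$, and that the image of $\overline{\mv}_n$ under $L_{A\times C_p}/p_A^*I_{A,0}\to L_{C_p}/I_n$ is $\overline{\psi}_p^{(n)}$. Since $\psi_p^{(n)}$ was defined as the unique element of $L_\T/I_n$ with $e_{\tau^p}=\psi_p^{(n)}e_\tau^{p^n}$, \cref{cor:division} (or the remark just before \cref{prop:generators}) gives $\res^\T_1\psi_p^{(n)}=v_n$, because the image of $e_{\tau^p}$ in $L/I_n\llbracket e_\tau\rrbracket$ is the $p$-series $[p]_F(e_\tau)=v_ne_\tau^{p^n}+\cdots$, whose leading coefficient after dividing by $e_\tau^{p^n}$ is $v_n$. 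Restricting through $\{1\}$ via $L_{A\times C_p}\to L_{C_p}\to L$ (or $L_{A\times\T}\to L_\T\to L$) and using that $\res^{C_p}_1\overline{\psi}_p^{(n)}=\res^\T_1\psi_p^{(n)}=v_n$, we conclude that $\overline{\mv}_n\mapsto v_n$ under $L_{A\times C_p}/p_A^*I_{A,0}\to L/I_n$. (One must only note that this restriction map is well-defined, i.e.\ $p_A^*I_{A,0}$ maps into $I_n$ under $L_{A\times C_p}\to L$; this is exactly \cref{cor:inclusions} applied to $C_p^n\subseteq C_p^n$, as already remarked before \cref{prop:xn}.)

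\textit{Main obstacle.} The genuinely substantive point is hidden in Part (1): one needs the product $\prod_{V\in A^*}e_{V\otimes\overline{\tau}}$ (equivalently, its lift $\prod_{V\in A^*}e_{V\otimes\tau}$ in $L_{A\times\T}/p_A^*I_{A,0}$) to actually be a \emph{single} Euler class of a representation with no $(A\times C_p)$-fixed points, and for the divisibility defining $\mv_n$ to have taken place in the quotient by $p_A^*I_{A,0}$ rather than in $L_{A\times\T}$ itself — this is why \cref{def:vn} works modulo $p_A^*I_{A,0}$, and the remark following it explains that $e_{\epsilon\otimes\tau^p}$ is genuinely \emph{not} divisible by the full product before passing to this quotient. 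Once the bookkeeping of which quotient we are in is kept straight, both parts reduce to the identities already assembled in \cref{rem:mv0x0}, \cref{cor:division}, and \cref{lem:characterizationofideals}, and no further computation is required.
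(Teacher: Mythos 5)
Your proposal is correct and follows essentially the same route as the paper: Part (1) by restricting the defining equation for $\mv_n$ along $A\times C_p\subseteq A\times\T$, noting $e_{\epsilon\otimes\tau^p}$ restricts to $e_\epsilon=0$ so that $\overline{\mv}_n$ is annihilated by the product of the (non-trivial) Euler classes $e_{V\otimes\overline{\tau}}$ and hence dies in $\Phi^{A\times C_p}L$; Part (2) via \cref{rem:mv0x0}, identifying the restriction of $\mv_n$ with $\psi_p^{(n)}$ and using $\res^\T_1\psi_p^{(n)}=v_n$. No gaps.
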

\begin{proof} Part 1: The equation
\[ e_{\epsilon\otimes  \tau^p} = \mv_n \cdot \prod_{V\in A^*}e_{V\otimes \tau}\]
reduces to the equation
\[ 0 = \overline{\mv}_n \cdot \prod_{V\in A^*}e_{V\otimes \overline{\tau}}\]
in $L_{A\times C_p}/p_A^*I_{A,0}$, where $\overline{\tau}$ denotes the restriction of $\tau\in \T^*$ to $C_p$. Note that each $(A\times C_p)$-character of the form $V\otimes \overline{\tau}$ is non-trivial. Hence $\overline{\mv}_n$ forms Euler-power torsion and therefore maps to $0$ in the geometric fixed points. 

Part 2: By Remark \ref{rem:mv0x0}, the intermediate restriction $\res_\T^{A\times \T} \mv_n\in L_\T/I_n$ equals the element $\psi_{p}^{(n)}$ constructed before \cref{prop:generators}. Since $\res^\T_1\psi_{p}^{(n)}=v_n$, this finishes the proof.
\end{proof}

\begin{cor} \label{cor:resxn} If $x_n$ is a preimage of $\overline{\mv}_n$ under the projection $L_{C_p^{n+1}}\to L_{C_p^{n+1}}/p_{C_p^n}^*I_{C_p^n,0}$ and $B\subseteq C_p^{n+1}$ is a subgroup of rank $0\leq m\leq n+1$, then
\[ x_n \in I^{C_p^{n+1}}_{B,n+1-m}-I^{C_p^{n+1}}_{B,n-m}. \]
\end{cor}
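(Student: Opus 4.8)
The plan is to obtain both halves of the claim directly from \cref{prop:xn} and the inclusions already proved in \cref{cor:inclusions}, without any further computation with $\overline{\mv}_n$. First I would unwind what \cref{prop:xn} tells us about an arbitrary lift $x_n\in L_{C_p^{n+1}}$ of $\overline{\mv}_n$. Composing the quotient map $L_{C_p^{n+1}}\to L_{C_p^{n+1}}/p_{C_p^n}^*I_{C_p^n,0}$ with the two maps appearing in \cref{prop:xn}, one extracts: (a) the geometric fixed point map $L_{C_p^{n+1}}\to \Phi^{C_p^{n+1}}L$ kills $x_n$ — here one uses that $L_{C_p^{n+1}}\to L_{C_p^{n+1}}/p_{C_p^n}^*I_{C_p^n,0}\to \Phi^{C_p^{n+1}}L$ is the honest geometric fixed point map, which is implicit in the very statement of \cref{prop:xn}(1) — so that $x_n\in I^{C_p^{n+1}}_{C_p^{n+1},0}$; and (b) $\res^{C_p^{n+1}}_1 x_n\equiv v_n \pmod{I_n}$, by \cref{prop:xn}(2) together with the identification of $I^{C_p^{n+1}}_{1,k}$ with $(\res^{C_p^{n+1}}_1)^{-1}(I_k)$ (the trivial group has no nontrivial characters, so $\Phi^1 L = L$).

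For the containment $x_n\in I^{C_p^{n+1}}_{B,n+1-m}$ I would then invoke \cref{cor:inclusions}: since $B\subseteq C_p^{n+1}$ has $p$-rank $m$ and $C_p^{n+1}$ is elementary abelian, the finite $p$-group $\pi_0(C_p^{n+1}/B)=C_p^{n+1}/B$ has $p$-rank $n+1-m$, so \cref{cor:inclusions} applied to $B\subseteq C_p^{n+1}$ (the numerical hypothesis $n+1-m\geq 0+(n+1-m)$ holding with equality) yields $I^{C_p^{n+1}}_{C_p^{n+1},0}\subseteq I^{C_p^{n+1}}_{B,n+1-m}$. Combined with (a) this gives the containment.

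For the non-containment, I would assume $n-m\geq 0$ (if $m=n+1$ then $B=C_p^{n+1}$, there is no ideal $I^{C_p^{n+1}}_{B,-1}$ in play, and the statement reduces to the containment already established), and argue by contradiction: suppose $x_n\in I^{C_p^{n+1}}_{B,n-m}$. Applying \cref{cor:inclusions} to $1\subseteq B$ — with $B$ a $p$-group of $p$-rank $m$ and $n\geq (n-m)+m$ — we get $I^{C_p^{n+1}}_{B,n-m}\subseteq I^{C_p^{n+1}}_{1,n}$, hence $\res^{C_p^{n+1}}_1 x_n\in I_n$. This contradicts (b) together with the nonvanishing of $v_n$ in $L/I_n$ (the $p$-series of the universal formal group law over $L/I_n$ is $v_n x^{p^n}+\cdots$, nonzero, as seen e.g.\ by specializing to a Honda formal group of height exactly $n$). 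Therefore $x_n\notin I^{C_p^{n+1}}_{B,n-m}$, finishing the proof.

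Given the earlier results, there is no genuine obstacle left; the one point that requires care rather than insight is the $p$-rank bookkeeping — recognizing $\rank_p(\pi_0(C_p^{n+1}/B))=n+1-m$ and $\rank_p(\pi_0(B))=m$, noting that both applications of \cref{cor:inclusions} sit at the boundary of their numerical range — together with the degenerate case $m=n+1$, where the second clause is vacuous.
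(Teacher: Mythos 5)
Your proof is correct and follows essentially the same route as the paper: Proposition \ref{prop:xn} gives $x_n\in I^{C_p^{n+1}}_{C_p^{n+1},0}$ and $\res^{C_p^{n+1}}_1 x_n\equiv v_n\pmod{I_n}$, Corollary \ref{cor:inclusions} applied to $B\subseteq C_p^{n+1}$ gives the containment, and Corollary \ref{cor:inclusions} applied to $1\subseteq B$ plus the nonvanishing of $v_n$ in $L/I_n$ rules out $x_n\in I^{C_p^{n+1}}_{B,n-m}$. The only difference is that you spell out the rank bookkeeping and the degenerate case $m=n+1$ explicitly, which the paper leaves implicit.
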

\begin{proof} By the previous proposition, $x_n$ is an element of $I^{C_p^{n+1}}_{C_p^{n+1},0}$. As $A/B$ has rank $(n+1-m)$, we know by Corollary \ref{cor:inclusions} that $x_n$ must lie in $I^{C_p^{n+1}}_{B,n+1-m}$.

If $x_n$ were an element of $I^{C_p^{n+1}}_{B,n-m}$, then applying Corollary \ref{cor:inclusions} to the inclusion of the trivial group into $B$ shows that $x_n$ is also an element of $I^{C_p^{n+1}}_{1,n}$. This contradicts the fact that, modulo $I_n$, we have $\res^{C_p^{n+1}}_1(x_n)=\res^{C_p^{n+1}}_1(\overline{\mv}_n)=v_n$.
\end{proof}
\begin{cor} If $B'\subseteq B$ is a $p$-toral inclusion of subgroups of $A$ (i.e.\ $B/B'$ is $p$-toral) and $n'<n+\rank_p(\pi_0(B/B'))$, then  $I^A_{B,n}$ does not include into $I^A_{B',n'}$.
\end{cor}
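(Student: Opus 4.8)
The plan is to reduce to the elementary-abelian case using the formal machinery already available and then to extract the required element from $\overline{\mv}_{n+m-1}$. By \cref{lem:reduction}, the inclusion $I^A_{B,n}\subseteq I^A_{B',n'}$ holds if and only if $I^{B/B'}_{B/B',n}\subseteq I^{B/B'}_{1,n'}$ does, so one may assume $A=B$ is $p$-toral and $B'=1$; put $m=\rank_p(\pi_0(A))$, so the hypothesis reads $n'\le n+m-1$. If $m=0$ then $A$ is a torus and $n'<n$, and \cref{lem:mustbesubgroup} already excludes an inclusion; so assume $m\ge 1$. As $n'\le n+m-1$ we have $I_{n'}\subseteq I_{n+m-1}$, and by \cref{cor:idealsfortori} (with the trivial group as the torus) an element of $L_A$ lies in $I^A_{1,n'}$ exactly when its restriction to $L$ lies in $I_{n'}$. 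Hence it suffices to find $x\in I^A_{A,n}$ with $\res^A_1(x)\notin I_{n+m-1}$.

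The first step is the elementary-abelian case: for every $k\ge 1$ one has $I^{C_p^k}_{C_p^k,n}\not\subseteq I^{C_p^k}_{1,n+k-1}$. Apply \cref{cor:resxn} with its index taken to be $n+k-1$: the preimage $x_{n+k-1}\in L_{C_p^{n+k}}$ of $\overline{\mv}_{n+k-1}$ satisfies $x_{n+k-1}\in I^{C_p^{n+k}}_{B_k,n}$ for any rank-$k$ subgroup $B_k\subseteq C_p^{n+k}$ (the rank-$k$ clause of the corollary, since $(n+k-1)+1-k=n$), while $x_{n+k-1}\notin I^{C_p^{n+k}}_{1,n+k-1}$ (the rank-$0$ clause; concretely $\res^{C_p^{n+k}}_1(x_{n+k-1})\equiv v_{n+k-1}\pmod{I_{n+k-1}}$ by \cref{prop:xn}). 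Restricting to $B_k$: since $I^{C_p^{n+k}}_{B_k,n}$ is the preimage of $I^{B_k}_{B_k,n}$ under the surjective restriction $L_{C_p^{n+k}}\to L_{B_k}$ (as recalled in the proof of \cref{lem:reduction}) and restriction is transitive, $y:=\res^{C_p^{n+k}}_{B_k}(x_{n+k-1})$ lies in $I^{B_k}_{B_k,n}$ with $\res^{B_k}_1(y)\notin I_{n+k-1}$; identifying $B_k\cong C_p^k$ settles this step.

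For the second step, pick a surjection $\pi\colon A\twoheadrightarrow\pi_0(A)\twoheadrightarrow\pi_0(A)/p\,\pi_0(A)\cong C_p^m$. Since $\pi$ is surjective it carries each non-trivial character of $C_p^m$ to a non-trivial character of $A$, so by naturality of Euler classes (\cref{rem:naturalitybi}) the inflation $\pi^*\colon L_{C_p^m}\to L_A$ sends every $e_V$ with $V\ne\epsilon$ to a unit of $\Phi^A L/I_n$; hence $\pi^*$ extends to a map $\Phi^{C_p^m}L/I_n\to\Phi^A L/I_n$ compatible with $L_{C_p^m}\to L_A$, which forces $\pi^*(I^{C_p^m}_{C_p^m,n})\subseteq I^A_{A,n}$. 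Taking $x=\pi^*(y)$ for $y$ as in the first step, and using $\res^A_1\circ\pi^*=\res^{C_p^m}_1$ (the composite $\{1\}\hookrightarrow A\xrightarrow{\pi}C_p^m$ is the trivial inclusion), one gets $x\in I^A_{A,n}$ with $\res^A_1(x)\notin I_{n+m-1}\supseteq I_{n'}$; \cref{lem:reduction} then finishes the proof. There is no real obstacle here: the substantive content — the construction of $\overline{\mv}_n$ and the computation of its restrictions — is already carried out in \cref{def:vn,prop:xn,cor:resxn}, and what remains is the bookkeeping of the height shift (height $n$ at $C_p^k$ drops to height exactly $n+k-1$, one short of the maximal drop $n+k$ allowed by \cref{cor:inclusions}) together with the two preimage-compatibility checks, one for restriction along a subgroup inclusion and one for inflation along a quotient map.
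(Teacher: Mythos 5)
Your proposal is correct and takes essentially the same route as the paper: reduce via \cref{lem:reduction} to the $p$-toral case $A=B$, $B'=1$, restrict a lift $x_{n+m-1}$ of $\overline{\mv}_{n+m-1}$ (via \cref{cor:resxn}) to a rank-$m$ elementary abelian subgroup, inflate along a surjection $A\to C_p^m$, and compare restrictions to the trivial group. The only differences are presentational: you treat the torus case $m=0$ separately via \cref{lem:mustbesubgroup} and spell out the compatibility of inflation with the ideals $I^{(-)}_{(-),n}$, steps the paper leaves implicit.
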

\begin{proof} By Lemma \ref{lem:reduction} we can reduce to the case $A=B$ and $B'=1$. Let $r=\rank_p(\pi_0(A))$, and $q\colon A\to C_p^r$ be a surjection. Let $x_{n+r-1}\in L_{C_p^{n+r}}$ as in Corollary \ref{cor:resxn}. Then, by the corollary, the restriction $\res^{C_p^{n+r}}_{C_p^r}(x_{n+r-1})$ is an element of $I^{C_p^{r}}_{C_p^{r},n}$ but not an element of $I^{C_p^{r}}_{1,n+r-1}$. 
Therefore $x=q^*(\res^{C_p^{n+r}}_{C_p^r}(x_{n+r-1}))$ is an element of $I^A_{A,n}$ whose restriction to the trivial group is not contained in $I_{n+r-1}$. In other words, $x$ is an element of $I^A_{A,n}$ but not an element of $I^A_{1,n+r-1}$. Since by assumption we have $n'\leq n+r-1$ and hence $I^A_{1,n'}\subseteq I^A_{1,n+r-1}$, this proves that $I^A_{A,n}$ does not include into $I^A_{1,n'}$.
\end{proof}

Combined with Corollaries \ref{cor:necessary} and \ref{cor:inclusions}, this proves Theorem \ref{thm:inclusions}.

\section{Generators for invariant prime ideals} \label{sec:generators}

In this section we show that over elementary abelian $p$-groups the elements $\overline{\mv}_n$ -- together with the Euler classes -- generate all invariant prime ideals under restriction and inflation maps. 
More precisely, we show the following theorem:
\begin{thm} \label{thm:generators1}
\begin{enumerate}
    \item 
    For every torus $B$ and $n\in \mathbb{N}$, the ideal $I_{C_p^{n}\times B,0} = I^{C_p^n\times B}_{C_p^{n}\times B,0}$ is generated by the elements
\[ p_1^*(\overline{\mv}_0),p_2^*(\overline{\mv}_1),\hdots,p_{n-1}^*(\overline{\mv}_{n-2}),p_n^*(\overline{\mv}_{n-1}),\]
where $p_i\colon C_p^{n}\times B\to C_p^{i}$ is the projection to the first $i$ factors.
    \item For every $m\in \mathbb{N}$ and every inclusion $i\colon C_p^n\to C_p^{n+m}$, the restriction map 
    \[ (i\times B)^*\colon L_{C_p^{n+m}\times B}\to L_{C_p^n\times B} \]
    maps $I_{C_p^{n+m}\times B,0}$ surjectively onto $I_{C_p^n\times B,m}$.
\end{enumerate}
\end{thm}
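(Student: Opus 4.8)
The plan is to establish Part~(1) by induction on $n$ (simultaneously for all tori $B$) and then to deduce Part~(2) from it. Two bookkeeping facts are used throughout. First, for a surjection $\alpha\colon G\twoheadrightarrow G'$ the inflation $\alpha^*\colon L_{G'}\to L_G$ extends to geometric fixed points, because $\alpha^*$ is injective on character groups; hence $\alpha^*$ carries Euler-power torsion to Euler-power torsion, and in particular $\alpha^*(I_{G',0})\subseteq I_{G,0}$. Second, using this together with \cref{prop:xn}(1) and the inductive form of Part~(1), the indeterminacy of $p_i^*\overline{\mv}_{i-1}$ inside $L_{C_p^n\times B}$ is exactly $(p_1^*\overline{\mv}_0,\dots,p_{i-1}^*\overline{\mv}_{i-2})$, so the ideal $J:=(p_1^*\overline{\mv}_0,\dots,p_n^*\overline{\mv}_{n-1})$ is well defined, and $J\subseteq I_{C_p^n\times B,0}$. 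Thus Part~(1) amounts to the reverse inclusion.

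For the inductive step, write $C_p^n\times B=C_p^{n-1}\times C_p\times B$ and view it as the subgroup of $\bar G:=C_p^{n-1}\times\T\times B$ cut out by a non-torsion character, so that $L_{C_p^n\times B}=L_{\bar G}/(e_\xi)$ with $e_\xi:=e_{\epsilon\otimes\tau^p\otimes\epsilon}$ a non-zero-divisor by \cref{prop:regularglobal}. Applying the inductive Part~(1) to the tori $\T\times B$ and $B$ identifies $J':=(p_1^*\overline{\mv}_0,\dots,p_{n-1}^*\overline{\mv}_{n-2})$ with the image of $I_{\bar G,0}$, whence $L_{C_p^n\times B}/J'\cong \bar L/(e_\xi)$ where $\bar L:=L_{\bar G}/I_{\bar G,0}$ is an integral domain (an invariant prime quotient, by \cref{thm:ClassificationOfInvariantPrimes}). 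The defining relation of \cref{def:vn}, inflated along $\bar G\to C_p^{n-1}\times\T$, gives in $\bar L$ a factorisation $e_\xi=w\cdot\prod_{V\in (C_p^{n-1})^*}e_{V\otimes\tau\otimes\epsilon}$, where $w$ is the (nonzero) image of $\mv_{n-1}$ and maps to $p_n^*\overline{\mv}_{n-1}$ in $\bar L/(e_\xi)$. One then shows that the Euler-power-torsion ideal of $\bar L/(e_\xi)$ --- which is precisely $I_{C_p^n\times B,0}/J'$ --- is generated by the class of $w$. The inclusion ``$\supseteq$'' is \cref{prop:xn}(1); for ``$\subseteq$'' one runs the divisibility argument of the proof of \cref{prop:generators}: given $z\in\bar L$ whose class is torsion, one has $z\cdot e_W=e_\xi y=w\cdot(\prod e_{V\otimes\tau\otimes\epsilon})\cdot y$ for a representation $W$ with trivial $(C_p^n\times B)$-fixed points, one normalises $e_W$ to a product of Euler classes of coordinate characters (since Euler classes of characters generating the same subgroup generate the same ideal, by \cref{lem:EulerClassesGenerateKernel}), and then strips these factors off one at a time: restricting to the subgroup on which a chosen coordinate Euler class vanishes, $w$ restricts to a restriction of some $\psi_p^{(\cdot)}$ (cf.\ \cref{rem:mv0x0}), hence to some $v_\bullet$ at the trivial group, which is a regular element modulo the relevant $I_\bullet$ by \cref{prop:free}/\cref{cor:regularmodIn}; this forces the complementary factor to absorb that coordinate Euler class, and iterating yields $w\mid z$ in the domain $\bar L$. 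Hence $I_{C_p^n\times B,0}=J'+(p_n^*\overline{\mv}_{n-1})=J$.

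To deduce Part~(2) we may, after an automorphism of $C_p^{n+m}$, take $i$ to be the inclusion of the first $n$ coordinates; then $(i\times B)^*$ is the quotient by $\ker(i\times B)^*=I^{C_p^{n+m}\times B}_{C_p^n\times B}$ (\cref{lem:EulerClassesGenerateKernel}). The preimage $(i\times B)^{*-1}(I_{C_p^n\times B,m})$ is the preimage of an invariant prime along a map of Hopf algebroids, hence itself an invariant prime of $L_{C_p^{n+m}\times B}$; as it contains $\ker(i\times B)^*$ it has the form $I^{C_p^{n+m}\times B}_{B',m'}$ with $B'\subseteq C_p^n\times B$, and pushing it forward and invoking the injectivity in \cref{thm:ClassificationOfInvariantPrimes} pins it down to $I^{C_p^{n+m}\times B}_{C_p^n\times B,m}$. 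By surjectivity of $(i\times B)^*$, Part~(2) is equivalent to
\[ I_{C_p^{n+m}\times B,0}+\ker(i\times B)^* = I^{C_p^{n+m}\times B}_{C_p^n\times B,m}, \]
whose inclusion ``$\subseteq$'' is immediate from \cref{thm:inclusions}. Applying Part~(1) to $C_p^{n+m}\times B$ and to $C_p^n\times B$, the opposite inclusion reduces to the statement that $I_{C_p^n\times B,m}$ is obtained from $I_{C_p^n\times B,0}$ by adjoining the $m$ restrictions of $\overline{\mv}_n,\dots,\overline{\mv}_{n+m-1}$ along the first-$n$-coordinate inclusions $C_p^n\hookrightarrow C_p^{j}$ (inflated along $C_p^n\times B\to C_p^n$); by \cref{cor:resxn} these lie in the expected ideals and restrict to $v_n,\dots,v_{n+m-1}$ at the trivial group. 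This last statement is proved by induction on $m$: one reduces modulo $I_{C_p^n\times B,m-1}$ (the inductive step) and then applies once more the divisibility mechanism above, now with the single Euler class of the last adjoined coordinate and the regular element $v_{n+m-1}$ modulo $I_{n+m-1}$; this also yields the promised generators of the higher-height ideals.

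The main obstacle is the divisibility argument itself in the torsion-laden rings $L_{C_p^n\times B}$: unlike in \cref{prop:generators}, one must first pass to the quotients by the inductively described ideals $I_{\bullet,0}$ in order to have integral domains and regular restrictions of the $\overline{\mv}$'s available, and one must carefully normalise the ambient Euler classes $e_W$ --- verifying that the normalisation loses no information and identifying, at each step, a subgroup on which the relevant coordinate Euler class vanishes while $\overline{\mv}_{n-1}$ remains regular --- is where the real work lies.
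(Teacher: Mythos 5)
Your overall strategy---rank induction with the torus carried along, lifting to $\bar G=C_p^{n-1}\times\T\times B$, the defining relation of $\mv_{n-1}$ from \cref{def:vn}, and a stripping/divisibility argument modelled on \cref{prop:generators}---is the paper's strategy, but your induction carries too little, and this leaves a genuine gap in the inductive step for Part~(1). The problem is the Euler classes of characters inflated from $C_p^{n-1}$, i.e.\ of the form $V\otimes\epsilon$ with $V\in(C_p^{n-1})^*$ nontrivial. An element of $I_{C_p^n\times B,0}$ is a priori annihilated by a product of Euler classes of arbitrary nontrivial characters; the non-torsion factors can be cancelled by \cref{prop:regularglobal}/\cref{cor:regularmodIn}, and factors $e_{V\otimes\overline{\tau}^{\,k}}$ with $k\neq 0$ can be traded for multiples of some $e_{V'\otimes\overline{\tau}}$ by \cref{lem:EulerClassesGenerateKernel}, but the inflated classes $e_{V\otimes\epsilon}$ generate different subgroups of the character group and are not reached by your ``normalisation to coordinate characters''. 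They are zero divisors in $L_{C_p^n\times B}$, so to strip them one must show they act regularly on $L_{C_p^n\times B}/J'$ (your $J'$), equivalently that $e_{\tau^p}$ is regular modulo $(I_{\bar G,0},e_{V\otimes\epsilon})$. The paper proves exactly this as the first claim of its induction step, and the proof needs two inputs at lower rank which your architecture (Part~(1) at height $0$ first, Part~(2) afterwards) has not yet established at that point: (i) that $I_{C_p^{n-1},0}$ restricts onto $I_{\ker V,1}$, which is Part~(2) at lower rank, and (ii) that $p^*I_{\ker V,1}$ generates $I_{\ker V\times\T\times B,1}$, so that the relevant quotient is an integral domain---a height-$1$ generation statement that does not follow from your height-$0$ Part~(1). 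The same inputs are what legitimise your stripping step (``$w$ restricts to something regular, so the complementary factor absorbs the Euler class''): the exactness used there is only available for the non-torsion characters $V\otimes\tau$ via \cref{prop:regularglobal}, together with the identification of the corresponding quotients as invariant prime quotients. You flag this as ``where the real work lies'', but it is not merely deferred work: with only the height-$0$ Part~(1) as inductive hypothesis the step cannot be carried out along these lines.

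The paper resolves this by running a single induction on the rank which proves Part~(1), Part~(2) \emph{and} the auxiliary statement that $p^*_{C_p^n}I_{C_p^n,m}$ generates $I_{C_p^n\times B,m}$ for \emph{all} heights $m$ simultaneously; accordingly its key \cref{lem:inductionstep} and \cref{cor:quotient} are stated for arbitrary $m$, and Part~(2) then falls out of the same corollary. Your separate induction on $m$ for Part~(2) runs into the same missing higher-height, lower-rank inputs, since the divisibility mechanism at height $m$ again requires regularity of inflated Euler classes modulo the height-$m$ ideals. So you should either restructure the induction to carry Part~(2) and the all-heights generation statement along (as the paper does), or supply an independent argument that the classes $e_{V\otimes\epsilon}$ are regular on $L_{C_p^n\times B}/J'$ using only height-$0$ data---which I do not see how to do.
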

\begin{rmk}
    Implicit in the statement of the theorem is that each $p_i^*(\overline{\mv}_{i-1})$ is well-defined modulo the ideal generated by $p_1^*(\overline{\mv}_{0}),\hdots,p_{i-1}^ *(\overline{\mv}_{i-2})$. By definition, $p_i^*(\overline{\mv}_{i-1})$ is an element of the quotient by the subideal generated by $p_{i-1}^*I_{C_p^{i-1},0}$. Applying the theorem to rank $n-1$ and $B=0$ we see that this ideal is indeed generated by $p_1^*(\overline{\mv}_{0}),\hdots,p_{i-1}^ *(\overline{\mv}_{i-2})$, so the sequence of elements makes sense. Hence, the theorem and sequence should be interpreted in an inductive manner.
\end{rmk}
Combining both parts it follows that $I_{C_p^{n}\times B,m}$ is generated by 
\[ (i\times B)^*p_1^*(\overline{\mv}_0),(i\times B)^*p_2^*(\overline{\mv}_1),\hdots,(i\times B)^*p_{n+m-1}^*(\overline{\mv}_{n+m-2}),(i\times B)^*\overline{\mv}_{n+m-1},\]
where $i\colon C_p^n\to C_p^{n+m}$ is any inclusion.
The choice of inclusion will generally affect the resulting generators. For example, setting $n=m=1$ and $B$ the trivial group: If we choose $i_1\colon C_p\to C_p^2$ to be the inclusion into the first factor, the composite $p_1\circ i_1$ becomes the identity. Hence we obtain that $I_{C_p,1}$ is generated by the elements $\overline{\mv}_0$ and $i_1^*(\overline{\mv}_1)$. If we alternatively use the inclusion $i_2\colon C_p\to C_p^2$ into the second factor the composite $p_1\circ i_2$ becomes the constant map, yielding the generators $v_0=p$ and $i_2^*(\overline{\mv}_1)$ (i.e., the same ones as in Corollary \ref{cor:generators}, as $\overline{\psi}_{p}^{(1)}$ equals $i_2^*(\overline{\mv}_1)$). Furthermore, it follows that generators for ideals of the form $I^{A}_{C_p^n\times B,m}$ with $B$ a torus can be obtained as the union of Euler classes $(e_V)_{V\in \mathcal{B}}$ for a basis $\mathcal{B}$ of $\ker(A^*\to (C_p^n\times B)^*)$ together with a choice of generators for $I^{C_p^n\times B}_{C_p^n\times B,m}$.

\lennart{We prove both parts of Theorem \ref{thm:generators1} by induction on $n$, alongside the following statement:
\[ \text{$(\star)$ For every torus $B$ and $m\in \N$ the ideal $I_{C_p^n\times B,m}$ is generated by $p^*_{C_p^n}I_{C_p^n,m}$.}\]}
Part 1 of the induction start $n=0$ is the statement that $\revm{I_{B,0} =}I^B_{B,0}$ is the $0$-ideal for any torus $B$ (\Cref{cor:idealsfortori})\revm{; this also shows $(\star)$ for $n=0$}. For Part 2 we need to see that the restriction $L_{C_p^m\times B}\to L_B$ maps $I_{C_p^m\times B,0}$ surjectively onto $I_{B,m}$, which we know is generated by $I_m$ by \cref{cor:idealsfortori}. For $i=0,\hdots,m-1$ we can consider the elements $\overline{\mv}_i\in I_{C_p^{i+1},0}/p_{i}^*I_{C_p^i,0}$, which reduce to $v_i \in L/I_{i-1}$. It follows that the inflation of $\overline{\mv}_i$ to $C_p^m$ via any choice of surjection $C_p^m\times B\to C_p^{i+1}$ gives an element of a quotient of $I_{C_p^{m}\times B,0}$ which reduces to $v_i$ in $L_B/I_{i-1}$. Since $I_i/I_{i-1}$ is generated by $v_i$, the claim follows.

We now assume that Theorem \ref{thm:generators1} holds for an elementary abelian $p$-group $A$ of rank $n$ and show it also holds for $A\times C_p$. For any $m\in \N$ we consider the surjection
\[ L_{A\times \T\times B}/p^*_AI_{A,m}\to L_{A\times C_p\times B}/p^*_AI_{A,m},\]
with kernel generated by $e_{\tau^p}$ for $\tau$ the tautological $\T$-character pulled back to $A\times \T\times B$. We first claim that if $V\in A^*$ is non-trivial, then the Euler class $p_A^*(e_V)$ is a non-zero divisor in $L_{A\times C_p\times B}/p^*_AI_{A,m}$. To see this, we use that since $\T\times B$ is a torus we can apply the induction hypothesis to $L_{A\times \T\times B}$. In particular, 
\lennart{we know that the statement $(\star)$ holds for $A\cong C_p^n$ and hence} $p^*_AI_{A,m}$ generates the ideal $I_{A\times \T\times B,m}$. Therefore, $L_{A\times \T\times B}/p_A^*I_{A,m}$ is an integral domain. So we have to show that $p^*_A(e_V)$ still acts regularly modulo $e_{\tau^p}$. Since both Euler classes are regular, this is equivalent to showing that $e_{\tau^p}$ is regular modulo $p^*_A(e_V)$. We have an isomorphism
\[ L_{A\times \T\times B}/(p_A^*I_{A,m},p^*_A(e_V))\cong L_{\ker(V)\times \T\times B}/p^*_{\ker(V)}(\res^A_{\ker(V)}I_{A,m}).\]
By Part 2 of the induction hypothesis, we know that $I_{A,m}$ restricts onto $I_{\ker(V),m+1}$, hence the latter quotient identifies with $L_{\ker(V)\times \T\times B}/p^*_{\ker(V)}I_{\ker(V),m+1}$. Again we know by the induction hypothesis that this quotient is an integral domain, and $e_{\tau^p}$ is clearly a non-trivial element. So the claim follows and we have shown that $p^*_A I_{A,m}$ generates the Euler power torsion in $L_{A\times C_p\times B}$ (at height $m$) for characters inflated up from $A$. 

Hence to understand the full ideal $I_{A\times C_p\times B,m}$ it suffices to further divide by the Euler-power torsion for the remaining torsion characters in $(A\times C_p\times B)^*$ (there is no Euler-power torsion for non-torsion characters by \cref{prop:regularglobal}). These torsion characters are of the form $V\otimes \overline{\tau}^k$, where $V\in A^*$, $\overline{\tau}$ is the restriction of $\tau\in \T^*$ to $C_p$ and $k\in \{1,\hdots,p-1\}$. Furthermore we can assume that $k=1$: Any $V\otimes \overline{\tau}^k$ has some power of the form $V'\otimes \overline{\tau}$ and hence $e_{V'\otimes \overline{\tau}}$ is a multiple of $e_{V\otimes \overline{\tau}^k}$. Thus, $I_{A\times C_p\times B,m}/p^*_A I_{A,m}$ is generated by Euler-power torsion for characters of the form $V\otimes \overline{\tau}$.

Again it is beneficial to pass to the integral domain $L_{A\times \T\times B}/p_A^*I_{A,m}$ to understand the Euler-power torsion for these characters. We have the following:
\begin{lemma} \label{lem:inductionstep} Let $A=C_p^n$ be an elementary abelian $p$-group, $B$ a torus and $m\in \N$. Further let $x\in L_{A\times \T\times B}/p_A^*I_{A,m}$ be an element satisfying
    \[x\cdot \prod_{V\in A^*} e_{V\otimes \tau}^{n_V} = y \cdot e_{\tau^p}\]
for some $y$ and collection of natural numbers $n_V$. Then $x$ lies in the ideal generated by \[ p_{A\times \T}^*\res^{A\times C_p^m\times \T}_{A\times \T}(\mv_{n+m}). \]
\end{lemma}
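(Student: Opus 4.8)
\emph{Plan of proof.} Write $R := L_{A\times\T\times B}/p_A^{*}I_{A,m}$ and
\[ \widetilde{W}:= p_{A\times\T}^{*}\,\res^{A\times C_p^{m}\times\T}_{A\times\T}(\mv_{n+m})\ \in R, \]
so the assertion is that $x\in(\widetilde{W})$. By the inductive statement $(\star)$, applied with the torus $\T\times B$, the ideal $p_A^{*}I_{A,m}$ generates the invariant prime ideal $I_{A\times\T\times B,m}$, so $R$ is an integral domain. The first step is to record the \emph{key equation}
\[ e_{\tau^{p}} \;=\; \widetilde{W}\cdot\prod_{V\in A^{*}} e_{V\otimes\tau}^{\,p^{m}}\qquad\text{in }R . \]
This is obtained by applying to the defining relation $e_{\epsilon\otimes\tau^{p}}=\mv_{n+m}\cdot\prod_{V'\in (C_p^{n+m})^{*}}e_{V'\otimes\tau}$ of \cref{def:vn} (for the group $C_p^{n+m}=A\times C_p^{m}$) the ring homomorphism $L_{C_p^{n+m}\times\T}/p^{*}I_{C_p^{n+m},0}\to R$ given by restriction along $A\times\T\hookrightarrow A\times C_p^{m}\times\T$ (trivial on the $C_p^{m}$-factor) followed by inflation along $A\times\T\times B\to A\times\T$. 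This map descends to the indicated quotients because, by Part 2 of the inductive hypothesis, restriction carries $I_{C_p^{n+m},0}$ onto $I_{A,m}$; and it sends $\prod_{V'}e_{V'\otimes\tau}$ to $\prod_{V\in A^{*}}e_{V\otimes\tau}^{p^{m}}$ since each $V\in A^{*}$ has exactly $|(C_p^{m})^{*}|=p^{m}$ preimages in $(C_p^{n+m})^{*}$, all of whose Euler classes restrict to $e_{V\otimes\tau}$.

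The second step records three coprimality facts for every $V\in A^{*}$. First, $e_{V\otimes\tau}$ is prime in $R$: the split short exact sequence \eqref{eq:exact} (carried out at height $m$ and with the extra torus factor $B$) identifies $R/(e_{V\otimes\tau})$ with $L_{A\times B}/p_A^{*}I_{A,m}$, which is an integral domain by $(\star)$; the same sequence shows each $e_{V\otimes\tau}$ is a nonzero, hence regular, element of $R$. Second, $e_{V\otimes\tau}\nmid e_{V'\otimes\tau}$ for $V'\neq V$, because under $(\id_A,V^{-1})^{*}$ the class $e_{V'\otimes\tau}$ maps to the nonzero Euler class $e_{V'V^{-1}}\in L_A/I_{A,m}$. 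Third, $e_{V\otimes\tau}\nmid\widetilde{W}$: the image of $\widetilde{W}$ under $(\id_A,V^{-1})^{*}$ lies in $L_A/I_{A,m}$, and restricting it further to the trivial group gives $\res^{\T}_{1}(\psi^{(n+m)}_{p})=v_{n+m}$ (using \cref{rem:mv0x0}, i.e.\ that $\res_{\T}\mv_{n+m}=\psi^{(n+m)}_{p}$), which is nonzero in $L/I_{n+m}$; since $\res^{C_p^{n}}_{1}(I_{C_p^{n},m})=I_{n+m}$ by the inductive description of generators of $I_{C_p^{n},m}$, this forces $(\id_A,V^{-1})^{*}\widetilde{W}\neq 0$ and in particular $\widetilde{W}\neq 0$ in $R$.

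For the last step, combine the hypothesis $x\cdot\prod_{V}e_{V\otimes\tau}^{n_V}=y\cdot e_{\tau^{p}}$ with the key equation to obtain $x\prod_{V}e_{V\otimes\tau}^{n_V}=y\,\widetilde{W}\prod_{V}e_{V\otimes\tau}^{p^{m}}$ in $R$. Cancelling $e_{V\otimes\tau}^{\min(n_V,p^{m})}$ for each $V$ (legitimate since $R$ is a domain and the Euler classes are regular) gives $x\prod_{V\in T}e_{V\otimes\tau}^{a_V}=y\,\widetilde{W}\prod_{V\in S}e_{V\otimes\tau}^{b_V}$, where $a_V=(n_V-p^{m})_{+}$, $b_V=(p^{m}-n_V)_{+}$, $T=\{V:n_V>p^{m}\}$ and $S=\{V:n_V<p^{m}\}$. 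For $V\in T$ the right-hand side is divisible by $e_{V\otimes\tau}^{a_V}$; as $e_{V\otimes\tau}$ is prime and divides neither $\widetilde{W}$ nor any $e_{V'\otimes\tau}$ with $V'\neq V$ (and $V\notin S$), it follows that $e_{V\otimes\tau}^{a_V}\mid y$. Ranging over all $V\in T$ and using that the $e_{V\otimes\tau}$ are pairwise non-associate, we get $y=y'\prod_{V\in T}e_{V\otimes\tau}^{a_V}$; substituting and cancelling $\prod_{V\in T}e_{V\otimes\tau}^{a_V}$ yields $x=y'\,\widetilde{W}\prod_{V\in S}e_{V\otimes\tau}^{b_V}\in(\widetilde{W})$, which is the claim. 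There is no single deep step here: the main work is the bookkeeping of Step 1 — choosing the right ideals so the restriction–inflation map descends and the key equation comes out with the correct exponent $p^{m}$ — together with the coprimality $e_{V\otimes\tau}\nmid\widetilde{W}$ of Step 2, which hinges on tracking $\mv_{n+m}$ down to the circle and then to the point via \cref{rem:mv0x0}.
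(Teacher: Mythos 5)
Your proof is correct and follows essentially the same route as the paper: you derive the key relation $e_{\tau^p}=\widetilde{W}\cdot\prod_{V\in A^*}e_{V\otimes\tau}^{p^m}$ by restricting/inflating the defining equation of $\mv_{n+m}$ (with the exponent $p^m$ coming from the $p^m$ extensions of each character), and then use the induction hypothesis via the Euler-class short exact sequences and the nonvanishing of $\res_1(\mv_{n+m})=v_{n+m}$ to cancel Euler classes and conclude $x\in(\widetilde{W})$. The only difference is cosmetic: you organize the final divisibility step via primality of the $e_{V\otimes\tau}$ and factoring powers out of $y$, while the paper iteratively applies ``$e_{V\otimes\tau}\mid z\cdot\alpha$ implies $e_{V\otimes\tau}\mid\alpha$''.
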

\begin{proof} By applying $p_{A\times C_p^m\times \T}^*$ to the defining property of $\mv_{n+m}$ (Definition \ref{def:vn}) we obtain the equation
\[ e_{\tau^p}= p_{A\times C_p^m\times \T}^* (\mv_{n+m}) \cdot \prod_{V\in (A\times C_p^m)^*} e_{V\otimes \tau}\]
in $L_{A\times C_p^m\times \T \times B}/p_{A\times C_p^m}^*I_{A\times C_p^m,0}$. 
Restricting from $A\times C_p^m$ to $A = C_{p}^n$ yields
\begin{equation} e_{\tau^p}= p_{A\times \T}^*\res^{A\times C_p^ m\times \T}_{A\times \T}(\mv_{n+m}) \cdot \prod_{V\in A^*} e_{V\otimes \tau}^{p^m} \end{equation}
in the quotient $L_{A\times \T \times B,m}/p_A^*I_{A,m}$. This uses that every character of $A$ extends to $p^m$ different characters of $A\times C_p^m$ and that the restriction of $I_{A\times C_p^m,0}\subseteq L_{A\times C_p^m}$ lands in the ideal $I_{A,m}\subseteq L_A$. For the rest of the proof we write $z$ for the element $p^*_{A\times \T}\res^{A\times C_p^ m\times \T}_{A\times \T}(\mv_{n+m})$. With $x$ as in the statement of the lemma, we hence obtain an equation of the form
\begin{equation} \label{eq:etaup}  x\cdot \prod_{V\in A^*} e_{V\otimes \tau}^{n_V}
=z\cdot \prod_{V\in A^*} e_{V\otimes \tau}^{p^m} \cdot y\end{equation}
and we need to show that $x$ is a multiple of $z$. The Euler classes $e_{V\otimes \tau}$ fit into short exact sequences of the form
\[ 0\to L_{A\times \T\times B}/p_A^* I_{A,m} \xr{e_{V\otimes \tau}} L_{A\times \T\times B}/p_A^* I_{A,m} \xr{((\id,V^{-1})\times B)^*} L_{A\times B}/p_A^*I_{A,m} \to 0,  \]
analogously to Equation \ref{eq:exact}. The induction hypothesis implies that the quotient $L_{A\times B}/p_A^*I_{A,m}$ is an integral domain. We know that $z$ restricts to $v_{n+m}\in L/I_{n+m}$ at the trivial group. In particular it must restrict non-trivially under each $((\id,V^{-1})\times B)^*$. Hence the above short exact sequence implies: If an Euler class $e_{V\otimes \tau}$ divides an element of the form $z\cdot \alpha$, then $e_{V\otimes \tau}$ divides $\alpha$. Applying this iteratively to Equation \ref{eq:etaup} (and using that $L_{A\times \T\times B}/p_A^*I_{A,m}$ is an integral domain as well by the induction hypothesis) we see that $\prod_{V\in A^*} e_{V\otimes \tau}^{n_V}$ must divide the term $\prod_{V\in A^*}e_{V\otimes \tau}^{p^m}\cdot y$. Dividing on both sides shows that $x$ is a multiple of $z$, as desired.
\end{proof}
\begin{cor} \label{cor:quotient} The quotient
\[ I_{A\times C_p\times B,m}/p^*_A I_{A,m}\]
is generated by the element
\[ \res^{A\times C_p^m\times C_p\times B}_{A\times C_p\times B}(p_{A\times C_p^m\times C_p}^*\overline{\mv}_{n+m})=p_{A\times C_p}^*(\res^{A\times C_p^m\times C_p}_{A\times C_p}\overline{\mv}_{n+m}).\]
\end{cor}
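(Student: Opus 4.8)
The plan is to combine the reduction carried out in the paragraphs preceding \cref{lem:inductionstep} with the lemma itself. By that discussion, inside the ring $L_{A\times C_p\times B}/p_A^*I_{A,m}$ the quotient ideal $I_{A\times C_p\times B,m}/p_A^*I_{A,m}$ consists exactly of the Euler-power torsion for the characters $V\otimes\overline{\tau}$ with $V\in A^*$; concretely, an element $x$ lies in it if and only if $x\cdot\prod_{V\in A^*}e_{V\otimes\overline{\tau}}^{n_V}=0$ for some natural numbers $n_V$. So it suffices to identify these torsion elements and to check that the proposed generator is one of them.

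For the inclusion of the quotient ideal into the ideal generated by the proposed element, I would lift such an $x$ along the surjection $L_{A\times\T\times B}/p_A^*I_{A,m}\twoheadrightarrow L_{A\times C_p\times B}/p_A^*I_{A,m}$, whose kernel is generated by $e_{\tau^p}$. A lift $\tilde x$ then satisfies $\tilde x\cdot\prod_{V\in A^*}e_{V\otimes\tau}^{n_V}=y\cdot e_{\tau^p}$ for some $y$, which is precisely the hypothesis of \cref{lem:inductionstep}; hence $\tilde x$ lies in the ideal generated by $z:=p_{A\times\T}^*\res^{A\times C_p^m\times\T}_{A\times\T}(\mv_{n+m})$, and pushing this conclusion back down along the surjection shows that $x$ lies in the ideal generated by the image $\bar z$ of $z$ in $L_{A\times C_p\times B}/p_A^*I_{A,m}$. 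Tracking the groups, $\bar z$ is obtained from $z$ by restricting from $A\times\T\times B$ to $A\times C_p\times B$; since $\overline{\mv}_{n+m}$ is by \cref{def:vn} the restriction of $\mv_{n+m}$ from $C_p^{n+m}\times\T$ to $C_p^{n+m}\times C_p$ and restriction commutes with the inflation $p^*$, this yields
\[ \bar z=\res^{A\times C_p^m\times C_p\times B}_{A\times C_p\times B}\bigl(p_{A\times C_p^m\times C_p}^*\overline{\mv}_{n+m}\bigr)=p_{A\times C_p}^*\bigl(\res^{A\times C_p^m\times C_p}_{A\times C_p}\overline{\mv}_{n+m}\bigr), \]
which is exactly the element in the statement.

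For the reverse inclusion I would check directly that $\bar z$ lies in $I_{A\times C_p\times B,m}/p_A^*I_{A,m}$: restricting the identity $e_{\tau^p}=z\cdot\prod_{V\in A^*}e_{V\otimes\tau}^{p^m}$ from the proof of \cref{lem:inductionstep} from $A\times\T\times B$ to $A\times C_p\times B$ gives $0=\bar z\cdot\prod_{V\in A^*}e_{V\otimes\overline{\tau}}^{p^m}$, since $\tau^p$ restricts to the trivial character of $C_p$ and hence $e_{\tau^p}$ restricts to $e_\epsilon=0$. Thus $\bar z$ is Euler-power torsion for the characters $V\otimes\overline{\tau}$ and therefore lies in the quotient ideal, which together with the previous inclusion proves the corollary. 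I expect the main obstacle to be purely organizational, namely keeping straight which group each Euler class, inflation map and restriction map is taken over when moving between $A\times C_p\times B$, $A\times\T\times B$, $A\times C_p^m\times C_p\times B$ and $A\times C_p^m\times\T\times B$, and confirming that the two displayed forms of $\bar z$ genuinely coincide; there is no new mathematical input beyond \cref{lem:inductionstep} and the reduction preceding it, and the lifting step together with the application of \cref{lem:inductionstep} rests on the integral-domain statements for $L_{A\times\T\times B}/p_A^*I_{A,m}$ and $L_{A\times B}/p_A^*I_{A,m}$ supplied by the inductive hypothesis.
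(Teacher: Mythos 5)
Your proposal is correct and follows essentially the same route as the paper: it invokes the preceding reduction to Euler-power torsion for characters $V\otimes\overline{\tau}$, lifts such torsion along the surjection with kernel $(e_{\tau^p})$ so that \cref{lem:inductionstep} applies, and then notes the reduction of $p_{A\times\T}^*\res^{A\times C_p^m\times\T}_{A\times\T}(\mv_{n+m})$ is the stated element and is itself Euler-power torsion. The only difference is that you spell out the lifting step and the torsion check (via restricting $e_{\tau^p}=z\cdot\prod_V e_{V\otimes\tau}^{p^m}$) which the paper leaves implicit.
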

\begin{proof} We saw above that the quotient $I_{A\times C_p\times B,m}/p^*_A I_{A,m}$ is generated by Euler-power torsion for characters of the form $V\otimes \overline{\tau}$. An element $\overline{x}$ of $L_{A\times C_p\times B}/p_A^* I_{A,m}$ is such a torsion element if and only if it is the reduction of an element $x\in L_{A\times \T\times B}/p_A^* I_{A,m}$ satisfying the conditions of the lemma. Since the reduction of $p_{A\times \T}^*\res^{A\times C_p^m\times \T}_{A\times \T}(\mv_{n+m})$ equals
$p_{A\times C_p}^*\res^{A\times C_p^m\times C_p}_{A\times C_p}(\overline{\mv}_{n+m})$, it follows that $\overline{x}$ lies in the ideal generated by the latter.

As $p_{A\times C_p}^*\res^{A\times C_p^m\times C_p}_{A\times C_p}(\overline{\mv}_{n+m})$ is Euler-power torsion itself, it hence forms a generator of $I_{A\times C_p\times B,m}/p^*_A I_{A,m}$.
\end{proof}
To finish the proof of Theorem \ref{thm:generators1}: Setting $m=0$ in the corollary shows that $\overline{\mv}_n$ generates the quotient $I_{A\times C_p\times B,0}/p_A^*I_{A,0}$. By the induction hypothesis we know that $I_{A,0}$ is generated by $p_1^*(\overline{\mv}_0),\hdots,p_{n-1}^*(\overline{\mv}_{n-2}),\overline{\mv}_{n-1}$. Combined this proves Part 1 for the group $A\times C_p$. 

For Part 2 and general $m$, we first note that it suffices to show the statement for any choice of injection $i\colon A \times C_p \to C_p^{n+m+1}$ since any two only differ by postcomposition with an automorphism of $C_p^{n+m+1}$. We can hence pick the canonical inclusion $A\times C_p\to A\times C_p^m\times C_p$ avoiding $C_p^m$. By the induction hypothesis we know that $I_{A\times C_p^m,0}$ surjects onto $I_{A,m}$. From the diagram
\[
\xymatrix{
I_{A\times C_p^m, 0}\ar[d]^{p_{A\times C_p^m}^*} \ar@{->>}[r]^{\res} & I_{A,m} \ar[d]^{p_A^*}\\ 
I_{A\times C_p^m\times C_p\times B,0} \ar[r]_-{\res} & I_{A\times C_p\times B, m}
}
\]
we see that $p_A^*(I_{A,m})$ is contained in the image of the lower horizontal arrow. Furthermore, Corollary \ref{cor:quotient} implies that $I_{A\times C_p\times B,m}/p^*_A I_{A,m}$ is generated by the restriction of an element of $I_{A\times C_p^m\times C_p\times B,m}$. \revm{Finally, \lennart{we need to see that $(\star)$ holds true for $C_p^{n+1}\cong A\times C_p$. This} is a direct consequence of Corollary \ref{cor:quotient}, as by definition both $p_A^*I_{A,m}$ and $p_{A\times C_p}^*(\res^{A\times C_p^m\times C_p}_{A\times C_p}\overline{\mv}_{n+m})$ lie in the image of $p_{A\times C_p}^* I_{A\times C_p,m}$.}
This finishes the proof.

\begin{remark} \label{rem:generatorstori} Unlike the sequence $v_0,\hdots,v_{n-1}$, the sequence 
\[p_1^*(\overline{\mv}_0),p_2^*(\overline{\mv}_1),\hdots,p_{n-1}^*(\overline{\mv}_{n-2}),\overline{\mv}_{n-1}\]
isn't regular. In fact, the ideal $I_{C_p^n,0}$ generated by these elements is precisely that of Euler-\lennart{power} torsion.

This can be corrected by passing to a torus: The ideal $I^{\T^n}_{C_p^n,0}$ is generated by the sequence $p_1^*(\mv_0),p_2^*(\mv_1),\hdots,p_{n-1}^*(\mv_{n-2}),\mv_{n-1}$. Here, each $p_{i+1}^*(\mv_i)$ is the element of
\begin{equation}\label{eq:regulariso} L_{\T^n}/(p_1^*(\mv_0),p_2^*(\mv_1),\hdots,p_i^*(\mv_{i-1}))\cong L_{C_p^{i}\times \T^{n-i}}/I_{C_p^i\times \T^{n-i},0} \end{equation}
obtained as the inflation of $\mv_i\in L_{C_p^i\times \T}/I_{C_p^i\times \T,0}$ along the projection to the first coordinate of $\T^{n-i}$.  Since each successive quotient $L_{C_p^{i}\times \T^{n-i}}/I_{C_p^i\times \T^{n-i},0}$ is an integral domain, the regularity of the sequence is clear once we have demonstrated the isomorphism claimed in \eqref{eq:regulariso}. Similarly one shows that $I^{\T^n}_{C_p^n,m}$ is generated by a regular sequence of length $n+m$.

To establish the isomorphism, first note that $(e_{\tau^p}) \subseteq (p^*_{C_p^{i-1}\times \T}(\mv_{i-1}))$ in the quotient ring $L_{C_p^{i-1}\times \T^{n-i+1}}/p^*_{C_p^{i-1}}I_{C_p^{i-1},0}$ by \cref{lem:inductionstep}. Thus, \cref{cor:quotient} with $m=0$ gives
\[L_{\T^n}/(p_1^*(\mv_0),p_2^*(\mv_1),\hdots,p_i^*(\mv_{i-1}))\cong L_{C_p^{i}\times \T^{n-i}}/(p_1^*(\overline{\mv}_0),p_2^*(\overline{\mv}_1),\hdots,p_i^*(\overline{\mv}_{i-1})),\]
and the claimed isomorphism becomes \cref{thm:generators1}.
\end{remark}

\section{The Zariski topology on the spectrum of invariant prime ideals}

\begin{figure}
    \centering
    \begin{tikzpicture}
        % First tower
        \node[draw, circle, fill, inner sep=1pt] (1) at (0,0) {};
        \node[left] at (1.west) {$(e)=I_{\{1\},0}$};
        
        \node[draw, circle, fill, inner sep=1pt] (2) at (0,1.5) {};
        \node[left] at (2.west) {$(e, v_0)=I_{\{1\},1}$};
        
        \node[draw, circle, fill, inner sep=1pt] (3) at (0,3) {};
        \node[left] at (3.west) {$(e, v_0, v_1)=I_{\{1\},2}$};
        
        \node[draw, circle, fill, inner sep=1pt] (4) at (0,4.5) {};
        \node[left] at (4.west) {$(e, v_0, v_1, v_2)=I_{\{1\},3}$};
        
        % Infinity symbol
        \node[draw, circle, fill, inner sep=1pt] (5) at (0,8.5) {};
        \node[left] at (5.west) {$(e, v_0, v_1, v_2,\hdots)=I_{\{1\},\infty}$};
        % Dotted line
        \draw[dotted] (0, 5) -- (0,8.5);

        % Second tower
        \node[draw, circle, fill, inner sep=1pt] (C1) at (2,0) {};
        \node[right] at (C1.east) {$I_{C_p,0}=(\overline{\mv}_0)$};
        
        \node[draw, circle, fill, inner sep=1pt] (C2) at (2,1.5) {};
        \node[right] at (C2.east) {$I_{C_p,1}=(\overline{\mv}_0,i_1^*\overline{\mv}_1)=(v_0, i_2^*\overline{\mv}_1)$};
        
        \node[draw, circle, fill, inner sep=1pt] (C3) at (2,3) {};
        \node[right] at (C3.east) {$I_{C_p,2}=(\overline{\mv}_0,i_1^*\overline{\mv}_1,i_1^*\overline{\mv}_2)=(v_0,i_2^*\overline{\mv}_1,i_2^*\overline{\mv}_2)=(v_0, v_1, i_3^*\overline{\mv}_2)$};
        
        \node[draw, circle, fill, inner sep=1pt] (C4) at (2,4.5) {};
        \node[right] at (C4.east) {$I_{C_p,3}=(\overline{\mv}_0,i_1^*\overline{\mv}_1,i_1^*\overline{\mv}_2,i_1^*\overline{\mv}_3)=\hdots=(v_0, v_1, v_2, i_4^*\overline{\mv}_3)$};
        
        % Infinity symbol
        \node[draw, circle, fill, inner sep=1pt] (C5) at (2,8.5) {};
        \node[right] at (C5.east) {$I_{C_p,\infty}=(\overline{\mv}_0,i_1^*\overline{\mv}_1,i_1^*\overline{\mv}_2,\hdots)$};

    \draw[dotted, ] (2, 5) -- (2,8.5);
        % Draw line between towers and shade area
        \draw (2) -- (C1);
        \draw (3) -- (C2);
        \draw (4) -- (C3);
        \draw (C4) -- (1,5.25);
        \draw[dotted] (1, 5.25) -- (0.2, 5.85);
        \draw (5) -- (C5);
        \draw (1) -- (2);
        \draw (2) -- (3);
        \draw (3) -- (4);
        \draw (4) -- (0,5);
        \draw (C1) -- (C2);
        \draw (C2) -- (C3);
        \draw (C3) -- (C4);
        \draw (C4) -- (2,5);
        \fill[yellow,opacity=0.5] (C2.center) -- (C5.center) -- (5.center) -- (3.center) -- cycle;
    \end{tikzpicture}
    \caption{A picture of $\Spec^{\inv}(L_{C_p})$, localized at $p$, including different choices of generators, with $i_j\colon C_p\to C_p^k$ denoting the $j$th canonical inclusion (generators arising from the further inclusions $C_p\to C_p^k$ are omitted). The yellow area depicts the closure of the point $I_{C_p,1}$.}
\end{figure}
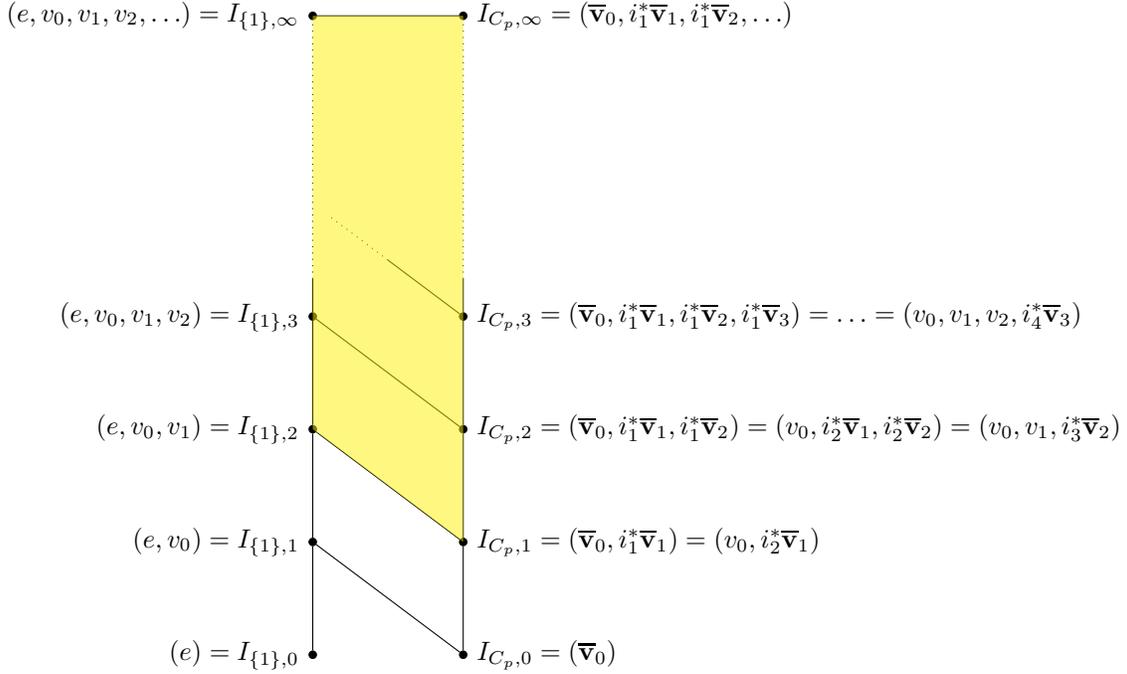

The goal of this section is to describe the Zariski topology on  $\Spec^{\inv}(L_A)$, or equivalently the topology on the space $|\MM_{FG}^A|$ (\Cref{thm:ClassificationOfInvariantPrimes}). By definition, the closed subsets of $\Spec^{\inv}(L_A)$ are the subsets of the form
\[ V(X)=\{ I^{\lennart{A}}_{B,n}  \in \Spec^{\inv}(L_A)\ |\ X\subseteq I^{\lennart{A}}_{B,n} \} \]
for some subset $X$ of $L_A$. Hence we need to determine the collections of invariant prime ideals that arise as $V(X)$ for some $X$. We now fix a subset $X$. Since a containment $X\subseteq I^A_{B,n}$ automatically implies $X\subseteq I^A_{B,n'}$ for all $n'\geq n$, it suffices to understand -- for every closed subgroup $B$ of $A$ -- the smallest value of $n\in \N\cup \{\infty\}$ such that $X\subseteq I^A_{B,n}$. In other words we need to determine the function
\[ \height_X\colon \Sub(A)\to \overline{\N}_- = \{-1\}\cup \mathbb{N}\cup \{\infty\} \]
defined by
\[ \height_X(B)=\sup \{ n\ |\ X\not \subseteq I_{B,n}^{A}\}, \]
where we set $\sup(\varnothing) = -1$. We note that with this definition the height function of the image of $v_n\in L$ in $L_A$ (thought of as a one-element set) is constantly $n$. This follows from the fact that $\Phi^BL$ is a free $L$-module by \cref{prop:GeometricFixedPoints}; cf.\ also the proof of \cref{thm:ClassificationOfInvariantPrimes}. Moreover, we have:
\begin{example} Let $x_n\in L_{C_p^{n+1}}$ be a lift of $\overline{\mv}_n\in L_{C_p^{n+1}}/p_{C_p^n}^*I_{C_p^n,0}$. Then Corollary \ref{cor:resxn} implies that $\height_{x_n}(B)=n-\rk(B)$.
\end{example}

 Our goal is to understand which functions $\Sub(A)\to \overline{\N}_-$ arise as such height functions. In the previous section we showed that there are inclusions between invariant prime ideals associated to different subgroups of $A$. These translate to conditions between the different values of $\height_X$: If $B'\subseteq B$ is an inclusion of subgroups of $A$ such that $B/B'$ is $p$-toral, and $X$ is contained in $I^A_{B,n}$, then $X$ is also contained in $I^A_{B',n+\rank_p(\pi_0(B/B'))}$. In terms of the height function this translates to the inequality
\[ \height_X(B')\leq \height_X(B)+\rank_p(\pi_0 (B/B')). \]
This leads us to the following definition:
\begin{definition} \label{def:admissible} A function $f\colon \Sub(A)\to \overline \N_-$ is called {\em admissible} 
if it satisfies the inequality
\[ f(B')\leq f(B) + \rank_p(\pi_0 (B/B')) \]
for every $p$-toral inclusion $B'\subseteq B$ of closed subgroups of $A$. Here, $B'\subseteq B$ is \emph{$p$-toral} if $B/B'$ is a product of a torus and a $p$-group.
\end{definition}
By the above considerations, any height function $\height_X$ is admissible. When the group $A$ is finite, it turns out that the converse also holds: Any admissible function is realized by a height function $\height_X$. For positive dimensional $A$ there is an additional condition on top of admissibility. To state this condition, we recall that choosing an invariant Riemannian metric $d$ on $A$ also equips the set of closed subgroups $\Sub(A)$ with the Hausdorff metric, the underlying topology of which does not depend on the chosen metric on $A$. This turns $\Sub(A)$ into a compact totally-disconnected metric space, in which a sequence $(B_i)_{i\in \mathbb{N}}$ of closed subgroups converges to another closed subgroup $B\in \Sub(A)$ if and only if almost all $B_i$ are subgroups of $B$ and for every element $b\in B$ the distance function $d(b,B_i)$ converges to zero (see \cite[Section 5.6]{tomDieckTransformationRepresentation}). 
If $B_i \to B$, we have the following two implications about representations: 
\begin{enumerate}
    \item Let $W$ be a (finite dimensional) representation of $B$ with $W^B = 0$. Thus, writing $W$ as a sum of characters $V_i$, none of the $V_i$ is trivial. For sufficiently large $i$, no $\ker(V_i)$ contains $B_i$ (since $\ker(V_i) \subseteq B$ is a closed proper subgroup) and thus $W^{B_i} = 0$.
    \item Let $V$ and $W$ be two characters of $B$ such that $\res^B_{B_i}V = \res^B_{B_i}W$ for all sufficiently large $i$. If $V\neq W$, then $(V\cdot W^{-1})^B = 0$, in contradiction with the previous point. Thus $V = W$. 
\end{enumerate}
We have the following:
\begin{prop} 
For every {\em finite} subset $X\subseteq L_A$, the height function $\height_X$ is a locally constant function on $\Sub(A)$.
\end{prop}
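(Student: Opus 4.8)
The plan is to fix a finite set $X\subseteq L_A$ and a closed subgroup $B$, and produce a neighborhood $U$ of $B$ in $\Sub(A)$ on which $\height_X$ is constant. The key point is that $\height_X(B)$ is determined by which of the elements of $X$ lie in which $I^A_{B,n}$, and by \cref{lem:characterizationofideals} membership $x\in I^A_{B,n}$ is witnessed by a finite amount of data: an $A$-representation $W$ with $W^B=0$ and an equation $x\cdot e_W\in(I^A_B,I_n)$ in $L_A$. Since $L_A$ is a countably-presented ring and $X$ is finite, all the membership and non-membership facts determining $\height_X(B)$ can be encoded by finitely many elements of $L_A$ together with finitely many auxiliary representations. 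I want to show each such fact persists under replacing $B$ by a sufficiently close $B'$.

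First I would reduce to the following two persistence statements, for $B_i\to B$ in $\Sub(A)$: (a) if $x\in I^A_{B,n}$ then $x\in I^A_{B_i,n}$ for all large $i$; and (b) if $x\notin I^A_{B,n}$ then $x\notin I^A_{B_i,n}$ for all large $i$. For (a): using \cref{lem:characterizationofideals}, pick $W$ with $W^B=0$ and $x\cdot e_W=j+\iota$ with $j\in I^A_B$ and $\iota\in I_n\cdot L_A$. By the first bulleted implication before the proposition, $W^{B_i}=0$ for large $i$. Also $I^A_B=\ker(L_A\to L_B)$ is generated by Euler classes $e_V$ with $V\in\ker(A^*\to B^*)$; writing $j$ as a finite $L_A$-combination of finitely many such $e_V$, each of these $V$ also lies in $\ker(A^*\to B_i^*)$ for large $i$ (again by the first bulleted implication, applied to the non-trivial character $V$, whose kernel is a proper closed subgroup of $B$). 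Hence $j\in I^A_{B_i}$ for large $i$, so $x\cdot e_W\in(I^A_{B_i},I_n)$, and \cref{lem:characterizationofideals} gives $x\in I^A_{B_i,n}$.

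The harder direction is (b), ruling out spurious membership for nearby subgroups; this is where I expect the main obstacle. The idea is: $x\notin I^A_{B,n}$ means $x$ maps to a nonzero element of the integral domain $\Phi^BL/I_n$, which by \cref{prop:GeometricFixedPoints} is $(L/I_n)[(b_0^V)^{\pm1},b_i^V]$. One wants a single Euler class $e_W$ (equivalently a representation $W$ with $W^B=0$) and an identity in $L_A$ detecting this nonvanishing that is robust under shrinking $B$. Concretely, pass to $L_B/I_n$ and use that $\Phi^BL/I_n$ is a localization of it away from the Euler classes $e_{\res^A_BV}$, $V\notin\ker(A^*\to B^*)$; nonvanishing of the image of $x$ in $\Phi^BL/I_n$ means there is a finite product $e_W$ of such Euler classes and an $m$ so that $x\cdot e_W^m\notin(I^A_B,I_n)$ yet... this is not quite an identity. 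A cleaner route: since $L_A$ is free over $L$ (\cref{prop:free}) and all the relevant quotients are $L$-free by \cref{cor:regularmodIn}, choose a finite partial $L$-basis exhibiting that the image of $x$ in $L_B/I_n$ is not annihilated by $e_W$ for a suitable $W$; then transport this finite linear-algebra witness to $B_i$ using that restriction maps $L_A\to L_{B_i}$ are compatible and that for large $i$ the finitely many characters involved restrict the ``same way'' to $B_i$ as to $B$ (second bulleted implication). Finitely many elements of $X$, each requiring finitely many such witnesses, yield finitely many closed conditions excluding finitely many subgroups from a neighborhood; the intersection is the desired $U$. The main difficulty is packaging the non-membership witness as genuinely finite data in $L_A$ and checking its stability — here one leans on \cref{prop:free}, \cref{cor:regularmodIn}, and the two bulleted implications, which is precisely the structure the paper has set up.
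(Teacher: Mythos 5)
Your direction (a) --- persistence of membership $x\in I^A_{B,n}$ for nearby $B_i$ --- is essentially the paper's argument and is fine (the parenthetical about kernels is muddled: once $B_i\subseteq B$, any $V\in\ker(A^*\to B^*)$ automatically lies in $\ker(A^*\to B_i^*)$, so no limiting argument is needed there). The genuine gap is in direction (b). Non-membership $x\notin I^A_{B,n}$ is \emph{not} witnessed by finite data of the kind you describe: it says that $e_W\cdot x\notin (I^A_B,I_n)$ for \emph{every} representation $W$ with $W^B=0$, a universal statement over infinitely many Euler classes, and you notice yourself that your first attempt ``is not quite an identity.'' The proposed repair --- choosing a ``finite partial $L$-basis'' exhibiting non-annihilation in $L_B/I_n$ and ``transporting'' it to $B_i$ --- does not work as stated: the restriction maps $L_B\to L_{B_i}$ do not carry a chosen $L$-basis of $L_B$ to anything controlled in $L_{B_i}$, and non-annihilation by a single $e_W$ in $L_B/I_n$ is in any case weaker than non-vanishing in $\Phi^BL/I_n$ (the map $L_B/I_n\to\Phi^BL/I_n$ kills all Euler-power torsion coming from torsion characters). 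So no finite, restriction-stable witness of non-membership has actually been produced.

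The missing idea, which is exactly what the paper uses, is the system of natural elements $\gamma_j^V\in L_A$ from \cref{rem:naturalitybi}: they satisfy $\alpha^*\gamma_j^V=\gamma_j^{\alpha^*V}$, $\gamma_0^V=e_V$, and they give the presentation $\Phi^{G}L\cong L[e_V^{\pm1},\gamma_j^V\mid V\neq\epsilon]$ refining \cref{prop:GeometricFixedPoints}. With this, $x\notin I^A_{B,n}$ (after reducing to $A=B$) translates into an honest identity in $L_A$: there are $W$ with $W^A=0$ and finitely many pairwise distinct non-trivial characters $V_1,\dots,V_k$ such that $e_W\cdot x$ equals a polynomial over $L$ in the $\gamma_j^{V_l}$ with at least one coefficient outside $I_n$. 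By naturality this identity restricts verbatim to $B_i$, and for large $i$ one has $W^{B_i}=0$ and the $V_l$ restrict to pairwise distinct non-trivial characters of $B_i$ (your two bulleted implications), so the restricted expression is the ``same'' nonzero polynomial in the free generators of $\Phi^{B_i}L/I_n$ over $L/I_n$; hence $x\notin I^A_{B_i,n}$. Without this (or an equivalent restriction-compatible set of generators of the geometric fixed points), the transport step in your sketch has no justification, so the proof is incomplete precisely at its crux.
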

\begin{proof}
We start by noting that
\[ \height_X(B)=\max(\height_{\{x\}}(B)\ |\ x\in X). \]
The maximum of a finite number of locally constant functions is again locally constant. Hence it suffices to understand that $\height_{\{x\}}$ is locally constant for any element $x$ of $L_A$.

Now let $(B_i)_{i\in \mathbb{N}}$ denote a sequence of subgroups of $A$ converging to a subgroup $B$. We need to show that $\height_{\{x\}}(B_i)=\height_{\{x\}}(B)$ for almost all $i$. Without loss of generality we can assume that the $B_i$ are subgroups of $B$. Replacing $x$ by $\res^A_B(x)$ if necessary we can further \revm{restrict to the case} $A=B$.

We first assume that $x\in I^A_{A,n}$ for some $n$, and show that then also $x\in I^A_{B_i,n}$ for almost all $B_i$. If $x\in I^A_{A,n}$, there exists an $A$-representation $W$ with $W^A=0$, such that $e_W\cdot x$ lies in the ideal $L_A\cdot I_n$ \revm{(see Lemma \ref{lem:characterizationofideals})}. For all $i$ large enough we have $W^{B_i}=0$, meaning that for these $i$ the restriction $e_{\res^A_{B_i} W}$ becomes invertible in $\Phi^{B_i} L$. It follows that $\res^A_{B_i} x$ maps to the ideal generated by $I_n$ in $\Phi^{B_i} L$, in other words $x$ is contained in $I^A_{B_i,n}$.

For the other direction, we assume that $x$ is not contained in $I^A_{A,n}$ for some $n$, and show that then also $x\notin I^A_{B_i,n}$ for almost all $B_i$. For this we recall from Remark \ref{rem:naturalitybi} the construction of elements $\gamma_j^V$ for every character $V\in G^*$ and $j\in \N$ satisfying the following three properties:
\begin{enumerate}
    \item $\alpha^*(\gamma_j^V)=\gamma_j^{\alpha^*(V)}$ for every group homomorphism $\alpha\colon G'\to G$.
    \item $\gamma_0^V=e_V$.
    \item \revm{The canonical map $L[e_V^{\pm 1}, \gamma_j^V\ |\ V\in G^*-\{\epsilon\},j>0]\to \Phi^G L$ is an isomorphism for all abelian compact Lie groups $G$.}
\end{enumerate}
Now, if $x$ is not contained in $I^A_{A,n}$, it maps to a non-trivial element in 
\[ \Phi^ AL/I_n= L/I_n[e_V^{\pm 1},\gamma_j^V\ |\ V\in A^*-\{\epsilon\},j> 0]. \]
In other words, there exists an $A$-representation $W$ with $W^A=0$ and pairwise different non-trivial characters $V_1,\hdots,V_k$ such that $e_W\cdot x\in L_A$ is a polynomial over $L$ in the classes $\gamma_j^{V_l}$, $l=1,\hdots,k, j\geq 0$, not all of whose coefficients are contained in $I_n$. For all $i$ large enough we have that (i) $W^{B_i}=0$ and that (ii) all the characters $V_1,\hdots,V_k$ restrict to pairwise different and non-trivial characters of $B_i$. It then follows that for these $i$ the element $e_{\res^A_{B_i} W}\cdot \res^A_{B_i}x$ equals the corresponding polynomial in the classes $\gamma_j^{\res^A_{B_i}V_l}$, implying that it maps to a non-trivial element in $\Phi^{B_i}L/I_n$. In other words, $x$ is not contained in $I^A_{B_i,n}$ for $i$ large enough. This finishes the proof.
\end{proof}
Together with admissibility, this property characterizes the height functions of finite subsets of~$L_A$:
\begin{proposition} \label{prop:heightfunctions} Given a function $f\colon \Sub(A)\to \overline{\mathbb{N}}_-$, the following are equivalent:
\begin{enumerate}
    \item There exists a finite subset $X\subseteq L_A$ such that $f=\height_X$. 
    \item The function $f$ is admissible and locally constant.
\end{enumerate}
\end{proposition}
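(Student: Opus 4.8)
The implication $(1)\Rightarrow(2)$ is essentially already available. Writing $\height_X=\max_{x\in X}\height_{\{x\}}$, local constancy is the proposition just proved, and admissibility is nothing but the translation of the inclusions $I^A_{B,n}\subseteq I^A_{B',\,n+\rank_p(\pi_0(B/B'))}$ from \cref{cor:inclusions}: if some $x\in X$ has $x\notin I^A_{B',n'}$ then $x\notin I^A_{B,n}$ for $n=n'-\rank_p(\pi_0(B/B'))$, and one takes suprema over $x$. So all the work lies in $(2)\Rightarrow(1)$.

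The basic tool is a \emph{calculus of height functions}. Since each $I^A_{B,n}$ is prime, $\height_{\{xy\}}=\min(\height_{\{x\}},\height_{\{y\}})$, while $\height_{X\cup Y}=\max(\height_X,\height_Y)$; hence the realizable functions are closed under pointwise maxima, and a product of elements realizes the pointwise minimum of the individual height functions. I then record the height functions of the basic elements of $L_A$ at hand: the image of $v_m$ is constant $m$; the unit is constant $\infty$ and $0$ is constant $-1$; the Euler class $e_V$ has height $\infty$ on $\{B:V|_B\neq\epsilon\}$ and $-1$ on $\{B:V|_B=\epsilon\}$ (by \cref{lem:CoordinateFreeEuler} and $\Phi^BL/I_m\neq 0$, \cref{prop:GeometricFixedPoints}); the pullback of $\mv_0^{(q)}\in L_\T$ (from \cref{sec:reduction}) along a character $\psi\colon A\to\T$ restricts to the $p$-local unit $q$ wherever $\psi$ is trivial and is Euler-power torsion wherever $\psi$ has order exactly $q$; and the lift $x_n\in L_{C_p^{n+1}}$ of $\overline{\mv}_n$ has height $B\mapsto n-\rank_p(B)$ by \cref{cor:resxn}, so that its inflation along a surjection $q\colon A\twoheadrightarrow C_p^{n+1}$ has height $B\mapsto n-\rank_p(q(B))$. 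I also note that for any closed $H\subseteq A$ the set $\{B\in\Sub(A):B\subseteq H\}$ is clopen, so all the loci above, and all level sets of locally constant height functions, are clopen.

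Next I reduce $(2)\Rightarrow(1)$ to a \emph{local claim}: for admissible locally constant $f$ and each $B_0\in\Sub(A)$ with $f(B_0)\neq -1$ there is $w_{B_0}\in L_A$ with $\height_{\{w_{B_0}\}}(B_0)=f(B_0)$ and $\height_{\{w_{B_0}\}}\leq f$ pointwise. Granting the claim, $\height_{\{w_{B_0}\}}$ is locally constant by the preceding proposition, so $\{B:\height_{\{w_{B_0}\}}(B)=f(B)\}$ is an open neighborhood of $B_0$; since $f$ is locally constant on the compact space $\Sub(A)$ it has finite image and $\{f\neq -1\}$ is clopen, hence covered by finitely many such neighborhoods, say indexed by a finite set $S$. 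Then $X=\{0\}\cup\{w_{B_0}:B_0\in S\}$ is finite and $\height_X=f$: each term is $\leq f$, and at every $B$ the value $f(B)$ is attained, by some $w_{B_0}$ if $f(B)\neq -1$ and by $0$ if $f(B)=-1$.

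The local claim is the heart of the matter. For $f(B_0)=\infty$ one takes $w_{B_0}$ to be a product of Euler classes $e_V$ with $V|_{B_0}\neq\epsilon$, chosen (using local constancy and compactness of the clopen complement of $\{f=\infty\}$) so that $\{B:V|_B\neq\epsilon\ \forall V\}$ is a neighborhood of $B_0$ inside $\{f=\infty\}$. For $f(B_0)=n_0\in\N$ one builds $w_{B_0}$ as a finite product of three kinds of factors: (a) an inflation of $x_{n_0}$ (combined with $v_{n_0}$ as needed), designed to take the value $n_0$ at $B_0$ and to decay at exactly the maximal admissible rate $n_0-\rank_p(\pi_0(B/B_0))$ along $p$-toral overgroups of $B_0$, where the admissibility inequality $f(B)\geq f(B_0)-\rank_p(\pi_0(B/B_0))$ keeps it $\leq f$; (b) for each prime $q\neq p$ occurring in some $\pi_0(B/B_0)$ with $B\supseteq B_0$, a pullback of $\mv_0^{(q)}$ along a character $A\to\T$ that is trivial on $B_0$ and of order $q$ on $B$, forcing the height down to $-1$ on the non-$p$-toral overgroups without disturbing $B_0$; and (c) finitely many Euler classes $e_{V_i}$ with $V_i|_{B_0}\neq\epsilon$, obtained by covering the remaining (clopen, $B_0$-avoiding after (a) and (b)) locus where the partial product still exceeds $f$ — which consists only of $B$ with $B_0\not\subseteq B$, so such $V_i$ exist — and passing to a finite subcover by compactness. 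The main obstacle is precisely this construction: pinning the value $n_0$ at $B_0$ while simultaneously beating $f$ over all four types of subgroups (contained in $B_0$, strictly containing $B_0$ with $p$-toral quotient, strictly containing $B_0$ with non-$p$-toral quotient, incomparable), the delicate points being the non-$p$-toral overgroups handled via $\mv_0^{(q)}$ and the finiteness of the corrections when $\Sub(A)$ is not discrete. Throughout one uses \cref{cor:idealsfortori} for the torus pieces and the preceding proposition on local constancy of single-element height functions.
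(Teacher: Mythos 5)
Your global architecture --- a ``local claim'' producing, for each $B_0$, one element $w_{B_0}$ with $\height_{w_{B_0}}\leq f$ everywhere and equality at $B_0$, then local constancy plus compactness of $\Sub(A)$ to patch finitely many together via maxima --- is exactly the paper's, and your $(1)\Rightarrow(2)$ direction and min/max calculus are fine. The gap is inside the local claim, and it concerns overgroups of $B_0$. In the finite case, the factor (a) you postulate does not exist: for any homomorphism $\alpha\colon A\to C_p^{n_0+1}$ with $B_0\subseteq\ker\alpha$, the kernel is forced to contain strictly more than $B_0$ whenever $A/B_0$ is not a product of a torus and an elementary abelian group, and at overgroups $B$ with $B_0\subsetneq B\subseteq\ker\alpha$ the height of $\alpha^*x_{n_0}$ stays at $n_0$ rather than decaying. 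Concretely, for $A=C_{p^2}$, $B_0=\{1\}$, $f(1)=n_0$, $f(C_p)=f(C_{p^2})=n_0-1$ (admissible and locally constant), every $\alpha$ kills $C_p\subseteq C_{p^2}$, so $\height_{\alpha^*x_{n_0}}(C_p)=n_0>f(C_p)$; step (b) contributes nothing (no primes $q\neq p$), and step (c) cannot repair it, because an Euler class $e_V$ with $V|_{B_0}\neq\epsilon$ has height $\infty$ at \emph{every} overgroup of $B_0$ (here there are not even any such $V$, since $B_0=\{1\}$). So your assertion in (c) that the residual excess locus consists only of $B$ with $B_0\not\subseteq B$ is false. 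The same over-optimism appears in (b) (one pullback of $\mv_0^{(q)}$ per prime $q$ leaves $\ker\psi$ untouched; try $A=C_q\times C_q$, $B_0=\{1\}$, $f=-1$ on all nontrivial subgroups) and, more starkly, in your $f(B_0)=\infty$ case: a product of Euler classes nontrivial on $B_0$ has height $\infty$ on all overgroups of $B_0$, but admissibility forces $f=\infty$ only on the \emph{$p$-toral} ones, so e.g.\ $A=C_q$, $f(1)=\infty$, $f(C_q)=0$ is admissible and locally constant yet unreachable by your recipe (one needs an $\mv_0^{(q)}$-type factor there too).

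The missing idea is that one auxiliary element is needed per overgroup \emph{direction}, not per type of direction, together with a second compactness argument in that variable. This is how the paper's proof of the local step goes: for each pair $(B_0,B')$ it constructs $x_{B_0,B'}$ with $\height_{x_{B_0,B'}}(B_0)=f(B_0)$ and $\height_{x_{B_0,B'}}(B')\leq f(B')$ --- for $p$-toral overgroups $B'$ via an inflation of a restriction of $x_{f(B_0)}$ along a surjection $B'\to C_p^r$ (accepting that it fails to decay at the intermediate subgroups inside the kernel), for non-$p$-toral overgroups via $v_{f(B_0)}\cdot g^*\overline{\mv}_0^{(q)}$, and for incomparable $B'$ via $e_V\cdot v_{f(B_0)}$ --- and then uses local constancy of $f$ and of each $\height_{x_{B_0,B'}}$ plus compactness of $\Sub(A)$ in the $B'$-variable to take a finite product $w_{B_0}=\prod_j x_{B_0,B_j'}$, whose height is the pointwise minimum and hence $\leq f$ with equality at $B_0$. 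Your ingredients (Euler classes, $v_m$, $\mv_0^{(q)}$, inflations of $x_n$, the prime-ideal min/max calculus, and \cref{cor:resxn}) are the right ones, but the one-shot three-factor product cannot beat $f$ on the $p$-power overgroups hidden in the kernel of a single inflation, so the heart of $(2)\Rightarrow(1)$ is not yet established as written.
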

\begin{proof} We have already shown the implication 1. $\Rightarrow$ 2.

It remains to show that given a locally constant admissible function $f$, there exists a finite subset $X\subseteq L_A$ with $f=\height_X$. We start with the following claim: If $f$ is admissible, then given any pair of subgroups $B,B'\subseteq A$, there exists an element $x_{B,B'}\in L_A$ such that $\height_{x_{B,B'}}(B)=f(B)$ and $\height_{x_{B,B'}}(B')\leq f(B')$. To see this, we distinguish between three cases: 
\begin{enumerate}
    \item[(i)] If $B$ is not a subgroup of $B'$, we can choose a character $V\in A^*$ which restricts to the trivial character over $B'$ but to a non-trivial character over $B$. Then $x_{B,B'}=e_V\cdot v_{f(B)}$ has the desired properties, since $\height_{e_V\cdot v_{n}}(B)=n$ and $\height_{e_V\cdot v_{n}}(B')=-1$. Here and in the following, we set $v_{-1} = 0$, $v_0 = p$ and $v_{\infty}= 1$. 
    \item[(ii)] If $B$ is a subgroup of $B'$ with $\pi_0(B'/B)$ not a $p$-group, we choose a prime $q\neq p$ dividing the order of $\pi_0(B'/B)$ and a surjection $g\colon B'\to C_q$ containing $B$ in the kernel. Then we set $y=v_{f(B)}\cdot g^*( \lennart{\overline{\mv}_0^{(q)}})\lennart{\in L_{B'}}$, where $ \lennart{\overline{\mv}_0^{(q)}}\in L_{C_q}$ is the element introduced in Section \ref{sec:reduction}. Then $ \lennart{\overline{\mv}_0^{(q)}}$ is an element of $I^{C_q}_{C_q,0}$ and its restriction to the trivial group is given by $q$ and hence a unit. It follows that $\height_y(B')=-1$, since $\height_{g^*( \lennart{\overline{\mv}_0^{(q)}})}(B')=-1$. Moreover, $\height_y(B)=f(B)$, since $\res^{B'}_B(y)=v_{f(B)}\cdot q$. Hence, we can set $x_{B,B'}$ to be any lift of $y$ to an element of $L_A$. 
    \item[(iii)] The remaining case is when $B$ is a subgroup of $B'$ with $\pi_0(B'/B)$ a $p$-group. Let $r$ be the minimum of the $p$-rank of $\pi_0(B'/B)$ and the number $f(B)+1$, and choose a surjection $g\colon B'/B\to C_p^{r}$. By \cref{cor:resxn} we know for $f(B)<\infty$ that there exist an element $x_{f(B)}\in L_{C_p^{f(B)+1}}$ such that $\height_{x_{f(B)}}(1)=f(B)$ and $\height_{x_{f(B)}}(C_p^{f(B)+1})=-1$. (We set $x_{-1} = 0$.) We can choose an embedding $C_p^r\to C_p^{f(B)+1}$ and restrict $x_{f(B)}$ to an element $y\in L_{C_p^r}$. Then we have  $\height_y(1)=f(B)$ and $\height_y(C_p^r)=f(B)-r$ (see \cref{cor:resxn}). If $f(B) = \infty$, choose $y=1$. It follows that $\height_{g^*(y)}(B)=f(B)$ and $\height_{g^*(y)}(B')\leq f(B)-r\leq f(B')$, since $f$ is admissible. Hence, $g^*(y)$ 
has the desired properties. 
\end{enumerate}
Now given any such pair $(B,B')$ there exists an open neighbourhood $U_{B'}$ of $B'$ on which both $\height_{x_{B,B'}}$ and $f$ are constant. The $U_{B'}$ for varying $B'$ form an open cover of the compact space $\Sub(A)$. Let $U_{B_1'},\dots,U_{B_k'}$ be a finite subcover. We then set
\[ x_B=x_{B,B_1'}\cdot x_{B,B_2'}\cdots x_{B,B_k'} \]
to be the product of the corresponding elements. For any closed subgroup $B'$ we have
\[ \height_{x_B}(B')=\min(\height_{x_{B,B_j'}}(B')\ |\ j=1,\dots,k). \]
For $B'=B$ this gives $\height_x(B)=f(B)$, since $\height_{x_{B,B_j'}}(B')=f(B)$ for all $j$. Any $B'$ is contained in $U_{B_i'}$ for some $i$, yielding
\[ \height_{x_B}(B')\leq \height_{x_{B,B_i'}}(B')=\height_{x_{B,B_i'}}(B_i')\leq f(B_i')=f(B'). \]
In summary, the height function $\height_{x_B}$ is less than or equal to $f$ everywhere and agrees with $f$ at $B$ itself. Once more we can apply that $\height_{x_B}$ and $f$ are locally constant to find that $\height_{x_B}$ and $f$ in fact agree on a neighborhood $V_B$ of $B$. Letting $B$ vary, this yields an open cover of $\Sub(A)$, for which we can choose a finite subcover $V_{B_1},\dots,V_{B_l}$. Hence, for every $B\in \Sub(A)$, there exists some $i\in \{1,\dots,l\}$ such that $f(B)=\height_{x_{B_i}}$. Finally, we define $X$ to be the set $\{ x_{B_1},\dots,x_{B_l}\}$. Since $\height_X$ is given by the maximum of the functions $\height_{B_i}$,  it follows that $X$ has the desired property.
\end{proof}
\revm{For the following theorem, we remind the reader of our convention that everything is implicitly localized at a fixed prime $p$.}
\begin{thm} \label{thm:zariski} The Zariski topology on $\Spec^{\inv}(L_A)$ has as a basis the closed sets
\[ V_f=\{ I_{B,n} \ |\ n> f(B) \} \]
for all locally constant, admissible functions $f\colon \Sub(A)\to \overline{\mathbb{N}}_-$. 
\end{thm}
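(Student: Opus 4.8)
The plan is to prove the theorem by combining \cref{prop:heightfunctions} with the description of closed subsets of $\Spec^{\inv}(L_A)$ recalled at the start of this section. The two containments to establish are: (a) every $V_f$ for $f$ locally constant and admissible is a closed subset, and (b) every closed subset is an intersection of sets of the form $V_f$. Since $V_f = V(X)$ whenever $f = \height_X$, part (a) is an immediate consequence of \cref{prop:heightfunctions}: a locally constant admissible $f$ is realized as $\height_X$ for a finite $X \subseteq L_A$, and then by the very definition of $\height_X$ one has $V(X) = \{I^A_{B,n} \mid X \subseteq I^A_{B,n}\} = \{I^A_{B,n} \mid n > \height_X(B)\} = V_f$. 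Conversely, for part (b), let $C = V(Y)$ be an arbitrary closed subset for some subset $Y \subseteq L_A$. First I would reduce to finite $Y$: since $V(Y) = \bigcap_{y \in Y} V(\{y\})$, and each $V(\{y\}) = V_{\height_{\{y\}}}$ is of the desired form by the proposition applied to the one-element set $\{y\}$ (which is finite), we get $C$ as an intersection of basic sets. This shows the $V_f$ form a basis of closed sets, which is exactly the claim.

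More carefully, to make the word "basis" precise one should note two things. First, the collection $\{V_f\}$ is closed under finite unions up to the indexing: given admissible locally constant $f_1, f_2$, realized as $\height_{X_1}, \height_{X_2}$, the set $V_{f_1} \cup V_{f_2}$ is $V(X_1 X_2)$ where $X_1 X_2 = \{x_1 x_2 \mid x_i \in X_i\}$, and $\height_{X_1 X_2}(B) = \min(\height_{X_1}(B), \height_{X_2}(B))$, which is again admissible and locally constant (the minimum of two admissible functions is admissible since the defining inequality $f(B') \le f(B) + \rank_p(\pi_0(B/B'))$ is preserved under $\min$, and the minimum of two locally constant functions is locally constant). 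So actually the $V_f$ already form the full lattice of the topology on the nose, not merely a basis — but stating it as a basis is the cleanest formulation and is what the arbitrary-intersection argument above gives directly. Second, one should double-check the edge behavior at $n = \infty$ and $f(B) = \infty$: the convention $\sup(\varnothing) = -1$ and the fact that $I^A_{B,\infty}$ contains every finite $X$ that is contained in all $I^A_{B,n}$ means $I^A_{B,\infty} \in V(X)$ iff $X \subseteq I^A_{B,\infty}$, consistent with "$\infty > f(B)$" when $f(B) < \infty$ and with "$\infty \not> \infty$" when $f(B) = \infty$; this matches the definition of $V_f$.

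The bulk of the work, then, is already carried out in \cref{prop:heightfunctions}, and the proof of the present theorem is essentially a bookkeeping exercise translating that proposition into the language of Zariski-closed sets. I would write it as follows: closedness of $V_f$ follows from \cref{prop:heightfunctions}(2)$\Rightarrow$(1); conversely, an arbitrary closed set $V(Y)$ equals $\bigcap_{y\in Y} V_{\height_{\{y\}}}$ by \cref{prop:heightfunctions}(1) applied to singletons, and each $\height_{\{y\}}$ is admissible and locally constant by \cref{prop:heightfunctions}(1)$\Rightarrow$(2). Hence the $V_f$ are exactly the closed sets arising as finite intersections — equivalently (using the union formula above) they are closed under finite union and intersection and generate all closed sets under arbitrary intersection, so they form a basis of closed sets for the Zariski topology. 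I do not anticipate a genuine obstacle here; the only mild subtlety is being careful that "finite subset" in \cref{prop:heightfunctions} is what lets us pass from an arbitrary generating set $Y$ to the singletons, and that the height function of a finite set is the pointwise maximum of the height functions of its elements (stated in the proof of the preceding proposition), which is what glues a finite collection of basic closed sets into one.

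\begin{proof}
By definition, the closed subsets of $\Spec^{\inv}(L_A)$ are the sets $V(X) = \{I^A_{B,n} \mid X \subseteq I^A_{B,n}\}$ for $X \subseteq L_A$. If $f\colon \Sub(A) \to \overline{\N}_-$ is locally constant and admissible, then by \cref{prop:heightfunctions} there is a finite $X \subseteq L_A$ with $f = \height_X$. Since $X \subseteq I^A_{B,n}$ holds precisely when $n > \height_X(B) = f(B)$, we have $V(X) = V_f$, so $V_f$ is closed.

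Conversely, let $C = V(Y)$ be an arbitrary closed set, for some $Y \subseteq L_A$. Then $C = \bigcap_{y \in Y} V(\{y\})$. For each $y$, the set $\{y\}$ is finite, so by \cref{prop:heightfunctions} the function $\height_{\{y\}}$ is admissible and locally constant, and $V(\{y\}) = V_{\height_{\{y\}}}$ as above. Thus every closed set is an intersection of sets of the form $V_f$ with $f$ admissible and locally constant.

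Finally, given two admissible locally constant functions $f_1 = \height_{X_1}$ and $f_2 = \height_{X_2}$ (with $X_1, X_2$ finite), set $X_1 X_2 = \{x_1 x_2 \mid x_i \in X_i\}$. Since $x_1 x_2 \in I^A_{B,n}$ for the prime ideal $I^A_{B,n}$ if and only if $x_1 \in I^A_{B,n}$ or $x_2 \in I^A_{B,n}$, one computes $\height_{X_1 X_2}(B) = \min(f_1(B), f_2(B))$, which is again admissible and locally constant; hence $V_{f_1} \cup V_{f_2} = V(X_1 X_2) = V_{\min(f_1, f_2)}$ is of the same form. Therefore the sets $V_f$, for $f$ ranging over locally constant admissible functions, are closed under finite unions and every closed set is an intersection of them, i.e.\ they form a basis for the closed sets of the Zariski topology on $\Spec^{\inv}(L_A)$.
\end{proof}
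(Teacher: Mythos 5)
Your proof is correct and follows essentially the same route as the paper: both reduce to the fact that the sets $V(X)$ for finite $X\subseteq L_A$ form a basis of closed sets and then invoke \cref{prop:heightfunctions} to identify these with the $V_f$ for $f$ locally constant and admissible; you merely spell out the reduction to singletons and the closure under finite unions, which the paper leaves implicit. The only blemish is the aside claiming the $V_f$ form ``the full lattice of the topology on the nose'' --- arbitrary intersections of the $V_f$ need not again be of this form when $A$ is infinite --- but since your actual proof only uses the basis formulation, this does not affect correctness.
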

\begin{proof} A basis for the Zariski topology is given by the sets $V(X)$ for all finite subsets $X$ of $L_A$. As we saw above, $V(X)$ is determined by its height function $\height_X$ as
\[ V(X)=\{ I_{B,n} \ |\ n> \height_X(B)\}. \]
By Proposition \ref{prop:heightfunctions}, the functions $\Sub(A)\to \overline{\N}_-$ that occur as height functions of finite subsets are precisely the locally constant admissible functions, which finishes the proof.
\end{proof}

\section{Comparison with $A$-spectra} \label{sec:comparison}

In this final section, we discuss the relationship of the algebraic results of the previous sections with the theory of $A$-spectra. \revm{Throughout, we will fix a prime $p$ and implicitly localize at it.} 

\subsection{The universal support theory via $MU_A$-homology}\label{sec:universalsupport}
We begin by comparing our classification of invariant prime ideals with the Balmer spectrum of compact $p$-local $A$-spectra. We recall from \cite{BalmerSpectrum} that a prime ideal $\mathfrak{p}$ of a tensor-triangulated category $\mathcal{C}$ is defined to be a thick tensor-ideal with the additional property that if $X\otimes Y$ is contained in $\mathfrak{p}$, then $X\in \mathfrak{p}$ or $Y\in \mathfrak{p}$. The set of all prime ideals assembles to a topological space $\Spec(\mathcal{C})$, the Balmer spectrum, with the topology generated by the closed sets $\supp(X)=\{ \mathfrak{p}\ |\ X\notin \mathfrak{p}\}$ for all objects $X\in \mathcal{C}$.

Here, the support function $\supp(-)$, assigning a closed set of the Balmer spectrum to every object of $\mathcal{C}$, is the \emph{universal support theory} in the sense of Balmer. This means that it is terminal among pairs $(T,\sigma)$ of a topological space $T$ and a function 
\[ \sigma\colon \operatorname{ob}(\mathcal{C})\to \{\text{closed subsets of } T\} \]
satisfying $\sigma(0)=\emptyset$, $\sigma(1)=T$, $\sigma(X\oplus Y)=\sigma(X)\cup \sigma(Y)$, $\sigma(\Sigma X)=\sigma(X)$ and $\sigma(X\otimes Y)=\sigma(X)\cap \sigma(Y)$ for all $X,Y\in \operatorname{ob}(\CC)$, and $\sigma(X)\subseteq \sigma(Y)\cup \sigma(Z)$ whenever there exists a distinguished triangle $X\to Y\to Z\to \Sigma X$. See \cite{BalmerSpectrum} for more details.

In the case of compact $p$-local $A$-spectra $\Sp_A^c$ for an abelian compact Lie group $A$, the Balmer spectrum was computed in \cite{BalmerSanders, BarthelHausmannNaumannNikolausNoelStapleton, BarthelGreenleesHausmann}. Given a closed subgroup $B$ and $n\in \mathbb{N}\cup \{\infty\}$, one defines a prime ideal
\[ \lennart{P^A_{B,n}} = \{ X\in \Sp_A^c\ |\ K(n)_*(\Phi^B X)=0\},\]
where $K(n)$ denotes the nth Morava $K$-theory, and $\Phi^B X$ is the $B$-geometric fixed point spectrum of $X$, a compact $p$-local spectrum.
\begin{theorem}[\cite{BalmerSanders, BarthelHausmannNaumannNikolausNoelStapleton, BarthelGreenleesHausmann}] The map
\begin{align*} \Sub(A)\times \overline{\mathbb{N}}  & \to  \Spec(\Sp_A^c) \\
                (B,n) & \mapsto  \lennart{P^A_{B,n}}
\end{align*}
defines a bijection. Moreover, the topology on $\Spec(\Sp_A^c)$ has a basis given by the closed sets defined by
\[ \{ \lennart{P^A_{B,n}}\ |\ n\geq f(B) \} \]
where $f$ ranges through all admissible functions $\Sub(A)\to \overline{\N} = \mathbb{N}\cup \{\infty\}$.
\end{theorem}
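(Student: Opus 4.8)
This theorem is the combination of the main computations of \cite{BalmerSanders} (the case $A=C_p$), \cite{BarthelHausmannNaumannNikolausNoelStapleton} (the finite abelian case) and \cite{BarthelGreenleesHausmann} (the general case), and in this paper we use it as input; I only sketch how the argument is structured. For each closed subgroup $B\subseteq A$ the geometric fixed point functor $\Phi^B\colon \Sp_A^c\to \Sp^c$ is tensor-triangulated, so pulling back the non-equivariant Hopkins--Smith support theory \cite{H-S98} (classified by the Morava $K$-theories $K(n)$) along $\Phi^B$ produces for every pair $(B,n)$ the thick tensor-ideal $P^A_{B,n}=\{X: K(n)_*\Phi^BX=0\}$, which is prime because its non-equivariant counterpart is. The plan is then to show that the continuous comparison map $\Spec(\Sp_A^c)\to \Sub(A)\times\overline{\N}$ induced by this family is a bijection: surjectivity is immediate, while injectivity and the fact that \emph{every} prime ideal has this form follow from the fact that the functors $\{\Phi^B\}_{B\in\Sub(A)}$ jointly detect triviality of compact $A$-spectra, combined with an induction over the poset $\Sub(A)$ using isotropy separation (the Tate square).

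The substance lies in the topology, i.e.\ in determining the containments $P^A_{B',n'}\subseteq P^A_{B,n}$. The ``positive'' half --- that such a containment holds whenever $B'\subseteq B$ with $\pi_0(B/B')$ a $p$-group and $n\geq n'+\rank_p\pi_0(B/B')$ (together with the toral inclusions) --- is the \emph{chromatic Smith fixed point theorem}: it amounts to a blue-shift estimate for the $B/B'$-Tate construction, showing that $\Phi^B X$ of type $>n$ forces $\Phi^{B'}X$ of type $>n'$. The ``negative'' half --- that there are no further containments --- is the genuine obstacle and the homotopical analogue of \cref{prop:xn}: one must construct, for every $n$ and every $k$, a finite $C_p^k$-spectrum of underlying type $n$ whose $C_p^k$-geometric fixed points have type exactly $n+k$ (rationally nontrivial when $n=0$), which is done in \cite[Section 4]{BarthelHausmannNaumannNikolausNoelStapleton} and \cite[Section 7]{KuhnLloydChromatic}; suitable restrictions and inflations of such spectra witness the sharpness of all the containments.

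Finally, granted the classification of containments, the topology is pinned down exactly as on the algebraic side: the closed sets are those collections of $P^A_{B,n}$ that are closed under the containment relation, and one shows that a basis is given by the sets $\{P^A_{B,n}: n\geq f(B)\}$ for $f$ an admissible function in the sense of \cref{def:admissible}. For infinite $A$ this last step uses that $\Sub(A)$ with the Hausdorff metric is a compact, totally disconnected space, so that one may pass between a condition imposed at every closed subgroup and a condition imposed on a finite open cover, exactly as in the proof of \cref{prop:heightfunctions}. I expect this to be the only point, beyond the two homotopical inputs above, where anything nontrivial happens, and it is purely point-set topological.
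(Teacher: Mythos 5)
The paper itself offers no proof of this theorem: it is imported wholesale from \cite{BalmerSanders, BarthelHausmannNaumannNikolausNoelStapleton, BarthelGreenleesHausmann}, so your decision to treat it as external input and only sketch the structure of the cited arguments is exactly the paper's own stance, and the skeleton you describe (primes pulled back along the geometric fixed point functors, a set-level classification via joint detection and isotropy separation, a positive half coming from blue-shift, a negative half coming from finite complexes with extreme type-shifting, and the topology controlled by admissible functions via compactness and total disconnectedness of $\Sub(A)$) is the right one.

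However, your sketch reverses the direction of the blue-shift throughout, and both key homotopical inputs are misstated as a result. The containment $P^A_{B',n'}\subseteq P^A_{B,n}$ (for $B'\subseteq B$ with $\pi_0(B/B')$ a $p$-group, or toral) holds precisely when $n'\geq n+\rank_p(\pi_0(B/B'))$, not when $n\geq n'+\rank_p(\pi_0(B/B'))$; this is forced by the admissibility condition $f(B')\leq f(B)+\rank_p(\pi_0(B/B'))$ appearing in the very statement you are proving, and it mirrors \cref{thm:inclusions} with the roles of the two ideals interchanged. Your condition is literally false: with $B=C_p$, $B'=\{1\}$, $n=1$, $n'=0$, the inflation of $S/p$ lies in $P^{C_p}_{\{1\},0}$ but not in $P^{C_p}_{C_p,1}$. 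Likewise, the chromatic Smith theorem says that passing from $\Phi^{B'}X$ to $\Phi^{B}X$ can \emph{lower} the type by at most $\rank_p(\pi_0(B/B'))$, i.e.\ type of $\Phi^{B'}X>n'$ forces type of $\Phi^{B}X>n'-\rank_p(\pi_0(B/B'))$; your version, that $\Phi^{B}X$ of type $>n$ forces $\Phi^{B'}X$ of type $>n'$, fails already for $X=(A/B')_+$, whose $B$-geometric fixed points are contractible while its $B'$-geometric fixed points are a nontrivial wedge of spheres. The same reversal infects the negative half: finite $C_p^k$-spectra of underlying type $n$ whose geometric fixed points have type $n+k$ exist trivially (induced spectra have contractible geometric fixed points) and rule out nothing. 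What \cite[Section 4]{BarthelHausmannNaumannNikolausNoelStapleton} and \cite[Section 7]{KuhnLloydChromatic} supply, and what is actually needed, are finite $C_p^k$-spectra of underlying type $k$ whose $C_p^k$-geometric fixed points are \emph{rationally nontrivial}; restricting these to subgroups then witnesses all the non-containments, exactly as \cref{cor:resxn} does on the algebraic side. Finally, a smaller point: for infinite $A$ being closed under the containment relation does not by itself characterize the closed sets of $\Spec(\Sp_A^c)$; the local-constancy condition on $\Sub(A)$ (which you do invoke afterwards) is an essential part of the basis description, parallel to \cref{thm:zariski}.
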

Here, `admissible' is meant in the sense of Definition \ref{def:admissible}.
Hence, comparing to Theorem \ref{thm:zariski}, we see that the assignment $I^{\lennart{A}}_{B,n}\mapsto \lennart{P^A_{B,n}}$ defines a homeomorphism from $\Spec^{\inv}(L_A)$ to $\Spec(\Sp_A^c)$.\footnote{The reader may have noticed that in this section our admissible functions $f$ take values in $\overline{\N}$, while in the last section they took values in $\overline{\N}_-$. Likewise, we consider the condition $n\geq f(B)$ here, and considered $n> f(B)$ before. A shift by one shows that the topologies agree. The shift is caused by wanting to have $v_n$ having height $n$ in the last section.} The computation of $\Spec(\Sp_A^c)$ is also analogous to the one of $\Spec^{\inv}(L_A)$ in the way that computing the underlying set is relatively straightforward (and is in fact known for all compact Lie groups), with most work going into understanding the topology.

Hence, we can view the universal support theory of $\Sp_A^c$ to take values in the invariant prime ideals of $L_A$. The goal of the remainder of this section is to construct this universal support theory more intrinsically using $MU_A$-homology and the structure of equivariant formal group laws described in this paper. The idea is the following: Given a compact $p$-local $A$-spectrum $X$, we can consider its equivariant complex bordism homology $(\underline{MU}_A)_* X$. Here, underlining $MU_A$ indicates that we take the $A$-Mackey functor valued homology of $X$, i.e., we record the collection of $(MU_B)_*(\res^A_B X)$ for all closed subgroups $B$ of $A$, together with restriction and transfer maps between them. We \revm{reiterate that we} will always work at the fixed prime $p$ and $p$-localize everything implicitly.

Since the coefficients $\pi_*^A MU_A$ are isomorphic to the Lazard ring $L_A$ and moreover the cooperations $\pi_*^A MU_A\wedge MU_A$ agree with $S_A$ (see \cite[Theorem 5.52]{Hau} and the discussion thereafter), the groups $(MU_A)_* X$ form a graded $A$-Mackey functor in $(L_A,S_A)$-comodules. As such, we can take its support in the invariant prime ideals
\[ \supp((\underline{MU}_A)_* X)=\{ I^A_{B,n} \ |\ ((\underline{MU}_A)_* X)_{I^A_{B,n}} \revm{\neq} 0 \}\subseteq \Spec^{\inv}(L_A). \]
\begin{remark}\label{rem:counterexample}
    In general, $\supp((\underline{MU}_A)_* X)$ is different from $\supp((MU_A)_* X)$. Take for example $A = C_2$, $p=2$ and $X=S^{\sigma}$, the circle with action given by reflection at a line. We have 
    \[MU_*(\res_1^{C_2}S^{\sigma})_{I_{1,0}} = MU_*(S^1)_{\Q} \neq 0.\] 
    The module $(MU_{C_2})_*(S^{\sigma})_{I_{1,0}^{C_2}}$ is rational as well since $\lennart{2}\notin I_{1,0}^{C_2}$. Rationally, $(MU_{C_2})_*(S^{\sigma})$ splits into the coinvariants $(MU_{C_2})_*(S^{\sigma})_{C_2} = 0$ and the geometric fixed points $(MU_{C_2})_*(S^{\sigma})[e^{-1}]$, for $e$ the Euler class of the unique non-trivial character. The element $\overline{\mv}_0$ from \ref{def:vn} becomes zero in the geometric fixed points, but restricts to $2$ and is thus not in $I_{1,0}^{C_2}$; thus $(MU_{C_2})_*(S^{\sigma})[e^{-1}]$ becomes zero as well after localization at $I_{1,0}^{C_2}$, and $(MU_{C_2})_*(S^{\sigma})_{I_{1,0}^{C_2}} = 0$.
\end{remark}
\begin{remark}[Transfer maps]
    The isomorphisms $L_A \cong \pi^A_*MU_A$ imply that on top of the contravariant restriction maps along group homomorphisms there also exist transfer maps $L_B \to L_A$ for inclusions $B\subseteq A$ of finite index.  In other words, the collection of all equivariant Lazard rings $L_A$ forms a `global Green functor' on the family of abelian compact Lie groups, in the sense of \cite[Definition 5.1.3]{SchGlobal}.
    In \cite[Section 23]{StricklandMulti}, Strickland defines transfers for equivariant formal groups and shows that they agree with the topological transfers for complex oriented equivariant cohomology theories, in particular $MU_A$.
    
   Using the methods from this paper, transfers on equivariant Lazard rings can be determined as follows: By Frobenius reciprocity, it suffices to compute $\tr_B^A\colon L_B \to L_A$ on $1 \in L_B$ since the restriction is surjective. Inductively, we can further assume that $A/B \cong C_p$. Furthermore, $\tr_B^A(1) = q^*\tr_{\{1\}}^{C_p}(1)$ for $q\colon A \to A/B\cong C_p$ since transfers are compatible with inflation maps (see, e.g., \cite[Theorem 4.2.6 ff.]{SchGlobal}). Hence it suffices to identify $\tr_{\{1\}}^{C_p}(1)$. We claim that it equals $\overline{\mv}_0 \in L_{C_p}$. Indeed, any transfer maps to zero in the geometric fixed points and is thus an element of $I_{C_p,0}$. We know that $\overline{\mv}_0$ generates $I_{C_p,0}$. Hence, we can write $\tr_{\{1\}}^{C_p}(1) = x \cdot \overline{\mv}_0$ for $x\in L_{C_p}$. Writing $x$ as $x_0 + x' \cdot e$ for $e$ a non-trivial Euler class and $x_0 \in L$, we obtain $\tr_{\{1\}}^{C_p}(1) = x_0\cdot \overline{\mv}_0$ since $e \cdot \overline{\mv}_0 = 0$ by the definition of $\overline{\mv}_0$. We obtain $x_0 \cdot p = \res_{\{1\}}^{C_p}\tr_{\{1\}}^{C_p}(1) = p$ since $\overline{\mv}_0$ restricts to $p$, and thus $x_0 = 1$ and $\tr_{\{1\}}^{C_p}(1) = \overline{\mv}_0$.
\end{remark}

We will now see that our notion of support is another model for the universal support theory on compact $A$-spectra.

We first note a major inconvenience: It is unclear whether $(MU_A)_* X$ is a finitely generated $L_A$-module, even for compact $X$. This is in contrast with the non-equivariant situation, where finite generation of $MU_*X$ for compact $X$ follows from the fact that $L$ is a polynomial ring and hence coherent. An analogous statement is unknown for equivariant Lazard rings. In particular, it is a priori unclear that $\supp((\underline{MU}_A)_* X)$ is indeed a \emph{closed} subset of $\Spec^{\inv}(L_A)$ and we have to prove this by hand, see Proposition \ref{prop:supportclosed} below.

The following proposition gives the relationship between the $MU_A$-homology support theory described above and geometric fixed points.

\begin{prop} \label{prop:supportgeom} Let $B$ be a subgroup of $A$, $n\in \overline{\N}$ and $X$ a compact $A$-spectrum. Then the following are equivalent:
\begin{enumerate}
    \item[(i)] The Mackey functor $((\underline{MU}_A)_*(X))_{I^A_{B,n}}$ is trivial.
    \item[(ii)] The $(MU_B)_*$-module $((MU_B)_*(\res^A_B X))_{I^B_{B,n}}$ is trivial.
  \item[(iii)] The $B$-geometric fixed points $\Phi^B(X)$ are of chromatic type $\geq n+1$.
\end{enumerate}
\end{prop}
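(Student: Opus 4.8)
The plan is to prove the equivalences $(i)\Leftrightarrow(ii)\Leftrightarrow(iii)$ by relating localization at the invariant prime $I^A_{B,n}$ to geometric fixed points, and then invoking the non-equivariant chromatic support story for $MU_*$.

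\textbf{Step 1: $(i)\Rightarrow(ii)$.} This is immediate. A Mackey functor is trivial if and only if all its values are trivial; the value at $B$ of $(\underline{MU}_A)_*(X)$ is $(MU_B)_*(\res^A_B X)$ as an $L_B$-module. Localizing the Mackey functor at $I^A_{B,n}$ localizes this value at the pullback of $I^A_{B,n}$ to $L_B$, which contains $I^B_{B,n}$; but one must be slightly careful here. Rather than fight with the precise relationship of the multiplicative sets, it is cleaner to prove $(iii)\Rightarrow(i)$ and $(ii)\Rightarrow(iii)$ and $(i)\Rightarrow(ii)$, closing the loop. For $(i)\Rightarrow(ii)$: since $\res^A_B\colon L_A\to L_B$ sends $I^A_{B,n}$ into $I^B_{B,n}$ (in fact $I^A_{B,n}$ is by construction the preimage of $\Phi^BL\cdot I_{p,n}$, which contains the image of $I^B_{B,n}$), and since $((\underline{MU}_A)_*X)_{I^A_{B,n}}$ restricted to the subgroup $B$ computes the localization of $(MU_B)_*(\res^A_B X)$ at the multiplicative set $L_A\setminus I^A_{B,n}$, and this localization agrees with localizing at $L_B\setminus I^B_{B,n}$ after extending scalars (because $I^B_{B,n}$ is precisely the contraction of the localizing ideal), triviality of the former forces triviality of the latter. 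I would spell this out using the explicit description of $I^A_{B,n}$ from \cref{lem:characterizationofideals}.

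\textbf{Step 2: $(ii)\Leftrightarrow(iii)$ --- the heart of the argument.} This step reduces the equivariant statement to the non-equivariant chromatic support theorem. By the change-of-rings/geometric-fixed-points comparison, $\Phi^B(X)$ is a compact $p$-local spectrum whose $MU_*$-homology is computed by $\Phi^B((MU_B)_*(\res^A_B X))$, i.e.\ by inverting all Euler classes $e_V$ for nontrivial $V\in B^*$ in $(MU_B)_*(\res^A_B X)$ and identifying $\Phi^B L = \Phi^B(\pi^B_*MU_B)$ with $MU_*\Phi^B MU_B$. Concretely: $(MU_*)(\Phi^B X)\cong ((MU_B)_*(\res^A_B X))[e_V^{-1}: V\neq\epsilon]$ as a module over $\Phi^B L$, which by \cref{prop:GeometricFixedPoints} is a free $MU_*$-module (after adjoining the polynomial/Laurent generators $b_i^V$). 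Now $I^B_{B,n}$ is the kernel of $L_B\to\Phi^BL\to\Phi^BL/I_{p,n}$, so localizing $(MU_B)_*(\res^A_B X)$ at $I^B_{B,n}$ is trivial if and only if the $\Phi^BL$-module $\Phi^B((MU_B)_*(\res^A_B X))$ becomes trivial after localizing at the prime $\Phi^BL\cdot I_{p,n}$; and since $\Phi^BL$ is faithfully flat over $L/I_{p,n}$ after inverting complements, this holds if and only if $(MU_*(\Phi^B X))_{I_{p,n}} = 0$. By the non-equivariant theory (Hopkins--Smith / Landweber--Morava, as recalled in the introduction and via the structure of $|\MM_{FG}|$), $(MU_*Y)_{I_{p,n}} = 0$ for a compact $p$-local spectrum $Y$ if and only if $Y$ has type $\geq n+1$, i.e.\ $K(m)_*Y = 0$ for all $m\leq n$. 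Applying this with $Y = \Phi^B X$ gives $(ii)\Leftrightarrow(iii)$.

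\textbf{Step 3: $(iii)\Rightarrow(i)$.} Here one must upgrade from the single subgroup $B$ to the whole Mackey functor localized at $I^A_{B,n}$. The key point is that for any subgroup $C\subseteq A$, the value $(MU_C)_*(\res^A_C X)$ localized at $I^A_{B,n}$ is controlled by the subgroup $B$: if $C$ does not contain a conjugate of $B$ (here everything is abelian so "conjugate" just means "equal or related by the inclusion lattice"), or more precisely if $B\not\subseteq C$, then some Euler class $e_V$ with $V$ trivial on $B$ but not on $C$ is a unit after localizing at $I^A_{B,n}$ while being a product relevant to transfers/restrictions --- I would use \cref{lem:characterizationofideals} to see that $e_V\notin I^A_{B,n}$ forces the relevant piece to vanish. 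The cleanest route: by the stratification of $\MM^A_{FG}$ (\cref{prop:OpenClosedSubstacks}, \cref{rem:stratification}) and the fact that the point $I^A_{B,n}$ lies in the stratum indexed by $B$, localizing a $(L_A,S_A)$-comodule Mackey functor at $I^A_{B,n}$ only sees the "$B$-part". Concretely, I expect: $((\underline{MU}_A)_*X)_{I^A_{B,n}}$ is determined by $(\Phi^B((MU_B)_*\res^A_B X))_{\Phi^BL\cdot I_{p,n}}$, which vanishes by Step 2 given $(iii)$. I would make this precise using \cref{prop:FACompatibility}/\cref{prop:alphaadjunction} (geometric fixed points $\leftrightarrow$ restriction-to-the-open-stratum $\MM^{A/B}_{FG}$) to show that after localization at $I^A_{B,n}$ the Mackey functor is pulled back from the $B$-geometric-fixed-point data, hence trivial when the latter is.

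\textbf{Main obstacle.} The genuinely delicate step is Step 3, i.e.\ showing that triviality of the single $B$-geometric fixed point localization propagates to triviality of the \emph{whole localized Mackey functor} $((\underline{MU}_A)_*X)_{I^A_{B,n}}$ at all subgroups simultaneously. Non-equivariantly this is automatic; equivariantly it requires knowing that localizing a $(L_A,S_A)$-comodule at an invariant prime supported on the $B$-stratum kills everything "away from $B$", which is where one needs the stacky picture (open immersion $\MM^{A/B}_{FG}\hookrightarrow\MM^A_{FG}$ and the compatibility of geometric fixed points with pullback/pushforward along it, \cref{prop:FACompatibility}). The finite-generation failure noted in the text means one cannot simply argue with supports of finitely generated modules, so this propagation must be done via the comodule/sheaf structure rather than pointwise module theory.
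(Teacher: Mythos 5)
Your Steps 1 and 2 are essentially the paper's argument (modulo a small imprecision noted below), but Step 3 — which you correctly identify as the heart of the matter — has a genuine gap, and the route you propose would not close it. The stacky compatibility \cref{prop:FACompatibility} identifies the restriction of $\FF^A(X)$ to the open stratum with $\FF^{A/B}(\underline{\Phi}^BX)$, where $\underline{\Phi}^BX$ is the \emph{$A/B$-equivariant} (monoidal) geometric fixed point spectrum, not the underlying spectrum $\Phi^BX$. Concretely, for a subgroup $\widetilde{B}\supseteq B$ the value $((MU_{\widetilde{B}})_*\res^A_{\widetilde{B}}X)_{I^A_{B,n}}$ inverts only the Euler classes of characters nontrivial on $B$, and is therefore controlled by the $\widetilde{B}/B$-equivariant homology $(MU_{\widetilde{B}/B})_*(\underline{\Phi}^B\res^A_{\widetilde{B}}X)$ localized at $I^{\widetilde{B}/B}_{1,n}$ (cf.\ \cref{lem:LandweberPhi}); its vanishing is \emph{not} a formal consequence of $\Phi^BX$ having type $\geq n+1$. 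This is exactly why the paper's proof of $(iii)\Rightarrow(i)$ is not purely sheaf-theoretic: after an induction over intermediate subgroups that concentrates the homotopy of $(MU_{\widetilde B})_{I^A_{B,n}}\wedge X$ at the top, one identifies the localization with $((\Phi^{\widetilde B}MU_{\widetilde B})_{I^A_{B,n}})_*\Phi^{\widetilde B}X$, computes via \cref{cor:blueshift} that this coefficient ring has height $n-\rank_p(\pi_0(\widetilde B/B))$, and then needs the genuinely homotopy-theoretic input that $\Phi^{\widetilde B}X$ has type $\geq n+1-\rank_p(\pi_0(\widetilde B/B))$ whenever $\Phi^BX$ has type $\geq n+1$ — the chromatic Smith fixed point/blueshift theorem, either imported from \cite{BalmerSanders, BarthelHausmannNaumannNikolausNoelStapleton, BarthelGreenleesHausmann} or reproved via the Borel-equivariant blueshift computation (\cref{prop:blueshift}, using $\psi^{(n)}_{p^k}$, plus the torus case). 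No amount of comodule/stratification formalism replaces this input; your "I expect the localized Mackey functor is determined by $(\Phi^B((MU_B)_*\res^A_BX))_{\Phi^BL\cdot I_{p,n}}$" is precisely the statement that cannot be had for free. (You correctly note Nakayama is unavailable for lack of finite generation; the paper's substitute is the height computation of \cref{cor:blueshift} combined with the type-shift theorem, not the sheaf structure.)

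Two smaller points. In your treatment of subgroups $C=\widetilde{B}$ with $B\not\subseteq C$ you have the character backwards: you want $V$ trivial on $\widetilde{B}$ (so $e_V$ restricts to $0$ in $L_{\widetilde B}$) and nontrivial on $B$ (so $e_V\notin I^A_{B,n}$ and hence becomes a unit after localization); a $V$ trivial on $B$ lies in $I^A_{B,0}\subseteq I^A_{B,n}$ and is certainly not inverted. In Step 2, the isomorphism you want is $((MU_B)_*X)_{I^B_{B,n}}\cong((\Phi^BMU_B)_*\Phi^BX)_{I^B_{B,n}}$, not a statement about $MU_*(\Phi^BX)$ itself; one then concludes because $(\Phi^BMU_B)_{I^B_{B,n}}$ is an $MU$-algebra of height exactly $n$ (using \cref{prop:GeometricFixedPoints} and $v_n\notin I^B_{B,n}$), whose vanishing on compacta detects type $\geq n+1$. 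With that adjustment your Step 2 matches the paper's.
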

We give two proofs of this proposition below, one using the results of \cite{BalmerSanders, BarthelHausmannNaumannNikolausNoelStapleton, BarthelGreenleesHausmann} and one using the methods from this paper. The latter one is complicated by the fact that we don't know whether $(MU_A)_*X$ is always finitely generated. 

We have the following corollary:
\begin{prop}\label{prop:supportclosed} Let $X$ be a compact $A$-spectrum. Then its support $\supp((\underline{MU}_A)_* X)$ is a closed subset of $\Spec^{\inv}(L_A)$.

Moreover, the assignment
\[ \operatorname{ob}(\Sp_A^c)\to \Spec^{\inv}(L_A)\ ;\ X\mapsto \supp((\underline{MU}_A)_* X)\]
is a support theory on compact $A$-spectra.
\end{prop}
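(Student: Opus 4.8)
The first claim --- that $\supp((\underline{MU}_A)_*X)$ is closed --- is the one requiring real work, because we do not know that $(MU_A)_*X$ is finitely generated over $L_A$. My plan is to deduce closedness from \cref{prop:supportgeom} together with \cref{thm:zariski}. Concretely: by \cref{prop:supportgeom}, the point $I^A_{B,n}$ lies in $\supp((\underline{MU}_A)_*X)$ if and only if the $B$-geometric fixed points $\Phi^B X$ are of chromatic type $\leq n$, i.e.\ $K(n)_*(\Phi^B X)\neq 0$. Since $\Phi^B X$ is a compact $p$-local (non-equivariant) spectrum, its chromatic type is some $\tau(B) = \tau(\Phi^B X)\in \overline{\N} = \N\cup\{\infty\}$, characterised by $K(n)_*(\Phi^B X)\neq 0 \iff n\geq \tau(B)$ (with the convention that $\tau(\Phi^B X)=\infty$ iff $\Phi^B X$ is contractible). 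Hence $\supp((\underline{MU}_A)_*X) = \{ I^A_{B,n}\mid n\geq \tau(B)\}$. Setting $f(B) := \tau(B)-1\in \overline{\N}_-$ (so that $n\geq \tau(B) \iff n > f(B)$), this is exactly the set $V_f$ appearing in \cref{thm:zariski}, \emph{provided} $f$ is admissible and locally constant. Admissibility of $B\mapsto \tau(\Phi^B X)$ is precisely the chromatic blueshift/Smith inequality, which is one of the statements established in \cite{BalmerSanders,BarthelHausmannNaumannNikolausNoelStapleton,BarthelGreenleesHausmann} and reflected in the topology-comparison theorem quoted in this section; alternatively it follows from the inclusion $P^A_{B,n}\subseteq P^A_{B',n'}$ under the conditions dual to those of \cref{thm:inclusions}, since $\tau(\Phi^{B'}X)\leq \tau(\Phi^B X)+\rank_p(\pi_0(B/B'))$ translates directly into the admissibility inequality of \cref{def:admissible}. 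Local constancy of $B\mapsto \tau(\Phi^B X)$ on $\Sub(A)$ is part of the input from \cite{BarthelGreenleesHausmann} for compact $X$ (this is exactly the local constancy needed to describe $\Spec(\Sp_A^c)$). Granting these two facts, $f$ is a locally constant admissible function and $V_f = \supp((\underline{MU}_A)_*X)$ is closed by \cref{thm:zariski}.

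For the second claim, that $X\mapsto \supp((\underline{MU}_A)_*X)$ is a support theory, I would verify the Balmer axioms one at a time, using the description $\supp((\underline{MU}_A)_*X) = \{I^A_{B,n}\mid \Phi^B X \text{ has type}\leq n\}$ from \cref{prop:supportgeom}, which reduces everything to elementary properties of geometric fixed points and of non-equivariant chromatic type. First, $\Phi^B(\ast) = \ast$ gives $\supp((\underline{MU}_A)_*\ast)=\varnothing$, and $\Phi^B(\S_A) = \S$ has type $0$ at every $B$, so its support is all of $\Spec^{\inv}(L_A)$. For the additivity axiom, $\Phi^B(X\vee Y)\simeq \Phi^B X\vee \Phi^B Y$, and the type of a wedge is the minimum of the types, so $I^A_{B,n}$ is in the support of $X\vee Y$ iff it is in the support of $X$ or of $Y$; hence $\supp(X\vee Y) = \supp(X)\cup\supp(Y)$. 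Suspension invariance is immediate since $\Phi^B(\Sigma X)\simeq \Sigma\Phi^B X$ has the same chromatic type. For the tensor axiom one uses the monoidality $\Phi^B(X\wedge Y)\simeq \Phi^B X\wedge \Phi^B Y$ together with the multiplicativity of Morava $K$-theory, $K(n)_*(\Phi^B X\wedge \Phi^B Y)\cong K(n)_*(\Phi^B X)\otimes_{K(n)_*}K(n)_*(\Phi^B Y)$, so $K(n)_*(\Phi^B(X\wedge Y))\neq 0$ iff both factors are nonzero; thus $\supp(X\wedge Y) = \supp(X)\cap\supp(Y)$. Finally, for a distinguished triangle $X\to Y\to Z\to \Sigma X$, applying the exact functor $\Phi^B$ and then $K(n)_*$ yields a long exact sequence, so $K(n)_*(\Phi^B Y)=0$ forces $K(n)_*(\Phi^B X)\cong K(n)_*(\Phi^B \Sigma^{-1}Z)$ and in particular $K(n)_*(\Phi^B X)=0$ whenever both $K(n)_*(\Phi^B Y)=0$ and $K(n)_*(\Phi^B Z)=0$; equivalently $\supp(X)\subseteq\supp(Y)\cup\supp(Z)$.

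The main obstacle is really the closedness statement, and within it the honest point is that we cannot argue via coherence of $L_A$; instead we must route through \cref{prop:supportgeom} to replace the localized Mackey functor by the chromatic type of $\Phi^B X$, and then invoke the external topological inputs (admissibility and local constancy of $B\mapsto \tau(\Phi^B X)$ for compact $X$) from \cite{BalmerSanders,BarthelHausmannNaumannNikolausNoelStapleton,BarthelGreenleesHausmann} to land inside the basic closed sets of \cref{thm:zariski}. Everything else is a formal consequence of the behaviour of geometric fixed points under wedges, suspensions, smash products, and cofibre sequences, combined with the standard properties of $K(n)_*$. I would present the closedness argument first (as a short standalone paragraph citing \cref{prop:supportgeom} and \cref{thm:zariski}), and then dispatch the five support-theory axioms in a single paragraph as above.
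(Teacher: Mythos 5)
Your proof is correct and follows the same overall strategy as the paper: translate membership of $I^A_{B,n}$ in the support into the statement that $\Phi^BX$ has type $\leq n$ via \cref{prop:supportgeom}, observe that the resulting function on $\Sub(A)$ is locally constant and admissible, and conclude closedness from \cref{thm:zariski} (your shift $f=\tau-1$ handles the off-by-one in the conventions correctly). The one place where you diverge is the source of admissibility: you import the chromatic Smith/blueshift inequality $\type(\Phi^{B'}X)\leq \type(\Phi^BX)+\rank_p(\pi_0(B/B'))$ from \cite{BalmerSanders, BarthelHausmannNaumannNikolausNoelStapleton, BarthelGreenleesHausmann} (or the corresponding Balmer-spectrum inclusions), whereas the paper gets it for free from algebra: the support of any module is closed under inclusion of prime ideals, so \cref{thm:inclusions} (an internal result of the paper) immediately forces the type function to be admissible. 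This keeps the external topological input in this proposition down to the local constancy of $B\mapsto\type(\Phi^BX)$ from \cite[Proposition 4.3]{BarthelGreenleesHausmann}, which matters for the paper's aim of rederiving the ``chromatic Smith'' direction from equivariant formal group theory rather than assuming it; your route is not circular, but it is less self-contained. For the support-theory axioms, the paper verifies everything except the smash axiom directly from exactness of localization $(-)_{I^A_{B,n}}$ and uses the type characterization only for smash products, while you run all five axioms through geometric fixed points and $K(n)$-homology; both arguments are standard and equally valid.
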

\begin{proof} We consider the type function
\[ \type_X\colon \Sub(A)\to \overline{\mathbb{N}}\ ;\  X \mapsto \type(\Phi^BX), \]
which is locally constant by \cite[Proposition 4.3]{BarthelGreenleesHausmann}. By the previous proposition, $I_{B,n}$ is an element of $\supp((\underline{MU}_A)_* X)$ if and only if $\type_X(B) < n+1$. Since the support $\supp((\underline{MU}_A)_* X)$ is closed under inclusion, \cref{thm:inclusions} implies that $I_{B',n} \in \supp((\underline{MU}_A)_* X)$ if $I_{B, n-\rk_p(\pi_0(B/B'))}\in \supp((\underline{MU}_A)_* X)$ for every $p$-toral inclusion $B'\subseteq B$. Thus $\type_X$ is admissible in the sense of Definition \ref{def:admissible}. Theorem \ref{thm:zariski} implies that $\supp((\underline{MU}_A)_* X)$ is closed, as desired.

For the second part, all required properties of a support theory follow easily from exactness of localization $(-)_{I^ A_{B,n}}$ except for the one on the interplay with smash products. This in turn follows from the third characterization in Proposition \ref{prop:supportgeom}, since the type of a smash product of two compact spectra is the maximum of the two types.
\end{proof}

By the universal property of the Balmer spectrum, we obtain a continuous map
\[ \Spec^{\inv}(L_A)\to \Spec(\Sp_A^c). \]
Proposition \ref{prop:supportgeom} makes it clear that this map sends $I^A_{B,n}$ to $\lennart{P^A_{B,n}}$ and is hence bijective. Therefore we can conclude from the results of this paper that the topology on $\Spec(\Sp_A^c)$ is at least as coarse as the one on $\Spec^{\inv}(L_A)$. In other words, our proof of the existence of inclusions $I_{B,n}\subseteq I_{B',n'}$ gives another proof of the analogous inclusion $\lennart{P^A_{B',n'}\subseteq P^A_{B,n}}$ on the topological side. The fact that there are no further topological inclusions requires additional arguments, namely the existence of compact $A$-spectra with `maximal type shifting behaviour'. See \cite[Section 4]{BarthelHausmannNaumannNikolausNoelStapleton} or \cite[Section 7]{KuhnLloydChromatic}. Knowing this, we see that $X\mapsto \supp((\underline{MU}_A)_* X)$ is a universal support theory on compact $A$-spectra.

It remains to give the proof of Proposition \ref{prop:supportgeom}, which we will do in three steps:

$(i) \Rightarrow (ii)$: Since the $L_A$-action on $(MU_B)_*(X)$ factors through $\res^A_B\colon L_A\to L_B$ and $I^A_{B,n}=(\res^A_B)^{-1})(I^B_{B,n})$, it follows that we have an isomorphism $((MU_B)_*(X))_{I^A_{B,n}}\cong ((MU_B)_*(X))_{I^B_{B,n}}$. In particular the vanishing of the entire Mackey-functor $(\underline{MU}_*(X))_{I^A_{B,n}}$ implies the vanishing at the subgroup $B$ as a special case.

$(ii)\Longleftrightarrow (iii)$: Note that $I^B_{B,n}$ contains none of the non-trivial Euler classes for $B$, hence the non-trivial Euler classes act invertibly on $((MU_B)_*(X))_{I^B_{B,n}}$. It follows that we have an isomorphism
\[ ((MU_B)_*(X))_{I^B_{B,n}}\cong ((\Phi^BMU_B)_*\Phi^B X)_{I^B_{B,n}} \cong \Phi^B((MU_B)_{I^B_{B,n}})_*\Phi^B X. \]
Modulo $I_n$, the ring $\Phi^B((MU_B)_{I^B_{B,n}})_*$ embeds into the field of fractions of $\Phi^BL/I_n$, which is non-trivial. Moreover, $v_n$ is not contained in $I^B_{B,n}$ and hence is invertible in the localization. It follows that the localization $(\Phi^BMU_B)_{I^B_{B,n}}$ is an $MU$-algebra of height $n$, i.e., its vanishing detects compact spectra of type $\geq n+1$.

$(iii)\Rightarrow (i)$: Let $X$ be a compact $A$-spectrum such that $\Phi^B(X)$ is of type $\revm{>}n$. By the previous paragraph we know that $((MU_B)_*(X))_{I^A_{B,n}}=0$, and we have to show that $((MU_{\widetilde{B}})_*(X))_{I^{A}_{B,n}}=0$ for all other closed subgroups $\widetilde{B}$, too. We first assume that $\widetilde{B}$ does not contain $B$. Then there exists a character $V\in A^*$ which restricts to the trivial character for $\widetilde{B}$ but to a non-trivial character for $B$. It follows that $e_V$ is not contained in $I^A_{B,n}$ and hence acts invertibly on $(MU_{\widetilde{B}})_{I^{A}_{B,n}}$. On the other hand $e_V$ restricts to $0$ in $MU_{\widetilde{B}}$, so it follows that $(MU_{\widetilde{B}})_{I^{A}_{B,n}}=0$ and in particular $((MU_{\widetilde{B}})_*(X))_{I^{A}_{B,n}}=0$, as desired. Hence we can assume that $\widetilde{B}$ contains $B$ as a subgroup. Since the statement then no longer depends on the ambient group, we can reduce to the case $\widetilde{B}=A$.

Hence we need to show that $(MU_{A})_*(X))_{I^{A}_{B,n}}=0$. By induction on the pair (dimension, cardinality of $\pi_0(A/B)$) it follows that all the localizations at smaller intermediate groups $B\subseteq \widetilde{B} \subseteq A$ vanish, and hence the homotopy groups of $(MU_A)_{I^A_{B,n}}\wedge X$ are concentrated at $A$. In particular this implies that the map $((MU_A)_{I^A_{B,n}})_* X\to ((\Phi^AMU_A)_{I^A_{B,n}})_* \Phi^AX$ is an isomorphism and all Euler classes $e_V$ for non-trivial characters act invertibly on $((MU_A)_{I^A_{B,n}})_* X$. Now, if $V$ restricts to the trivial character in $B^*$ (such a $V$ always exists since we can assume that $B$ is a proper subgroup of $A$), its Euler class $e_V$ lies in the maximal ideal $I^A_{B,n}$ of $((MU_A)_{I^A_{B,n}})_*$.

If we knew that $((MU_A)_{I^A_{B,n}})_* X$ is finitely generated over $(MU_A)_*$ we could apply Nakayama's lemma to see directly that $((MU_A)_{I^A_{B,n}})_* X=0$, as desired. Since we do not know this, we have to argue differently:  By Corollary \ref{cor:blueshift}, we know that $\Phi^A ((MU_A)_{I^A_{B,n}})_*$ is trivial if $\pi_0(A/B)$ is not a $p$-group, and is of height $n-\rank_p(\pi_0(A/B))$ otherwise (where negative heights again mean that the theory is trivial). Hence, what we want to show is that if $\Phi^BX$ is of type $\geq n+1$, then $\Phi^A X$ is of type $\geq n+1-\rank_p(\pi_0(A/B))$. Indeed, this implies that $((MU_A)_{I^A_{B,n}})_* X\cong ((\Phi^AMU_A)_{I^A_{B,n}})_* \Phi^AX$ is trivial.

This precise statement about $\Phi^AX$ is one of the main results of \cite{BarthelGreenleesHausmann}, building on \cite{BalmerSanders} and \cite{BarthelHausmannNaumannNikolausNoelStapleton}. Hence, using these results, Proposition \ref{prop:supportgeom} follows. Alternatively, rather than importing we can reprove the above statement using the methods from this paper. Note that by induction on the rank of $A/B$ and by replacing $X$ by the compact $A/B$-spectrum $\underline{\Phi^B} X$ it suffices to show two special cases:
\begin{enumerate}
    \item[(1)] If $X$ is a compact $C_{p^k}$-spectrum of underlying type $\geq n+1$, then the type of $\Phi^{C_{p^k}}X$ is at least $n$.
    \item[(2)] If $X$ is a compact $\T$-spectrum of underlying type $\geq n+1$, then the type of $\Phi^{\T}X$ is also $\geq n+1$.
\end{enumerate}

For (1) it suffices to find a complex oriented theory $E$ of height $n$ \lennart{(meaning here that $E_*$ vanishes on a compact spectrum $X$ if and only if $X$ is of type $\geq n+1$)} such that $\Phi^{C_{p^k}}\underline{E}$ is of height $\geq n-1$, where $\underline{E} = F(EG_+,E)$ denotes the Borel theory associated to $E$ (see the proofs of \cite[Corollary 3.12]{BarthelHausmannNaumannNikolausNoelStapleton} or \cite[Proposition 6.10]{BarthelGreenleesHausmann} for details on this argument).  In \cite{BarthelHausmannNaumannNikolausNoelStapleton} it was shown that Morava $E$-theory $E=E_n$ has this property, building on results of Hopkins-Kuhn-Ravenel on the $p$-disivible group associated to $E_n$ \cite{HKR00} and extending earlier work of Greenlees, Hovey and Sadofsky. Using Remark \ref{rem:generators} we obtain similar results more generally:
\begin{prop} \label{prop:blueshift} Let $E$ be any complex oriented ring spectrum of height $n$ which is Landweber exact over $MU/I_{n-1}=MU/(v_0,\hdots,v_{n-2})$, i.e., $I_{n-1}$ acts trivially on $\pi_* E$, $v_{n-1}\in \pi_* E$ is a regular element and $v_n$ is a unit modulo $v_{n-1}$. Then $\Phi^{C_{p^k}}\underline{E}$ is of height $n-1$.
\end{prop}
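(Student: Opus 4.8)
The plan is to reduce the statement to the algebraic input of Remark \ref{rem:generators} applied to the coefficients of the Borel $C_{p^k}$-spectrum $\underline{E} = F(EG_+, E)$. First I would recall that $\pi_*^{C_{p^k}}\underline{E}$ is the Borel cohomology $E^{-*}_{C_{p^k}}(\mathrm{pt}) = E^{-*}(BC_{p^k})$, and more generally that the assignment $G \mapsto \pi_*^G \underline{E}$ (with its Euler classes coming from the complex orientation of $E$) defines a global group law in the sense of \cite[Definition 5.1]{Hau}: it is obtained by restricting $\pi_*^? MU_?$ along the $MU$-algebra map $MU \to E$ that exists by the complex orientation, so all the structure maps are inherited. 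The geometric fixed points $\Phi^{C_{p^k}}\underline{E}$ appearing in the statement are then (up to completion subtleties, which are harmless at the level of detecting type since we only need the height of the ring) the $C_{p^k}$-geometric fixed points of this global group law.

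Next I would verify the hypotheses of Remark \ref{rem:generators} for this global group law $X(?) = \pi_*^? \underline{E}$. The map $L = \mathbf{L}(1) \to X(1) = \pi_* E$ sends $I_{n-1}$ to zero by the Landweber-exactness assumption. For each $l = 0, \dots, k$ I need that the Euler class $e_{p^l}$ in $X(\T) = E^*(B\T) = \pi_* E\llbracket y \rrbracket$ is a non-zero-divisor, and that $v_n$ is a non-zero-divisor in $X(C_{p^l}) = X(\T)/(e_{p^l})$. Since $e_{p^l}$ maps to the $[p^l]$-series of the formal group of $E$ under the completion map, and modulo $I_{n-1}$ this series has leading coefficient an appropriate power of $v_{n-1}$ times a unit (using that $v_{n-1}$ is regular and $v_n$ is a unit modulo $v_{n-1}$ on $\pi_* E$), one checks $e_{p^l}$ is regular in $\pi_* E\llbracket y\rrbracket$ and that the quotient $X(\T)/(e_{p^l})$ is again a domain-like ring in which $v_n$ is regular; this is exactly the parenthetical sufficient condition in Remark \ref{rem:generators} ($v_n$ regular in $X(\T)$, and $e_{p^l}$ regular modulo $v_n$), which I would spell out using Landweber exactness over $MU/I_{n-1}$. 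Granting this, Remark \ref{rem:generators} tells me that the image of $\psi_{p^k}^{(n)} \in L_\T/I_{n-1}$ (wait — here $\psi^{(n)}_{p^k}$ lives modulo $I_n$; I would instead work modulo $I_{n-1}$, where the analogous element is defined by $e_{p^k} = \psi \cdot e_{p^{k-1}}^{p^{n-1}} \cdot (\text{unit})$, tracking that modulo $I_{n-1}$ the $p$-series is $v_{n-1} e^{p^{n-1}} + \cdots$) generates the kernel of $X(\T) \to X(C_{p^k}) \to \Phi^{C_{p^k}} X$, and that its restriction to the trivial group is $v_n$.

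The payoff: $\Phi^{C_{p^k}}\underline{E}$ is obtained from $X(\T)/I_{n-1}$ by inverting the non-trivial Euler classes and killing the regular element $\psi$ (together with the already-vanishing $I_{n-1}$), so it is a nontrivial ring on which $I_{n-1}$ acts as zero while $v_n = \res^\T_1 \psi$ becomes a unit (since $\psi$ itself is zero there, and the defining relation forces $v_n$ to be inverted together with the Euler classes). Hence $\Phi^{C_{p^k}}\underline{E}/I_{n-1} \neq 0$ but $\Phi^{C_{p^k}}\underline{E}/I_n = 0$, which is exactly the statement that its height is $n-1$; by the argument of \cite[Corollary 3.12]{BarthelHausmannNaumannNikolausNoelStapleton} or \cite[Proposition 6.10]{BarthelGreenleesHausmann}, $\underline{E}$-homology detects type $\geq n$ on geometric fixed points, giving the chromatic conclusion. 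The main obstacle I anticipate is the bookkeeping around completions and gradings — making precise that "the geometric fixed points of the global group law $\pi_*^?\underline{E}$" agrees with $\Phi^{C_{p^k}}\underline{E}$ in the topological sense, and that passing to the Borel theory does not destroy the regularity hypotheses needed to invoke Remark \ref{rem:generators}; this is where I would be most careful, likely by invoking Landweber exactness over $MU/I_{n-1}$ to control $\pi_*\underline{E}$ as a completion of $\pi_*E \otimes_{\pi_* MU} \pi_*MU_{C_{p^k}}/I_{n-1}$.
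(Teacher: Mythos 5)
Your overall strategy is the paper's: view $G\mapsto \pi_*^G\underline{E}$ as a global group law, apply Remark \ref{rem:generators} at index $n-1$ to see that $\psi:=\psi^{(n-1)}_{p^k}$ generates the kernel of $E_*\llbracket e\rrbracket \to (\underline{E}^{C_{p^k}})_*=E_*\llbracket e\rrbracket/[p^k]_F(e)\to (\Phi^{C_{p^k}}\underline{E})_*$, and then read off the height. But the execution has genuine errors. First, a bookkeeping slip in the hypotheses: since only $I_{n-1}$ (not $I_n$) dies in $\pi_*E$, the conditions you must check are the index-$(n-1)$ ones, namely that $v_{n-1}$ is regular in $E_*\llbracket e\rrbracket$ and that $e_{p^l}=[p^l]_F(e)$ is regular modulo $v_{n-1}$ (the latter because modulo $v_{n-1}$ its lowest coefficient is a power of the unit $v_n$); your "$v_n$ regular in $X(\T)$, $e_{p^l}$ regular modulo $v_n$" is the wrong index. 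Second, and more seriously, the restriction of $\psi$ to the trivial group is $v_{n-1}$, not $v_n$: writing $\psi=\sum_i a_i([p^{k-1}]_F(e))^i$ with $\sum_i a_ie^i=[p]_F(e)/e^{p^{n-1}}$, its constant term is $a_0=v_{n-1}$. Your later height argument leans on the incorrect value.

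This matters for both bounds. For nontriviality, "killing a regular element and inverting Euler classes" can perfectly well give the zero ring; the actual argument is that if $(\Phi^{C_{p^k}}\underline{E})_*$ were zero the kernel identified by Remark \ref{rem:generators} would be the unit ideal, so $\psi$ would be a unit in $E_*\llbracket e\rrbracket$, which is false because its constant term $v_{n-1}$ is a non-unit (were $v_{n-1}$ a unit, $E$ would have height $\leq n-1$). With your claimed constant term $v_n$ this step would break down, e.g.\ for Johnson--Wilson theory $v_n$ is a unit in $\pi_*E$. For the upper bound, $\Phi^{C_{p^k}}\underline{E}/I_n=0$ does not follow from "$v_n$ becomes a unit": since $I_n=(v_0,\dots,v_{n-1})$, what you need is that $v_{n-1}$ becomes invertible in the geometric fixed points, equivalently that the quotient by $v_{n-1}$ vanishes. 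The correct mechanism (and the paper's) is that modulo $v_{n-1}$ the lowest coefficient of $\psi$ is a power of the unit $v_n$, so modulo $I_n$ the element $\psi$ is a power of $[p^{k-1}]_F(e)$ times a unit power series; after inverting the Euler classes the relation $\psi=0$ then forces $1=0$ in $(\Phi^{C_{p^k}}\underline{E})_*/I_n$. If you replace your two height deductions by these arguments (and fix the value of $\res^{\T}_1\psi$), your proof becomes the paper's; the completion issues you flagged are indeed harmless, since $(\underline{E}^{\T})_*=E_*\llbracket e\rrbracket$ and $(\underline{E}^{C_{p^k}})_*=E_*\llbracket e\rrbracket/[p^k]_F(e)$ hold on the nose.
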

\begin{proof} We apply Remark \ref{rem:generators} and check that its assumptions are satisfied. We have $(\underline{E}^{\T})_*=E_*\llbracket e\rrbracket$, with Euler classes $e_n$ given by the $n$-series $[n]_F(e)$ for the formal group law associated to $E$. If $n$ is a power of $p$, the leading term of this Euler class is a power of $v_{n-1}$, which we assumed to be regular. Modulo $v_{n-1}$, the leading term becomes $v_n$ which is a unit since $E/v_{n-1}$ is of height $n$. Hence by \cref{rem:generators} we find that the pushforward of the element $\psi_{p^k}^{(n-1)}$ to $E_*\llbracket e\rrbracket $, i.e., the element $\sum_{i=0}^{\infty}a_i[p^{k-1}]_F(e)$ for $\sum_{i=0}^{\infty}a_ie^i = [p]_F(e)/e^{p^{n-1}}$, generates the kernel of the composite
\[ E_*\llbracket e\rrbracket\to E_*\llbracket e\rrbracket/[p^k]_F(e)=(\underline{E}^{C_{p^k}})_*\to (\Phi^{C{p^k}} \underline{E})_*.\]
The leading term of $\psi_{p^k}^{(n-1)}$ equals $v_{n-1}$, which is not a unit. Hence $\Phi^{C{p^k}} \underline{E}$ is non-trivial and since $I_{n-1}$ acts trivially it must be of height $\geq n-1$. Furthermore, reducing modulo $v_{n-1}$ the leading coefficient of $\psi_{p^k}^{(n-1)}$ equals a power of $v_n$.  Since $v_n$ is a unit modulo $v_{n-1}$, it follows that so is $\psi_{p^k}^{(n-1)}$. Therefore $(\Phi^{C{p^k}}\underline{E})_*/v_{n-1}$ is trivial as an algebra over $E_*\llbracket e\rrbracket/(v_{n-1},\psi_{p^k}^{(n-1)})=0$, and hence $\Phi^{C{p^k}} \underline{E}$ is of height exactly $n-1$.
\end{proof}
\begin{example}
    Given any Landweber exact complex oriented ring spectrum $E$ of height $n$, its quotient $E/I_{n-1}$ satisfies the assumptions of the previous proposition. It follows that also in this case the height of $\Phi^{C_{p^k}}E$ equals $n-1$. For example this applies to Johnson-Wilson spectra, to $MU[v_{n}^{- 1}]$ or to Morava $E$-theories.
\end{example} 

Similarly, for (2) one needs to find a complex oriented theory $E$ of height $n$ such that $\Phi^{\T}\underline{E}$ is also of height $n$. This is more elementary and satisfied by any $p$-local complex oriented theory $E$ of height $n$. To see this, note again that the Euler classes $e_n$ are given by the $n$-series $[n]_F(e)\in E_*\llbracket e\rrbracket $. Writing $n=p^k\cdot m$ with $m$ coprime to $p$, we find that $[n]_F(e)$ is a unit multiple of $[p^k]_F(e)$. Modulo $I_n$, for $k>0$ the leading term of $[p^k]_F(e)$ is a power of $v_n$, which by assumption is a unit in $E_*/I_n$. It follows that, modulo $I_n$, the coefficients of $\Phi^{\T} \underline{E}$ are given by $E_*/I_n ((e))$, which is always non-trivial when $E_*/I_n$ is.\newline

\subsection{Change of groups and the structure of $\MM_{FG}^A$}
For any $A$-spectrum $X$, the groups $MU_{2*}^A(X)$ come equipped with the structure of a graded $(L_A, S_A)$-comodule. Since the stack $\MM_{FG}^A$ is the stack associated to this graded Hopf algebroid, we obtain an associated quasi-coherent sheaf $\FF^A(X)$ on $\MM_{FG}^A$ (see \cite[Proposition 4.3]{MeierOzornova}). This is the $0$-th graded piece of a $\QCoh(\MM_{FG}^A)$-valued homology theory on $A$-spectra, whose $i$-th piece $\FF_i^A(X)$ is given by $MU_{2*+i}^A(X)$. We end this section by a closer look upon how the structure of $\MM_{FG}^A$ relates to this homology theory. 

    Recall from \cref{prop:OpenClosedSubstacks} that for a closed subgroup $B\subseteq A$, there is an open immersion $j\colon \MM_{FG}^{A/B} \to \MM_{FG}^A$ and a closed immersion $i\colon \MM_{FG}^B \to \MM_{FG}^A$. We obtain corresponding adjunctions
\begin{equation*}
\begin{tikzcd}
\QCoh(\MM_{FG}^A) \arrow[r,shift left=.5ex,"j^*"]
&
\QCoh(\MM_{FG}^{A/B}) \arrow[l,shift left=.5ex,"j_*"]
\end{tikzcd}
\text{ and }
\begin{tikzcd}
\QCoh(\MM_{FG}^A) \arrow[r,shift left=.5ex,"i^*"]
&
\QCoh(\MM_{FG}^{B}). \arrow[l,shift left=.5ex,"i_*"]
\end{tikzcd}
\end{equation*}
Believing that the structure of $\MM_{FG}^A$ dictates the structure of the $\infty$-category $\Sp_A$ of $A$-spectra, we expect a relation to the adjunctions
\begin{equation*}
\begin{tikzcd}
\Sp_A \arrow[rr,shift left=.5ex,"\underline{\Phi}^B"]
&&
\Sp_{A/B} \arrow[ll,shift left=.5ex,"P^*_{A/B}"]
\end{tikzcd}
\quad\qquad\text{ and }\qquad\quad
\begin{tikzcd}
\Sp_A \arrow[rr,shift left=.5ex,"\res_B^A"]
&&
\Sp_B. \arrow[ll,shift left=.5ex,"\coind_B^A"]
\end{tikzcd}
\end{equation*}
Here, denoting by $\FF[B]$ the family of subgroups of $A$ not containing $B$ and by $q\colon A \to A/B$ the projection, we define $\underline{\Phi}^B(X) = (\widetilde{E}\FF[B] \sm X)^B$ and $P^*_{A/B}(Y) = \widetilde{E}\FF[B] \sm q^*X$. Note that the definition of $\underline{\Phi}^B$ is made so that its underlying spectrum is $\Phi^B$ and more generally $\Phi^{C/B}\underline{\Phi}^B \simeq \Phi^C$ for every $B\subseteq C\subseteq A$. For more details on the first adjunction, see \cite[Section II.9]{LewisMaySteinberger}, \cite[Section 4.1]{HillPrimer} and \cite[Section 2.2]{MeierShiZeng}. For the benefit of the reader, we give a brief sketch of its basic properties: The adjunction between $q^*$ and $(-)^H$ induces maps 
\begin{align*}\id \to \underline{\Phi}^BP^*_{A/B} \qquad &\text{and}\qquad P^*_{A/B}\underline{\Phi}^B \to \widetilde{E}\FF[B] \sm (-),
\intertext{which can be checked to be equivalences on geometric fixed points. The inverses}
\underline{\Phi}^BP^*_{A/B} \xrightarrow{\simeq} \id\qquad &\text{and} \qquad\id \to \widetilde{E}\FF[B] \sm (-) \xrightarrow{\simeq} P^*_{A/B}\underline{\Phi}^B\end{align*}
form the counit and unit of the adjunction. In particular, $P^*_{A/B}$ is fully faithful and its image agrees with that of $\widetilde{E}\FF[B] \sm$. Since smashing with $\widetilde{E}\FF[H]$ is idempotent and hence symmetric monoidal, $P^*_{A/B}$ is symmetric monoidal as well and so is $\underline{\Phi}^B$. 

\begin{prop}\label{prop:FACompatibility} \revm{For every abelian compact Lie group $A$ and closed subgroup $B\subseteq A$, the} diagram 
\[
\begin{tikzcd}
\Sp_A \arrow[r,shift left=.5ex,"\underline{\Phi}^B"]\arrow[d, "\FF^A"]
&
\Sp_{A/B} \arrow[l,shift left=.5ex,"P^*_{A/B}"]\arrow[d, "\FF^{A/B}"]
\\
\QCoh(\MM_{FG}^A) \arrow[r,shift left=.5ex,"j^*"]
&
\QCoh(\MM_{FG}^{A/B}) \arrow[l,shift left=.5ex,"j_*"]
\end{tikzcd} \]
commutes, i.e.\ there are 
 natural isomorphisms 
$j^*\FF^A(X) \cong \FF^{A/B}(\underline{\Phi}^BX)$ for $X\in \Sp_A$ and $j_*\FF^{A/B}(Y) \cong \FF^A(P^*_{A/B}Y)$ for $Y\in \Sp_{A/B}$.

Moreover, there is a natural isomorphism
$i_*\FF^B(Z) \cong \FF^A(\coind_B^AZ)$ for $Z\in \Sp_B$.
\end{prop}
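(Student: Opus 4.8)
The proposition consists of three natural isomorphisms; I would prove each by reducing the statement about the homology theory $\FF^{(-)}$ to the underlying statement about $MU_{2*}^{(-)}$-homology and then matching the algebra with the geometry of \cref{prop:OpenClosedSubstacks}. The key translation dictionary is: $\FF^A(X)$ is the quasi-coherent sheaf on $\MM_{FG}^A$ corresponding to the graded $(L_A,S_A)$-comodule $MU_{2*}^A(X)$ under the equivalence $\QCoh(\MM_{FG}^A)\simeq (L_A,S_A)\text{-grcomod}$ of \cite[Proposition 4.3]{MeierOzornova} (combined with \cref{prop:MFGAHopfAlgebroid}); the open immersion $j\colon \MM_{FG}^{A/B}\to\MM_{FG}^A$ corresponds, on the level of Hopf algebroids, to the map $(L_A,S_A)\to(\Phi^{?}\text{-type localization})$, more precisely to the localization of $(L_A,S_A)$ obtained by inverting all Euler classes $e_V$ with $V\notin \im((A/B)^*\to A^*)$, whose associated stack is $\MM_{FG}^{A/B}$ by \cref{prop:OpenClosedSubstacks}(i) and \cref{cor:fullyfaithful}; and the closed immersion $i\colon \MM_{FG}^B\to\MM_{FG}^A$ corresponds to the surjection $L_A\to L_B$ of \cref{lem:EulerClassesGenerateKernel}, i.e.\ to quotienting by the invariant ideal $I_A^B$ generated by the Euler classes $e_V$ for $V\in\ker(A^*\to B^*)$, so that $i_*$ is restriction of scalars along $L_A\to L_B$.

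\textbf{The $\coind$ statement.} For the final isomorphism I would argue as follows. The functor $i_*\colon \QCoh(\MM_{FG}^B)\to\QCoh(\MM_{FG}^A)$ is, under the comodule description, simply the functor that views an $(L_B,S_B)$-comodule as an $(L_A,S_A)$-comodule via the map of Hopf algebroids $(L_A,S_A)\to(L_B,S_B)$; this uses that $L_A\to L_B$ and $S_A\to S_B$ are surjective (\cref{lem:EulerClassesGenerateKernel} and \cref{prop:SISA}) so that an $L_B$-module is automatically an $L_A$-module and the comodule structure transports. Hence $i_*\FF^B(Z)$ is the sheaf associated to $MU_{2*}^B(Z)$ regarded as an $(L_A,S_A)$-comodule. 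On the other hand $\FF^A(\coind_B^A Z)$ is the sheaf associated to $MU_{2*}^A(\coind_B^A Z)$. So the statement reduces to the natural isomorphism of $(L_A,S_A)$-comodules
\[ MU_{2*}^A(\coind_B^A Z)\;\cong\; MU_{2*}^B(Z), \]
with the right-hand side given the $L_A$-module structure via $\res^A_B\colon L_A\to L_B$. This is exactly the Wirthm\"uller isomorphism: for $B\subseteq A$ a closed subgroup, $MU_A\wedge \coind_B^A Z\simeq \coind_B^A(\res^A_B MU_A\wedge Z)\simeq \coind_B^A(MU_B\wedge Z)$ (using that $MU_A$ restricts to $MU_B$ and the projection formula for coinduction/induction, which agree up to a dimension shift that vanishes since $B$ is a closed subgroup of the \emph{abelian compact Lie} group $A$ and $A/B$ is again such — one must be slightly careful, but for finite index the Wirthm\"uller shift is trivial and in general one uses that $\coind_B^A$ and $\operatorname{ind}_B^A$ differ by the representation sphere $S^{\mathfrak{a}}$ for the tangent representation, which for abelian $A$ acting on $A/B$ is a trivial summand plus...), and then $\pi_*^A\coind_B^A(MU_B\wedge Z)\cong\pi_*^B(MU_B\wedge Z)=MU_*^B(Z)$ by the defining adjunction $\coind_B^A\dashv\res^A_B$ composed with the self-adjointness. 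The compatibility with the comodule/coaction structure is automatic because every map in sight is induced by a map of $E_\infty$-ring $A$-spectra and the change-of-group functors are (lax) symmetric monoidal.

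\textbf{The two $\underline\Phi^B$/$P^*_{A/B}$ statements.} These two isomorphisms are adjoint to each other in a formal sense (an isomorphism $j^*\FF^A\cong\FF^{A/B}\underline\Phi^B$ is equivalent, given that $j^*\dashv j_*$ and $\underline\Phi^B$ has the fully faithful right ... actually left ... adjoint $P^*_{A/B}$, to the isomorphism $j_*\FF^{A/B}\cong\FF^A P^*_{A/B}$), so I would prove one and deduce the other, or prove both directly. For $j^*\FF^A(X)\cong\FF^{A/B}(\underline\Phi^BX)$: unravelling, $j^*$ on comodules is localization at the Euler classes $e_V$, $V\notin\im((A/B)^*\to A^*)$ — i.e.\ $j^*\FF^A(X)$ is the sheaf on $\MM_{FG}^{A/B}$ (whose defining Hopf algebroid is this localization of $(L_A,S_A)$) associated to $MU_{2*}^A(X)[e_V^{-1}\mid V\notin\im]$. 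Meanwhile $\FF^{A/B}(\underline\Phi^BX)$ is the sheaf associated to $MU_{2*}^{A/B}(\underline\Phi^BX)$. So the needed input is the well-known isomorphism $MU_{2*}^{A/B}(\underline\Phi^B X)\cong \bigl(MU_{2*}^A(X)\bigr)[e_V^{-1}\mid V\notin\im((A/B)^*\to A^*)]$ — this is the "geometric fixed points invert Euler classes" principle, here in the relative form $\underline\Phi^B$, which follows from $\underline\Phi^B X = (\widetilde E\FF[B]\wedge X)^B$ together with $MU_{A/B}\wedge\underline\Phi^B X\simeq \underline\Phi^B(MU_A\wedge X)$ and the computation of $\pi_*^{A/B}\underline\Phi^B$ of a module over $MU_A$ as the localization inverting the relevant Euler classes (cf.\ the algebraic shadow in \cref{prop:GeometricFixedPoints} and \cref{cor:fullyfaithful}). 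One then checks that this localization of the $(L_A,S_A)$-comodule, viewed over the localized Hopf algebroid, is precisely the $(L_{A/B},S_{A/B})$-comodule structure — this is the content of \cref{prop:OpenClosedSubstacks}(i), since the localized Hopf algebroid \emph{is} $(L_{A/B},S_{A/B})$ (equivalently, $q^*\alpha_* = \id$ on the localization). For $j_*\FF^{A/B}(Y)\cong\FF^A(P^*_{A/B}Y)$: push-forward along an open immersion corresponds to the comodule-theoretic pushforward, and $P^*_{A/B}Y = \widetilde E\FF[B]\wedge q^*Y$, whose $MU_A$-homology is computed by first noting $q^*Y$ has $MU_A$-homology $= MU_{A/B}$-homology of $Y$ pulled back along $q^*\colon L_{A/B}\to L_A$, and then smashing with $\widetilde E\FF[B]$ — but this last smash does not change $MU_A$-homology after the relevant localization and the push-forward $j_*$ exactly records "restriction of scalars from the localized Hopf algebroid", which is again governed by \cref{prop:OpenClosedSubstacks}(i) via the retraction $q_* q^*\simeq \id$.

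\textbf{Main obstacle.} The serious issue is making the homotopy-theoretic inputs precise and functorial: specifically (a) the identification $MU_{A/B}\wedge\underline\Phi^B(-)\simeq\underline\Phi^B(MU_A\wedge(-))$ and the resulting description of $\pi_*^{A/B}\underline\Phi^B$ of an $MU_A$-module as a localization at Euler classes — this requires knowing that $\underline\Phi^B MU_A\simeq MU_{A/B}$ (true, and compatible with the ring structure) and a careful monoidal/comodule bookkeeping so that the localization is an equivalence of \emph{comodules over the localized Hopf algebroid}, not just of modules; and (b) for the $\coind$ statement, handling the Wirthm\"uller isomorphism correctly in the compact Lie (not just finite) setting, where a priori there is a dimension-shifting representation sphere — one must verify this sphere is trivial or otherwise accounted for (for $A$ abelian and $B$ closed, the $A/B$-action on the relevant tangent space is by a representation with trivial fixed points, forcing... this needs the precise form of the equivariant Wirthm\"uller isomorphism of Lewis--May--Steinberger). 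I expect (a) to be the genuine mathematical heart and (b) to be a technical point one can dispatch by citing the standard equivariant Wirthm\"uller isomorphism together with the fact that $MU_A$ is a module over the sphere and the shift representation is already orientable/accounted for by the grading convention $\FF_i^A(X)=MU_{2*+i}^A(X)$.
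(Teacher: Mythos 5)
Your handling of the closed-immersion statement is essentially the paper's argument: identify $i_*$ with corestriction of comodules along the surjection $(L_A,S_A)\to(L_B,S_B)$ and compute $(MU_A)_{2*}(\coind_B^A Z)\cong\pi_{2*}^A(\coind_B^A(MU_B\wedge Z))\cong MU_{2*}^B(Z)$ via the projection formula for coinduction and the adjunction $\res^A_B\dashv\coind_B^A$ (note the direction of this adjunction; also the Wirthm\"uller twist is not a real issue, since applying the Wirthm\"uller equivalence to both sides of the induction projection formula makes the shifts cancel). Likewise, your plan of identifying $\QCoh(\MM_{FG}^{A/B})$ with comodules over the Euler-localized Hopf algebroid, and of deducing the $j_*$-statement from the $j^*$-statement using full faithfulness of $P^*_{A/B}$ and $j_*$, is how the paper proceeds (the paper justifies the first point by exhibiting $\Spec\underline{\Phi}^BL_A$ as the pullback $\Spec L_A\times_{\MM_{FG}^A}\MM_{FG}^{A/B}$, hence a faithfully flat affine cover of $\MM_{FG}^{A/B}$).

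The genuine gap is the homotopy-theoretic input you feed into this framework. You assert $\underline{\Phi}^BMU_A\simeq MU_{A/B}$ and deduce the ``well-known'' isomorphism $MU_{2*}^{A/B}(\underline{\Phi}^BX)\cong MU_{2*}^A(X)[e_V^{-1}]$; both statements are false. The coefficients of $\underline{\Phi}^BMU_A$ are $\underline{\Phi}^BL_A=L_A[e_V^{-1}]$, which for $B=A$ is $\Phi^AL\cong L[(b_0^V)^{\pm1},b_i^V]$ by \cref{prop:GeometricFixedPoints} --- a large polynomial extension of $L=\pi_*MU$, not $L$ itself; accordingly your claimed isomorphism already fails for $X=S^0$ and $B=A$, where the left side is $L$ and the right side is $\Phi^AL$. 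The two sides agree only as quasi-coherent sheaves on $\MM_{FG}^{A/B}$, i.e.\ after base change along the faithfully flat map $L_{A/B}\to\underline{\Phi}^BL_A$, and proving this is precisely the heart of the paper's proof (\cref{lem:LandweberPhi}): one constructs a comparison map from the global ring map $MU_{A/B}\to\underline{\Phi}^BMU_A$ and verifies $(\underline{\Phi}^BMU_A)_*(Y)\cong\underline{\Phi}^BL_A\tensor_{L_{A/B}}(MU_{A/B})_*(Y)$ by reduction to orbits $A/B'_+$, where the key algebraic fact is $L_A\tensor_{L_{A/B}}L_{B'/B}\cong L_{B'}$. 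Your sketch has no substitute for this Landweber-exactness-type step; similarly, in the $j_*$-part, the claim that $MU^A_*(q^*Y)$ is the base change of $MU^{A/B}_*(Y)$ along $q^*\colon L_{A/B}\to L_A$ is unproven (it would require flatness of inflation maps, which is not known) --- the paper only proves, and only needs, this after further localizing at the Euler classes.
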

\noindent No such isomorphism can be expected for $i^*$ and $\res^A_B$ in general. One reason is that $i^*$ is not flat, but even a spectral sequence relating $i^*$ and $\res^A_B$ seems not to exist for $A$ not a torus for reasons related to \cref{rem:counterexample}. 

Before we prove the proposition, we need a lemma, in which we will use the Hopf algebroid $(\underline{\Phi}^BL_A, \underline{\Phi}^BS_A)$. Here $\underline{\Phi}^BL_A$ is obtained from $L_A$ by inverting $e_V$ for all $V\notin \im((A/B)^*\to A^*)$, and the ring $\underline{\Phi}^BS_A$ is defined as $\underline{\Phi}^BL_A \tensor_{L_A} S_A \tensor_{L_A} \underline{\Phi}^BL_A$. This classifies strict isomorphism between equivariant formal group laws where the relevant Euler classes are invertible on source \emph{and} target. Since the invertibility of Euler classes only depends on the underlying equivariant formal group and not on the choice of coordinate, this simplifies to $S_A \tensor_{L_A} \underline{\Phi}^BL_A$.
\begin{lemma}\label{lem:LandweberPhi}
    For $Y\in \Sp_B$ and $q\colon A \to A/B$ the projection, there are natural isomorphisms 
    \[\underline{\Phi}^BL_A \tensor_{L_A} (MU_A)_*(q^*Y) \xrightarrow{\cong} \underline{\Phi}^BL_A \tensor_{L_A} (MU_A)_*(P^*_{A/B}Y) \xrightarrow{\cong} (\underline{\Phi}^BMU_A)_*(Y) \] 
    and 
     \[\underline{\Phi}^BL_A \tensor_{L_{A/B}} (MU_{A/B})_*(Y) \xrightarrow{\cong} (\underline{\Phi}^BMU_A)_*(Y) \] 
     of graded $(\underline{\Phi}^BL_A, \underline{\Phi}^BS_A)$-comodules, where the map $L_{A/B} \to \underline{\Phi}^BL_A$ is defined as the composite $L_{A/B}\xrightarrow{q^*} L_A \to \underline{\Phi}^BL_A$. 
\end{lemma}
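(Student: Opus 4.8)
\textbf{Proof plan for Lemma \ref{lem:LandweberPhi}.}
The plan is to deduce all three isomorphisms from the identification $\underline{\Phi}^B MU_A \simeq \Phi^B(MU_A \sm P^*_{A/B}S^0) \simeq \Phi^B MU_A$ interpreted correctly, together with the fact that smashing with $\widetilde{E}\FF[B]$ and then taking $B$-geometric fixed points is a smashing/Bousfield-type operation that matches base change along $L_A \to \underline{\Phi}^B L_A$ on $MU_A$-homology. Concretely, I would first recall that $\underline{\Phi}^B$ is symmetric monoidal (as noted in the excerpt, since it is $(\widetilde{E}\FF[B]\sm -)$ composed with an equivalence onto the $\widetilde{E}\FF[B]$-local category), so $\underline{\Phi}^B(MU_A) \simeq \underline{\Phi}^B(MU_A) $ is a ring in $\Sp_{A/B}$ and, by the computation of geometric fixed points of $MU_A$ in \cite{Hau} (see also \cref{prop:GeometricFixedPoints} on the level of coefficients), $\pi^{A/B}_* \underline{\Phi}^B MU_A \cong \underline{\Phi}^B L_A$ — indeed $\underline{\Phi}^B L_A$ was \emph{defined} as the localization of $L_A$ inverting exactly the Euler classes $e_V$ for $V\notin \im((A/B)^* \to A^*)$, which are precisely the Euler classes becoming invertible after smashing with $\widetilde{E}\FF[B]$.

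Next I would establish the middle and right isomorphisms. For the right-hand isomorphism $\underline{\Phi}^B L_A \tensor_{L_A}(MU_A)_*(P^*_{A/B}Y) \xrightarrow{\cong}(\underline{\Phi}^B MU_A)_*(Y)$: since $P^*_{A/B}Y \simeq \widetilde{E}\FF[B]\sm q^* Y$ and $\underline{\Phi}^B$ is monoidal with $\underline{\Phi}^B q^* = \underline{\Phi}^B P^*_{A/B}$ the identity on $\Sp_{A/B}$, we have $\underline{\Phi}^B(MU_A \sm P^*_{A/B}Y) \simeq \underline{\Phi}^B MU_A \sm Y$, so it suffices to see that $\pi^{A/B}_*$ of the left side computes $\underline{\Phi}^B L_A \tensor_{L_A}(MU_A)_*(P^*_{A/B}Y)$. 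This holds because $MU_A \sm P^*_{A/B}Y$ is already $\widetilde{E}\FF[B]$-local in the module sense — all the relevant Euler classes act invertibly on its homotopy — and applying $(-)^B$ then agrees with first taking homotopy and then base-changing; I would make this precise by noting $\widetilde{E}\FF[B]\sm MU_A$ is a smashing localization of $MU_A$ whose effect on $(MU_A)_*(-)$ is exactly $\underline{\Phi}^B L_A \tensor_{L_A}(-)$, using that $\underline{\Phi}^B L_A$ is a localization (flat) of $L_A$. The first isomorphism (replacing $P^*_{A/B}Y$ by $q^*Y$) is then immediate since the unit map $q^*Y \to \widetilde{E}\FF[B]\sm q^*Y = P^*_{A/B}Y$ becomes an equivalence after smashing with $\widetilde{E}\FF[B]$, hence $\underline{\Phi}^B L_A\tensor_{L_A}(MU_A)_*(q^*Y) \to \underline{\Phi}^B L_A\tensor_{L_A}(MU_A)_*(P^*_{A/B}Y)$ is an isomorphism. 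For the last isomorphism $\underline{\Phi}^B L_A \tensor_{L_{A/B}}(MU_{A/B})_*(Y) \xrightarrow{\cong}(\underline{\Phi}^B MU_A)_*(Y)$, I would use that $\underline{\Phi}^B MU_A$ is an $MU_{A/B}$-algebra (via $P^*_{A/B}$ applied to the $MU_{A/B}$-module structure, or via the ring map $MU_{A/B} \to \underline{\Phi}^B MU_A$ adjoint to $\Phi^B$) which on $\pi_*$ is the composite $L_{A/B}\xrightarrow{q^*} L_A \to \underline{\Phi}^B L_A$, and that this map exhibits $\underline{\Phi}^B MU_A$ as obtained from $MU_{A/B}$ by a \emph{flat} base change on coefficients (namely $\underline{\Phi}^B L_A$ is flat, in fact a localization followed by a polynomial extension, over $L_{A/B}$ by \cref{prop:GeometricFixedPoints} and \cref{cor:fullyfaithful}), so Landweber-type/flat base change gives $(\underline{\Phi}^B MU_A)_*(Y) \cong \underline{\Phi}^B L_A \tensor_{L_{A/B}}(MU_{A/B})_*(Y)$.

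Finally I would check the comodule structure claim: all four spectra involved ($MU_A \sm q^*Y$, $MU_A\sm P^*_{A/B}Y$, $\underline{\Phi}^B MU_A\sm Y$, and the base change of $MU_{A/B}\sm Y$) carry $(MU_A \sm MU_A)$- resp.\ $(\underline{\Phi}^B MU_A \sm \underline{\Phi}^B MU_A)$-comodule structures from the standard coaction, and the maps above are maps of spectra-with-coaction, so passing to homotopy gives comodule maps over $(\underline{\Phi}^B L_A, \underline{\Phi}^B S_A)$ — here one uses the identification $\underline{\Phi}^B S_A \cong S_A \tensor_{L_A}\underline{\Phi}^B L_A$ noted just before the lemma (invertibility of Euler classes being a property of the underlying formal group, independent of coordinate, so it need only be imposed on one side). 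The main obstacle I anticipate is the step asserting that $(-)^B$ of $\widetilde{E}\FF[B]\sm MU_A \sm Z$ agrees with base change on $MU_A$-homology — i.e.\ that no $\lim^1$ or higher correction terms intervene; this is where I would need to invoke that $\widetilde{E}\FF[B]$ is a filtered colimit of finite $A$-complexes with geometric fixed points in a controlled range, together with the fact that the relevant localization $L_A \to \underline{\Phi}^B L_A$ is flat so that $\underline{\Phi}^B L_A\tensor_{L_A}(-)$ is exact and commutes with the colimit, exactly as in the proof of \cref{prop:GeometricFixedPoints}.
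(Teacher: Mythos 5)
Your handling of the first displayed chain is essentially the paper's own argument: model $\widetilde{E}\FF[B]$ as $S^{\infty W}$ for $W$ the sum of all characters $V\notin \im((A/B)^*\to A^*)$, so that smashing an $MU_A$-module with it is a filtered colimit inverting exactly the Euler classes $e_V$ for those $V$, commute $\pi_*$ with the colimit, and use monoidality of $\underline{\Phi}^B$ together with $\underline{\Phi}^BP^*_{A/B}\simeq \id$; the comodule claim via naturality in the $MU_A$-variable and the left/right units also matches the paper.

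The genuine gap is in the second displayed isomorphism. First, your flatness claim -- that $\underline{\Phi}^BL_A$ is ``a localization followed by a polynomial extension'' over $L_{A/B}$ -- is not supported by the results you cite: \cref{prop:GeometricFixedPoints} computes the \emph{full} geometric fixed points $\Phi^AL$ as an algebra over the non-equivariant Lazard ring $L$ (the case $B=A$), not the partial localization $\underline{\Phi}^BL_A = L_A[e_V^{-1}]$ as an algebra over $L_{A/B}$ via $q^*$, and \cref{cor:fullyfaithful} is a categorical statement about equivariant formal groups that gives no control over this module structure; no such structural result is established in the paper. Second, even granting flatness, ``Landweber-type/flat base change'' over the coefficient map $L_{A/B}\to \underline{\Phi}^BL_A$ does not finish the argument in genuine $A/B$-spectra: knowing the comparison map is an isomorphism on $S^0$ (which is all that $\pi_*^{A/B}\underline{\Phi}^BMU_A\cong \underline{\Phi}^BL_A$ gives) is not enough, since the cells of $\Sp_{A/B}$ are the orbits $(A/B)/(B'/B)_+$ and the transformation must be checked on each of them. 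That orbit check is precisely where the paper's proof does its work: for $Y=(A/B)/(B'/B)_+$ the left-hand side is $\underline{\Phi}^BL_A\tensor_{L_{A/B}}L_{B'/B}$, and the key algebraic input is the isomorphism $L_A\tensor_{L_{A/B}}L_{B'/B}\xrightarrow{\cong}L_{B'}$, which follows from \cref{lem:EulerClassesGenerateKernel} because the kernel of $L_A\to L_{B'}$ is generated by Euler classes of characters pulled back from $(A/B)^*$; combining this with the first isomorphism applied to $Y=A/B'_+$ identifies both sides on all orbits (and hence, by the usual reduction, on all $Y$). Your sketch supplies neither the flatness justification nor this orbit-wise verification, so the second isomorphism is not established as written.
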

\begin{proof}
    A model for $\widetilde{E}\FF[B]$ is given by $S^{\infty W}$ for $W$ the sum of all characters $V\in \VV$, for $\VV$ the set of characters $V$  not restricting to $1$ in $B$ or, equivalently, $V\notin \im((A/B)^*\to A^*)$. Indeed, $W^C = 0$ for $C\subseteq A$ if and only if $B\subseteq C$, as this is equivalent to none of the $V\in \VV$ restricting to $1$ in $C$. 

    In other words, smashing with $\widetilde{E}\FF[B]$ is the same as inverting all the maps $S^0 \to S^V$ for $V\in \VV$. For an $MU_A$-module, this is equivalent to inverting $e_V$ for $V\in \VV$. 

    We have
    \begin{align*}
        \underline{\Phi}^BL_A \tensor_{L_A} (MU_A)_*(q^*Y) &\cong (MU_A)_*(q^*Y)[e_V^{-1}: V\in \VV] \\
        &\cong \pi_*(MU_A \wedge q^*Y \wedge \widetilde{E}\FF[B])^A \\
        &\cong \pi_*(\underline{\Phi}^B(MU_A \wedge P^*_{A/B}Y))^{A/B} \\
        &\cong \pi_*(\underline{\Phi}^BMU_A \wedge Y)^{A/B} \\
        &\cong (\underline{\Phi}^BMU_A)_*(Y).
    \end{align*}
Replacing $q^*Y$ by $P^*_{A/B}Y$ in the chain of isomorphisms above yields a similar chain of isomorphisms. All the isomorphisms are isomorphisms of comodules since the isomorphisms are natural in the $MU_A$-variable and we can plug into this variable the left and right unit $MU_A \to MU_A \wedge MU_A$. 

    To construct the second isomorphism, note that as part of the global structure of equivariant $MU$, there is a ring map $q^*MU_{A/B} \to MU_A$ (cf.\ \cite{LinskensNardinPol}). Applying $\underline{\Phi}^B = (- \wedge \widetilde{E}\FF[H])^B$ yields a ring map $MU_{A/B} \to \underline{\Phi}^B MU_A$. This induces a morphism 
       \[\underline{\Phi}^BL_A \tensor_{L_{A/B}} (MU_{A/B})^*(Y) \to (\underline{\Phi}^BMU_A)^*(Y). \]
It is enough to show that this is an isomorphism for finite $Y$ and hence for $Y = (A/B)/(B'/B)_+ \cong A/B'_+$ for $B\subseteq B'\subseteq A$. In this case, the map becomes 
\begin{equation}\label{eq:PhiMorphism} \underline{\Phi}^BL_A\tensor_{L_{A/B}} L_{B'/B} \to (\underline{\Phi}^BMU_A)^*(A/B'_+).\end{equation}
The natural map 
\[L_A \tensor_{L_{A/B}}L_{B'/B} \to L_{B'}\]
is an isomorphism since $L_A \to L_{B'}$ is a surjection with kernel generated by the Euler classes $e_V$ for those $V\in A^*$ restricting trivially to $B'$; these are exactly the images of the Euler classes $e_W$ for those $W\in (A/B)^*$ restricting trivially to $B'/B$. 

Thus, \eqref{eq:PhiMorphism} becomes 
\[\underline{\Phi}^BL_A\tensor_{L_A}MU_A^*(A/B'_+)\cong \underline{\Phi}^BL_A\tensor_{L_A}L_{B'} \to (\underline{\Phi}^BMU_A)^*(A/B'_+),\]
which is an isomorphism by the first part. Similar to the first part, all isomorphisms are isomorphisms of comodules again.
\end{proof}

\begin{proof}[Proof of \cref{prop:FACompatibility}: ]
   We establish first the isomorphism $j^*\FF^A(X) \cong \FF^{A/B}(\underline{\Phi}^BX)$ for $A$-spectra $X$. Consider the commutative diagram
\[
\xymatrix{
\Spec L_A  \ar[d]&\Spec \underline{\Phi}^BL_A\ar[rr]\ar[dr]^{\varphi}\ar[l] &&\Spec L_{A/B} \ar[dl] \\
\MM_{FG}^A\ar@{}[ur]|{\text{\pigpenfont L}}&&\MM_{FG}^{A/B}\ar[ll]^j,
}
\]
where the down-right arrow comes from applying $q_*$ to an $A$-equivariant formal group classified by a morphism to $\Spec \underline{\Phi}^BL_A$, and the right-pointing horizontal morphisms come from the composition $L_{A/B} \xrightarrow{q^*} L_A \to \underline{\Phi}^BL_A$. The square is a pullback square by \cref{prop:OpenClosedSubstacks}. Thus $\varphi$ is faithfully flat and hence $\varphi^*$ induces an equivalence of $\QCoh(\MM_{FG}^{A/B})$ to graded $(\underline{\Phi}^BL_A, \underline{\Phi}^BS_A)$-comodules. 

The comodule corresponding to $j^*\FF_X^A$ is $\underline{\Phi}^BL_A\tensor_{L_A}(MU_A)_{2*}X$. As in (the proof of) the first isomorphism in \cref{lem:LandweberPhi}, we observe that this is isomorphic to 
\begin{align*}\underline{\Phi}^BL_A\tensor_{L_A}(MU_A)_{2*}(X \wedge \widetilde{E}\FF[B]) &\cong \underline{\Phi}^BL_A\tensor_{L_A}(MU_A)_{2*}(P^*_{A/B}\underline{\Phi}^BX)\\ &\cong  (\underline{\Phi}^BMU_A)_{2*}(\underline{\Phi}^BX).\end{align*}
By the second isomorphism in \cref{lem:LandweberPhi}, this is isomorphic to the comodule corresponding to  $\FF_{\underline{\Phi}^BX}^{A/B}$, i.e.\ to  
$\underline{\Phi}^BL_A \tensor_{L_{A/B}} (MU_{A/B})_{2*}(\underline{\Phi}^BX)$. This establishes the first claimed isomorphism of sheaves. 

Since  the counit $\underline{\Phi}^BP^*_{A/B}\to \id_{\Sp_{A/B}}$ is an equivalence, we can assume for the proof of $j_*\FF^{A/B}(Y) \cong \FF^A(P^*_{A/B}Y)$ for $Y\in \Sp_{A/B}$ that $Y = \underline{\Phi}^BX$ for some $X\in \Sp_A$ and we obtain from the first isomorphism in this case a natural isomorphism 
\[j^*j_*\FF^{A/B}(Y)\cong j^*j_*j^*\FF^A(X) \cong j^*\FF^A(X) \cong \FF^{A/B}(Y) \cong j^*\FF^A(P^*_{A/B}Y).\] 
The Euler classes for characters $V \notin \im((A/B)^* \to A^*)$ act invertibly on $\FF^A(P^*_{A/B}Y)$, and \cref{prop:OpenClosedSubstacks} implies that $\FF^A(P^*_{A/B}Y)$ is in the image of $j_*$. Since the counit \[j^*j_*\to \id_{\QCoh(\MM_{FG}^{A/B})}\] is an isomorphism, the claimed isomorphism follows. 

Lastly, we show $i_*\FF^B(Z) \cong \FF^A(\coind_B^AZ)$ for $Z\in \Sp_B$. This follows from the chain 
\begin{align*}(MU_A)_{2*}(\coind_B^A Z) &\cong \pi_{2*}^A(MU_A \tensor \coind_B^AZ) \\
&\cong \pi_{2*}^A (\coind_B^A (MU_B\tensor Z))\\
&\cong \pi_{2*}^B (MU_B\tensor Z) \\
&\cong MU_{2*}^B(Z).\tag*{\qedhere}
\end{align*}
of isomorphisms of $(L_A, S_A)$-comodules.
\end{proof}

We end this section by connecting the stacky point of view with support theories. For a compact $A$-spectrum $X$, we may consider the support $\supp \FF^A_i(X) \subseteq |\MM_{FG}^A|$, corresponding to those $x\colon \Spec k \to \MM_{FG}^A$ such that $x^*\FF^A_i(X)$ is non-trivial. As remarked above, in general the sheaves $\FF_i^A(X)$ do not play well with restrictions. Thus, for an $A$-spectrum $X$, one should consider all $\FF_i^B(\res^A_B X) \in \QCoh(\MM_{FG}^B)$ for $B\subseteq A$ in conjunction. Using the closed embeddings $\MM_{FG}^B \hookrightarrow \MM_{FG}^A$, the correct notion of support of $\FF_*^A(X)$ becomes thus $\supp \underline{\FF}_*(X) = \bigcup_{B\subseteq A} \supp \FF_*^B(\res_B^A(X))$. 

\begin{prop}\label{prop:universalsupportMFG}
    For any compact $A$-spectrum $X$, the subsets $\supp \underline{\FF}_*(X) \subseteq |\MM_{FG}^A|$ and  $\supp((\underline{MU}_A)_* X)\subseteq \Spec^{\inv}(L_A)$ correspond to each other under the bijection from \cref{thm:ClassificationOfInvariantPrimes}. Thus, $\supp \underline{\FF}_*(-)$ is a universal support theory as well. 
\end{prop}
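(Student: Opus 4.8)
The plan is to identify, point by point, the set $\supp\underline{\FF}_*(X)$ inside $|\MM_{FG}^A|$ with $\supp((\underline{MU}_A)_*X)$ inside $\Spec^{\inv}(L_A)$, using the homeomorphism of \cref{thm:ClassificationOfInvariantPrimes} and \cref{prop:ClassificationPointsMFGA}. Recall that a point of $|\MM_{FG}^A|$ corresponding to a triple $(B,p',n')$ is, by \cref{thm:ClassificationOfInvariantPrimes}, the point $\eta_{I^A_{B,p',n'}}$ coming from the invariant prime ideal $I^A_{B,p',n'}$; since we have localized at $p$, we only see the primes $I^A_{B,n}$ (together with the residue characteristic $0$ primes at height $0$). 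So the claim reduces to: for a given closed subgroup $B\subseteq A$ and $n\in\overline{\N}$, the point $\eta_{I^A_{B,n}}$ lies in $\bigcup_{C\subseteq A}\supp\FF^C_*(\res^A_C X)$ if and only if $\eta_{I^A_{B,n}}\in \supp((\underline{MU}_A)_*X)$, i.e.\ if and only if the localization $((\underline{MU}_A)_*X)_{I^A_{B,n}}$ is nontrivial.

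First I would pin down which closed embedding $\MM_{FG}^C\hookrightarrow\MM_{FG}^A$ the point $\eta_{I^A_{B,n}}$ can come from. By \cref{prop:OpenClosedSubstacks}(ii), the image of $\MM_{FG}^C\to\MM_{FG}^A$ is the vanishing locus of the Euler classes $e_V$ for $V\in\ker(A^*\to C^*)$; by the computation of points in \cref{prop:ClassificationPointsMFGA} (applied together with \cref{rem:IdentifyPointsAlongOpenImmersion}), the point $\eta_{I^A_{B,n}}$ lies in this image precisely when $B\subseteq C$. Moreover, on the algebraic side $\res^A_C\colon L_A\to L_C$ is surjective with kernel $I^A_C$, and $I^A_{B,n}=(\res^A_C)^{-1}(I^C_{B,n})$ when $B\subseteq C$; so $((\underline{MU}_A)_*X)_{I^A_{B,n}}$ sees exactly the data of $(MU_C)_*(\res^A_C X)$ localized at $I^C_{B,n}$ as $C$ ranges over subgroups containing $B$ (together with all the transfers/restrictions, which don't affect vanishing of the localization since localization is exact). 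This is exactly the content packaged in \cref{prop:supportgeom}: the three conditions there — triviality of $((\underline{MU}_A)_*X)_{I^A_{B,n}}$, triviality of $((MU_B)_*\res^A_B X)_{I^B_{B,n}}$, and $\type(\Phi^B X)>n$ — are equivalent. So the statement "$\eta_{I^A_{B,n}}\in\supp((\underline{MU}_A)_*X)$" is equivalent to "$\type(\Phi^B X)\le n$".

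Next I would do the same unwinding for $\supp\underline{\FF}_*(X)$. For $C\supseteq B$, the sheaf $\FF^C_*(\res^A_C X)$ on $\MM_{FG}^C$ is the quasi-coherent sheaf associated to the $(L_C,S_C)$-comodule $(MU_C)_{2*+i}(\res^A_C X)$; its support at the point $\eta_{I^C_{B,n}}\in|\MM_{FG}^C|$ is nontrivial iff the localization $((MU_C)_*(\res^A_C X))_{I^C_{B,n}}$ is nonzero (using that $\pi^*$ is an equivalence from $\QCoh(\MM_{FG}^C)$ to graded $(L_C,S_C)$-comodules, \cite[Proposition 4.3]{MeierOzornova}, and that support of a comodule sheaf at $\eta_{I}$ is governed by localization at the invariant prime $I$, as in \cref{prop:InvariantPrimeIdealsAndPoints}). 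By \cref{prop:supportgeom} again (now applied with ambient group $C$ and subgroup $B$), this is equivalent to $\type(\Phi^B(\res^A_C X))\le n$, and $\Phi^B(\res^A_C X)\simeq\Phi^B X$ since geometric fixed points commute with restriction. Hence $\eta_{I^A_{B,n}}\in\supp\underline{\FF}_*(X)$ iff there exists $C\supseteq B$ with $\type(\Phi^B X)\le n$ — but that condition does not depend on $C$ at all (take $C=B$, or $C=A$), so it is simply $\type(\Phi^B X)\le n$. Comparing with the previous paragraph gives the desired equivalence, hence the two supports correspond under the homeomorphism of \cref{thm:ClassificationOfInvariantPrimes}. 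Universality of $\supp\underline{\FF}_*(-)$ is then immediate from the universality of $\supp((\underline{MU}_A)_*(-))$ established in \cref{sec:universalsupport}.

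The main obstacle I anticipate is bookkeeping rather than mathematical depth: one must be careful that "support of the comodule sheaf $\FF^C_*(\res^A_C X)$ at the point $\eta_{I^C_{B,n}}$" really is detected by localizing the comodule at the invariant prime $I^C_{B,n}$ — this is where one needs that $\pi\colon\Spec L_C\to\MM_{FG}^C$ realizes the comodule category as $\QCoh$ and that the point $\eta_{I^C_{B,n}}$ has $I^C_{B,n}$ as a generic preimage, so that nontriviality of $x^*\FF$ for $x\colon\Spec k\to\MM_{FG}^C$ hitting $\eta_{I^C_{B,n}}$ is equivalent to nontriviality of the localization. One should also note explicitly that $\bigcup_{C\subseteq A}$ can be replaced by $\bigcup_{C\supseteq B}$ for testing membership of $\eta_{I^A_{B,n}}$, since for $C\not\supseteq B$ the point $\eta_{I^A_{B,n}}$ is not in the image of $\MM_{FG}^C\hookrightarrow\MM_{FG}^A$ and contributes nothing. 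Everything else is a direct translation through \cref{prop:supportgeom}, \cref{prop:ClassificationPointsMFGA}, and \cref{prop:OpenClosedSubstacks}.
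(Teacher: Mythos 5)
There is a genuine gap, and it sits exactly at the step you flagged as "bookkeeping": you assert that, for $C\supseteq B$, the point $\eta_{I^C_{B,n}}$ lies in $\supp \FF^C_*(\res^A_C X)$ if and only if the localization $((MU_C)_*(\res^A_C X))_{I^C_{B,n}}$ is nonzero. By the paper's definition, membership in $\supp\FF^C_*$ means nonvanishing of the \emph{fiber} $x^*\FF^C_*(\res^A_CX)$, i.e.\ of $((MU_C)_*(\res^A_CX))\otimes_{L_C}\kappa(I^C_{B,n})$, while your MU-support is a \emph{localization} condition. "Fiber nonzero $\Rightarrow$ localization nonzero" is automatic, but the converse is a Nakayama-type statement that requires finite generation of $(MU_C)_*(\res^A_CX)$ over $L_C$ -- and the paper explicitly points out (beginning of \cref{sec:universalsupport}) that this finite generation is unknown equivariantly, even for compact $X$. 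Without it the equivalence can fail: a module on which $p$ acts invertibly can have nonzero localization at a characteristic-$p$ invariant prime while its fiber at that point vanishes. Neither \cref{prop:InvariantPrimeIdealsAndPoints} (which is about points and closures, not sheaf supports) nor the comodule/QCoh equivalence addresses this, so the direction you actually need -- $\type(\Phi^BX)\le n$ implies $\eta_{I^A_{B,n}}\in\supp\underline{\FF}_*(X)$ -- is not justified as written.

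The paper's proof is structured precisely to dodge this. For the implication "$I^A_{B,n}\notin\supp((\underline{MU}_A)_*X)\Rightarrow \eta\notin\supp\underline{\FF}_*(X)$" it only uses the easy direction (localization zero $\Rightarrow$ fiber zero), by factoring $x$ through $\Spec (L_{\widetilde B})_{I^{\widetilde B}_{B,n}}$ for each $\widetilde B\supseteq B$. For the converse it works only at $C=B$ and invokes \cref{prop:FACompatibility} (via \cref{lem:LandweberPhi}): the open immersion $j\colon\MM_{FG}^{B/B}\to\MM_{FG}^B$ identifies $x^*\FF^B_*(\res^A_BX)$ with $\overline{x}^*\FF^{\{1\}}_*(\Phi^BX)$, i.e.\ with the fiber of the \emph{non-equivariant} sheaf of $MU_*\Phi^BX$ at the height-$n$ point of $\MM_{FG}$, where $MU_*\Phi^BX$ \emph{is} finitely generated ($\Phi^BX$ is compact) and the standard height-$n$ detection applies; this is then matched with the MU-support via \cref{prop:supportgeom}. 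Your outline can be repaired by replacing the blanket "support at $\eta_I$ iff localization at $I$ nonzero" with exactly this asymmetric treatment (easy direction for all $C\supseteq B$, geometric-fixed-points comparison at $C=B$), but as it stands the proposal uses an unproven equivalence and makes no use of \cref{prop:FACompatibility}, which is the essential input.
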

\begin{proof}
 Let a point $[x\colon \Spec k \to \MM_{FG}^A] \in |\MM_{FG}^A|$ correspond to a pair $(B,n)$. Then $x$ factors as $\overline{x}\colon \Spec k \to \MM_{FG} = \MM_{FG}^{B/B}$ composed with $\MM_{FG}^{B/B} \xhookrightarrow{j} \MM_{FG}^B \xhookrightarrow{i}\MM_{FG}^A$ by \cref{prop:OpenClosedSubstacks} and \cref{rem:IdentifyPointsAlongOpenImmersion}.  By \cref{prop:FACompatibility}, the pullback functor $j^*$ corresponds to $\underline{\Phi}^B$. Thus, $x\in \supp \FF_*^B(\res^A_BX)$ if and only if $\overline{x}^*\FF_*^{\{1\}}(\Phi^BX)\neq 0$, i.e.\ if $\Phi^BX$ has chromatic type at least $n$. We have seen in \cref{prop:supportgeom}, this happens if and only if $I^{\lennart{A}}_{B,n} \in \supp((\underline{MU}_A)_* X)$. 

 Now suppose $I^{\lennart{A}}_{B,n} \notin \supp((\underline{MU}_A)_* X)$ and let $B \subseteq \widetilde{B} \subseteq A$. Thus $(MU_{\widetilde{B}})_*(\res_{\widetilde{B}}^A(X))_{I^{\widetilde{B}}_{B,n}}$ vanishes. 
 Since $[x] \in |\MM_{FG}^{\widetilde{B}}|$ is the image of the prime ideal $I^{\widetilde{B}}_{B,n}\in \Spec L_{\widetilde{B}} \cong \Spec (MU_{\widetilde{B}})_*$, we see by \cref{lem:EFGEFGL} that $x\colon \Spec k \to \MM_{FG}^{\widetilde{B}}$ factors over $\widetilde{x}\colon \Spec k \to \Spec (L_{\widetilde{B}})_{I^{\widetilde{B}}_{B,n}}$. 
 Thus, we can identify $x^*\FF^{\widetilde{B}}_*(X)$ with $\widetilde{x}^*(MU_{\widetilde{B}})_*(\res_{\widetilde{B}}^A(X))_{I^{\widetilde{B}}_{B,n}} = 0$, and $x\notin \supp \underline{\FF}_*(X)$. 
\end{proof}

\bibliographystyle{alpha}
\bibliography{bibliography}
\end{document}